\newtheorem{Thm}{Theorem}[section]
\newtheorem{Prop}[Thm]{Proposition}
\newtheorem{Cor}[Thm]{Corollary}
\newtheorem{Apulause}[Thm]{Lemma}
\newtheorem{Quest}{Question}
\newtheorem{Def}[Thm]{Definition}
\theoremstyle{definition}
\newtheorem{Not}[Thm]{Remark}
\newtheorem{Ex}[Thm]{Example}
\renewcommand{\r}{|}
\newcommand{\R}{\mathbb{R}}
\renewcommand{\H}{\mathcal{H}}
\newcommand{\N}{\mathbb{N}}
\newcommand{\T}{\mathcal{T}}
\newcommand{\U}{\mathcal{U}}
\newcommand{\es}{\varnothing} 
\renewcommand{\Cup}{\bigcup}
\renewcommand{\Cap}{\bigcap}
\renewcommand{\unlhd}{\subset}
\newcommand{\ol}{\widetilde}
\newcommand{\ra}{\rightarrow}
\newcommand{\hra}{\hookrightarrow}
\newcommand{\law}{\leftarrow}
\newcommand{\al}{\alpha}
\newcommand{\be}{\beta}
\newcommand{\cu}{\curvearrowright}
\title{Monodromy representations of completed coverings}
\author{Martina Aaltonen}
\address{Department of Mathematics and Statistics, P.O.Box 68, 00014 Univeristy of Helsinki}
\subjclass[2010]{57M12 (30C65)}
\thanks{The financial support from Emil Aaltonen Foundation is gratefully acknowledged.}
\email{martina.jansson@helsinki.fi}
\date{\today}  
\begin{document}

\begin{abstract}In this paper we consider completed coverings that are branched coverings in the sense of Fox. For completed coverings between PL manifolds we give a characterization of the existence of a monodromy representation and the existence of a locally compact mo\-no\-dromy representation. These results stem from a characterization for the discreteness of a completed normal covering. We also show that completed coverings admitting a monodromy representations are discrete and that the image of the branch set is closed.
\end{abstract}

\maketitle

\section{Introduction}

By the classical theory of covering spaces, a covering map $f  
\colon X\to Z$ between manifolds is a factor of a normal (or regular) covering; there exists a normal covering $p \colon Y \ra X$ so that $q = f \circ p \colon Y \to Z$ is a normal covering and the deck-transformation group of $q$ is isomorphic to the monodromy group of $f,$
\begin{equation*}\label{rr}
\xymatrix{
& Y \ar[ld]_p \ar[rd]^q &\\
X \ar[rr]^f & & Z. }
\end{equation*} 
In this case, the monodromy group $G$ of $f$ acts on $Y$ and there exists a subgroup $H \subset G$ for which $Y/G \approx Z$ and $Y/H \approx X.$ The normal coverings $p \colon Y \ra X$ and $q \colon Y \ra Z$ are orbit maps. In this article we are in\-te\-res\-ted in ramifications of this construction for discrete open mappings $f \colon X \ra Z$ between manifolds. 

By {\v{C}}ernavski ~\cite{CER} and Väisälä \cite{V} a discrete and open mapping $f \colon X \ra Z$ between manifolds is almost a local homeomorphism in the following sense. Let $B_f \subset X$ be the branch set i.e.\;the set of points in $X,$ where $f$ is not a local homeomorphism. By the theorem of {\v{C}}ernavski and Väisälä the topological codimension of $B_f$ is at least $2.$ In fact, if we set $Z':=Z \setminus f(B_f)$ and $X':=X \setminus f^{-1}(f(B_f)),$ then $X' \subset X$ is a dense and connected subset of $X$ and $Z' \subset Z$ is a dense and connected subset of $Z$ and $g:=f \r X' \colon X' \ra Z'$ is a local homeomorphism.

Suppose now in addition that $Z' \subset Z$ is open and $g$ is a covering. By the classical argument above there exists an open manifold $Y'$ and a commutative diagram of discrete and open mappings
\begin{equation}\label{hei}
\xymatrix{
& Y' \ar[ld]_{p'} \ar[rd]^{q'} &\\
X \ar[rr]^f & & Z }
\end{equation} 
where $p' \colon Y' \ra X'$ and $q' \colon Y' \ra Z'$ are normal coverings, the deck-trans\-for\-ma\-ti\-on group of the covering $q' \colon Y' \ra Z'$ is isomorphic to the mono\-dromy group of $g$ and $q'=f \circ p'.$ It becomes a question, whether there exists a space $Y \supset Y'$ so that $p'$ and $q'$ extend to discrete orbit maps $p \colon Y \ra X$ and $q \colon Y \ra Z$ satisfying $q=f \circ p.$ We formalize this question as follows.

\begin{Quest}\label{kysymys}Suppose $f \colon X \ra Z$ is an open discrete mapping between manifolds so that 
$Z \setminus f(B_f) \subset Z$ is open and 
$$f \r  \big(X \setminus f^{-1}(f(B_f))\big) \colon \big(X \setminus f^{-1}(f(B_f))\big) \ra \big(Z \setminus f(B_f)\big)$$ is a covering. Does there exists a locally connected Hausdorff space $Y,$ an embedding $\iota  \colon Y' \ra Y$ and orbit maps $p \colon Y \ra X$ and $q \colon Y \ra Z$ so that 
\begin{equation}\label{ww}
\xymatrix{
& Y \ar[ld]_p \ar[rd]^q &\\
X \ar[rr]^f & & Z}
\end{equation} 
is a commutative diagram of discrete and open mappings satisfying
$p'= p \circ \iota$ and $q'=q \circ \iota,$ where $p'$ and $q'$ are as in \eqref{hei}. 
Further, we require that $\iota(Y')\subset Y$ is a dense subset and $Y \setminus \iota(Y')$ does not locally separate $Y.$
\end{Quest}

This question stems from an article of Berstein and Edmonds \cite{BE} where they show that for open and discrete mappings between compact manifolds the answer to Question \ref{kysymys} is positive and the orbit maps $p$ and $q$ are induced by the action of the monodromy group of $f$ on $Y.$ They use this construction to give degree estimates for simplicial maps between compact manifolds; see also Pankka-Souto \cite{PS} for another application.

 In this article we extend the construction of Berstein and Edmonds \cite[2.2\;Prop.]{BE} for completed coverings (Definition \ref{cc}) that are branced coverings in the sense of Fox \cite{F}. Completed coverings form a subclass of open and surjective mappings from a Hausdorff space onto a manifold. 
Examples of completed coverings between manifolds are discrete and open mappings between compact manifolds (see \cite{CH}), proper quasiregular mappings (see \cite{R}) and surjective open and discrete simplicial mappings between PL manifolds (see \cite{I}). The class of completed coverings also include mappings outside these classes of mappings (see \cite{DP} and \cite{L}). The definition of a completed covering is technical and it comes from the theory of complete spreads in Fox \cite{F}. We only mention here, that the class of completed coverings is sufficiently large and natural in our setting.

Edmonds' results on orbit maps with finite multiplicity \cite{E} ensures that the answer to Question \ref{kysymys} is known for completed coverings that have finite multiplicity: By Edmods' result completed normal coverings that have finite multiplicity are discrete orbit maps; see \cite[Thm.\;4.1]{E}. Based on this the argument of Berstein and Edmonds \cite{BE} relies on the observation that a discrete and open mapping $f \colon X \ra Z$ between compact manifolds is a completed covering that has a finite monodromy group. For completed coverings the finiteness of the monodromy group is a sufficient condition for the argument of Bernestein and Edmonds. Thus, by the finiteness of the monodromy group, the answer to Question \ref{kysymys} is positive for every completed covering $f \colon X \ra Z$ between manifolds that has finite multiplicity.  

On the other hand, for Question \ref{kysymys} to have a positive answer for a mapping $f \colon X \ra Z$ having infinite multiplicity, it is not enough to assume that $f$ is a completed covering. The underlying reason is that the monodromy group of $f$ is in this case infinite, and a completed normal covering $p \colon Y \ra Z$ with infinite multiplicity is not necessary discrete; see Montesinos \cite[Ex.\;10.6]{MA}. To answer Question \ref{kysymys}, our first main Theorem is the following. 

\begin{Thm}\label{hilu}A completed normal covering $p \colon Y \ra Z$ from a Hausdorff space $Y$ onto a manifold $Z$ is an orbit map if and only if $p$ is a discrete map.
\end{Thm}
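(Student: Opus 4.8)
The plan is to reduce the statement to a transitivity property and then treat the two implications separately. Let $g := p\r Y'\colon Y'\to Z'$ be the associated normal covering, where $Z'\subseteq Z$ is the complement of the branch image and $Y' = p^{-1}(Z')$, and let $G$ denote its monodromy (= deck transformation) group. Since a completed covering is a complete spread and the Fox completion is functorial, every deck transformation of $g$ extends uniquely to a self-homeomorphism of $Y$ commuting with $p$; thus $G$ acts on $Y$ by homeomorphisms over $p$. As $p$ is a completed covering it is open and surjective, and the orbit projection $Y\to Y/G$ is open, so the induced map $\bar p\colon Y/G\to Z$ is a continuous open surjection. Hence $\bar p$ is a homeomorphism precisely when it is injective, i.e.\ precisely when $G$ acts transitively on every fibre $p^{-1}(z)$ (one checks that any group realizing $p$ as an orbit map acts through $G$). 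So it suffices to prove that transitivity of $G$ on all fibres is equivalent to discreteness of $p$.

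For the implication ``discrete $\Rightarrow$ transitive'' I would argue pointwise. Fix $z_0\in Z$ and $y_1,y_2\in p^{-1}(z_0)$. Writing $V_y(U)$ for the component of $p^{-1}(U)$ containing $y$, the sets $V_y(U)$ (for $U$ a neighbourhood of $p(y)$) form a neighbourhood basis at $y$. Using discreteness of $p^{-1}(z_0)$, choose a connected open $U\ni z_0$ small enough that $V_{y_i}(U)\cap p^{-1}(z_0) = \{y_i\}$ for $i=1,2$, and small enough that $U\cap Z'$ is connected (possible since $Z\setminus Z'$ does not locally separate $Z$). Then each $V_{y_i}(U)\cap Y'$ is a component of $p^{-1}(U\cap Z')$, and since $g$ is a normal covering and $U\cap Z'$ is connected, $G$ permutes these components transitively; pick $h\in G$ with $h\big(V_{y_1}(U)\cap Y'\big) = V_{y_2}(U)\cap Y'$. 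Because $Y'$ is dense in $Y$ and $Y$ is locally connected, a component of $p^{-1}(U)$ is determined by its trace on $Y'$, so the extension of $h$ satisfies $h\big(V_{y_1}(U)\big) = V_{y_2}(U)$; hence $hy_1\in V_{y_2}(U)\cap p^{-1}(z_0) = \{y_2\}$, i.e.\ $hy_1 = y_2$.

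For ``transitive $\Rightarrow$ discrete'' I would use a Baire category argument. Assume $G$ is transitive on all fibres and fix $z_0$. Since $Z$ is a manifold, $z_0$ has a countable neighbourhood basis $U_1\supseteq U_2\supseteq\cdots$, and by the structure of complete spreads the subspace topology on $p^{-1}(z_0)$ coincides with the inverse limit topology under the identification $p^{-1}(z_0)\cong\varprojlim_n\pi_0\big(p^{-1}(U_n\cap Z')\big)$ of discrete sets. The monodromy group $G$ is a quotient of $\pi_1(Z')$, hence countable, so every $\pi_0\big(p^{-1}(U_n\cap Z')\big)$ is countable; therefore $p^{-1}(z_0)$ is a closed subspace of a countable product of countable discrete spaces, hence a Polish space, hence Baire. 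It is nonempty (surjectivity of $p$) and countable (it is a single $G$-orbit and $G$ is countable), so by the Baire category theorem it has an isolated point; since $G$ acts on it transitively by homeomorphisms, every point is then isolated, i.e.\ $p^{-1}(z_0)$ is discrete. As $z_0$ was arbitrary, $p$ is discrete.

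The main obstacle — and the part I would write out carefully — is the bookkeeping with the Fox completion: that deck transformations extend to $Y$, that the $V_y(U)$ form a neighbourhood basis and that $V_y(U)\cap Y'$ is a single component of $p^{-1}(U\cap Z')$, and above all the identification $p^{-1}(z_0)\cong\varprojlim_n\pi_0\big(p^{-1}(U_n\cap Z')\big)$ together with the fact that the subspace and inverse limit topologies agree. One must also use that $Z\setminus Z'$ does not locally separate $Z$, so that $U\cap Z'$ is connected for small $U$ and the deck group is transitive on components of its preimage; this is where the PL-manifold / {\v C}ernavski--V\"ais\"al\"a hypotheses enter. Once these structural facts are in place, both implications are short.
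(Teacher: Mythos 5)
Your proposal is correct, and its skeleton matches the paper's: the paper proves Theorem \ref{hilu} via Theorem \ref{vauva}, reducing everything to transitivity of the completed deck group $\T(p)\cong\T(g)$ on fibres (Remark \ref{huu} supplies the extension of deck transformations that you invoke), and its ``discrete $\Rightarrow$ orbit map'' direction is exactly your argument: shrink to components meeting the fibre in one point, use normality of $g$ on the traces in $Y'$, and extend. Where you genuinely diverge is the converse. The paper argues by contradiction: transitivity plus countability of $\T(g)$ (fibres of coverings of manifolds are countable) makes the fibre countable; assuming non-discreteness, it enumerates the fibre and builds by hand a nested sequence of components $U_0\supset U_1\supset\cdots$ over a shrinking neighbourhood basis that avoids every enumerated fibre point, contradicting completeness of the spread. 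You instead package completeness once and for all as the identification of the fibre with $\varprojlim_n\pi_0\bigl(p^{-1}(U_n\cap Z')\bigr)$ (injectivity from the spread property and Hausdorffness, surjectivity from completeness via selection functions), so the fibre is a countable Polish space, and Baire category plus $G$-homogeneity gives an isolated point, hence discreteness. Morally these are the same diagonal argument --- the paper is proving the Baire step by hand --- but your route isolates a reusable structural fact (the Fox completion's fibre is the inverse limit of component sets, which is essentially Fox's construction), at the cost of the bookkeeping you flag, whereas the paper's construction is self-contained and needs no metrizability/Polish machinery. Two small corrections: the {\v C}ernavski--V\"ais\"al\"a theorem and PL structures play no role here --- largeness of $Y'$ and $Z'$ is part of the definition of a completed normal covering, and $Z$ is only a topological manifold in Theorem \ref{hilu}; and $Z'$ need not be the complement of the branch image, one simply takes the large sets furnished by the definition (with $Y'=p^{-1}(Z')$ by Lemma \ref{biotin}). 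Also note that your parenthetical ``any group realizing $p$ as an orbit map acts through $G$'' is genuinely needed (the paper glosses over it too): a homeomorphism preserving the fibres of $p$ restricts to a deck transformation of $g$ and, by density of $Y'$, coincides with its completion, so transitivity of the realizing group forces transitivity of $\T(p)$.
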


For this reason in the heart of Question \ref{kysymys} is the characterization of discrete completed normal coverings. In this direction we obtain the following results.
\begin{Thm}\label{ilta} A completed normal covering $p \colon Y \ra Z$ from a Hausdorff space $Y$ onto a PL manifold $Z$ is discrete if and only if $p$ is a stabily completed normal covering. 
\end{Thm}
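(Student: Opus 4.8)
The plan is to prove the two implications separately; the forward implication is essentially formal and rests on Theorem~\ref{hilu}, while the backward implication carries the geometric content and is where the PL hypothesis on $Z$ is used.

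\emph{Discrete $\Rightarrow$ stably completed.} Suppose $p\colon Y\ra Z$ is discrete. By Theorem~\ref{hilu} the map $p$ is an orbit map; write $G$ for the group acting, so that $p$ is the quotient map $Y\ra Y/G\approx Z$. Orbit maps restrict well: for every open $U\subset Z$ the set $p^{-1}(U)$ is $G$-invariant and $p^{-1}(U)\ra U$ is again an orbit map, hence a discrete completed normal covering of $U$. I would then invoke the uniqueness of Fox completions --- a complete spread over $U$ is determined by its restriction to any dense open subset whose complement does not locally separate $U$ --- to identify $p^{-1}(U)\ra U$ with the completion over $U$ of the restriction of $p$ over $U\cap Z'$, where $Z'\subset Z$ is the locus over which $p$ is an honest covering. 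Running this over a neighbourhood basis of PL balls at each point of $Z$ should give exactly the defining property of a stably completed normal covering, namely compatibility of the completion with restriction to small PL balls. The only delicate point here is checking that $U\setminus(U\cap Z')$ does not locally separate $U$, which follows from the {\v{C}}ernavski--Väisälä bound on the codimension of the branch set \cite{CER,V} together with the local connectedness of $Y$.

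\emph{Stably completed $\Rightarrow$ discrete.} Since discreteness is a local property, and the stability hypothesis provides, around each point $z_0\in Z$, a neighbourhood basis of PL balls $U$ for which $p^{-1}(U)\ra U$ is the Fox completion of a normal covering of $U\cap Z'$, it suffices to prove that each such $p^{-1}(U)\ra U$ has discrete fibres. By \cite{CER,V} and PL general position we may choose $U$ so that the closed set $U\setminus(U\cap Z')$ lies in a subcomplex $S$ of $U$ of codimension at least $2$. I would then filter $U$ by the preimages of the skeleta of $S$ and argue stratum by stratum: over $U\setminus S$ one has an ordinary covering of an open PL manifold, which is classical; crossing a codimension-$k$ face of $S$ with $k\ge 2$, the Fox completion is locally modelled --- after the standard PL normal-form reductions, using that links of simplices in $U$ are PL spheres --- on the join of the covering already built over a small $(k-1)$-sphere, and finiteness of that link forces the new fibre to be discrete. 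Iterating down to the vertices of $S$ gives discreteness of $p^{-1}(U)\ra U$, hence of $p$.

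I expect the main obstacle to be this inductive step: controlling the fibre of the completion as one passes to a lower-dimensional stratum of the branch locus, i.e.\ showing that the ``limit fibre'' produced by the Fox completion over a PL ball is discrete. This is precisely the phenomenon that fails for non-PL or non-stably-completed maps --- compare Montesinos' example \cite[Ex.\;10.6]{MA} --- and both features of the PL hypothesis enter: general position, to trap the branch locus in a codimension-$\ge 2$ subcomplex, and the sphericity and finiteness of PL links, to obtain the local normal forms that make the stratum-by-stratum induction go through.
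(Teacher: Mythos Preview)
Your proposal rests on a misreading of the definition of ``stabily completed.'' In the paper this is \emph{not} a compatibility condition between the Fox completion and restriction to small PL balls. Rather (see Section~\ref{dcc}), a completed normal covering $p$ is stabily completed if the underlying normal covering $g\colon Y'\ra Z'$ is \emph{stable}, meaning that $Z$ is $(Z',\mathrm{Ker}(\sigma_g))$-stable: every $z\in Z$ has a neighbourhood $U$ such that the image of $\pi(U\cap Z',z_0)$ in the monodromy quotient $\pi(Z',z_0)/\mathrm{Ker}(\sigma_g)$ does not shrink further when $U$ is replaced by any smaller connected neighbourhood of $z$ containing $z_0$. This is a condition on fundamental-group images, not on completions of restrictions. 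Consequently neither direction of your argument engages with the actual hypothesis or conclusion: your forward direction establishes a local-completion compatibility that is not what is being asked, and your backward direction assumes a hypothesis different from the one available.

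The paper's proof is a direct appeal to Theorem~\ref{TFAE}, which gives the equivalence of (a) $p$ discrete, (b) $p$ uniformly discrete, and (c) $g\colon Y'\ra Z$ stable. The substantive work is in Propositions~\ref{world} and~\ref{somac}, and both are path-lifting arguments in $Y'$ that compare loops in punctured stable neighbourhoods. For $(c)\Rightarrow(b)$ one supposes two preimages $d_1,d_2$ of $z$ lie in the same component $D$ of $p^{-1}(V)$ for a stable neighbourhood $V$, joins them by a path in $D\cap Y'$, and pushes down to get a loop whose class in the monodromy quotient cannot be represented in any strictly smaller neighbourhood --- contradicting stability. For $(b)\Rightarrow(c)$ one builds a candidate stable neighbourhood $V=U\cup\mathrm{St}_{1/n}(z)$ with $U$ simply connected (Lemma~\ref{D_2}), and uses uniform discreteness to rewrite any loop in $V\cap Z'$ as one supported in an arbitrarily small punctured star, modulo $g_*(\pi(Y',y_0))$. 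The PL hypothesis enters only through Lemmas~\ref{D_1} and~\ref{D_2}, which produce the simply-connected ``corridor'' $U$ and the Seifert--van Kampen reductions; there is no stratification, no induction on skeleta, and no use of finiteness of links. Your intuition that the PL structure controls the limit fibre via link geometry is not how the argument runs: it is the stabilisation of the monodromy image, not finiteness of any local model, that forces discreteness.
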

Stabily completedness is defined in Section \ref{dcc}. By Montesinos a completed normal covering $p \colon Y \ra Z$ from a Hausdorff space $Y$ onto a manifold $Z$ is discrete if it has locally finite multiplicity; see  \cite[Thm. 9.14]{MA}. We prove the following.

\begin{Thm}\label{kuuskytkolme}Let $p \colon Y \ra Z$ be a discrete completed normal covering from a Hausdorff space $Y$ onto a PL manifold $Z.$ Then $Y$ is locally compact if and only if $p$ has locally finite multiplicity. 
\end{Thm}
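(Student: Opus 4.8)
The plan is to reduce the statement to a local assertion near a single point of $Y$, to identify local compactness and local finiteness of the multiplicity with one common property of the local models, and to invoke Edmonds' theorem on completed normal coverings of finite multiplicity for the one nontrivial implication. Write $Z'\subseteq Z$ for the largest open set over which $p$ restricts to a genuine covering and put $Y':=p^{-1}(Z')$, so that $Y'$ is open and dense in $Y$, $p|_{Y'}$ is a normal covering with deck group $G$, and, since $p$ is discrete, Theorem~\ref{hilu} identifies $p$ with the orbit map $Y\to Y/G\approx Z$. Both conditions in the statement are local on $Y$ and hold automatically at the points of the manifold $Y'$, where $p$ is a local homeomorphism; so it suffices to treat a point $y_0\in Y\setminus Y'$. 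Using the complete-spread structure of $p$, I would fix a neighbourhood basis of $z_0:=p(y_0)$ by connected \emph{normal} open sets $U$ — those for which the components of $p^{-1}(U)$ are exactly the normal neighbourhoods of the points of $p^{-1}(z_0)$ — and let $V_U$ be the component containing $y_0$; then $p^{-1}(z_0)\cap V_U=\{y_0\}$, both $V_U$ and $V_U\cap Y'$ are connected, and $p|_{V_U}\colon V_U\to U$ is again a completed normal covering (restriction of a completed covering to an open subset of the base).

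The key point is the claim that $Y$ is locally compact at $y_0$ if and only if $p|_{V_U}$ has finite multiplicity for some such $U$; since the multiplicity of $p|_{V_U}$ equals $\sup_{z\in U}\#\bigl(p^{-1}(z)\cap V_U\bigr)$ and the fibre cardinalities only decrease under shrinking $U$, this is precisely the assertion that $p$ has locally finite multiplicity at $y_0$. For the forward (only if) implication, take a compact neighbourhood $K$ of $y_0$ and shrink $U$ so that $\overline{V_U}\subseteq K$; for $z\in U$ the fibre $p^{-1}(z)\cap V_U$ is a discrete subset of $Y$, and were it infinite it would cluster at some $w\in\overline{V_U}$ with $p(w)=z$, so that $w$ would be an accumulation point of the discrete fibre $p^{-1}(z)$ — impossible. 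Hence $p|_{V_U}$ has finite fibres, and since $V_U\cap Y'$ is connected its number of sheets over $U\cap Z'$ is a well-defined finite constant, i.e. $p|_{V_U}$ has finite multiplicity. For the reverse (if) implication, assume $p|_{V_U}$ has finite multiplicity; as $U$ is an open subset of the PL manifold $Z$, Edmonds~\cite[Thm.\;4.1]{E} gives that $p|_{V_U}$ is a discrete orbit map of a finite group acting on $V_U$. An orbit map of a finite group action on a Hausdorff space is closed with finite, hence compact, fibres, so it is proper; choosing (by local compactness of $Z$) a compact neighbourhood $C\subseteq U$ of $z_0$, the set $(p|_{V_U})^{-1}(C)$ is a compact neighbourhood of $y_0$ in $Y$, so $Y$ is locally compact at $y_0$.

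I expect the main obstacle to be not either implication but the bookkeeping of the local models: verifying that, for small normal $U$, the component $V_U$ is connected, that $V_U\cap Y'$ is connected (so that ``number of sheets'' makes sense), that $p|_{V_U}$ is again a completed normal covering of $U$, and that its multiplicity is the supremum of the — possibly a priori unbounded — finite fibre cardinalities. All of this lives in the complete-spread/Fox framework developed in the earlier sections, and it is also the place where the PL hypothesis on $Z$ enters, through the local structure of discrete completed normal coverings over PL manifolds (cf.\ Theorem~\ref{ilta} and the definitions of Section~\ref{dcc}); once it is in place, both halves of the equivalence are short.
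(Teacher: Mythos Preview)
Your argument is correct and takes a genuinely different route from the paper. For the nontrivial implication (locally finite multiplicity $\Rightarrow$ local compactness) the paper does \emph{not} pass through Edmonds' theorem or properness of finite-group quotients; instead it develops a substantial path-lifting machinery (Lemmas~\ref{kiire}--\ref{indeksit}), pulls back the polyhedral path-metric $d_s$ of $Z$ to a metric $d_s^*$ on $Y$, proves that $d_s^*$ induces the original topology (Proposition~\ref{kiirastorstai}), and finally shows by a covering argument that $(Y,d_s^*)$ is locally proper (Proposition~\ref{lankalauantai}). This is Theorem~\ref{kumpula}, from which Theorem~\ref{kuuskytkolme} is a one-line corollary. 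Your approach is considerably shorter and more conceptual for this statement alone: once you know $p|_{V_U}$ is a completed normal covering of finite multiplicity, Edmonds (or, staying inside the paper, Theorem~\ref{hilu} applied to the discrete map $p|_{V_U}$) gives a finite-group orbit map, and closed $+$ finite fibres $\Rightarrow$ proper finishes it. The paper's longer route has a payoff you do not get: the metric $d_s^*$ is exactly what is needed for Theorem~\ref{kk}, so the work is not wasted. Two small points on your write-up: (i) the passage from ``each fibre of $p|_{V_U}$ is finite'' to ``$p|_{V_U}$ has finite multiplicity'' needs the one-line observation (via openness of $p$) that the constant sheet number over $U\cap Z'$ bounds the fibre size over $U\setminus Z'$ as well; (ii) you could replace the appeal to Edmonds by Theorem~\ref{hilu}, since $p|_{V_U}$ is already discrete and the finiteness of the deck group then follows from the finite sheet number.
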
   

\noindent{\textbf{Regularity and existence of monodromy representations.}} We begin with a result showing the naturality of completed coverings. We show that if the space $Y$ in diagram \eqref{ww} is locally compact and locally connected, then all the maps in the diagram are completed coverings.  

\begin{Thm}\label{sade} Suppose $f \colon X \ra Z$ is a discrete and open mapping between ma\-nifolds,
$X':= X \setminus f^{-1}(f(B_f))$ and $Z':= Z \setminus f(B_f)$ for the branch set $B_f$ of $f.$ Suppose $Z'\subset Z$ is open and $f \r  X' \colon X' \ra Z'$ is a covering. Let 
\begin{equation*}
\xymatrix{
& Y' \ar[ld]_{p'} \ar[rd]^{q'} &\\
X \ar[rr]^f & & Z }
\end{equation*} 
be a commutative diagram of discrete and open mappings so that $p' \colon Y' \ra X'$ and $q' \colon Y' \ra Z'$ are normal coverings. Suppose $Y$ is a locally compact and locally connected Hausdorff space so that there exists an embedding $\iota \colon Y' \ra Y$ and orbit maps $p \colon Y \ra X$ and $q \colon Y \ra Z$ for which  
\begin{equation*}
\xymatrix{
& Y \ar[ld]_p \ar[rd]^q &\\
X \ar[rr]^f & & Z }
\end{equation*} 
is a commutative diagram of discrete and open mappings satisfying
$p'= p \circ \iota$ and $q'=q \circ \iota,$ and so that $\iota(Y')\subset Y$ is dense and $Y \setminus \iota(Y')$ does not locally separate $Y.$ 

Then $f \colon X \ra Z,$ $p \colon Y \ra X$ and $q \colon Y \ra Z$ are completed coverings.
\end{Thm}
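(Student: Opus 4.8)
The plan is to verify, for each of $f$, $p$ and $q$, the clauses of Definition \ref{cc}. In each case the map is open and discrete by hypothesis, and surjective (orbit maps are surjective, and $f$ inherits surjectivity from $q=f\circ p$); moreover its domain is Hausdorff and its target ($Z$, $X$, $Z$ respectively) is a manifold. So what remains is to show that the map is a complete spread and that, over the complement of the image of its branch set, it restricts to a covering of which it is the Fox completion. For $f$ this last point is already largely in place: by construction $f^{-1}(Z')=X'$ and $f\r X'$ is the covering $g$ by hypothesis, while by the theorem of {\v C}ernavski and V\"ais\"al\"a $X'$ is dense in $X$ and the sets $Z\setminus Z'=f(B_f)$ and $X\setminus X'=f^{-1}(f(B_f))$ have topological codimension at least $2$, hence do not locally separate $Z$, $X$.

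First I would show that $f$, $p$ and $q$ are spreads, treating them uniformly. Let $h\colon A\to B$ be any of the three, so that $A$ (one of $X$, $Y$, $Y$) is locally compact, locally connected and Hausdorff and $B$ (one of $Z$, $X$, $Z$) is a locally compact manifold. Given $a\in A$ and a neighbourhood $W$ of $a$, local compactness together with discreteness of $h$ gives an open $U\ni a$ with $\overline U$ compact, $\overline U\subset W$ and $\overline U\cap h^{-1}(h(a))=\{a\}$. Then $h(\overline U\setminus U)$ is compact and avoids $h(a)$, so $V:=h(U)\setminus h(\overline U\setminus U)$ is an open neighbourhood of $h(a)$ with $h^{-1}(V)\cap\overline U=h^{-1}(V)\cap U$; hence the $h$-component of $h^{-1}(V)$ through $a$ is clopen in it and therefore lies in $U\subset W$. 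Thus $h$ is a spread.

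Next I would establish completeness and identify each map with a completion. The normal neighbourhoods above make $f$, $p$ and $q$ proper over a suitable neighbourhood of each point of the target, and a proper open discrete map onto a manifold is a complete spread; since completeness of a spread is a property local over the base, all three are complete spreads (if the preliminaries already record this for open discrete surjections onto a manifold from a locally compact, locally connected Hausdorff space, following Montesinos, one simply cites it). Now for $f$: with the density and non-separation noted above, the recognition principle for Fox completions identifies $f$ with the completion of $g$, so $f$ is a completed covering. For $q$ one first checks $q^{-1}(Z')=\iota(Y')$: the inclusion $\supset$ is immediate, and the reverse follows because $q^{-1}(Z')\to Z'$ is a complete spread densely containing the covering $q'=q\r\iota(Y')$, and such a complete spread over $Z'$ must coincide with that covering. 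Then $q\r\iota(Y')=q'$ is a covering of $Z'$, $\iota(Y')$ is dense in $Y$ and $Y\setminus\iota(Y')$ does not locally separate $Y$ by hypothesis, and $Z\setminus Z'$ does not locally separate $Z$; so the recognition principle again identifies $q$ with the Fox completion of $q'$, and $q$ is a completed covering. The argument for $p$ is identical, with $(X',X,p')$ in place of $(Z',Z,q')$, using $p^{-1}(X')=\iota(Y')$ (which also follows from $p^{-1}(X')=q^{-1}(f^{-1}(Z'))=q^{-1}(Z')$).

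The step I expect to be the main obstacle is the completeness, together with the surjectivity half of the identification: extracting genuine local properness of the open discrete maps from local compactness of the domain (a normal-neighbourhood construction in the spirit of V\"ais\"al\"a and Montesinos) and thereby the complete-spread property; equivalently, showing that the canonical comparison map from $Y$ (respectively $X$) into the Fox completion of $q'$ (respectively $g$) is onto. This is precisely where local compactness of $Y$ is indispensable: a thread over a branch point that were not realised in $Y$ would contradict either local compactness or the requirement that $Y\setminus\iota(Y')$ not locally separate $Y$. The auxiliary identity $q^{-1}(Z')=\iota(Y')=p^{-1}(X')$ is a smaller but still necessary point, to be dispatched from the covering structure and the non-separation hypothesis.
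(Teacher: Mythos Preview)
Your spread argument is correct and essentially matches the paper's (it is the first paragraph of Theorem~\ref{pakkanen}). The gap is in the completeness step. The normal-neighbourhood construction you describe produces, for each point $a$ of the \emph{domain}, a compact neighbourhood on which the restriction of $h$ is proper; it does \emph{not} yield properness of $h$ over a neighbourhood of $h(a)$ in the target. For that you would need to control all preimages of points near $h(a)$ simultaneously, and nothing in your argument does so when the fibre $h^{-1}\{h(a)\}$ is infinite. Your parenthetical hope that this is recorded somewhere for arbitrary open discrete surjections from locally compact spaces is misplaced: the paper does not claim it, and its proof of completeness for $p$ and $q$ (Theorem~\ref{pakkanen}) uses the orbit-map hypothesis in an essential way. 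Via Lemma~\ref{vindpark} the deck-transformation group carries any component of $p^{-1}(W)$ onto any other, so the surjectivity $p(V)=W$ and the finiteness of $V\cap p^{-1}\{z\}$ need only be checked for \emph{one} component; this is exactly what feeds Lemma~\ref{aurinko}.

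The second, related, problem is your treatment of $f$. You attempt to handle it in parallel with $p$ and $q$, but $f$ carries no orbit-map structure, so the mechanism above is unavailable. The paper does not argue directly for $f$ at all: once $p$ and $q$ are known to be completed coverings, it invokes the factorisation Lemma~\ref{sina} (if $f_1$ and $f_2\circ f_1$ are completed coverings and $f_2$ is a spread restricting to a covering over a large subset, then $f_2$ is a completed covering) with $f_1=p$ and $f_2=f$. This sidesteps entirely the question of whether an arbitrary open discrete map between manifolds is a complete spread. Your identification $q^{-1}(Z')=\iota(Y')$ and the analogous statement for $p$ are correct (this is Lemma~\ref{biotin}), but they become unnecessary once you follow the paper's route.
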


Let $X$ and $Z$ be manifolds and $f \colon X \ra Z$ a completed covering. We say that a triple $(Y,p,q)$ is \textit{a monodromy representation} of $f$ if $Y$ is a locally connected Hausdorff space and the monodromy group $G$ of $f$ has an action on $Y$ and a subgroup $H \subset G$ so that $Y/G \approx Z,$ $Y/H \approx X$ and the associated orbit maps $p \colon Y \ra X$ and $q \colon Y \ra Z$ are completed coverings satisfying $q=p \circ f.$  
We call a monodromy representation $(Y,p,q)$ \textit{locally compact} if $Y$ is locally compact. We present the following four regularity results for monodromy representations;

\begin{Thm}\label{ll} Let $f \colon X \ra Z$ be a completed covering between manifolds. Suppose $(Y,p,q)$ is a monodromy representation of $f.$ Then $f,$ $p$ and $q$ are discrete maps.
\end{Thm}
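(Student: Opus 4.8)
The plan is to deduce all three assertions from Theorem~\ref{hilu}, which handles the normal case, using in addition only the commutativity $q = f\circ p$ and the fact that $p$ is open and surjective (being a completed covering).

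The main work is with $q$. By definition of a monodromy representation, $q\colon Y\ra Z$ is the orbit map of the $G$-action on $Y$, with $Y/G\approx Z$, and it is a completed covering onto the manifold $Z$. The step requiring care is to upgrade this to: $q$ is a completed \emph{normal} covering. For this I would argue that over the dense open subset of $Z$ defining $q$ as a completed covering, $q$ restricts to a covering which is the orbit map of the induced $G$-action; since the orbit map of a group action is a covering only when the action is free and properly discontinuous, that covering is normal, and hence $q$ is a completed normal covering. Granting this, $q$ is a completed normal covering that is an orbit map, so Theorem~\ref{hilu} gives that $q$ is discrete. I expect this upgrade --- from ``completed covering'' to ``completed \emph{normal} covering'' for the monodromy orbit maps --- to be the only genuine obstacle; the rest is point-set topology.

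Discreteness of $p$ is then immediate. For $x\in X$ we have $p^{-1}(x)\subset q^{-1}(f(x))$, since $q=f\circ p$, and $q^{-1}(f(x))$ is a closed discrete subset of the Hausdorff space $Y$ because $Z$ is $T_1$ and $q$ is continuous and discrete; hence its subspace $p^{-1}(x)$ is discrete, so $p$ is a discrete map.

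For $f$, fix $z\in Z$ and $x\in f^{-1}(z)$. Since $p$ is surjective, pick $y\in p^{-1}(x)$; then $q(y)=f(p(y))=f(x)=z$, so $y\in q^{-1}(z)$. As $q^{-1}(z)$ is discrete, choose an open set $V\ni y$ in $Y$ with $V\cap q^{-1}(z)=\{y\}$; since $p$ is open, $p(V)$ is an open neighborhood of $x$ in $X$. If $x'\in p(V)\cap f^{-1}(z)$ and $x'=p(y')$ with $y'\in V$, then $q(y')=f(x')=z$ forces $y'\in V\cap q^{-1}(z)=\{y\}$, so $x'=x$. Thus $x$ is isolated in $f^{-1}(z)$, and since $z$ and $x$ were arbitrary, $f$ is a discrete map.
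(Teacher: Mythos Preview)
Your proposal is correct and follows essentially the same route as the paper: upgrade the orbit maps $p,q$ to completed \emph{normal} coverings and invoke Theorem~\ref{hilu} (equivalently Theorem~\ref{vauva}), then push discreteness from $q$ to $f$ via the open surjection $p$. The only cosmetic difference is that the paper applies Theorem~\ref{vauva} to $p$ directly (as a completed normal covering onto the manifold $X$), whereas you instead deduce discreteness of $p$ from that of $q$ via the containment $p^{-1}(x)\subset q^{-1}(f(x))$; both work.
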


\begin{Thm}\label{yopo} Let $f \colon X \ra Z$ be a completed covering between manifolds. Suppose $(Y,p,q)$ is a monodromy representation of $f$ and $B_f \subset X,$ $B_p \subset Y$ and $B_q \subset Y$ the respective branch sets. Then $f(B_f) \subset Z,$ $p(B_p) \subset X$ and $q(B_q) \subset Z$ are closed sets. 
\end{Thm}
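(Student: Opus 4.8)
\emph{Proof plan.} By Theorem~\ref{ll} the maps $f$, $p$ and $q$ are discrete, and a completed covering is by hypothesis open and surjective, so $f$, $p$ and $q$ are discrete open surjections, in particular quotient maps. Two elementary facts are worth recording first: since a continuous map that is a local homeomorphism at a point is a local homeomorphism on a whole neighbourhood of that point, the sets $B_f$, $B_p$, $B_q$ are closed in their domains; and since $q=f\circ p$ is a local homeomorphism at $y$ precisely when $p$ is at $y$ and $f$ is at $p(y)$, one has $p^{-1}(B_f)\subset B_q$ (in fact $B_q=B_p\cup p^{-1}(B_f)$). The closedness of $q(B_q)$ and $p(B_p)$ now follows formally: the group $G$ (resp.\ $H$) acts on $Y$ by homeomorphisms, homeomorphisms preserve branch sets, so $B_q$ is $G$-invariant and $B_p$ is $H$-invariant, i.e.\ $q^{-1}(q(B_q))=B_q$ and $p^{-1}(p(B_p))=B_p$; thus $q(B_q)$ and $p(B_p)$ are images of saturated closed sets under the quotient maps $q$ and $p$, and are therefore closed.

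For $f(B_f)$ the same reductions apply but do not suffice by themselves. Since $f$ and $p$ are quotient maps, $f(B_f)$ is closed in $Z$ iff $f^{-1}(f(B_f))$ is closed in $X$ (equivalently, iff $X\setminus f^{-1}(f(B_f))$ is open), iff $q^{-1}(f(B_f))=p^{-1}(f^{-1}(f(B_f)))$ is closed in $Y$. Now $q^{-1}(f(B_f))=q^{-1}\big(q(p^{-1}(B_f))\big)=G\cdot p^{-1}(B_f)$, and the obstacle is precisely that, although $p^{-1}(B_f)$ is a closed subset of $B_q$, it is only $H$-invariant, so its $G$-saturation need not be closed on formal grounds. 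To prove that it is closed I would take a net $y_\alpha\to y_0$ in $Y$ with $y_\alpha\in G\cdot p^{-1}(B_f)$ and put $x_\alpha:=g_\alpha^{-1}y_\alpha\in p^{-1}(B_f)$ for suitable $g_\alpha\in G$; then $q(x_\alpha)=q(y_\alpha)\to q(y_0)$, and if some subnet of $(x_\alpha)$ converges, its limit lies in $\overline{p^{-1}(B_f)}=p^{-1}(B_f)$ and is carried by $q$ to $q(y_0)$, so that $q(y_0)\in q(p^{-1}(B_f))=f(B_f)$ and $y_0\in q^{-1}(f(B_f))$. Thus the whole matter reduces to a local properness statement: it is enough to know that $q$ restricted to $p^{-1}(B_f)$ is proper onto its image, i.e.\ that each $z_0\in\overline{f(B_f)}$ has a compact neighbourhood $\overline{V}$ with $q^{-1}(\overline{V})\cap p^{-1}(B_f)$ compact.

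This properness is, I expect, the real content of the statement, and it cannot follow from the completed‑covering hypothesis alone: for an arbitrary completed covering the conclusion may fail (compare the non‑discrete example of Montesinos \cite[Ex.\,10.6]{MA}), so the input that has to be used is that $(Y,p,q)$ is a monodromy representation, whence $q$ and $p$ are discrete by Theorem~\ref{ll}. I would derive the properness from the local structure of the completed covering $f$ at a point $z_0\in\overline{f(B_f)}$: choose a connected neighbourhood $V$ of $z_0$ that is normal for $f$ in the Fox sense, so that $V\cap Z_0$ is connected for a covering locus $Z_0$ of $f$ and $f^{-1}(V)$ is the disjoint union of the Fox completions $\widehat{W}_j$ of the connected coverings $W_j\to V\cap Z_0$, the local degree of $f$ along $f^{-1}(z)\cap\widehat{W}_j$ being governed by the restriction of $W_j$ over a deleted neighbourhood of $z$. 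Using discreteness one argues that over a small enough such $V$ only finitely many $\widehat{W}_j$ meet the closure of $B_f$ and that each of them contributes a proper piece; transporting this picture along the completed covering $p$ then yields that $q|_{p^{-1}(B_f)}$ is proper over $z_0$, which finishes the proof. The technical weight of the argument therefore lies entirely in this interplay between discreteness, Fox‑normal neighbourhoods, and the finiteness of the relevant sheets over a branch value.
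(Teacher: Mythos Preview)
Your treatment of $p(B_p)$ and $q(B_q)$ is correct and coincides with the paper's Proposition~\ref{lamppu}: the branch sets are saturated for the orbit maps, so their images are closed. For $f(B_f)$, however, the properness route does not go through. The condition you isolate as ``the real content'' --- that each $z_0\in\overline{f(B_f)}$ admits a compact neighbourhood $\overline V$ with $q^{-1}(\overline V)\cap p^{-1}(B_f)$ compact --- is simply false in general, even when $f(B_f)$ is closed. In the paper's own BLD example $\R^2\to S^2$ (Figure~\ref{qer}) the set $f(B_f)$ is finite, but for some branch value $z$ the fibre $f^{-1}\{z\}\cap B_f$ is infinite; then $q^{-1}\{z\}\cap p^{-1}(B_f)=p^{-1}\big(f^{-1}\{z\}\cap B_f\big)$ is an infinite discrete subset of $Y$, hence non-compact, and so is $q^{-1}(\overline V)\cap p^{-1}(B_f)$ for every $\overline V\ni z$. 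Your assertion that ``only finitely many $\widehat W_j$ meet the closure of $B_f$'' is exactly what fails here, and since $Y$ need not be locally compact (indeed is not, in that example) there is no repair along these lines.

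The paper avoids compactness altogether by working on the complement via a stabiliser identity. By uniform discreteness of $q$ (Lemma~\ref{spread_5}), pick for $z\in Z\setminus f(B_f)$ a neighbourhood $U$ such that every component $D$ of $q^{-1}(U)$ meets $q^{-1}\{z\}$ in a single point. The key lemma (Lemma~\ref{kylajuhlat}) is that this single-point condition forces the stabiliser $H_D=\{\tau\in\T(q):\tau(D)=D\}$ to coincide with the stabiliser $H_C$ of \emph{any} component $C\subset D$ of $q^{-1}(U_0)$, for every smaller neighbourhood $U_0\subset U$ of $z$. Since $z\notin f(B_f)$, one can choose $U_0$ so small that $f\r p(C)$ is injective, whence $H_C\subset\T(p)$; then $H_D=H_C\subset\T(p)$ gives $f\r p(D)$ injective for every component $D$, i.e.\ $U\subset Z\setminus f(B_f)$. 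The argument is purely group-theoretic: the stabiliser of $D$ is already determined by the single fibre over $z$, so the unbranched behaviour at $z$ propagates to all of $U$ without any finiteness hypothesis on $f^{-1}(U)$.
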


\begin{Thm}\label{mm} Let $f \colon X \ra Z$ be a completed covering between PL manifolds. Suppose $(Y,p,q)$ is a monodromy representation of $f.$ Then $f,$ $p$ and $q$ are stabily completed coverings. 
\end{Thm}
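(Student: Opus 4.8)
The plan is to dispose of the maps $p$ and $q$ by a direct appeal to Theorem~\ref{ilta}, and then to obtain the statement for $f$ from the one for $q$ by descending along the orbit map $p$.

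First, by Theorem~\ref{ll} all three maps $f$, $p$ and $q$ are discrete. The map $q \colon Y \ra Z$ is the orbit map of the action of the monodromy group $G$ of $f$ on $Y$ with $Y/G \approx Z$, and by the definition of a monodromy representation it is a completed covering; since it is the Fox completion of a normal covering over a dense open subset of the PL manifold $Z$, it is a completed normal covering onto $Z$, so Theorem~\ref{ilta} applies. As $q$ is discrete, that theorem shows $q$ is a stabily completed normal covering. The same reasoning applies to $p \colon Y \ra X$ — the orbit map for $H \subset G$, a completed normal covering onto the PL manifold $X$, discrete by Theorem~\ref{ll} — and shows that $p$ is a stabily completed normal covering as well.

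It remains to show that $f \colon X \ra Z$ is a stabily completed covering. By hypothesis $f$ is a completed covering, and by Theorem~\ref{ll} it is discrete, so only the stability of the completion has to be checked. Here I would use the factorization $q = f \circ p$ together with the two facts just established. Put $Z' := Z \setminus f(B_f)$ and $X' := X \setminus f^{-1}(f(B_f)) = f^{-1}(Z')$; by Theorem~\ref{yopo} the set $f(B_f)$ is closed, so $Z'$ and $X'$ are open, $f\r X' \colon X' \ra Z'$ is a covering of which $f$ is the Fox completion, and $Y' := q^{-1}(Z') = p^{-1}(X') \subset Y$ is the dense subset carrying the normal coverings completing to $q$ and $p$. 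The $G$-action restricts to $Y'$ with $Y'/G \approx Z'$ and $Y'/H \approx X'$. Since $q$ is stabily completed, its stable completion is again $Y$ with the same $G$-action; taking the $H$-quotient, the stable completion of $f\r X'$ is then $Y/H \approx X$ with quotient map $f$, so $f$ coincides with its stable completion and is a stabily completed covering.

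The main obstacle is the last step: transporting stability from the normal covering $q$ to the induced quotient map $f$. One has to unwind the definition of a stabily completed covering from Section~\ref{dcc} and verify that the local models defining it are preserved under forming the quotient by the relevant stabilizer subgroups of the $G$-action — equivalently, that stabilization of a completed covering commutes with the orbit-map construction underlying Theorems~\ref{hilu} and~\ref{ilta}. The ingredients for this are that $p$ is a genuine surjective, open, discrete orbit map, so that the relevant properties of $q = f \circ p$ descend to $f$, and that $f(B_f)$, $p(B_p)$ and $q(B_q)$ are closed (Theorem~\ref{yopo}), which is what makes the dense open sets over which the maps are honest coverings, and hence the completion data, well defined.
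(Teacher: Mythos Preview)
Your treatment of $p$ and $q$ is correct and essentially matches the paper: both are discrete completed normal coverings onto PL manifolds (by Theorem~\ref{ll} and the definition of a monodromy representation), so Theorem~\ref{ilta} gives that they are stabily completed. The paper phrases this via Corollary~\ref{viimeinen}, but that is the same content.

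The gap is in your argument for $f$. You treat ``stable completion'' as though it were an operation producing a new space, and then try to pass to a quotient by $H$. But in this paper ``stabily completed'' is a \emph{property}, not a construction: a completed covering $f \colon X \to Z$ is stabily completed exactly when the base $Z$ is $(Z',\mathrm{Ker}(\sigma_g))$-stable for the underlying covering $g = f\r X'$ (see the definitions in Section~\ref{dcc}). So what has to be checked for $f$ is a condition on $Z$ and on the normal subgroup $\mathrm{Ker}(\sigma_{f\r X'}) \subset \pi(Z',z_0)$, not anything about quotients of $Y$.

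Once you see this, the step is almost trivial and purely algebraic, which is what the paper does: since $q$ is stabily completed, $Z$ is $(Z',\mathrm{Ker}(\sigma_{q\r Y'}))$-stable. In a monodromy representation the deck group of $q$ is the monodromy group of $f$, so $q'_*(\pi(Y',y_0)) = \mathrm{Ker}(\sigma_{q'}) \subset \mathrm{Ker}(\sigma_{f\r X'})$ (indeed they coincide; the containment already follows from Proposition~\ref{spread_3}). Then Remark~\ref{stabhuomio} --- stability is monotone in the normal subgroup --- gives that $Z$ is $(Z',\mathrm{Ker}(\sigma_{f\r X'}))$-stable, i.e.\ $f$ is stabily completed. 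Your quotient-space sketch can be replaced entirely by this one-line monotonicity argument.
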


\begin{Thm}\label{kk} Let $f \colon X \ra Z$ be a completed covering between PL manifolds. Suppose $(Y,p,q)$ is a locally compact monodromy representation of $f.$ Then for every polyhedral path-metric $d_s$ on $Z$ there exists a path-metric $d_s^*$ on $Y,$ so that
\begin{itemize}
\item[(a)]the topology induced by $d_s^*$ coincides with the topology of $Y$,
\item[(b)]$(Y,d_s^*)$ is a locally proper metric space,
\item[(c)]$q \colon (Y,d_s^*) \ra (Z,d_s)$ is a $1$-Lipschitz map, and 
\item[(d)]the action of the monodromy group of $f$ in the monodromy representation $(Y,p,q)$ is by isometries on $(Y,d_s^*).$ 
\end{itemize}
\end{Thm}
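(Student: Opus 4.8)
The plan is to let $d_s^*$ be the path-metric obtained by pulling $d_s$ back through $q$: for a path $\gamma\colon[0,1]\to Y$ write $\ell_s(q\circ\gamma)$ for the $d_s$-length of $q\circ\gamma$, and set
\[
d_s^*(y_1,y_2)=\inf\bigl\{\,\ell_s(q\circ\gamma):\gamma \text{ a path in } Y \text{ from } y_1 \text{ to } y_2\,\bigr\}.
\]
Here $Y$ is connected, and by the local structure discussed below it is locally path-connected, hence path-connected, so $d_s^*$ is a finite-valued pseudometric. It is a path-metric: subdividing a path $\gamma$ and using $d_s^*\le\ell_s(q\circ(\cdot))$ on each piece shows that the $d_s^*$-length of $\gamma$ is at most $\ell_s(q\circ\gamma)$, while it is always at least $d_s^*(\gamma(0),\gamma(1))$, whence $d_s^*(y_1,y_2)$ equals the infimum of $d_s^*$-lengths of paths from $y_1$ to $y_2$. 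Conclusion (c) is then immediate, since $d_s$ is a length metric and therefore $d_s(q(y_1),q(y_2))\le\ell_s(q\circ\gamma)$ for every competitor $\gamma$. Conclusion (d) is equally formal: the monodromy group $G$ of $f$ acts on $Y$ with $q$ the orbit map $Y\to Y/G\approx Z$, so $q\circ g=q$ for all $g\in G$; as $g$ carries a path from $y_1$ to $y_2$ to a path from $gy_1$ to $gy_2$ with the same $q$-image, and $g^{-1}$ does the reverse, one gets $d_s^*(gy_1,gy_2)=d_s^*(y_1,y_2)$.

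The core of the argument is (a), together with the fact that $d_s^*$ separates points, and the input I would use is the local structure theory for $q$. By Theorem \ref{ll} the map $q$ is discrete; since $Y$ is locally compact, Theorem \ref{kuuskytkolme} gives that $q$ has locally finite multiplicity; by Theorem \ref{mm} it is a stably completed covering, and by Theorem \ref{yopo} the set $q(B_q)$ is closed. Using these facts together with the analysis of Section \ref{dcc} (and the complete-spread theory of Fox \cite{F} and Montesinos \cite{MA}), I would fix, for each $y\in Y$, a neighborhood basis of \emph{normal neighborhoods} $U$ of $y$: connected open sets with $\overline U$ compact, $q^{-1}(q(y))\cap\overline U=\{y\}$, such that $q(U)$ is a small $d_s$-ball about $q(y)$ which is bi-Lipschitz, via a PL chart, to a Euclidean ball (hence conical over $q(y)$, with radial arcs of $d_s$-length comparable to their $d_s$-distance to $q(y)$), with $q^{-1}(q(U))$ containing $U$ as the connected component of $y$, and $q|_U\colon U\to q(U)$ a finite PL branched covering which is conical over $q(y)$. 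From the conical structure, every $w\in U$ is joined to $y$ by a path $\gamma_w$ in $\overline U$ whose image $q\circ\gamma_w$ is a radial PL arc in $q(U)$ from $q(y)$ to $q(w)$, and by compactness and the bi-Lipschitz comparison there is a constant $C=C(U)$ with $\ell_s(q\circ\gamma_w)\le C\,d_s(q(y),q(w))$ for all $w\in U$.

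With such neighborhoods in hand, (a) follows in two steps. If $y_n\to y$ in $Y$, then eventually $y_n\in U$ and $d_s^*(y,y_n)\le C\,d_s(q(y),q(y_n))\to0$ by continuity of $q$, so the identity map from $Y$ with its topology to $(Y,d_s^*)$ is continuous. Conversely, given $y$ and an open $W\ni y$, choose a normal neighborhood $U\subseteq W$ of $y$; then $K:=q(\partial U)$ is compact and $q(y)\notin K$ (because $q^{-1}(q(y))\cap\overline U=\{y\}$), so $\varepsilon:=d_s(q(y),K)>0$. Any path $\gamma$ in $Y$ from $y$ to a point of $Y\setminus U$ meets $\partial U$ (by connectedness, since $\gamma$ cannot have image contained in the disjoint union $U\sqcup(Y\setminus\overline U)$), so $q\circ\gamma$ runs from $q(y)$ into $K$ and $\ell_s(q\circ\gamma)\ge\varepsilon$; hence $d_s^*(y,y')\ge\varepsilon$ for every $y'\in Y\setminus U$, i.e.\ $B_{d_s^*}(y,\varepsilon)\subseteq U\subseteq W$. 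This proves (a); applying the last estimate to a normal neighborhood of $y_1$ missing a given $y_2\ne y_1$ shows $d_s^*(y_1,y_2)>0$, so $d_s^*$ is a metric. Finally, (b) follows from (a) and local compactness: each $y$ has a compact neighborhood $K$, which is $d_s^*$-closed by (a), and for $\varepsilon>0$ with $B_{d_s^*}(y,\varepsilon)\subseteq\operatorname{int}K$ the closed ball $\overline B_{d_s^*}(y,\varepsilon/2)$ is a $d_s^*$-closed subset of $K$, hence compact, so $(Y,d_s^*)$ is locally proper.

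The step I expect to be the genuine obstacle is the construction of the normal neighborhoods $U$ together with the uniform bound $\ell_s(q\circ\gamma_w)\le C\,d_s(q(y),q(w))$ over $w\in U$: this is precisely where one must combine the PL hypothesis, stable completedness (Theorem \ref{mm}), local finiteness of multiplicity (Theorem \ref{kuuskytkolme}), closedness of $q(B_q)$ (Theorem \ref{yopo}), and regular-neighborhood theory in $Z$ into a single conical local model for $q$ near an arbitrary point of $Y$, including the branch points, where $q$ is not a local homeomorphism. Granting this model, the metric property, (a), (b), (c) and (d) are all formal.
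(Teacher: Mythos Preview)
Your definition of $d_s^*$ as the $q$-pullback path metric is exactly the paper's, and your treatment of (c), (d) and of (b) (deducing local properness formally from (a) and local compactness of $Y$) is correct; indeed your derivation of (b) is cleaner than the paper's direct argument in Proposition~\ref{lankalauantai}. The direction ``$Y$-open $\Rightarrow$ $d_s^*$-open'' in (a), via $\varepsilon=d_s(q(y),q(\partial U))>0$, is also essentially the paper's first half of Proposition~\ref{kiirastorstai}.

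The point of divergence is the other direction of (a), where you need short paths from $y$ to a \emph{given} nearby $w\in Y$. You assume a local conical PL model for $q|_U$, so that the radial arc in $q(U)$ lifts to a path ending at the prescribed $w$. You correctly flag this as the obstacle, and it is: nothing proved so far gives $Y$ a PL (or even cone) structure, and obtaining one requires Fox's PL completion machinery, which the paper neither develops nor invokes here. The paper sidesteps this entirely. It first proves a path-lifting lemma (Lemma~\ref{indeksit}, via induction on the homotopical index $\mathcal H(y,p)$ using Lemmas~\ref{kiire} and~\ref{kiireet}): every path in $\mathrm{St}_{1/m}(z)$ starting at $z=q(y)$ lifts to a path starting at $y$. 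The lift of a short path to $q(w)$ then ends at \emph{some} preimage $w'\in U$ with $d_s^*(y,w')$ small; the paper closes the gap between $w'$ and $w$ not by a conical model but by the orbit-map structure: there is $\tau\in\mathcal T(q)$ with $\tau(w')=w$, and since $U\cap q^{-1}\{q(y)\}=\{y\}$ one gets $\tau(y)=y$, whence $d_s^*(y,w)=d_s^*(y,w')$ by (d). So the deck-transformation isometries, which you treated as a formal afterthought, are in the paper the device that replaces your conical model.

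In summary: your outline is sound and, modulo the conical-model lemma, would give a correct proof; but that lemma is at least as hard as the theorem, and the paper's route via path-lifting plus the deck-transformation endpoint correction avoids it completely and uses only what has already been established about completed normal coverings.
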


We note that the path metric $d_s^*$ in Theorem \ref{kk} is a pullback of the path metric $d_s$ in $Z.$ Thus the map $q \colon (Y,d_s^*) \ra (Z,d_s)$ is a $1$-BLD map. Further, if $f \colon (X,e_s) \ra (Z,d_s)$ is a $L$-BLD map for a path metric $e_s$ on $X,$ then $p \colon (Y,d^*_s) \ra (X,e_s)$ is a $L$-BLD map. A mapping $f \colon X \ra Z$ between length manifolds is $L$-BLD for $L \geq 1$ if $f$ is discrete and open and satisfies 
$$\frac{1}{L} \ell(\gamma) \leq \ell(f \circ \gamma) \leq L \ell(\gamma)$$ for every path $\gamma$ in $X,$ where $\ell(\cdot)$ is the length of a path. We refer to \cite{HR} for a detailed discussion of BLD-maps. 

For the existence of a monodromy representations, we have the following characterization. Together with Theorem \ref{ll}, this answers to Question \ref{kysymys}  in the context of completed coverings between PL manifolds. 

\begin{Thm}\label{relas} A completed covering $f \colon X \ra Z$ between PL manifolds has a mo\-nodromy representation $(Y,p,q)$ if and only if $f$ is \textit{stabily completed}. 
\end{Thm}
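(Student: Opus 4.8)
The plan is to prove the two implications separately. The \emph{only if} direction is immediate from the regularity results already obtained: if $f$ admits a monodromy representation $(Y,p,q)$, then since $X$ and $Z$ are PL manifolds, Theorem~\ref{mm} says that $f$ (together with $p$ and $q$) is a stabily completed covering.

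For the \emph{if} direction, suppose $f \colon X \ra Z$ is stabily completed. Put $X' := X \setminus f^{-1}(f(B_f))$ and $Z' := Z \setminus f(B_f)$; since $f$ is a completed covering between manifolds, $Z' \subset Z$ is open, $X'$ and $Z'$ are dense and connected, and $g := f\r X' \colon X' \ra Z'$ is a covering. Let $G$ be the monodromy group of $f$ and form the classical diagram \eqref{hei}: a normal covering $q' \colon Y' \ra Z'$ with deck-transformation group $G$, a subgroup $H \subset G$, and the intermediate normal covering $p' \colon Y' \ra X'$ with $Y'/H \approx X'$ and $q' = f \circ p'$. I would then take $Y$ to be the Fox completion of the spread $q' \colon Y' \ra Z'$ over $Z$, so that $q \colon Y \ra Z$ is a completed normal covering; the deck transformations of $q'$, being automorphisms of the spread over $Z'$, extend uniquely to $Y$ by functoriality of the completion, giving an action of $G$ on $Y$ with orbit map $q$. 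By construction $Y$ is Hausdorff (the completion of a spread over the manifold $Z$), $\iota(Y') \subset Y$ is dense and $Y \setminus \iota(Y')$ does not locally separate $Y$; one also arranges $Y$ to be locally connected.

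The heart of the argument is to show that $q$ is discrete. For this I would verify that $q$ is stabily completed and then invoke Theorem~\ref{ilta}. This is exactly where the hypothesis on $f$ is used: the stability of the completion of $f$ near a point of $f(B_f)$ must be transported to the completion $q$ of the monodromy covering, via the local structure of $q'$ over $Z'$ and the PL structure of $Z$. I expect this step to be the main obstacle, and the place where the precise notion from Section~\ref{dcc} does the work. Once $q$ is known to be discrete, Theorem~\ref{hilu} shows that $q$ is an orbit map, so $Y/G \approx Z$.

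It then remains to produce $p$ and to check that it is a completed covering with $q = f \circ p$. Restricting the $G$-action to $H$, the quotient spread $Y/H \ra Z$ is the Fox completion of $Y'/H = X' \ra Z'$ over $Z$ (completion commutes with these group quotients), while $X$ is the Fox completion of $X' \ra Z'$ over $Z$ because $f$ is a completed covering; by uniqueness of completions, $Y/H \approx X$ compatibly with the maps to $Z$. Let $p \colon Y \ra Y/H = X$ be the orbit map; then $q = f \circ p$, and $p$ is discrete because $p^{-1}(x) \subset q^{-1}(f(x))$ is discrete. Finally, using that a composition of complete spreads is a complete spread, together with the fact that $X \setminus X' = f^{-1}(f(B_f))$ has topological codimension at least $2$ (\v{C}ernavski--V\"ais\"al\"a), one identifies $Y$ with the Fox completion of $p' \colon Y' \ra X'$ over $X$, so that $p$ is a completed covering as well. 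Hence $(Y,p,q)$ is a monodromy representation of $f$, which together with the \emph{only if} part completes the proof.
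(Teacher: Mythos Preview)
Your \emph{only if} direction matches the paper exactly (both invoke Theorem~\ref{mm}). For the \emph{if} direction your outline is workable but takes a detour compared with the paper, and you misidentify where the difficulty lies.

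The transport of stability from $f$ to $q$, which you call ``the main obstacle,'' is in fact immediate. By construction the normal covering $q'\colon Y'\to Z'$ satisfies $q'_*(\pi(Y',y_0))=\mathrm{Ker}(\sigma_g)$; since $q'$ is normal, $\mathrm{Ker}(\sigma_{q'})=q'_*(\pi(Y',y_0))=\mathrm{Ker}(\sigma_g)$. Thus $(Z',\mathrm{Ker}(\sigma_{q'}))$-stability of $Z$ is literally the same condition as $(Z',\mathrm{Ker}(\sigma_g))$-stability, which is the hypothesis that $f$ is stabily completed. No ``transport'' is needed; Theorem~\ref{TFAE} (equivalently Theorem~\ref{ilta}) gives discreteness of $q$ at once.

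The genuine difference is in how you build $p$. You complete $q'$ over $Z$ to get $Y$, then try to recover $p$ by forming $Y/H$ and identifying it with $X$ via uniqueness of completions. This can be made to work, but it costs you several extra verifications (that $Y/H$ is Hausdorff and locally connected, that $Y/H\to Z$ is a complete spread, that ``completion commutes with the $H$-quotient''), and your remark that ``a composition of complete spreads is a complete spread'' is not what the paper proves---Lemma~\ref{elama} is about completed \emph{coverings}. The paper avoids all of this by reversing the order: take $p\colon Y\to X$ to be the Fox completion of $p'\colon Y'\to X$, so $p$ is a completed normal covering over the manifold $X$ from the start; then set $q:=f\circ p$ and use Lemma~\ref{elama} to conclude that $q$ is a completed covering. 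Discreteness of $q$ (via the stability observation above and Theorem~\ref{TFAE}) gives that $q$ is an orbit map by Theorem~\ref{vauva}; discreteness of $p$ follows since $q=f\circ p$, hence $p$ is an orbit map too. This is the content of Theorem~\ref{hh}, which together with Theorem~\ref{mm} is the paper's entire proof of Theorem~\ref{relas}.
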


As a corollary of Theorem \ref{relas} open and surjective simplicial mappings between PL manifolds have monodromy representations. Another corollary is the following.

\begin{Cor} Let $f \colon X \ra Z$ be a $L$-BLD mapping between PL $2$-ma\-ni\-folds. Then $f$ has a monodromy representation if and only if $f(B_f) \subset Z$ is a discrete set.
\end{Cor}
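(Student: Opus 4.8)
The plan is to deduce this corollary by combining Theorem~\ref{relas} with the special structure of $L$-BLD maps between surfaces. By Theorem~\ref{relas}, $f$ has a monodromy representation if and only if $f$ is stabily completed, so the entire task reduces to showing that for an $L$-BLD map $f\colon X \ra Z$ between PL $2$-manifolds, stabily completedness is equivalent to $f(B_f)$ being a discrete subset of $Z$.

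First I would recall that an $L$-BLD map between length manifolds is automatically open and discrete, so the {\v{C}}ernavski--V\"ais\"al\"a theory applies: $B_f$ has topological codimension at least $2$, hence in dimension $2$ the branch set $B_f$ is a discrete subset of $X$. The key local model is that near a branch point $x \in B_f$ in dimension $2$, the map $f$ is, up to homeomorphism, the winding map $z \mapsto z^k$ for some $k = i(x,f) \geq 2$; this follows from the classical Stoilow-type factorization for discrete open maps between surfaces. I would then argue that $f$ is a completed covering: since $X,Z$ are $2$-manifolds and $B_f$ is discrete in $X$, the restriction $g = f\r X' \colon X' \ra Z'$ is a covering onto its image provided $Z' = Z \setminus f(B_f)$ is open, which holds exactly when $f(B_f)$ is closed (equivalently, since $B_f$ is discrete and $f$ is "finite-to-one over points" in the BLD setting, when $f(B_f)$ is discrete in $Z$). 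So the hypothesis that $f(B_f)$ is discrete is precisely what is needed to even place $f$ in the framework of Question~\ref{kysymys} and to know $f$ is a completed covering in the first place.

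Next comes the main point: upgrading "completed covering" to "stabily completed". Here I would invoke the definition of stabily completedness from Section~\ref{dcc} and the characterization implicit in Theorems~\ref{ilta} and~\ref{mm}. The crucial observation is that for a $2$-manifold target with the branch image $f(B_f)$ discrete, the local picture over each branch value is a finite collection of winding maps, so the Fox completion is locally the cone completion of finitely many punctured disks; in particular the completed normal covering built over $Z$ has locally finite multiplicity over points of $f(B_f)$, and by Montesinos \cite[Thm.~9.14]{MA} (cited before Theorem~\ref{kuuskytkolme}) such a completed normal covering is discrete. Discreteness of the associated completed normal covering is exactly the content of stabily completedness via Theorem~\ref{ilta}. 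Conversely, if $f(B_f)$ is \emph{not} discrete, it has an accumulation point $z_0 \in Z$; I would show that near $z_0$ the monodromy around arbitrarily small loops is nontrivial in a way that forces infinite multiplicity to pile up at a single fiber point of any completion, producing the Montesinos-type non-discrete example \cite[Ex.~10.6]{MA} locally, so $f$ cannot be stabily completed. Thus stabily completedness $\iff$ $f(B_f)$ discrete, and Theorem~\ref{relas} finishes the proof.

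The main obstacle I anticipate is the converse direction: turning "$f(B_f)$ has an accumulation point" into a genuine failure of stabily completedness rather than merely a failure of local compactness. One must be careful that discreteness of a completed normal covering is strictly weaker than locally finite multiplicity (as Theorem~\ref{kuuskytkolme} emphasizes), so it is not automatic that an accumulating branch image destroys discreteness. The resolution should be dimension-specific: in a $2$-manifold, the link of a point is a circle, and the monodromy data around an accumulation point of winding-type branch values cannot be "completed away" while keeping the total space locally connected and Hausdorff with the non-local-separation condition---this is essentially why Montesinos's Example~10.6 lives in dimension allowing such circle-monodromy behavior. Making this rigorous will require a careful local analysis of the Fox completion over a punctured disk minus a sequence of points accumulating at the center, showing the fiber over the center fails to be discrete, and I would lean on the stabily-completedness machinery of Section~\ref{dcc} together with Theorem~\ref{ilta} to package this cleanly.
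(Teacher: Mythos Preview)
Your forward direction is essentially the paper's: $f(B_f)$ discrete $\Rightarrow$ $f$ is a completed covering (via Luisto \cite{L}) $\Rightarrow$ $f$ is stabily completed (in dimension $2$ the complement of a discrete closed set in a small disk is a punctured disk, so stability is immediate) $\Rightarrow$ a monodromy representation exists by Theorem~\ref{relas}.

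For the reverse direction, however, you take a substantially harder route than the paper, and the obstacle you yourself flag is genuine. The paper does \emph{not} try to show that a non-discrete branch image forces failure of stabily completedness. Instead it argues: if $f(B_f)$ is not discrete, then by Stoilow's theorem $f(B_f)$ is not closed; but Theorem~\ref{yopo} says that whenever $f$ has a monodromy representation, $f(B_f)$ must be closed. This completely sidesteps the delicate question of whether the associated completed normal covering is discrete near an accumulating branch value. Your proposed local analysis of Fox completions over a punctured disk minus an accumulating sequence might in principle be carried out, but it is unnecessary here: the closedness criterion of Theorem~\ref{yopo} is the intended tool, and it turns the converse into a two-line argument rather than a construction in the spirit of Montesinos's Example~10.6.

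A related issue with your framing: you implicitly assume $f$ is already a completed covering and then ask whether it is stabily completed. But if $f(B_f)$ is not closed, then $Z \setminus f(B_f)$ is not open and $f$ need not be a completed covering at all, so Theorem~\ref{relas} is not even applicable as stated. The paper's route through Theorem~\ref{yopo} avoids this, since that theorem takes the existence of a monodromy representation as a hypothesis and returns closedness of $f(B_f)$ as a conclusion.
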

Indeed, if $f(B_f)\subset Z$ is a discrete set, then the BLD-mapping $f$ is a completed covering by Luisto \cite{L}, and hence $f$ is a stabily completed covering. Then $f$ has a monodromy representation by Theorem \ref{relas}. 

If $f(B_f) \subset Z$ is not a discrete set, then $f(B_f) \subset Z$ it not a closed set by Stoilow's Theorem. Thus, by Theorem \ref{yopo}, $f$ has no monodromy representation. 

We characterize the existence of locally compact monodromy representations as follows. Local monodromy groups are defined in Definition \ref{br}.

\begin{Thm}\label{hopo} A completed covering  $f \colon X \ra Z$ between PL manifolds has a locally compact monodromy representation $(Y,p,q)$ if and only if $f$ is stabily completed and $f$ has a finite local monodromy group at each point of $Z.$  
\end{Thm}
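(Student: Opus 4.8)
The plan is to combine the existence result (Theorem~\ref{relas}), the discreteness and regularity results (Theorems~\ref{ll} and~\ref{mm}), and the local compactness criterion (Theorem~\ref{kuuskytkolme}) with a description of the orbit map $q$ near the fibre $q^{-1}(z)$ in terms of the local monodromy group $G_z$ of $f$ at $z$ (Definition~\ref{br}). The key ingredient will be the following \emph{local dictionary}: for $z\in Z$, a normal (cone) neighbourhood $U$ of $z$ in the PL manifold $Z$, $V:=U\setminus f(B_f)$, and $y\in q^{-1}(z)$, the point $y$ has a normal neighbourhood $N$ with $q(N)=U$ which meets $q'^{-1}(V)$ in a single component $W$; since $\pi_1(Y')$ is normal in $\pi_1(Z')$, the restriction $q|_W\colon W\to V$ is a normal covering whose deck group is a conjugate of the image of $\pi_1(V)$ in the monodromy group $G$ of $f$, i.e.\ of $G_z$; and $q|_N\colon N\to U$ is the Fox completion of $q|_W$. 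I expect this to give that the local index $i(y,q)$ is finite precisely when $G_z$ is finite (and then $i(y,q)=|G_z|$).

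Granting the local dictionary, the ``only if'' direction goes as follows. Assume $(Y,p,q)$ is a locally compact monodromy representation of $f$. Then $f$ is a stabily completed covering by Theorem~\ref{mm}; moreover $q$ is a completed normal covering by the definition of a monodromy representation, it is discrete by Theorem~\ref{ll}, and since $Y$ is locally compact it has locally finite multiplicity by Theorem~\ref{kuuskytkolme}. Now fix $z\in Z$ and pick $y\in q^{-1}(z)$ (possible since $q$ is surjective onto the manifold $Z$); then $i(y,q)<\infty$, so $G_z$ is finite by the local dictionary. Hence $f$ is stabily completed and has a finite local monodromy group at every point of $Z$.

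For the ``if'' direction, assume $f$ is stabily completed and all its local monodromy groups are finite. By Theorem~\ref{relas} there is a monodromy representation $(Y,p,q)$ of $f$; again $q$ is a completed normal covering by definition and is discrete by Theorem~\ref{ll}. By Theorem~\ref{kuuskytkolme} it suffices to prove that $q$ has locally finite multiplicity, i.e.\ that $i(y,q)<\infty$ for every $y\in Y$; but the local dictionary gives $i(y,q)=|G_{q(y)}|<\infty$. Hence $Y$ is locally compact and $(Y,p,q)$ is a locally compact monodromy representation of $f$.

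The main obstacle will be establishing the local dictionary. One has to show that a point $y\in q^{-1}(z)$ admits a normal neighbourhood $N$ with $N\setminus q^{-1}(f(B_f))$ connected and surjecting onto $V$; identify its deck group with the local monodromy group $G_z$ of Definition~\ref{br}, using the normality of $\pi_1(Y')$ in $\pi_1(Z')$; and control the number of points over $z$ and their local indices for the Fox completion of a normal covering of the punctured cone neighbourhood $V$ --- in particular one uses the surjectivity of $q$ to rule out the possibility that an infinite local monodromy group produces no point, or a point of finite local index, over $z$. Once the local dictionary is in place, the two implications follow at once from the quoted theorems.
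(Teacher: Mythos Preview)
Your proposal is essentially the paper's own proof, repackaged. What you call the ``local dictionary'' is exactly the content of Lemma~\ref{whynot} and Theorem~\ref{torstai} (local multiplicity of $q$ equals the order of the local monodromy group of $q$), combined with Theorem~\ref{viimeista} (passage between the local monodromy of $q$ and that of $f$). The paper then invokes Corollary~\ref{matka} and Theorem~\ref{kumpula} in the ``if'' direction and Theorems~\ref{vauva}, \ref{TFAE}, \ref{torstai} in the ``only if'' direction, just as you invoke Theorems~\ref{relas}, \ref{ll}, \ref{mm}, \ref{kuuskytkolme} for the same steps.

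One point to tighten. Your dictionary asserts that the deck group of $q|_W\colon W\to V$ is the image of $\pi_1(V)$ in the monodromy group of $f$, hence equals $G_z$ and $i(y,q)=|G_z|$. That identification uses $q'_*(\pi_1(Y'))=\mathrm{Ker}(\sigma_g)$, i.e.\ that $q$ is \emph{natural of $f$} in the paper's terminology. In the ``if'' direction this is fine: the monodromy representation produced by Theorem~\ref{relas} (via Theorem~\ref{hh}) is natural by construction. In the ``only if'' direction, however, you start from an arbitrary locally compact monodromy representation, and a~priori you only know $q'_*(\pi_1(Y'))\subset\mathrm{Ker}(\sigma_g)$ (Proposition~\ref{spread_3}); so the local monodromy group of $f$ is a \emph{quotient} of that of $q$, not necessarily equal to it (this is what the paper handles via Remark~\ref{stabhuomio}). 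Your conclusion still goes through, since a quotient of a finite group is finite, but the literal equality $i(y,q)=|G_z|$ need not hold in that direction and you should phrase the dictionary as an inequality or a quotient relation there.
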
 

We show that not every monodromy representation is locally compact. However, our results combined with previous results by Fox, Edmonds and Montesinos have an easy corollary in the positive direction. For the following statement we say that a covering $g \colon X' \ra Z'$ is \textit{virtually normal} if $g_*(\pi(X',x_0))\subset \pi(Z',z_0)$ contains a finite index subgroup which is normal in $\pi(Z',z_0).$ 

\begin{Cor}\label{ilo} Let $f \colon X \ra Z$ be a completed virtually normal covering between PL manifolds. Then $f$ has a locally compact monodromy representation $(Y,p,q),$ where $p$ has finite multiplicity. 
\end{Cor}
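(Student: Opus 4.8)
The plan is to show that virtual normality of $f$ forces the monodromy group $G$ of $f$ to be \emph{finite}, and then to read the statement off from the existence and regularity results established above.

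\emph{Reduction to a finite monodromy group.} Write $X'=X\setminus f^{-1}(f(B_f))$, $Z'=Z\setminus f(B_f)$ and $g=f\r X'\colon X'\ra Z'$, so that by definition $G=\pi(Z',z_0)/K$, where $K$ is the normal core of $g_*(\pi(X',x_0))$ in $\pi(Z',z_0)$. By hypothesis $g_*(\pi(X',x_0))$ contains a subgroup $N$ that is normal in $\pi(Z',z_0)$ and of finite index. Since $K$ is the largest normal subgroup of $\pi(Z',z_0)$ contained in $g_*(\pi(X',x_0))$, we get $N\subset K$, hence $[\pi(Z',z_0):K]\le[\pi(Z',z_0):N]<\infty$ and $G$ is finite. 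In particular $f$ has finite multiplicity $[\pi(Z',z_0):g_*(\pi(X',x_0))]\le|G|$, and the local monodromy group of $f$ at each point of $Z$ is finite as well, being the monodromy group of a covering with at most $|G|$ sheets.

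\emph{Existence of the representation.} Since $G$ is finite, the monodromy covering $q'\colon Y'\ra Z'$ of $g$ (the normal covering associated with $K$) carries a $G$-action by deck transformations; by functoriality of Fox completion \cite{F} this action extends to the completion $q\colon Y\ra Z$ of $q'$ over $Z$, and $q$ has multiplicity $|G|<\infty$. Hence $q$ is discrete by Montesinos \cite[Thm.\;9.14]{MA} (equivalently a discrete orbit map by Edmonds \cite[Thm.\;4.1]{E}), and thus a stabily completed normal covering by Theorem \ref{ilta}; together with the orbit map $p\colon Y\ra X$ of the stabiliser $H=g_*(\pi(X',x_0))/K\subset G$ — which completes $p'\colon Y'\ra X'$ over $X$ — this yields, exactly as in the Berstein--Edmonds construction \cite[2.2\;Prop.]{BE}, a monodromy representation $(Y,p,q)$ of $f$ with $Y/G\approx Z$, $Y/H\approx X$ and $q=f\circ p$. (Alternatively, the above shows that $f$ is stabily completed, so Theorem \ref{relas} — or Theorem \ref{hopo}, using also the finiteness of all local monodromy groups from the previous step — provides the representation.)

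\emph{Local compactness and finite multiplicity of $p$.} In $(Y,p,q)$ the map $q\colon Y\ra Z$ is a discrete (Theorem \ref{ll}) completed normal covering onto the PL manifold $Z$ of multiplicity $|G|<\infty$, hence of locally finite multiplicity; by Theorem \ref{kuuskytkolme} this forces $Y$ to be locally compact, so $(Y,p,q)$ is a locally compact monodromy representation. Finally $p\colon Y\ra X$ is the orbit map of the action of $H$ on $Y$, so its multiplicity equals $|H|\le|G|<\infty$, i.e.\ $p$ has finite multiplicity. The only step that requires any thought is the first one — the passage from ``virtually normal'' to ``$G$ finite'' through the normal core; the identification of the multiplicities of the orbit maps $p$ and $q$ with $|H|$ and $|G|$, and the commutativity $q=f\circ p$, are routine and are already contained in the proofs of Theorems \ref{relas} and \ref{kuuskytkolme}.
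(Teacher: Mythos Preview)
Your proof is correct and follows essentially the same route as the paper's: both arguments hinge on the observation that virtual normality forces $\mathrm{Ker}(\sigma_g)$ to have finite index (you make this explicit via the normal core, whereas the paper leaves it implicit in the phrase ``by Fox we find a completed normal covering $p$ with finite multiplicity''), then complete the monodromy covering using Fox, invoke Edmonds/Montesinos for discreteness and the orbit-map property, and finish with Theorem \ref{kuuskytkolme} for local compactness. Your alternative route via Theorems \ref{relas}/\ref{hopo} is a legitimate shortcut once $|G|<\infty$ is known, but is not needed.
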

\begin{proof} By Fox \cite{F} we find a completed normal covering $p \colon Y \ra Z$ so that $p$ has finite multiplicity and $q:=f \circ p$ is a completed normal covering. Since $p$ has finite multiplicity, $p$ is an orbit map by Edmonds \cite[Thm.\;4.1]{E} and $q$ has locally finite multiplicity. Since $q$ has locally finite multiplicity, the map $q$ is a discrete map by Montesinos \cite[Thm. 9.14]{MA}. Thus $q$ is an orbit map by Theorem \ref{hilu} and $(Y,p,q)$ a monodromy representation of $f$. Since $q \colon Y \ra Z$ is a completed normal covering onto a PL manifold that is an orbit map and that has locally finite multiplicity, the space $Y$ is by Theorem \ref{kuuskytkolme} locally compact. Thus the monodromy representation $(Y,p,q)$ is locally compact.
\end{proof}

This article is organized as follows. In Sections \ref{nc} and \ref{mc} we discuss monodromy of covering maps. In Sections \ref{sp} and \ref{ccc} we introduce completed coverings. In Section \ref{os} we study the relation between  orbit maps and completed normal coverings and prove Theorem \ref{sade}. In Section \ref{ooo} we prove Theorem \ref{ll} by showing that competed normal coverings are orbit maps if and only if they are discrete maps. In Section \ref{ando} we prove Theorem \ref{yopo}. In Section \ref{dcc} we prove Theorems \ref{mm} and \ref{relas} by showing that a completed normal covering onto a PL manifold is discrete if and only if it is stabily completed. In Sections \ref{brr} and \ref{ccamm} we study completed normal coverings having locally finite multiplicity and prove Theorems \ref{kuuskytkolme}, \ref{kk} and \ref{hopo}. 

\tableofcontents

\noindent{\textbf{Acknowledgements.}} I want to thank my PhD advisor Pekka Pankka for our numerous discussions on the topic and for  reading the manuscript and giving excellent suggestions.

\section{Preliminaries}

\subsection{Normal coverings}\label{nc}
In this section we recall some facts on normal coverings and fix some notation related to coverings; we refer to Hatcher \cite[Ch.\;1]{H} for a detailed discussion.

Let $Y$ be a topological space. A \textit{path} is a continuous map $\al \colon [0,1] \ra Y.$ The \textit{inverse path} $\al^{-1} \colon [0,1] \ra Y$ of a path $\al \colon [0,1] \ra Y$ is the path $t \mapsto \al(1-t).$ The \textit{path composition} $\al\be \colon [0,1]\ra Y$ of paths $\al\colon [0,1] \ra Y$ and $\be\colon [0,1] \ra Y$ satisfying $\al(1)=\be(0)$ is the path $t \mapsto \al(2t)$ for $t \in [0,1/2]$ and $t \mapsto \be(2(t-1/2))$ for $t \in [1/2,1].$ For $y_0, y_1 \in Y$ a path $\al \colon [0,1] \ra Y$ satisfying $\al(0)=y_0$ and $\al(1)=y_1$ is denoted $\al \colon y_0 \cu y_1.$ Given $y_0 \in Y,$ we denote by $\pi(Y,y_0)$ the fundamental group of $Y$ (at $y_0$).

A map $p \colon X \ra Y$ between path-connected spaces $X$ and $Y$ is a covering map if for every $y \in Y$ there exists an open neighbourhood $U$ of $y$ such that the pre-image $p^{-1}(U) \subset X$ is a union of pairwise disjoint open sets homeomorphic to $U$. A space $X$ is called \textit{a cover} of $Y$ if there is a covering map from $X$ to $Y.$ 
Let $p \colon X \ra Y$ be a covering map, $y_0 \in Y$ and $x_0 \in p^{-1}\{y_0\}.$ A path $\ol{\al} \colon [0,1] \ra X$ is called a \textit{lift} of $\al \colon [0,1] \ra Y$ in $p$ if $\al= p \circ \ol{\al}.$ For every path $\al \colon [0,1] \ra Y$ satisfying $\al(0)=\al(1)=y_0$ there exists a unique lift $\ol{\al}$ satisfying $\ol{\al}(0)=x_0.$ We denote this lift by $\ol{\al}_{x_0}.$ Given a loop $\al \colon (S^1,e_0) \ra (Y,y_0)$ we denote by $\ol{\al}$ the lift of the path $[0,1] \ra Y, t \mapsto \al(\text{cos}(2 \pi t),\text{sin}(2 \pi t)).$

The \emph{deck-transformation group} $\T(p)$ of a map $p \colon X \ra Y$ is the group of all homeomorphisms $\tau \colon X \ra X$ satisfying $p(\tau(x))=p(x)$ for all $x \in X.$ Let $\tau_1 \colon X \ra X$ and $\tau_2 \colon X \ra X$ be deck-transformations of a covering $p \colon X \ra Y.$ Then $\tau_1=\tau_2$ if and only if there exists $x \in X$ so that $\tau_1(x)=\tau_2(x).$ 

A covering $p \colon X \ra Y$ is \textit{normal} if the subgroup $p_*(\pi(X,x_0)) \unlhd \pi(Y,y_0)$ is normal for $y_0 \in Y$ and $x_0 \in p^{-1}\{y_0\}.$ The following statements are equi\-valent: 
\begin{itemize}
\item[(a)]$p$ is a normal covering,
\item[(b)]for every $y \in Y$ and pair of points $x_1, x_2 \in p^{-1}\{y\}$ there exists a unique deck transformation $\tau \colon X \ra X$ satisfying $\tau(x_1)=x_2,$ 
\item[(c)]$X/\T(p) \approx Y,$ and
\item[(d)]$p_*(\pi(X,x_0))=p_*(\pi(X,x_1))$ for all $x_1 \in p^{-1}\{p(x_0)\}.$
\end{itemize}
Moreover, $\T(p) \cong \pi(Y,y_0)/p_*(\pi(X,x_0))$ if and only if $p$ is a normal covering.

For every manifold $Y,$ base point $y_0 \in Y$ and normal subgroup $N \subset \pi(Y,y_0),$ there exists a normal covering $p \colon X  \ra Y$ so that $p_*(\pi(X,x_0))=N$ for every $x_0 \in p^{-1}\{y_0\}.$ Moreover, the cover $X$ is a manifold. Thus $\T(p)$ is countable, since $p^{-1}\{y_0\}$ is countable. 

We also recall the following properties of covering maps. Let $p \colon (X,x_0) \ra (Y,y_0)$ be a normal covering and $\ol{\al}_{x_0} \colon [0,1] \ra X$ a lift of a loop $\al \colon (S^1,e_0) \ra (Y,y_0).$ Then the point $\ol{\al}_{x_0}(1) \in p^{-1}\{y_0\}$ depends only on the class of $\al$ in $\pi(Y,y_0)/p_*(\pi(X,x_0)).$ For any pair of points $y_1 \in Y$ and $x_1 \in p^{-1}\{y_1\}$ and a path $\be \colon y_1 \cu y_0,$ the lift $\ol{(\be\al\be^{\law})}_{x_1}$ is a loop if and only if the lift $\ol{\al}_{\ol{\be}_{x_1}(1)}$ is a loop. Further, $\ol{(\be\al\be^{\law})}_{x_1}(1)=\ol{(\be\gamma\be^{\law})}_{x_1}(1)$ for a loop $\gamma \colon (S^1,e_0) \ra (Y,y_0)$ if and only if $\ol{\al}_{\ol{\be}_{x_1}}(1)=\ol{\gamma}_{\ol{\be}_{x_1}}(1).$

\subsection{Monodromy}\label{mc}
In this section we recall some facts on the monodromy of maps. 

Let $f \colon X \ra Y$ be a covering, $y_0 \in Y$ and $K: = f^{-1}\{y_0\}.$ For every $[\al] \in \pi(Y,y_0)$ we define a map $m_{[\al]} \colon K \ra K$ by setting $m_{[\al]}(k)=\ol{\al}_k(1)$ for every $k \in K.$ For every $[\al] \in \pi(Y,y_0),$ the map $m_{[\al]}$ is a bijection, since 
$m_{[\al]} \circ m_{[\al]^{-1}}=m_{[\al]^{-1}} \circ m_{[\al]}=\mathrm{id}.$ The \textit{monodromy} of $f$ is the homomorphism 
$$\sigma_f \colon \pi(Y,y_0) \ra \text{Sym}(K),\, [\al] \to m_{[\al]}.$$ 
We call the quotient $\pi(Y,y_0)/\mathrm{Ker}(\sigma_f)$ the monodromy group of $f.$ 

Let $p \colon (Y,y_0) \ra (X,x_0)$ be a normal covering, $f \colon (X,x_0) \ra (Z,z_0)$ a covering and $q:=f \circ p \colon (Y,y_0) \ra (Z,z_0).$ Then $q$ is a covering and $q_{*}(\pi(Y,y_0))$ is a normal subgroup of $f_{*}(\pi(Z,z_0)).$ We note that, $q$ is a normal covering if and only if $q_{*}(\pi(Y,y_0))$ is a normal subgroup of $\pi(Z,z_0).$ The question weather the covering $q$ is a normal is related to the monodromy of the covering $f$ in the following way.

\begin{Prop}\label{spread_3}
Let $X$ be a manifold, $f \colon (X,x_0) \ra (Z,z_0)$ a covering and $N \subset \pi(Z,z_0)$ a normal subgroup. Then there exists a normal covering $p \colon (Y,y_0) \ra (X,x_0)$ satisfying
$(f \circ p)_{*}(\pi(Y,y_0))=N$ if and only if $N \subset \mathrm{Ker}(\sigma_f).$  
\end{Prop}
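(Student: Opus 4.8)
The plan is to analyze the two directions separately using the correspondence between subgroups and coverings together with the monodromy action. Throughout, fix the base points $z_0 \in Z$, $x_0 \in f^{-1}\{z_0\}$, write $H := f_*(\pi(X,x_0)) \subset \pi(Z,z_0)$, and let $K := f^{-1}\{z_0\}$ be the fiber on which $\sigma_f$ acts.

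For the ``only if'' direction, suppose $p \colon (Y,y_0) \ra (X,x_0)$ is a normal covering with $(f \circ p)_*(\pi(Y,y_0)) = N$. Setting $q := f \circ p$, we have $q_*(\pi(Y,y_0)) = p_*(\pi(Y,y_0))$ pushed forward by $f$, so $N \subset H$. I would show $N$ acts trivially on $K$ under $\sigma_f$, i.e. $N \subset \mathrm{Ker}(\sigma_f)$. Take $[\al] \in N$ and $k \in K$; I must check $m_{[\al]}(k) = \ol{\al}_k(1) = k$. Pick a path $\be \colon x_0 \cu k'$ in $X$ where $k' \in X$ lies over $z_0$ with $f(k')$ matching $k$ appropriately — more precisely, since $[\al] \in N = q_*(\pi(Y,y_0)) \subset H$, the loop $\al$ lifts under $f$ to a loop at $x_0$; call this lift $\ol{\al}^f_{x_0}$, which represents a class in $H$ that further lifts to a loop in $Y$ because $[\al] \in N$ and $p$ is normal. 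The normality of $p$ (property (d) in Section \ref{nc}: $p_*(\pi(Y,y_1))$ is independent of the lift $y_1$) then forces $\ol{\al}^f_{y_1}$ to be a loop for \emph{every} preimage $y_1 \in p^{-1}\{x_1\}$ of every $x_1 \in f^{-1}\{z_0\}$ lying on this lift. Translating this back through $f$ via the lift-of-conjugate statements recalled at the end of Section \ref{nc}, I get that the $f$-lift of $\al$ starting at any point of $K$ is a loop, which is exactly $m_{[\al]} = \mathrm{id}$, hence $N \subset \mathrm{Ker}(\sigma_f)$.

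For the ``if'' direction, suppose $N \subset \mathrm{Ker}(\sigma_f)$; in particular $N \subset \mathrm{Ker}(\sigma_f) \subset H$ since the kernel of the monodromy action on $K$ consists of classes whose lifts fix $x_0$, hence lie in $H$. Because $X$ is a manifold, $f_*^{-1}(N) \subset \pi(X,x_0)$ is a subgroup, and I would first argue it is \emph{normal} in $\pi(X,x_0)$: conjugating by $[\ga] \in \pi(X,x_0)$ corresponds on the fiber side to the monodromy relabeling governed by $f_*[\ga]$, and since $N$ is normal in $\pi(Z,z_0)$ and lies in the kernel, $f_*^{-1}(N)$ is invariant under this conjugation — here the conjugation/lifting compatibility from Section \ref{nc} does the bookkeeping. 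Then, by the existence statement for normal coverings of manifolds quoted in Section \ref{nc}, there is a normal covering $p \colon (Y,y_0) \ra (X,x_0)$ with $p_*(\pi(Y,y_0)) = f_*^{-1}(N)$, and $(f\circ p)_*(\pi(Y,y_0)) = f_*(f_*^{-1}(N)) = N$ because $N \subset H = \mathrm{im}(f_*)$.

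The main obstacle is the ``if'' direction's normality claim: showing $f_*^{-1}(N) \trianglelefteq \pi(X,x_0)$ — equivalently showing the covering one builds from $N$ is genuinely normal, not merely a covering with the right image — requires carefully combining the normality of $N$ in $\pi(Z,z_0)$ with the hypothesis $N \subset \mathrm{Ker}(\sigma_f)$, and this is where one must be most careful that a subgroup of $\pi(Z,z_0)$ that is normal and inside the monodromy kernel pulls back to a normal subgroup of $\pi(X,x_0)$. The reverse implication, by contrast, is essentially a lift-tracing argument once the normality of $p$ is exploited to make the looping condition base-point independent along the fiber.
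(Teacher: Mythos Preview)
Your proposal reaches the right conclusions but misidentifies where the work lies, and the ``only if'' sketch is muddled.

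In the ``if'' direction, what you flag as the main obstacle is in fact trivial: the preimage of a normal subgroup under any group homomorphism is normal, since $f_*^{-1}(N)$ is the kernel of the composite $\pi(X,x_0) \to \pi(Z,z_0) \to \pi(Z,z_0)/N$. No monodromy hypothesis and no ``fiber-relabeling'' argument is needed. The genuine role of $N \subset \mathrm{Ker}(\sigma_f)$ here is exactly the inclusion $N \subset H = f_*(\pi(X,x_0))$ you mention in passing (this is the paper's Lemma~\ref{nalkat}): without it one only gets $f_*(f_*^{-1}(N)) = N \cap H$, not $N$. The paper's proof simply notes that $f_*$ is a monomorphism, so $\tilde N := f_*^{-1}(N)$ is normal in $\pi(X,x_0)$ and $f_*(\tilde N)=N$, and then takes the normal covering realizing $\tilde N$.

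In the ``only if'' direction, your invocation of the normality of $p$ is a red herring: normality of $p$ controls the fibers $p^{-1}\{x\}$ over points of $X$, but what you need is to pass between different points $k \in K = f^{-1}\{z_0\}$, and nothing in your sketch explains that step. The paper instead uses the normality of $N$ in $\pi(Z,z_0)$ directly (Lemma~\ref{nalka}): given $k \in K$ and a path $\be \colon x_0 \cu k$, one has $[(f\be)\al(f\be)^{\leftarrow}] \in N \subset f_*(\pi(X,x_0))$, so its lift at $x_0$ is a loop, forcing $\ol{\al}_k$ to be a loop. An equally clean alternative (closer in spirit to your attempt) is to observe that $q = f\circ p$ is itself a normal covering, since $q_*(\pi(Y,y_0))=N$ is normal in $\pi(Z,z_0)$; then $\al$ lifts to a loop at every point of $q^{-1}\{z_0\}$, and pushing down by $p$ gives a loop at every point of $K$. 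Either way, the normality of $p$ is not the operative hypothesis.
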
 

In order to prove Proposition \ref{spread_3}, we prove first Lemma \ref{nalkat} and Lemma \ref{nalka}.

\begin{Apulause}\label{nalkat}Let $f \colon (X,x_0) \ra (Y,y_0)$ be a covering map. Then $\mathrm{Ker} (\sigma_f)$ is a normal subgroup of $\pi(Y,y_0)$ satisfying $\mathrm{Ker} (\sigma_f) \subset f_*(\pi(X,x_0)).$ 
\end{Apulause}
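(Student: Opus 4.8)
The plan is to separate the two assertions and dispatch each by unwinding the definitions from Section \ref{mc}. First, recall that $\sigma_f \colon \pi(Y,y_0) \ra \mathrm{Sym}(K)$ with $K = f^{-1}\{y_0\}$ is, by construction, a group homomorphism. Consequently its kernel $\mathrm{Ker}(\sigma_f)$ is automatically a normal subgroup of $\pi(Y,y_0)$, and there is nothing further to prove for the first claim.

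For the inclusion $\mathrm{Ker}(\sigma_f) \subset f_*(\pi(X,x_0))$, I would invoke the standard description of the image subgroup of a covering (see Hatcher \cite[Ch.\,1]{H}): a class $[\al] \in \pi(Y,y_0)$ lies in $f_*(\pi(X,x_0))$ if and only if the lift $\ol{\al}_{x_0}$ of $\al$ starting at $x_0$ is again a loop, that is, $\ol{\al}_{x_0}(1) = x_0$. So let $[\al] \in \mathrm{Ker}(\sigma_f)$. Since $f(x_0) = y_0$, the point $x_0$ belongs to the fibre $K$, and $\sigma_f([\al]) = m_{[\al]} = \mathrm{id}_K$ means in particular that $m_{[\al]}(x_0) = x_0$. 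By the definition of $m_{[\al]}$ this says exactly $\ol{\al}_{x_0}(1) = x_0$, so $\ol{\al}_{x_0}$ is a loop and hence $[\al] \in f_*(\pi(X,x_0))$.

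I do not expect any genuine obstacle: the statement is a direct consequence of the definition of $\sigma_f$ as a homomorphism together with the elementary correspondence between lifted loops and the subgroup $f_*(\pi(X,x_0))$. The only mild subtlety worth spelling out is that the basepoint $x_0$ is itself one of the points of the fibre $K$, which is what lets us apply the condition ``$m_{[\al]}$ fixes every point of $K$'' to $x_0$ and conclude that the lift closes up.
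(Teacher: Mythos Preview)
Your proof is correct and follows essentially the same route as the paper: both note that $\mathrm{Ker}(\sigma_f)$ is normal because $\sigma_f$ is a homomorphism, and both obtain the inclusion by applying $m_{[\al]}=\mathrm{id}$ to the basepoint $x_0\in K$ to see that the lift $\ol{\al}_{x_0}$ closes up, hence $[\al]\in f_*(\pi(X,x_0))$.
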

\begin{proof}Clearly, $\mathrm{Ker} (\sigma_f) \subset \pi(Y,y_0)$ is a normal subgroup. Let $\al \colon (S^1,e_0) \ra (Y,y_0)$ be a loop such that $[\al] \in \mathrm{Ker} (\sigma_f)$ and let $\ol{\al}_{x_0} \colon [0,1] \ra X$ be the lift of $\al$ in $f$ starting from $x_0.$  Since
$$\ol{\al}_{x_0}(1)=m_{[\al]}(x_0)=\text{id}(x_0)=x_0=\ol{\al}_{x_0}(0),$$ $\ol{\al}_{x_0}$ is a loop. Since $\al=f \circ \ol{\al}_{x_0},$ we have $[\al] \in f_*(\pi(X,x_0)).$ Hence $\mathrm{Ker} (\sigma_f) \subset f_*(\pi(X,x_0)).$ 
\end{proof}

\begin{Apulause}\label{nalka}Let $f \colon (X,x_0) \ra (Y,y_0)$ be a covering and $N \subset f_*(\pi(X,x_0))$ a normal subgroup of $\pi(Y,y_0).$ Then $N \subset \mathrm{Ker} (\sigma_f).$
\end{Apulause}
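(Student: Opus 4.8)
The plan is to show directly that every element of $N$ lies in $\mathrm{Ker}(\sigma_f)$, i.e.\;that if $[\al] \in N$ then the monodromy permutation $m_{[\al]} \colon K \ra K$ is the identity on the fiber $K := f^{-1}\{y_0\}$. First I would fix an arbitrary point $x_1 \in K$ and recall the standard fact about how the monodromy action transports the stabilizer: since $X$ is path-connected, choose a path $\ol{\be} \colon x_1 \cu x_0$ in $X$ and let $\be := f \circ \ol{\be} \colon y_0 \cu y_0$ be its projection, a loop at $y_0$. Then the lift of $\al$ starting at $x_1$ is a loop precisely when $[\be \al \be^{\law}] \in f_*(\pi(X,x_0))$; more generally $m_{[\al]}(x_1) = \ol{\al}_{x_1}(1) = x_1$ if and only if $[\be\al\be^{\law}] \in f_*(\pi(X,x_1)) = f_*(\pi(X,x_0))$, using here that the conjugating path $\ol\be$ identifies $f_*(\pi(X,x_1))$ with $f_*(\pi(X,x_0))$ and that lifts based at different points of the fiber are determined by the coset, as recalled in Section \ref{nc}.

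Next, the key step: since $[\al] \in N$ and $N$ is \emph{normal} in $\pi(Y,y_0)$, the conjugate $[\be][\al][\be]^{-1} = [\be\al\be^{\law}]$ again lies in $N$. By hypothesis $N \subset f_*(\pi(X,x_0))$, so $[\be\al\be^{\law}] \in f_*(\pi(X,x_0))$. By the previous paragraph this forces $m_{[\al]}(x_1) = x_1$. Since $x_1 \in K$ was arbitrary, $m_{[\al]} = \mathrm{id}$, hence $[\al] \in \mathrm{Ker}(\sigma_f)$, which gives $N \subset \mathrm{Ker}(\sigma_f)$.

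The main obstacle, and the only point requiring care, is the bookkeeping in the first paragraph: verifying that $m_{[\al]}(x_1)=x_1$ is genuinely equivalent to the conjugated loop lying in $f_*(\pi(X,x_0))$, rather than in some other point's subgroup. This is handled by the facts quoted at the end of Section \ref{nc} — namely that for a path $\be \colon y_1 \cu y_0$ the lift $\ol{(\be\al\be^{\law})}_{x_1}$ is a loop iff $\ol{\al}_{\ol{\be}_{x_1}(1)}$ is a loop — applied with $y_1 = y_0$, $\ol\be = \ol\be$ as above, and $\ol{\be}_{x_1}(1) = x_0$. Once that equivalence is in hand, normality of $N$ together with the containment $N \subset f_*(\pi(X,x_0))$ does all the work, and no further computation is needed.
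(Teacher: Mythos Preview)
Your argument is essentially the paper's: fix an arbitrary fiber point, join it to $x_0$ by a path in $X$, conjugate $[\al]$ by the projected loop, and use normality of $N$ together with $N \subset f_*(\pi(X,x_0))$ to conclude that the lift of $\al$ at the arbitrary point closes up. One minor bookkeeping slip: with your path $\ol\be \colon x_1 \cu x_0$ the correct equivalence is that $\ol\al_{x_1}$ is a loop iff $[\be^{\law}\al\be] \in f_*(\pi(X,x_0))$ (and $f_*(\pi(X,x_1))$ is in general only conjugate to $f_*(\pi(X,x_0))$, not equal), but since $N$ is normal this reversal is harmless and the conclusion stands.
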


\begin{proof} We need to show that for every loop $\al \colon (S^1,e_0) \ra (Y,y_0)$ for which $[\al] \in N$ the lift $\ol{\al}_x$ in $f$ is a loop for every $x \in f^{-1}\{y_0\}$. Let $\be \colon  x_0 \cu x$ be a path. Since $N$ is a normal subgroup of $\pi(Y,y_0),$ we have
$$[\gamma]:=[f \circ \be][\al][f \circ \be]^{-1}=[(f \circ \be)\al(f \circ \be)^{\law}] \in N\subset f_*(\pi(X,x_0)).$$ 
Thus the lift $\ol{\gamma}_{x_0}$ is a loop. By the uniqueness of lifts, $\ol{\al}_x$ is a loop. This shows that $N \subset \mathrm{Ker}(\sigma_f).$  
\end{proof}

\begin{proof}[Proof of Proposition \ref{spread_3}] Let $p \colon (Y,y_0) \ra (Z,z_0)$ be a normal covering. Suppose first that $(f \circ p)_{*}(\pi(Y,y_0))=N.$ Then $N \subset \mathrm{Ker}(\sigma_f)$ by Lemma \ref{nalka}.

Suppose now that $N \subset \mathrm{Ker}(\sigma_f).$ Then, by Lemma \ref{nalkat}, $N \subset f_*(\pi(X,x_0)).$ Since $f_*$ is a monomorphism, the pre-image $\tilde{N}$ of $N$ under $f_*$ is a normal subgroup of $\pi(X,x_0)$ and isomorphic to $N.$ Hence there exists a normal covering $p \colon (Y,y_0) \ra (X,x_0)$ satisfying $p_{*}(\pi(Y,y_0))=\ol{N} \cong N.$ In particular, $(f \circ p)_{*}(\pi(Y,y_0))=f_{*}(\tilde{N})=N.$
\end{proof}

The following observation is due to Berstein-Edmonds \cite{BE}.

\begin{Prop}\label{he} Let
\begin{displaymath}
\xymatrix{&Y \ar[dl]_{p} \ar[dr]^{q:=f \circ p} \\X \ar[rr]_{f} && Z}
\end{displaymath}
be a diagram of covering maps so that $q_{*}(\pi(Y,y_0))=\mathrm{Ker}(\sigma_f)$ for $z_0 \in Z$ and $y_0 \in q^{-1}\{z_0\}.$ Then the deck-transformation group $\T(q)$ of the covering $q \colon Y \ra Z$ is finite if and only if the set $f^{-1}\{z_0\} \subset X$ is finite.  
\end{Prop}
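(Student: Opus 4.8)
The plan is to identify $\T(q)$ with the monodromy group of $f$ acting on the fiber $K := f^{-1}\{z_0\}$, and then observe that this action is transitive, so that finiteness of the group forces finiteness of $K$, while the converse is immediate. First I would recall that since $q = f \circ p$ is a covering with $q_*(\pi(Y,y_0)) = \mathrm{Ker}(\sigma_f)$, which is a normal subgroup of $\pi(Z,z_0)$ by Lemma \ref{nalkat}, the covering $q$ is normal; hence $\T(q) \cong \pi(Z,z_0)/q_*(\pi(Y,y_0)) = \pi(Z,z_0)/\mathrm{Ker}(\sigma_f)$, which is by definition the monodromy group of $f$. So it suffices to show: the monodromy group $\pi(Z,z_0)/\mathrm{Ker}(\sigma_f)$ is finite if and only if $K$ is finite.

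For the forward direction, I would use that the monodromy action $\sigma_f \colon \pi(Z,z_0) \to \mathrm{Sym}(K)$ descends to a faithful action of the monodromy group $\pi(Z,z_0)/\mathrm{Ker}(\sigma_f)$ on $K$, and that this action is transitive because $X$ is path-connected: given $x_1 \in K$, a path $\be \colon x_0 \cu x_1$ in $X$ projects to a loop $f \circ \be$ at $z_0$ whose monodromy sends $x_0$ to $x_1$. A group acting faithfully and transitively on a set $K$ has cardinality at most $|K|!$ if $K$ is finite — more to the point, if $K$ is infinite we must rule out a finite monodromy group: a transitive action of a finite group $G$ on $K$ gives $|K| = |G|/|G_{x_0}| \le |G| < \infty$ for the point stabilizer $G_{x_0}$, so $K$ infinite forces $\T(q)$ infinite. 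Equivalently, and this is the cleaner route: the stabilizer of $x_0 \in K$ under the monodromy action is exactly $f_*(\pi(X,x_0))/\mathrm{Ker}(\sigma_f)$, and by the orbit–stabilizer correspondence the cosets of $f_*(\pi(X,x_0))$ in $\pi(Z,z_0)$ are in bijection with $K$; passing to the quotient by $\mathrm{Ker}(\sigma_f) \subset f_*(\pi(X,x_0))$ (Lemma \ref{nalkat}), the index $[\pi(Z,z_0)/\mathrm{Ker}(\sigma_f) : f_*(\pi(X,x_0))/\mathrm{Ker}(\sigma_f)]$ equals $|K|$. Thus $\T(q)$ finite implies $|K|$ finite, since an index in a finite group is finite.

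For the converse, if $K = f^{-1}\{z_0\}$ is finite, then $\mathrm{Sym}(K)$ is finite, and since the induced map $\pi(Z,z_0)/\mathrm{Ker}(\sigma_f) \hookrightarrow \mathrm{Sym}(K)$ is injective by definition of the kernel, the monodromy group — hence $\T(q)$ — is finite. I do not anticipate a serious obstacle here; the only point requiring care is the bookkeeping identifying $\T(q)$ with the monodromy group and verifying that $q$ is genuinely a normal covering, which follows from the normality of $\mathrm{Ker}(\sigma_f)$ together with the standard equivalence (d) $\Leftrightarrow$ normality recalled in Section \ref{nc}. Everything else is the orbit–stabilizer theorem applied to the monodromy action.
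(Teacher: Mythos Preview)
Your proof is correct and the converse direction is exactly the paper's argument: $\T(q) \cong \pi(Z,z_0)/\mathrm{Ker}(\sigma_f) \cong \mathrm{Im}(\sigma_f) \subset \mathrm{Sym}(K)$, hence finite when $K$ is. For the forward direction you take a slightly longer route than the paper. You invoke transitivity of the monodromy action and orbit--stabilizer to bound $|K|$ by $|\T(q)|$; the paper instead uses the factorization $q = f\circ p$ directly: since $q$ is normal, $\#q^{-1}\{z_0\} = \#\T(q) = m$, and since $p$ is a covering (in particular surjective), $p$ maps $q^{-1}\{z_0\}$ onto $f^{-1}\{z_0\}$, giving $\#f^{-1}\{z_0\} \le m$. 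Your argument and the paper's are really the same fact viewed from two sides---the coset description of the fiber versus the surjection of fibers---but the paper's version avoids having to spell out transitivity and the stabilizer identification.
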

\begin{proof} Suppose that the group $\T(q)$ is finite and $\#\T(q) = m$. Since $q$ is a normal covering, $\#(q^{-1}\{z_0\})=m$. Hence $\#(f^{-1}\{z_0\})\leq m,$ since $q=f \circ p.$
 
Suppose that $f^{-1}\{z_0\} \subset X$ is finite. Then the finiteness of the symmetric group $\text{Sym}(f^{-1}\{z_0\})$ implies that $$\T(q) \cong \pi(Z,z_0)/\mathrm{Ker}(\sigma_f) \cong \mathrm{Im}(\sigma_f) \subset \text{Sym}(f^{-1}\{y_0\})$$ is a finite group. 
\end{proof}

Next we define monodromy for a broader class of maps including completed coverings defined in the following section. We say, that an open dense subset $Z' \subset Z$ is \emph{large} if $Z \setminus Z'$ does not locally separate $Z.$ 

\begin{Apulause}\label{something}
Let $f \colon X \ra Z$ be a continuous and open map. Suppose that for $i \in \{1,2\}$ there exist large subsets $X_i \subset X$ and $Z_i \subset Z$ so that  
$g_i:=f \r X_i \colon X_i \ra Z_i$ is a covering. Then $\mathrm{Im}(\sigma_{g_1})$ is isomorphic to $\mathrm{Im}(\sigma_{g_2}).$ 
\end{Apulause}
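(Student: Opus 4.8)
The plan is to show that both $\mathrm{Im}(\sigma_{g_1})$ and $\mathrm{Im}(\sigma_{g_2})$ are isomorphic to the image of a monodromy associated to a covering over a common large subset. First I would set $Z_0 := Z_1 \cap Z_2$ and $X_0 := f^{-1}(Z_0) \cap X_1 \cap X_2$. The key preliminary step is to verify that $Z_0 \subset Z$ and $X_0 \subset X$ are again large: a finite intersection of open dense sets is open dense, and the non-local-separation property is preserved under finite intersections (the complement of $Z_0$ is contained locally in the union of two sets neither of which locally separates, and one shows such a union still does not locally separate $Z$). Since largeness implies path-connectedness of the subset (this is where non-local-separation is used — removing a non-locally-separating set from a connected manifold leaves it connected, and open connected subsets of manifolds are path-connected), $X_0$ and $Z_0$ are path-connected, so $g_0 := f|X_0 \colon X_0 \to Z_0$ makes sense as a candidate covering.

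Next I would check that $g_0$ is genuinely a covering map. Since $g_1 = f|X_1$ is a covering onto $Z_1$ and $Z_0 \subset Z_1$ is open, the restriction $f|(g_1^{-1}(Z_0)) \colon g_1^{-1}(Z_0) \to Z_0$ is a covering; but $g_1^{-1}(Z_0) = f^{-1}(Z_0) \cap X_1$, which in general is larger than $X_0$. So the real content is that $X_0 = g_1^{-1}(Z_0)$, i.e. $f^{-1}(Z_0) \cap X_1 \subset X_2$, equivalently $f^{-1}(Z \setminus Z_2) \cap X_1 \subset X \setminus X_2$. Hmm — this need not hold literally. The cleaner route is instead to pass to the even smaller set: let $X_0' := g_1^{-1}(Z_0)$ and note $f|X_0' \colon X_0' \to Z_0$ is a covering by restriction of $g_1$; symmetrically $f|g_2^{-1}(Z_0) \to Z_0$ is a covering. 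These two covers need not have the same total space, but they have the same base $Z_0$, and each is a restriction of the respective $g_i$ over the open subset $Z_0 \subset Z_i$. The main technical lemma I would invoke or prove is: \emph{if $g \colon X' \to Z'$ is a covering and $Z_0 \subset Z'$ is a large open subset, then $\mathrm{Im}(\sigma_g) \cong \mathrm{Im}(\sigma_{g|g^{-1}(Z_0)})$.} This follows because the inclusion $Z_0 \hookrightarrow Z'$ induces a surjection $\pi(Z_0, z_0) \twoheadrightarrow \pi(Z', z_0)$ — surjectivity is exactly the statement that a non-locally-separating closed set can be avoided by loops, a standard transversality/general-position fact — and this surjection intertwines the two monodromy actions on the common fiber $g^{-1}\{z_0\}$, hence induces an isomorphism on the images in $\mathrm{Sym}(g^{-1}\{z_0\})$.

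Granting that lemma, the proof concludes quickly: $\mathrm{Im}(\sigma_{g_1}) \cong \mathrm{Im}(\sigma_{g_1|f^{-1}(Z_0)\cap X_1})$ and $\mathrm{Im}(\sigma_{g_2}) \cong \mathrm{Im}(\sigma_{g_2|f^{-1}(Z_0)\cap X_2})$, and both right-hand sides are images of monodromies of coverings of the \emph{same} connected base $Z_0$ sitting inside the monodromy over $Z_0$ of ... — here I need one more identification, namely that over the connected base $Z_0$ the two covers $g_1^{-1}(Z_0)$ and $g_2^{-1}(Z_0)$ are isomorphic as coverings of $Z_0$, because both are restrictions of $f$ and hence have the same fibers $f^{-1}\{z\}$ for $z \in Z_0$ with matching local-homeomorphism structure; more precisely, $X_1 \cap f^{-1}(Z_0)$ and $X_2 \cap f^{-1}(Z_0)$ both equal $f^{-1}(Z_0) \setminus B_f$ up to the largeness subtleties, giving a canonical identification commuting with $f$. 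I expect \textbf{this last identification} — showing the two intermediate covers of $Z_0$ are canonically isomorphic over $Z_0$, so that their monodromy images literally coincide inside $\mathrm{Sym}(f^{-1}\{z_0\})$ — together with the surjectivity $\pi(Z_0) \twoheadrightarrow \pi(Z_i)$ to be \textbf{the main obstacle}; the surjectivity rests on the precise meaning of "does not locally separate," and I would either cite the relevant lemma on large sets from the paper's later sections or prove it directly by a general-position argument showing every loop in $Z$ is homotopic rel basepoint into $Z_0$.
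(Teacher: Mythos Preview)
Your approach is essentially the paper's, but you are making the middle step harder than necessary. The paper simply sets $g := f\r X_1 \cap X_2 \colon X_1 \cap X_2 \to Z_1 \cap Z_2$, asserts this is a covering, and then uses exactly your ``main technical lemma'': the inclusion $Z_1 \cap Z_2 \hookrightarrow Z_1$ induces a surjection on $\pi_1$ (because $Z_1 \cap Z_2$ is large in $Z_1$), and this surjection intertwines the monodromy actions on the common fiber, giving $\mathrm{Im}(\sigma_{g_1}) \cong \mathrm{Im}(\sigma_g)$; by symmetry one is done.

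Your detour through the two a priori different covers $g_1^{-1}(Z_0)$ and $g_2^{-1}(Z_0)$, and your ``main obstacle'' of showing them canonically isomorphic, evaporates once you notice they are literally equal to $X_1 \cap X_2$. Here is the missing observation: if $x \in X_1$ with $f(x) \in Z_2$, then $x \in X_2$. Indeed, $f$ is a local homeomorphism at $x$ (since $x \in X_1$), so choose a connected neighbourhood $V \ni x$ in $X_1$ mapped homeomorphically into an evenly-covered neighbourhood $U \subset Z_2$ of $f(x)$; by largeness of $X_2$ you may shrink $V$ so that $V \cap X_2$ is connected, hence lies in a single sheet $S$ of $g_2^{-1}(U)$. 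The continuous map $(g_2\r S)^{-1} \circ (f\r V) \colon V \to S$ agrees with the identity on the dense set $V \cap X_2$, hence (Hausdorff) is the inclusion, so $x \in V \subset S \subset X_2$. This gives $g_1^{-1}(Z_1 \cap Z_2) = X_1 \cap X_2 = g_2^{-1}(Z_1 \cap Z_2)$, and in particular $g$ is a covering with the \emph{same} fiber over $z_0$ as both $g_1$ and $g_2$. With this in hand your proof and the paper's coincide.
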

\begin{proof} Let $z_0 \in Z_1 \cap Z_2$ and $g:= f \r X_1 \cap X_2 \colon X_1 \cap X_2 \ra Z_1 \cap Z_2.$ Then $g$ is a covering. It is sufficient to show that 
$\mathrm{Im}(\sigma_{g_1})=\mathrm{Im}(\sigma_{g}).$
Let $\iota_* \colon \pi(Z_1 \cap Z_2,z_0) \ra \pi(Z_1,z_0)$ be the homomorphism induced by the inclusion $\iota \colon Z_1 \cap Z_2 \hra Z_1.$ Then $\iota_*$ is surjective, since $Z_1 \cap Z_2 \subset Z_1$ is dense and $Z_1 \setminus(Z_1 \cap Z_2)$ does not locally separate $Z_1.$ Thus $\mathrm{Im}(\sigma_{g_1})\cong\mathrm{Im}(\sigma_{g}).$
\end{proof}

Let $f \colon X \ra Z$ be an open and continuous map so that $g:=f \r X' \colon X' \ra Z'$ is a covering for large subsets $X' \subset X$ and $Z' \subset Z.$ Then we say that the monodromy $\sigma_g \colon \pi(Z',z_0) \ra \mathrm{Sym}(f^{-1}\{z_0\})$ is a \textit{monodromy} $\sigma_f$ of $f.$ We call the monodromy group $\pi(Z',z_0)/\mathrm{Ker}(\sigma_g)$ of $g$ the \textit{monodromy group of} $f.$ 

\begin{Not}We note that the isomorphism class of the monodromy group of $f$ is independent of the choice of $z_0 \in Z',$ and thus by Lemma \ref{something}, it is independent of the choice of monodromy. 
\end{Not} 

\section{Spreads and completed coverings}

\subsection{Spreads}\label{sp}
In this section we discuss the theory of spreads; see Fox \cite{F} and Montesinos \cite{MA} for a more detailed discussion. 

\begin{Def}A mapping $g \colon Y \ra Z$ between locally connected Hausdorff spaces is called a \textit{spread} if the components of the inverse images of the open sets of $Z$ form a basis of $Y.$ 
\end{Def}

Clearly spreads are continuous and a composition of spreads is a spread. A map $g \colon Y \ra Z$ between topological manifolds is a spread if and only if $g$ is light; for every $z \in Z$ the set $g^{-1}\{z\}$ is totally disconnected, see \cite[Cor.4.7]{MA}. In particular, if $Z$ is a manifold and $g \colon Y \ra Z$ is an open mapping which is a covering to its image, then $g$ is a spread.  

The definition of a complete spread is formulated by Fox \cite{F}: a spread $g \colon Y \ra Z$ is \textit{complete} if for every point $z$ of $Z$ and every open neighbourhood $W$ of $z$ there is selected a component $V_W$ of $g^{-1}(W)$ in such a way that $V_{W_1} \subset V_{W_2}$ whenever $W_1 \subset W_2,$ then $\cap_{W}V_{W}\neq \es.$ This definition can equivalently be written as follows.

For every $z \in Z,$ we denote by $\mathcal{N}_Z(z)$ the set of open connected neighbourhoods of $z.$ A function $\Theta \colon \mathcal{N}_Z(z) \ra \text{Top}(Y)$ is a \emph{selection function} of a spread $g\colon Y \ra Z$ if $\Theta(W)$ is a component of $g^{-1}(W)$ for every $W \in \mathcal{N}_Z(z)$ and $\Theta(W_1)\subset \Theta(W_2)$ whenever $W_1 \subset W_2.$

\begin{Def} A spread $g \colon Y \ra Z$ is \textit{complete} if for all $z \in Z$ every selection function $\Theta \colon \mathcal{N}_Z(z) \ra \mathrm{Top}(Y)$ of $g$ satisfies $$\bigcap_{W \in \mathcal{N}_Z(z)} \Theta(W)\neq \es.$$  
\end{Def}

\begin{Not}Let $g \colon Y \ra Z$ be a complete spread. Since $Y$ is Hausdorff, the set $\bigcap_{W \in \mathcal{N}_Z(z)} \Theta(W)\subset Y$ is a point for every $z \in Z$ and every selection function $\Theta \colon \mathcal{N}_Z(z) \ra \text{Top}(Y).$ Moreover, for every $z \in Z$ and $y \in g^{-1}\{z\}$ there exists a selection function $\Theta \colon \mathcal{N}_Z(z) \ra \text{Top}(Y)$
of $g$ for which $\bigcap_{W \in \mathcal{N}_Z(z)} \Theta(W)=\{y\}.$
\end{Not}

\begin{Apulause}\label{aurinko}Let $g \colon Y \ra Z$ be a spread. Suppose that for every $z \in Z$ the following conditions are satisfied: 
\begin{itemize}
\item[(1)]$g(V)=W$ for every $W \in \mathcal{N}_Z(z)$ and every component $V$ of $g^{-1}(W),$ and
\item[(2)]there exists $W_0 \in \mathcal{N}_Z(z)$ so that for every component $V$ of $g^{-1}(W_0)$ the set $V \cap g^{-1}\{y\}$ is finite.
\end{itemize}
Then $g$ is a complete spread.
\end{Apulause}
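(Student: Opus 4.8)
The plan is to verify Fox's completeness criterion directly: given $z \in Z$ and an arbitrary selection function $\Theta \colon \mathcal{N}_Z(z) \ra \mathrm{Top}(Y)$ of $g$, I must produce a point in $\bigcap_{W} \Theta(W)$. First I would fix the neighbourhood $W_0 \in \mathcal{N}_Z(z)$ supplied by hypothesis (2) and restrict attention to $\mathcal{N}_Z(z)$-members contained in $W_0$; this is harmless since such neighbourhoods are cofinal in $\mathcal{N}_Z(z)$ (the intersection $\bigcap_W \Theta(W)$ over the cofinal subfamily is the same set, because $\Theta$ is monotone). By hypothesis, $V_0 := \Theta(W_0)$ meets $g^{-1}\{z\}$ in a finite nonempty set — nonempty because $g(V_0) = W_0 \ni z$ by (1). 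Write $V_0 \cap g^{-1}\{z\} = \{y_1, \dots, y_n\}$.

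The key step is a pigeonhole/finite-intersection-property argument. For each $W \subset W_0$ in $\mathcal{N}_Z(z)$, the component $\Theta(W) \subset V_0$ satisfies $g(\Theta(W)) = W \ni z$ by (1), so $\Theta(W) \cap g^{-1}\{z\} \neq \es$, and this set is a nonempty subset of $\{y_1, \dots, y_n\}$. Thus $W \mapsto \Theta(W) \cap g^{-1}\{z\}$ assigns to each such $W$ a nonempty subset of a fixed finite set, and by monotonicity of $\Theta$ these subsets are nested: $W_1 \subset W_2$ implies $\Theta(W_1) \cap g^{-1}\{z\} \subset \Theta(W_2) \cap g^{-1}\{z\}$. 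A nested family of nonempty subsets of a finite set, directed downward, has nonempty intersection — indeed it stabilizes, so there is some $y_j$ lying in $\Theta(W) \cap g^{-1}\{z\}$ for \emph{every} $W \subset W_0$ in $\mathcal{N}_Z(z)$. That $y_j$ then lies in $\Theta(W)$ for all $W \in \mathcal{N}_Z(z)$ (using monotonicity once more to pass from the cofinal subfamily back to the whole family: any $W$ contains $W \cap W_0'$ for a suitable small connected neighbourhood, or one argues $\Theta(W) \supset \Theta(W \cap W_0)$ and $W\cap W_0 \subset W_0$). Hence $y_j \in \bigcap_{W \in \mathcal{N}_Z(z)} \Theta(W)$, which is therefore nonempty.

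I should be slightly careful about two technical points. The directedness of the index family: $\mathcal{N}_Z(z)$ ordered by reverse inclusion is directed downward since $Z$ is locally connected, so the intersection of two connected neighbourhoods contains a connected neighbourhood of $z$; this legitimizes speaking of the ``stabilized'' value of the nested finite sets. Second, the reduction to $W \subset W_0$: for a general $W \in \mathcal{N}_Z(z)$ the set $W \cap W_0$ need not be connected, but it contains some $W' \in \mathcal{N}_Z(z)$ with $W' \subset W$ and $W' \subset W_0$, and then $\Theta(W') \subset \Theta(W)$; so any point common to all $\Theta(W')$ with $W' \subset W_0$ is automatically in $\Theta(W)$ for every $W$. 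The only real content is the pigeonhole argument of the second paragraph; everything else is bookkeeping with the directed poset $\mathcal{N}_Z(z)$ and the monotonicity of $\Theta$.

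The main obstacle, such as it is, is purely organizational: making sure the ``finite nested sets stabilize'' claim is applied to a genuinely downward-directed family and that the conclusion transfers from the cofinal subfamily $\{W : W \subset W_0\}$ to all of $\mathcal{N}_Z(z)$. There is no analytic difficulty here — hypothesis (2) does all the heavy lifting by making the relevant fibre portions finite, and hypothesis (1) guarantees each $\Theta(W)$ actually hits $g^{-1}\{z\}$ so the nested sets are never empty. I would keep the write-up short, emphasizing the pigeonhole step and relegating the directedness remarks to a sentence each.
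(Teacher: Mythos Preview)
Your proposal is correct and follows essentially the same approach as the paper: the paper argues by contradiction (if $\bigcap_W \Theta(W)=\es$ then each $y_j$ is excluded by some $W_j$, and intersecting the finitely many $W_j$'s yields a $W'$ with $z\notin g(\Theta(W'))$, contradicting (1)), which is just the contrapositive of your pigeonhole/stabilization argument. If anything, you are more careful than the paper about the connectedness of $W_0\cap\cdots\cap W_k$; the paper writes $\Theta(W')$ for $W'=W_0\cap\cdots\cap W_k$ without passing to a connected sub-neighbourhood, whereas you explicitly handle this via cofinality and directedness.
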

\begin{proof} Let $z \in Z$ and let $\Theta \colon \mathcal{N}_Z(z) \ra \text{Top}(Y)$ be a selection function for $g$. We need to show that $\bigcap_{W \in \mathcal{N}_Z(z)} \Theta(W) \neq \es.$ 

Suppose $\bigcap_{W \in \mathcal{N}_Z(z)} \Theta(W)=\es.$ By condition (2), the set $g^{-1}\{z\} \cap \Theta(W_0)$ is finite for some $W_0 \in \mathcal{N}_Z(z).$ Let $\{y_1, \ldots, y_k\}=g^{-1}\{z\} \cap \Theta(W_0).$ Since $\bigcap_{W \in \mathcal{N}_Z(z)} \Theta(W)=\es,$ we find for every $j=0, \ldots, k$ a set $W_j \in \mathcal{N}_Z(z)$ for which $y_j \notin \Theta(W_j).$ Let $W'=W_0 \cap \cdots \cap W_k.$ Then $\Theta(W') \cap g^{-1}\{z\}=\es.$ Thus $z \notin \bigcap_{W \in \mathcal{N}_Z(z)}g(\Theta(W))\subset g(\Theta(W')).$

On the other hand, $z \in \bigcap_{W \in \mathcal{N}_Z(z)}g(\Theta(W))$ as a consequense of condition (1). This is a contradiction and we conclude that $\bigcap_{W \in \mathcal{N}_Z(z)} \Theta(W)\neq \es.$
\end{proof}

For completion of presentation we include the following proposition well known for the experts.

\begin{Prop} Let $f \colon X \ra Z$ be a discrete and open map between compact manifolds. Then $f$ is a complete spread.
\end{Prop}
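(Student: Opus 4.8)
The plan is to verify the two hypotheses of Lemma \ref{aurinko} for the map $f$, after first checking that $f$ is a spread.

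That $f$ is a spread follows from discreteness: each fibre $f^{-1}\{z\}$ is a discrete subset of $X$, and being also closed in the compact space $X$ it is \emph{finite}; in particular it is totally disconnected. Since $X$ and $Z$ are manifolds, $f$ is light, hence a spread by \cite[Cor.\;4.7]{MA}. The finiteness of all fibres of $f$ immediately yields condition (2) of Lemma \ref{aurinko}: for every $z \in Z$ we may take any $W_0 \in \mathcal{N}_Z(z)$, since for each component $V$ of $f^{-1}(W_0)$ the set $V \cap f^{-1}\{z\}$ is contained in the finite set $f^{-1}\{z\}$.

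The substantive step is condition (1): for every $z \in Z$, every $W \in \mathcal{N}_Z(z)$, and every component $V$ of $f^{-1}(W)$, one has $f(V) = W$. This is where compactness of $X$ is used. Since $f$ is open, $f(V)$ is open in $Z$ and contained in $W$, and it is nonempty. To see it is also closed in $W$, note that $X$, being a manifold, is locally connected, so the component $V$ of the open set $f^{-1}(W)$ is closed in $f^{-1}(W)$; hence $\overline{V} \cap f^{-1}(W) = V$, the closure taken in $X$. As $\overline{V}$ is compact and $Z$ is Hausdorff, $f(\overline{V})$ is closed in $Z$, so $f(\overline{V}) \cap W$ is closed in $W$; and $f(\overline{V}) \cap W = f\big(\overline{V} \cap f^{-1}(W)\big) = f(V)$. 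Thus $f(V)$ is a nonempty subset of the connected set $W$ that is both open and closed in $W$, forcing $f(V) = W$. Lemma \ref{aurinko} then gives that $f$ is a complete spread.

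I expect condition (1) — the statement that $f$ carries each component of the preimage of a connected open set \emph{onto} that set — to be the only real point; compactness of $X$ (via the fact that continuous images of compacta in a Hausdorff space are closed) is exactly what supplies the needed closedness, while everything else is a formal consequence of discreteness and of manifolds being locally connected.
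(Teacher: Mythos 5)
Your proof is correct and follows essentially the same route as the paper: both reduce the claim to Lemma \ref{aurinko}, obtain condition (2) from the finiteness of fibres forced by compactness, and obtain condition (1) by using compactness of $\overline{V}$ so that $f(V)$ is (relatively) closed, hence all of $W$ — your direct clopen argument is just a rephrasing of the paper's contradiction via a boundary point of $f(U)$. (Minor remark: closedness of a component in $f^{-1}(W)$ is automatic; local connectedness of $X$ is what gives its \emph{openness}, which you implicitly use when asserting $f(V)$ is open.)
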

\begin{proof} The map $f$ is a spread by \cite[Cor.4.7]{MA}; see also proof of Theorem \ref{pakkanen}. We prove that $f$ is complete by showing that conditions (1) and (2) in Lemma \ref{aurinko} hold. Since $f$ is an open and discrete map from a compact manifold $X,$ the map $f$ has finite multiplicity. Thus condition (2) holds since $f$ is a spread. We then suppose that condition (1) does not hold. Then there exists $x \in X$ and $V \in \mathcal{N}_Z(f(x))$ so that the $y$-component $U$ of $f^{-1}(V)$ satisfies $f(U)\subsetneq V.$ Since $V$ is connected there is a point $z \in \partial f(U) \cap V.$ Since $X$ is compact $\overline{U} \subset X$ is compact. Thus $z \in \overline{f(U)} \subset f(\overline{U})$ and  there exists a point $x' \in \overline{U} \cap p^{-1}\{z\}.$ This is a contradiction since $x' \in (\overline{U} \setminus U) \cap p^{-1}(V)$ and $U$ is a component of $p^{-1}(V).$ We conclude that (1) holds and that $f$ is a complete spread.
\end{proof}

We recall that an open dense subset $Z' \subset Z$ is large if $Z \setminus Z'$ does not locally separate $Z.$

\begin{Def}\label{completion}Let $X,$ $Y$ and $Z$ be Hausdorff-spaces. A complete spread $g \colon Y \ra Z$ is a \textit{completion} of a spread $f \colon X \ra Z$ if there is an embedding $i \colon X \ra Y$ such that $i(X)\subset Y$ is large and 
\begin{displaymath} 
\xymatrix{X \ar[rr]^{i} \ar[dr]_{f} && Y \ar[dl]^g  \\&Z.}
\end{displaymath}
\end{Def}

Two completions $g_1 \colon Y_1 \ra Z$ and $g_2 \colon Y_2 \ra Z$ of a spread $f \colon X \ra Z$ are \textit{equivalent} if there exists a homeomorphism $j \colon Y_1 \ra Y_2$ such that $g_2 \circ j=g_1$ and $j(x)=x$ for all $x \in X.$ By the fundamental result of Fox, every spread has a completion \cite[Sec.\;2]{F} and it is unique up to equi\-va\-len\-ce \cite[Sec.\;3]{F} ; see also \cite[Cor. 2.8 and Cor. 7.4]{MA}. 

\subsection{Completed coverings}\label{ccc}

Let $Y$ be a Hausdorff space and $Z$ a manifold. Suppose $f \colon Y \ra Z$ is a complete spread so that there are large subsets $Y' \subset Y$ and $Z' \subset Z$ for which $f':=f \r Y' \colon Y' \ra Z'$ is a covering. Then $f$ is the completion of $f' \colon Y' \ra Z,$ 
and since $Z' \subset Z$ is large, $f$ is an open surjection, see \cite[Teo. 2.1]{C}.

\begin{Def}\label{cc}
Let $Y$ be a Hausdorff space and $Z$ a manifold. A map $f \colon Y \ra Z$ is a \textit{completed covering} if there are large subsets $Y' \subset Y$ and $Z' \subset Z$ so that $f\r Y' \colon Y' \ra Z'$ is a covering and $f$ is the completion of $f'\r Y' \colon Y' \ra Z.$ 
\end{Def}

The results in \cite{I} cover the following example.

\begin{Ex}Let $f \colon X \ra Z$ be an open and discrete simplicial map from a PL $n$-manifold $X$ with triangulation $K$ onto a PL $n$-manifold $Z$ with triangulation $L.$ Let $K^{n-2}$ be the $(n-2)$-skeleton of $K,$ $L^{n-2}$ the $(n-2)$-skeleton of $L,$ $X'=|K| \setminus |K^{n-2}|$ and $Z'=|L| \setminus |L^{n-2}|.$ Then $f \r  X' \colon X' \ra Z'$ is a covering and $f$ is the completion of $f \r X' \colon X' \ra Z.$  
\end{Ex}

Next we prove some basic properties of completed coverings.

\begin{Apulause}\label{biotin}Let $g \colon Y' \to Z'$ be a normal covering onto a large subset $Z'\subset Z$ of a manifold $Z$ and $f \colon Y \ra Z$ the completion of $g \colon Y' \ra Z.$ Then $f^{-1}(Z')=Y'.$
\end{Apulause}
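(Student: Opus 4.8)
The plan is to show the two inclusions $Y' \subset f^{-1}(Z')$ and $f^{-1}(Z') \subset Y'$ separately. The first is immediate: by the definition of completion (Definition \ref{completion}) we identify $Y'$ with its image $i(Y') \subset Y$, and the diagram $f \circ i = g$ forces $f(Y') = g(Y') = Z'$, hence $Y' \subset f^{-1}(Z')$. So the content of the lemma is the reverse inclusion: if $y \in Y$ with $f(y) = z \in Z'$, then $y \in Y'$.

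For the reverse inclusion I would argue by producing a selection function that ``sees'' $y$ and then using the covering structure of $g$ over $Z'$ to force that selection function to live in $Y'$. Concretely: fix $y \in f^{-1}\{z\}$ with $z \in Z'$. By the Remark following the definition of complete spread, there is a selection function $\Theta \colon \mathcal{N}_Z(z) \ra \mathrm{Top}(Y)$ of the complete spread $f$ with $\bigcap_{W} \Theta(W) = \{y\}$. Since $Z' \subset Z$ is open, the neighbourhoods $W \in \mathcal{N}_Z(z)$ with $W \subset Z'$ are cofinal in $\mathcal{N}_Z(z)$, so it suffices to understand $\Theta(W)$ for such $W$. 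For $W \subset Z'$, the set $f^{-1}(W) \cap Y'$ is, by the covering property of $g$ and since $Y' = f^{-1}(Z')$ would be circular — instead I use that $g$ is a covering over $Z'$ so $g^{-1}(W) = f^{-1}(W) \cap Y'$ is a disjoint union of open sets of $Y$ (being open in $Y'$ which is open in $Y$, as a large, hence in particular dense open, subset — here one should check $Y'$ is open in $Y$, which holds since $g$ is a covering to an open subset and $f$ restricted there is a local homeomorphism, or cite that completions of coverings give $Y'$ open). Then each component $V$ of $f^{-1}(W)$ meets $Y'$ in a nonempty (by density of $Y'$) open-and-closed-in-$V$ subset; since $V$ is connected this forces $V \cap Y'$ to be dense in $V$ and in fact one shows $V \subset Y'$: the complement $Y \setminus Y'$ does not locally separate $Y$, so it cannot disconnect $V$ from the sheet of $g^{-1}(W)$ it contains, giving $V \subset Y'$. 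Applying this to $V = \Theta(W)$ for all small $W \subset Z'$ shows each such $\Theta(W) \subset Y'$, hence $y \in \bigcap_W \Theta(W) \subset \overline{Y'}$; and more sharply, $y$ lies in the closure of a single sheet, and since $g$ is a covering over $W$ that sheet maps homeomorphically onto $W$, so its closure inside $f^{-1}(W)$ meets $f^{-1}\{z\}$ in exactly the limit point dictated by the covering — which lies in $Y'$ because $Y' \supset$ that sheet and one checks no new point of $f^{-1}\{z\} \setminus Y'$ can be the intersection point. Concluding, $y \in Y'$.

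The main obstacle is the step ``each component $V$ of $f^{-1}(W)$ with $W \subset Z'$ satisfies $V \subset Y'$''. The cleanest route: since $g \colon Y' \ra Z'$ is a covering and $W \subset Z'$ is (we may take) evenly covered, $g^{-1}(W)$ is a disjoint union $\coprod_\lambda S_\lambda$ of open sets of $Y'$, each mapped homeomorphically onto $W$; each $S_\lambda$ is open in $Y$ since $Y'$ is open in $Y$. Because $Y'$ is dense in $Y$ and $Y \setminus Y'$ does not locally separate $Y$, for every $y_0 \in f^{-1}\{z\} \subset Y$ the point $y_0$ has a neighbourhood basis of connected open sets whose intersections with $Y'$ are connected; combined with the spread property (components of $f$-preimages of open sets form a basis), one deduces the components of $f^{-1}(W)$ are exactly the $S_\lambda$ together with at most the points of $f^{-1}\{z\}$ attached as limits, and completeness of $f$ forces exactly one such limit point per selection function, lying in $\overline{S_\lambda}$. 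Since that limit point is the unique point of $f^{-1}\{z\}$ in the closure of $S_\lambda$ within $f^{-1}(W)$, and the whole of $f^{-1}\{z\} \cap f^{-1}(W) = f^{-1}\{z\}$ is accounted for by ranging over selection functions / sheets, every point of $f^{-1}\{z\}$ arises this way, i.e. lies in $Y'$ — wait, this needs the sheets' closures to cover $f^{-1}\{z\}$, which is where density of $Y'$ and non-local-separation re-enter. I would isolate this as a short sublemma: \emph{for a completion $f$ of a covering $g \colon Y' \ra Z'$ with $Z' \subset Z$ large, and for $z \in Z'$ with evenly-covered neighbourhood $W \subset Z'$, the components of $f^{-1}(W)$ are precisely $\overline{S_\lambda} \cap f^{-1}(W)$, each containing exactly one point of $f^{-1}\{z\}$, and these exhaust $f^{-1}\{z\}$}; granting it, $f^{-1}(Z') = \bigcup_{W \subset Z'} f^{-1}(W) \subset \bigcup_\lambda \overline{S_\lambda}$, but each $\overline{S_\lambda} \cap f^{-1}(W) = S_\lambda \subset Y'$ once we note $S_\lambda$ is already closed in $f^{-1}(W)$ (it is open, and its complement is the union of the other open sheets, hence open) — so $f^{-1}(W) = \coprod_\lambda S_\lambda \subset Y'$, giving $f^{-1}(Z') \subset Y'$ directly. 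This last observation, that over an evenly covered $W$ the preimage $f^{-1}(W)$ has no ``extra'' points beyond the sheets, is the crux and follows from completeness plus the fact that a selection function over such $W$ is constant equal to one sheet $S_\lambda$, whose intersection $\bigcap_W$ is then a point of $S_\lambda$. Hence $Y' = f^{-1}(Z')$.
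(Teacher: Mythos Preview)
Your opening move is right and matches the paper: fix $y\in f^{-1}\{z\}$ with $z\in Z'$, take a selection function $\Theta$ for the complete spread $f$ with $\bigcap_W\Theta(W)=\{y\}$, and use that neighbourhoods $W\subset Z'$ are cofinal in $\mathcal N_Z(z)$. The problem is in how you finish.

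The decisive step of your argument is circular. You assert that for evenly covered $W\subset Z'$ each sheet $S_\lambda\subset g^{-1}(W)$ is closed in $f^{-1}(W)$ because ``its complement is the union of the other open sheets, hence open''. But the complement of $S_\lambda$ in $f^{-1}(W)$ is $\bigl(\bigcup_{\mu\neq\lambda}S_\mu\bigr)\cup\bigl(f^{-1}(W)\setminus Y'\bigr)$, and the second piece is \emph{closed} (since $Y'\subset Y$ is open), not open. Saying it is empty is exactly the statement $f^{-1}(W)\subset Y'$ you want to prove. The same circularity hides in your final line ``a selection function over such $W$ is constant equal to one sheet $S_\lambda$'': that holds for $g$, but for $f$ it presupposes that the components of $f^{-1}(W)$ are the $S_\lambda$, which is again the conclusion.

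What you are missing is completeness of $g$, not of $f$. The paper's argument: since $Y'\subset Y$ is large, $\Theta(W)\cap Y'$ is connected for each $W\in\mathcal N_{Z'}(z)$, hence is a component of $g^{-1}(W)$; thus $\Theta'(W):=\Theta(W)\cap Y'$ defines a selection function for the spread $g\colon Y'\to Z'$. Now a covering map is itself a complete spread, so $\bigcap_{W\in\mathcal N_{Z'}(z)}\Theta'(W)=\{y'\}$ for some $y'\in Y'$. Since $\Theta'(W)\subset\Theta(W)$ and $\mathcal N_{Z'}(z)$ is cofinal in $\mathcal N_Z(z)$, this forces $y'\in\bigcap_W\Theta(W)=\{y\}$, so $y=y'\in Y'$. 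No analysis of the components of $f^{-1}(W)$ is needed.
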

\begin{proof} Suppose there exists a point $y \in Y \setminus Y'$ so that $z:=f(y)\in Z'.$ Let $\Theta \colon \mathcal{N}_Z(z) \ra \text{Top}(Y)$ be a selection function of $f$ satisfying 
$$\bigcap_{W \in \mathcal{N}_Z(z)} \Theta(W)=\{y\}.$$
Since $Z' \subset Z$ is open, we have $\mathcal{N}_{Z'}(z) \subset \mathcal{N}_Z(z).$  Moreover, since $Y' \subset Y$ is large, the set $\Theta(W) \cap Y'$ is a component of $g^{-1}(W)$ for every $W \in \mathcal{N}_{Z'}(z).$ Define $\Theta' \colon \mathcal{N}_{Z'}(z) \ra \text{Top}(Y')$ by $\Theta'(W)= \Theta(W) \cap Y'.$ Then $\Theta'$ is a selection function for $g \colon Y' \ra Z'.$
Since $g$ is a covering, it is also a complete spread. Thus there exists $y' \in Y'$ so that 
$$\bigcap_{W \in \mathcal{N}_{Z'}(z)} \Theta'(W)=\{y'\}.$$ This is a contradiction, since
$$y' \in \bigcap_{W \in \mathcal{N}_Z(z)} \Theta(W)=\{y\} \subset Y \setminus Y'.$$
\end{proof}

\begin{Apulause}\label{brunate} Let $g \colon Y' \to Z'$ be a covering onto a large subset $Z'\subset Z$ of a manifold $Z$ and $f \colon Y \ra Z$ the completion of $g \colon Y' \ra Z.$ Let $U \subset Z$ be a connected and open set, and let $V$ be a component of $f^{-1}(U).$ Then $f(V)=U.$ 
\end{Apulause}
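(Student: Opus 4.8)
The plan is to show both inclusions. The inclusion $f(V)\subset U$ is immediate since $V\subset f^{-1}(U)$. For the reverse inclusion I would argue that $f(V)$ is both open and closed in $U$, and then invoke connectedness of $U$. Openness of $f(V)$ is clear: $f$ is a completed covering, hence an open map by the discussion preceding Definition \ref{cc} (as $Z'\subset Z$ is large), so $f(V)$ is open in $Z$ and a fortiori in $U$. Thus the heart of the matter is to show that $f(V)$ is relatively closed in $U$, i.e.\ that if $z\in U$ lies in the closure of $f(V)$, then $z\in f(V)$.

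To prove relative closedness, I would take $z\in U\cap\overline{f(V)}$ and build a selection function witnessing a preimage point of $z$ inside $\overline{V}$, then upgrade this to a point actually in $V$. Concretely, fix a nested neighbourhood basis, or rather work directly with $\mathcal N_Z(z)$: for each $W\in\mathcal N_Z(z)$ with $W\subset U$, since $z\in\overline{f(V)}$ and $f$ is open, the open set $f(V)\cap W$ is nonempty, so there is a point of $V$ mapping into $W$, hence $V$ meets some component of $f^{-1}(W)$; because $V$ is connected and $W\subset U$, in fact $V$ is contained in a single component $\Theta(W)$ of $f^{-1}(W)$. One checks $W_1\subset W_2$ forces $\Theta(W_1)\subset\Theta(W_2)$ (both contain the connected set $V$, and $\Theta(W_1)\subset f^{-1}(W_1)\subset f^{-1}(W_2)$, so $\Theta(W_1)$ lies in the component of $f^{-1}(W_2)$ containing $V$, namely $\Theta(W_2)$). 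For $W\not\subset U$ one extends $\Theta$ by choosing, compatibly, the component of $f^{-1}(W)$ containing $\Theta(W\cap U)$. This $\Theta$ is a selection function for the complete spread $f$, so $\bigcap_W\Theta(W)=\{y\}$ for some $y\in Y$, and necessarily $f(y)=z$; moreover $V\subset\Theta(W)$ for all small $W$ gives $\overline V\subset\overline{\Theta(W)}$, and a limiting argument places $y\in\overline V$.

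It remains to promote $y\in\overline V$ to $y\in V$, which is where I expect the main subtlety. The point is that $V$, being a component of the open set $f^{-1}(U)$, is itself open (components of open sets in a locally connected space are open — and $Y$ is locally connected as the domain of a spread). So $V$ is open, but $V=\overline V\cap f^{-1}(U)$ need not hold for an arbitrary component; however $y\in f^{-1}(U)$ since $f(y)=z\in U$, and $y\in\overline V$. If $y$ lay in a different component $V'$ of $f^{-1}(U)$, then $V'$ would be an open neighbourhood of $y$ disjoint from $V$, contradicting $y\in\overline V$. Hence $y\in V$ and $z=f(y)\in f(V)$, proving $U\subset f(V)$. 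The delicate step is the construction of the selection function and verifying $y\in\overline V$ — specifically, ensuring the components $\Theta(W)$ are chosen compatibly across all of $\mathcal N_Z(z)$, not merely for $W\subset U$, and that the single point $y$ in the intersection really accumulates $V$; one should phrase this via: for each $W$, $\Theta(W)\cap V\neq\es$, so $\Theta(W)\cap\overline V\neq\es$, and $\{y\}=\bigcap_W\overline{\Theta(W)}\supset\bigcap_W(\text{something meeting }\overline V)$, using that the $\Theta(W)$ shrink to $y$ together with Hausdorffness of $Y$.
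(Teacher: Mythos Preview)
Your open-plus-closed-plus-connected strategy is reasonable in spirit, but the construction of the selection function contains a genuine error. You claim that for $W\in\mathcal N_Z(z)$ with $W\subset U$, the component $V$ of $f^{-1}(U)$ is \emph{contained in} a single component $\Theta(W)$ of $f^{-1}(W)$. The containment goes the other way: since $W\subset U$, every component of $f^{-1}(W)$ is a connected subset of $f^{-1}(U)$ and hence lies inside some component of $f^{-1}(U)$; so components of $f^{-1}(W)$ meeting $V$ are \emph{subsets} of $V$, not supersets. In particular there is no reason $V\subset\Theta(W)$, and your verification of the nesting $\Theta(W_1)\subset\Theta(W_2)$ (``both contain the connected set $V$'') collapses. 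Once the $\Theta(W)$ are merely components inside $V$, you face the real difficulty: for different $W$'s, $V$ may meet many components of $f^{-1}(W)$, and there is no evident mechanism to choose them compatibly as $W$ shrinks --- this is exactly where one needs extra structure, and your proposal supplies none. The later step promoting $y\in\overline V$ to $y\in V$ is fine, but it rests on a selection function you have not actually built.

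The paper avoids this by exploiting the covering $g$ directly rather than the completeness of $f$. Since $Z'\subset Z$ is large, $U\cap Z'$ is path-connected; pick a component $W$ of $g^{-1}(U\cap Z')$ inside $V\cap Y'$ (such exists because $Y'$ is dense and any such component, being connected in $f^{-1}(U)$, must lie in $V$). Because $g$ is a covering, paths in $U\cap Z'$ lift into $W$, so $g(W)=U\cap Z'$. Then one passes to the completion of $g\r W\colon W\to U$, which sits inside $Y$ (since $f$ is complete) as some connected $V'\subset V$ with $f(V')=U$, giving $f(V)=U$. The covering property is what furnishes the coherence that your selection-function argument was missing.
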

\begin{proof} Since $Z'\subset Z$ is large, the set $U \cap Z'$ is path-connected. Furthermore, since $Y' \subset Y$ is dense, we may fix a component $W$ of $g^{-1}(U \cap Z')$ contained in $V \cap Y'.$ Every path into $U \cap Z'$ has a total lift into $W,$ since $g$ is a covering map. Thus $U \cap Z'= \mathrm{Im}(g \r W).$ In particular, $g \r W \colon W \ra Z' \cap U$ is a covering onto a large subset of $U$. 

Since $W$ is a connected and open set and $f$ is a complete spread, there exists such a connected subset $V' \subset Y$ that $f \r V'$ is the completion of the spread $g \r W \colon W \ra U.$ Since $U \cap Z'\subset U$ is large, the map $f \r V' \colon V' \ra U$ is a surjective spread by \cite[Thm. 2.1]{C}. Since $V'$ is connected and $W \subset V,$ we have $V' \subset V.$ Thus $U \subset f(V') \subset f(V)\subset U.$
\end{proof}

\begin{Apulause}\label{elama} Let $f_1 \colon X \ra Y$ and $f_2 \colon Y \ra Z$ be completed coverings. Then $f_2 \circ f_1 \colon X \ra Z$ is a completed covering.
\end{Apulause}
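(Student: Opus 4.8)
The plan is to exhibit explicit large subsets that witness $f_2 \circ f_1$ as a completed covering, and then verify the two defining properties: that the restriction is a covering, and that the whole map is the completion of that restriction. Since $f_1 \colon X \ra Y$ is a completed covering, there are large subsets $X' \subset X$ and $Y_1 \subset Y$ with $f_1 \r X' \colon X' \ra Y_1$ a covering and $f_1$ the completion of $f_1 \r X' \colon X' \ra Y$. Similarly there are large subsets $Y_2 \subset Y$ and $Z' \subset Z$ with $f_2 \r Y_2 \colon Y_2 \ra Z'$ a covering and $f_2$ the completion of $f_2 \r Y_2 \colon Y_2 \ra Z$. The natural candidates are $Y'' := Y_1 \cap Y_2$, together with its preimage in $X'$ and image in $Z'$. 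First I would check that $Y''$ is large in $Y$ (a finite intersection of large sets is large, since a dense open set whose complement does not locally separate, intersected with another, retains both properties), and set $Z'' := f_2(Y'') \cap Z'$ and $X'' := (f_1 \r X')^{-1}(Y'')$.

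Next I would verify that $f_1 \r X'' \colon X'' \ra Y''$ and $f_2 \r Y'' \colon Y'' \ra Z''$ are coverings: a covering restricted over an open path-connected subset of the base is again a covering, and one must note that $Y'' \subset Y_1$ is open (hence the restriction of the covering $f_1\r X'$ over it is a covering) and likewise $Y'' \subset Y_2$ is open. Composing, $f_2 \circ f_1 \r X'' \colon X'' \ra Z''$ is a covering. I would also want $X''$ large in $X$: it is $(f_1\r X')^{-1}$ of a large subset of $Y_1$ under a covering map, so it is open and dense, and its complement in $X$ — the union of $X \setminus X'$ and the preimage of $Y_1 \setminus Y''$ — does not locally separate $X$, because coverings are local homeomorphisms and neither piece locally separates. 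Similarly $Z'' \subset Z$ is large; this needs the fact that $f_2$ is an open map (which holds for completed coverings, as noted after Definition \ref{cc}) so that $f_2(Y'')$ is open, together with density and the non-separation property.

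The remaining, and I expect main, obstacle is to show that $f_2 \circ f_1 \colon X \ra Z$ is the \emph{completion} of the spread $f_2 \circ f_1 \r X'' \colon X'' \ra Z$ — equivalently, by the uniqueness of completions (Fox \cite{F}), that $f_2 \circ f_1$ is a complete spread into which $X''$ embeds as a large subset. That $f_2 \circ f_1$ is a spread is immediate since spreads compose. Largeness of $X''$ in $X$ was established in the previous step. For completeness, the key point is that a composition of complete spreads is a complete spread: given $z \in Z$ and a selection function $\Theta \colon \mathcal{N}_Z(z) \ra \mathrm{Top}(X)$ for $f_2 \circ f_1$, one pushes it down via $f_1$ to get a selection function for $f_2$ at $z$, obtains a point $y \in f_2^{-1}\{z\}$ in the resulting intersection, then uses that the components $\Theta(W)$ eventually lie over connected neighbourhoods of $y$ (using Lemma \ref{brunate}-type surjectivity, $f_1(\Theta(W)) = $ the relevant component) to produce a selection function for $f_1$ at $y$, and finally invokes completeness of $f_1$ to get a point $x \in f_1^{-1}\{y\}$ in $\bigcap_W \Theta(W)$. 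Care is needed that the component selected for $f_1$ at $y$ is genuinely monotone and that its intersection picks out a point of $\Theta$'s intersection; here the Hausdorff property and the Remark following the definition of complete spread (each such intersection is a single point) keep the bookkeeping under control. Once completeness is in hand, Definition \ref{cc} is satisfied with the subsets $X''$ and $Z''$, and the proof concludes.
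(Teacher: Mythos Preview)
Your proposal is correct and follows essentially the same route as the paper. The paper is terser about the setup---it simply asserts the existence of large subsets $X' \subset X$, $Y' \subset Y$, $Z' \subset Z$ with both restrictions being coverings, implicitly using the intersection argument you spell out---and then proves completeness of $f_2 \circ f_1$ by exactly the push-down/pull-back of selection functions you describe: from $\Theta$ one extracts a selection function $\Theta_2$ for $f_2$ (by taking $U_W$ the component of $f_2^{-1}(W)$ containing $f_1(\Theta(W))$), obtains $y$ by completeness of $f_2$, extends the assignment $U_W \mapsto \Theta(W)$ to a selection function $\Theta_1$ for $f_1$ at $y$, and obtains $x \in \bigcap_W \Theta(W)$ by completeness of $f_1$. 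One small remark: the paper does not invoke Lemma~\ref{brunate} here, since the fact that each component of $(f_2 \circ f_1)^{-1}(W) = f_1^{-1}(f_2^{-1}(W))$ is a component of some $f_1^{-1}(U_W)$ is elementary (the $f_1^{-1}(U_W)$ are open and pairwise disjoint), so your appeal to it is harmless but unnecessary.
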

 
\begin{proof} Let $X'\subset X,$ $Y' \subset Y$ and $Z' \subset Z$ be large subsets so that $g_1:= f_1 \r X' \colon X' \ra Y'$ and $g_2:= f_1 \r Y' \colon Y' \ra Z'$ are coverings. Then $f_1$ is the completion of $g_1 \colon X' \ra Y$ and $f_2$ is the completion of $g_2 \colon Y' \ra Z$ since $f_1$ and $f_2$ are complete spreads. Since $Z'\subset Z$ is large $f_2\circ f_1 \colon X' \ra Z$ is a spread. Moreover, since $X' \subset X$ is large, it is sufficient to show that $f_2 \circ f_1 \colon X \ra Z$ is complete. 

If there are no selection functions for $f_2 \circ f_1,$ then $f_2 \circ f_1$ is trivially a complete spread. Then suppose $z \in Z$ and $\Theta \colon \mathcal{N}_{Z}(z) \ra \text{Top}(X)$ is a selection function for $f_2 \circ f_1.$ We need to show that $\Cap_{W \in \mathcal{N}_Z(z)} \Theta(W) \neq \es.$ 

Since $\Theta$ is a selection function, there are for every $W \in \mathcal{N}_Z(z)$ components $U_{W}$ of $f_2^{-1}(W)$ and $V_{U_{W}}$ of $f_2^{-1}(U_{W})$ so that
$\Theta(W)=V_{U_{W}}$ and $U_{W_1} \subset U_{W_2}$ whenever $W_1 \subset W_2.$ Define $\Theta_2 \colon \mathcal{N}_{Z}(z) \ra \text{Top}(Y)$ by $W \mapsto U_{W}.$ Then $\Theta_2$ is a selection function of $f_2.$ 

Since $f_2$ is a complete spread, there exists a point $y \in f^{-1}_2\{z\}$ satisfying $\Cap_{W \in \mathcal{N}_Z(z)} \Theta_2(W)=\{y\}.$ Thus there exists a selection function $\Theta_1 \colon \mathcal{N}_{Y}(y) \ra \text{Top}(X)$ satisfying $\Theta_1(U_W)=V_{U_{W}}$ for every
$W \in \mathcal{N}_Z(z).$ Since $f_1$ is complete, there exists a point $x \in f_1^{-1}\{y\}$ satisfying $\Cap_{U \in \mathcal{N}_Y(y)}\Theta_1(U)=\{x\}.$
Now
$$\{x\}=\Cap_{U \in \mathcal{N}_Y(y)} \Theta_1(U) \subseteq \Cap_{W \in \mathcal{N}_Z(z)} V_{U_W}= \Cap_{W \in \mathcal{N}_Z(z)} \Theta(W).$$
Thus $ \Cap_{W \in \mathcal{N}_Z(z)} \Theta(W) \neq \es,$ and we conclude that $f_2\circ f_1$ is a complete spread.
\end{proof}

\begin{Apulause}\label{sina} Let $f_1 \colon X \ra Y$ be a completed covering and $f_2 \colon Y \ra Z$ a spread between manifolds and suppose $Y'' \subset Y$ and $Z'' \subset Z$ are large subsets so that the map $f_2 \r Y'' \colon Y'' \ra Z''$ is a covering. Then $f_2$ is a completed covering if $f:=f_2 \circ f_1 \colon X \ra Z$ is a completed covering.
\end{Apulause}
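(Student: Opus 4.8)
The plan is to deduce the statement from the single fact that $f_2$ is a \emph{complete} spread. Indeed, once this is known, note that the inclusion $\iota\colon Y''\hra Y$ is an embedding with large image and $f_2\circ\iota=f_2\r Y''$, so by Definition \ref{completion} the complete spread $f_2\colon Y\ra Z$ is a completion of the spread $f_2\r Y''\colon Y''\ra Z$ (which is a spread, being an open map that is a covering onto its open image $Z''$); since moreover $f_2\r Y''\colon Y''\ra Z''$ is a covering and $Y''\subset Y$, $Z''\subset Z$ are large, Definition \ref{cc} yields that $f_2$ is a completed covering. So from now on the goal is to prove completeness of $f_2$.

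Fix $z\in Z$ and a selection function $\Theta_2\colon\mathcal{N}_Z(z)\ra\text{Top}(Y)$ of $f_2$; if no such function exists, the completeness condition at $z$ holds vacuously. Since $Z$ is a manifold, fix a decreasing sequence $W_0\supset W_1\supset\cdots$ of connected open neighbourhoods of $z$ that is a neighbourhood basis at $z$, and put $V_n:=\Theta_2(W_n)$, a component of the open set $f_2^{-1}(W_n)\subset Y$, so that $V_{n+1}\subset V_n$ for all $n$. The role of $f_1$ is captured by Lemma \ref{brunate}: since $f_1$ is a completed covering, it is the completion of a covering $f_1\r X'\colon X'\ra Y'$ onto a large subset $Y'\subset Y$, so for each $n$ and each component $C$ of $f_1^{-1}(V_n)$ we have $f_1(C)=V_n$; in particular $f_1$ is surjective.

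Using this I would construct inductively a decreasing sequence $C_0\supset C_1\supset\cdots$ with $C_n$ a component of $f_1^{-1}(V_n)$: let $C_0$ be any component of the nonempty open set $f_1^{-1}(V_0)$; given $C_n$, the equality $f_1(C_n)=V_n\supset V_{n+1}$ forces $C_n\cap f_1^{-1}(V_{n+1})\neq\es$, so some component $C_{n+1}$ of $f_1^{-1}(V_{n+1})$ meets $C_n$, and since $f_1^{-1}(V_{n+1})\subset f_1^{-1}(V_n)$ and $C_{n+1}$ is connected we get $C_{n+1}\subset C_n$. Because $Y$ is locally connected, $V_n$ is clopen in the open set $f_2^{-1}(W_n)$, so $f_1^{-1}(V_n)$ is clopen in $f^{-1}(W_n)$, whence each $C_n$ is in fact a component of $f^{-1}(W_n)$. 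A routine check then shows that the rule assigning to $W\in\mathcal{N}_Z(z)$ the component of $f^{-1}(W)$ containing $C_n$, for any $n$ with $W_n\subset W$, is a well defined monotone selection function $\Theta$ of $f$, and that $\Theta(W_n)=C_n$ for every $n$.

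Finally, $f=f_2\circ f_1$ is a completed covering, hence a complete spread, so $\Cap_{W\in\mathcal{N}_Z(z)}\Theta(W)\neq\es$; choose $x$ in this intersection. Then $x\in\Theta(W_n)=C_n\subset f_1^{-1}(V_n)$ for every $n$, so $f_1(x)\in V_n=\Theta_2(W_n)$ for every $n$, and since $(W_n)$ is a neighbourhood basis at $z$ this forces $f_1(x)\in\Cap_{W\in\mathcal{N}_Z(z)}\Theta_2(W)$. Hence $\Cap_W\Theta_2(W)\neq\es$, proving that $f_2$ is complete. The step I expect to be the main obstacle is exactly this coherent choice of the components $C_n$: a selection function lives on the directed but not linearly ordered family $\mathcal{N}_Z(z)$, and it is the reduction to a countable nested neighbourhood basis, combined with the surjectivity of $f_1$ on components of preimages (Lemma \ref{brunate}), that lets the inductive choice go through and then be promoted to a genuine selection function of the composite $f$.
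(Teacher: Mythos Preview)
Your proof is correct and follows essentially the same strategy as the paper: reduce to showing $f_2$ is a complete spread, lift the given selection function $\Theta_2$ for $f_2$ through $f_1$ to a selection function for $f$ using Lemma~\ref{brunate}, and conclude via completeness of $f$. The only difference is that the paper asserts in one line that such a lifted selection function $\Theta'$ exists ``as a consequence of Lemma~\ref{brunate}'', whereas you fill in exactly the coherence step the paper elides --- reducing to a countable nested neighbourhood basis of $z$ (using that $Z$ is a manifold), making the inductive choice of the $C_n$, and then promoting it to a selection function on all of $\mathcal{N}_Z(z)$; your identification of this as the main obstacle is apt.
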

\begin{proof}Let $X'\subset X,$ $Y' \subset Y$ and $Z' \subset Z$ be large subsets so that the maps $g_1:= f_1 \r X' \colon X' \ra Y'$ and $f_2 \r Y' \colon Y' \ra Z'$ are coverings. Then $f_1$ is the completion of $g_1 \colon X' \ra Y$, $g:= f \r X' \colon X' \ra Z'$ is a covering and $f$ is the completion of $g \colon X' \ra Z$. Since $f_2$ is a spread and $Y' \subset Y$ is large, it is sufficient to show that $f_2 \colon Y \ra Z$ is complete.
 
Suppose there are no selection functions for $f_2.$ Then $f_2$ is trivially complete. Suppose then that $z \in Z$ and $\Theta \colon \mathcal{N}_{Z}(z) \ra \text{Top}(Y)$ is a selection function for $f_2.$ Since $f_1$ is a completed covering, there exists a selection function $\Theta' \colon \mathcal{N}_{Z}(z) \ra \text{Top}(X)$ so that $\Theta'(W)$ is a component of $f_1^{-1}(\Theta(W))$ for every $W \in \mathcal{N}_Z(z)$ as a consequence of Lemma \ref{brunate}. Since $f$ is complete, there exists a point $x \in \Cap_{W \in \mathcal{N}_Z(z)} \Theta'(W).$ 
Now 
$$f_1(x) \in f\left(\Cap_{W \in \mathcal{N}_Z(z)} \Theta'(W)\right) \subseteq\Cap_{W \in \mathcal{N}_Z(z)} f(\Theta'(W))\subseteq\Cap_{W \in \mathcal{N}_Z(z)} \Theta(W).$$
Thus $f_2$ is complete and we conclude that $f_2$ is a completed covering.
\end{proof}

\subsection{Orbit maps}\label{os} In this section we prove Theorem \ref{sade} in the introduction.

An open continuous map $p \colon Y \ra Z$ is an \textit{orbit map} if there exists a subgroup $G \subset \text{Homeo}(Y)$ and a homeomorphism $\phi \colon Y/G \ra Z$ so that the diagram 
\begin{displaymath} 
\xymatrix{Y \ar[rr]^{p} \ar[dr]_{\pi} && Z  \\& Y/G \ar[ru]_{\phi}},
\end{displaymath}
commutes for the canonical map $\pi \colon Y \ra Y/G$ 

\begin{Apulause}\label{vindpark}Let $f \colon Y \ra Z$ be an orbit map. Let $U \subset Z$ be a connected and open set and let $V_1$ and $V_2$ be components of $f^{-1}(U)$ satisfying $f(V_1)=f(V_2)=U.$ Then there exists a deck-transformation $\tau \colon Y \ra Y$ satisfying $\tau(V_1)=V_2.$
\end{Apulause}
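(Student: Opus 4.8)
The plan is to produce the required deck-transformation as an element of the group $G$ that realizes $f$ as an orbit map. First I would fix a point $z \in U$ and use the surjectivity hypotheses $f(V_1)=f(V_2)=U$ to choose points $y_1 \in V_1$ and $y_2 \in V_2$ with $f(y_1)=f(y_2)=z$.

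Next I would unwind the definition of orbit map. Since $f=\phi \circ \pi$ for a homeomorphism $\phi \colon Y/G \ra Z$ and the canonical projection $\pi \colon Y \ra Y/G,$ the equality $f(y_1)=f(y_2)$ forces $\pi(y_1)=\pi(y_2),$ i.e.\ $y_1$ and $y_2$ lie in the same $G$-orbit. Hence there is $g \in G$ with $g(y_1)=y_2.$ Observe that such a $g$ is automatically a deck-transformation of $f$: it is a self-homeomorphism of $Y$ and $f \circ g = \phi \circ \pi \circ g = \phi \circ \pi = f,$ because $\pi$ is constant on $G$-orbits.

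It then remains to verify that $g(V_1)=V_2.$ From $f \circ g = f$ we get $g(f^{-1}(U))=f^{-1}(U),$ and since $g$ is a homeomorphism it carries components of $f^{-1}(U)$ onto components of $f^{-1}(U).$ Thus $g(V_1)$ is the component of $f^{-1}(U)$ containing $g(y_1)=y_2;$ but $V_2$ is, by definition, the component of $f^{-1}(U)$ containing $y_2,$ so $g(V_1)=V_2,$ and $\tau:=g$ is the desired deck-transformation.

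There is no genuine obstacle here; the only point requiring a moment's care is the identification of the fibers of $f$ with the $G$-orbits, which is immediate from the defining commutative triangle of an orbit map, together with the remark that every element of $G$ is automatically a deck-transformation of $f.$
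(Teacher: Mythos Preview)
Your proof is correct and follows essentially the same approach as the paper: pick a point $z\in U$, choose preimages $y_i\in V_i$ using $f(V_i)=U$, obtain a deck-transformation sending $y_1$ to $y_2$ from the orbit-map structure, and conclude that it carries $V_1$ onto $V_2$ because deck-transformations permute the components of $f^{-1}(U)$. The only cosmetic differences are that you spell out why elements of $G$ are deck-transformations, and you phrase the last step via ``components map to components'' where the paper argues $\tau(V_1)\subset V_2$ and $\tau^{-1}(V_2)\subset V_1$.
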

\begin{proof} Let $z \in U$ and fix points $y_1 \in V_1 \cap f^{-1}\{z\}$ and $y_2 \in V_2 \cap f^{-1}\{z\}.$ Since $f$ is an orbit map there is a deck-transformation $\tau \colon Y \ra Y$ for which $\tau(y_1)=y_2.$ Since $\tau(f^{-1}(U))=f^{-1}(U)$ and $V_1$ and $V_2$ are components of $f^{-1}(U)$, we have $\tau(V_1)\subset V_2$ and $\tau^{-1}(V_2)\subset V_1.$ Thus $\tau(V_1)=V_2.$   
\end{proof}

\begin{Thm}\label{pakkanen} Let $Y$ be a locally compact and locally connected Hausdorff space, $Z$ a manifold and $p \colon Y \ra Z$ a discrete, continuous and open orbit map. Suppose there are large subsets $Y' \subset Y$ and $Z' \subset Z$ so that $g \colon Y' \ra Z'$ is a covering. Then $p$ is a completed covering. 
\end{Thm}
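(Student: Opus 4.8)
The plan is to reduce the statement to the two hypotheses of Lemma~\ref{aurinko}. Since $Z'\subset Z$ is open, the covering $g\colon Y'\ra Z'$ ($=p\r Y'$) is, when regarded as a map into $Z$, an open map that is a covering onto its image $Z'$, hence a spread; as $Y'\subset Y$ is large and $p$ restricts to $g$, Definitions~\ref{completion} and~\ref{cc} show that it is enough to prove that $p$ is a \emph{complete} spread. To apply Lemma~\ref{aurinko} I first check that $p$ is a spread. Fix $y\in Y$, an open neighbourhood $N$ of $y$, and set $z=p(y)$. Since $p$ is discrete, $p^{-1}\{z\}$ is closed and discrete, so combining this with local compactness we find an open set $U_0\ni y$ with $\overline{U_0}$ compact, $\overline{U_0}\subset N$, and $\overline{U_0}\cap p^{-1}\{z\}=\{y\}$. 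Then $\partial U_0=\overline{U_0}\setminus U_0$ is compact, so $p(\partial U_0)$ is closed in $Z$ and does not contain $z$; choose a connected open $W\ni z$ disjoint from $p(\partial U_0)$. By local connectedness the $y$-component $V$ of $p^{-1}(W)$ is open, and since $p(V)\subset W$ is disjoint from $p(\partial U_0)$ we have $V\cap\partial U_0=\es$; hence the connected set $V$, which meets the open set $U_0$ at $y$, is contained in $U_0\subset N$. So the components of preimages of open sets form a basis of $Y$, i.e.\ $p$ is a spread.

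Next I would verify condition~(1) of Lemma~\ref{aurinko}: that $p(V)=W$ for every $z\in Z$, every $W\in\mathcal{N}_Z(z)$ and every component $V$ of $p^{-1}(W)$. This is where the orbit-map hypothesis enters. Let $G\subset\text{Homeo}(Y)$ be the acting group; then $p$ is a quotient map whose fibres are the $G$-orbits, and it restricts to a quotient map from $p^{-1}(W)$ onto $W$ since $p^{-1}(W)$ is $G$-invariant and hence saturated. As $Y$ is locally connected, each component of $p^{-1}(W)$ is clopen in $p^{-1}(W)$, so the $G$-orbit $O:=\bigcup_{\tau\in G}\tau(V)$ of $V$ is a $G$-invariant — hence saturated — clopen subset of $p^{-1}(W)$. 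A quotient map carries a saturated clopen set to a clopen set, so $p(V)=p(O)$ is a nonempty clopen subset of the connected set $W$, whence $p(V)=W$.

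For condition~(2), fix $z\in Z$; since $p$ is surjective, pick $y\in p^{-1}\{z\}$. Using the spread property and local compactness, choose $W_0\in\mathcal{N}_Z(z)$ small enough that the $y$-component $U_0$ of $p^{-1}(W_0)$ has $\overline{U_0}$ compact. Then $\overline{U_0}\cap p^{-1}\{z\}$ is a closed discrete subset of a compact set, so it is finite. If a component $V$ of $p^{-1}(W_0)$ meets $p^{-1}\{z\}$, say at $y'$, then some $\tau\in G$ carries $y$ to $y'$, and $\tau(U_0)$ is the component of $p^{-1}(W_0)$ through $y'$, so $V=\tau(U_0)$ and $V\cap p^{-1}\{z\}=\tau\bigl(U_0\cap p^{-1}\{z\}\bigr)$ is finite; a component disjoint from $p^{-1}\{z\}$ satisfies the condition trivially. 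Thus both hypotheses of Lemma~\ref{aurinko} hold, $p$ is a complete spread, and by the first paragraph $p$ is a completed covering.

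The step I expect to be the main obstacle is condition~(2): the fibre $p^{-1}\{z\}$ may be infinite, so one cannot simply separate its points in pairs by disjoint neighbourhoods and then shrink $W_0$. The resolution is the observation that every component of $p^{-1}(W_0)$ that meets the fibre is a single $G$-translate of the normal neighbourhood $U_0$, together with the use of local compactness to make $\overline{U_0}$ compact, so that the closed discrete set $p^{-1}\{z\}$ can meet $\overline{U_0}$ — and hence each such component — in only finitely many points. Condition~(1), though less delicate, is not automatic either: it fails for general discrete open maps and genuinely uses that $p$ is an orbit map.
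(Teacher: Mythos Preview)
Your proof is correct and follows essentially the same route as the paper: first verify that $p$ is a spread using local compactness and discreteness, then check the two hypotheses of Lemma~\ref{aurinko} via the orbit-map structure. Your saturated-clopen argument for condition~(1) is a minor streamlining of the paper's dichotomy argument (either $p(U_1)=p(U_2)$ or $p(U_1)\cap p(U_2)=\es$), and your treatment of condition~(2) is essentially identical to the paper's use of Lemma~\ref{vindpark}.
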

\begin{proof}
Suppose $U\subset Y$ is open. To show that $p$ is a spread, it is sufficient to show that for every $y \in U$ there exists a neighbourhood $V \subset p(U)$ of $p(y)$ for which the $y$-component $D$ of $p^{-1}(V)$ is contained in $U.$ Fix $y \in U.$ Since $Y$ is a locally compact and $p$ discrete there exists a neighbourhood $W\subset U$ of $y,$ so that $\overline{W}$ is compact and $\partial W \cap p^{-1}\{p(y)\}=\es.$ Since $p$ is open and continuous the set $p(W) \setminus p(\partial W)\subset Z$ is open. Since $Z$ is locally connected the $p(y)$-component $V$ of $p(W) \setminus p(\partial W)$ is open and since $Y$ is locally connected the $y$-component $D$ of $f^{-1}(V)$ is open. Since $D \cap \partial W=\es,$ we get $D \subset W \subset U.$  We conclude that $p$ is a spread.

Since $Y' \subset Y$ and $Z' \subset Z$ are large and $p \r Y' \colon Y' \ra Z'$ is a covering, to show that $p$ is a completed covering it is sufficient to show that the spread $p$ is complete.

For this let $z \in Z.$ By Lemma \ref{aurinko} it is suffient to show that $p(U)=W$ for every $W \in \mathcal{N}_Z(z)$ and $U$ a component of $p^{-1}(W),$ and that there is a connected neighbourhood $W_0$ of $z$ so that for every component $V_0$ of $p^{-1}(W_0)$ the set $V_0 \cap p^{-1}\{z\}$ is finite.
Towards this let $W \in \mathcal{N}_Z(z).$ Since $p$ is an orbit map $p$ is necessarily onto. Hence there is a component $U_1$ of $p^{-1}(W)$ for which $z \in p(U_1).$ Let $U_2$ be a component of $p^{-1}(W).$ Since $p$ is an orbit map the deck-transformation group $\T(p)$ acts transitively on fibers $p^{-1}\{z'\}$ for every $z' \in Z$. Thus Lemma \ref{vindpark} implies that $p(U_1)=p(U_2)$ or $p(U_1) \cap p(U_2)=\es.$ This implies that both $p(U_1)$ and $W \setminus p(U_1)$ are open sets. Since $W$ is connected, this is only possible if $W \setminus p(U_1)=\es.$ Thus $p(U)=W$ for every component $U$ of $p^{-1}(W).$   

Let then $y \in p^{-1}\{z\}$. Since $Y$ is locally compact and locally connected there is a neighbourhood $U_0$ of $y$ so that $p^{-1}\{z\}\cap U_0$ is finite and $U_0$ is the $y$-component of a $p^{-1}(p(U_0)).$ Further by Lemma \ref{vindpark}, $\#(p^{-1}\{z\}\cap V_0)=\#(p^{-1}\{z\}\cap U_0)$ for every component $V_0$ of $p^{-1}(p(U_0))$. Thus $p$ is a complete spread by Lemma \ref{aurinko}.
\end{proof}

\begin{proof}[Proof of Theorem \ref{sade}] Let $f \colon X \ra Z$ be a mapping and $X' \subset X$ and $Z' \subset Z$ large subsets as in the statement. Similarly, let $Y$ be a space and $p \colon Y \ra X$ and $q \colon Y \ra Z$ be orbit maps as in the statement. By assumption, there exists a large subset $Y' \subset Y$ for which the maps $p \r Y' \colon Y' \ra X'$ and $q \r Y' \colon Y' \ra X'$ are normal coverings. Thus, by Theorem \ref{pakkanen}, the maps $p$ and $q$ are completed coverings. Since $q=f \circ p$ and $p$ and $q$ are completed coverings, we have by Lemma \ref{sina}, that the map $f$ is a completed covering.
\end{proof}

\section{Existence of monodromy representations}

\subsection{Completed normal coverings and orbit maps}\label{ooo}

In this section we will prove Theorems \ref{hilu} and \ref{ll} in the introduction.
 
\begin{Thm}\label{vauva}Let $g \colon Y' \to Z'$ be a normal covering onto a large subset $Z'\subset Z$ of a manifold $Z,$ $f \colon Y \ra Z$ the completion of $g \colon Y' \ra Z$ and $\T(f)$ the deck-transformation group of $f.$ Then for the canonical maps $\pi$ and $\phi$
\begin{equation*}
\xymatrix{
Y \ar[dr]_{\pi} \ar[rr]^{f} && Z\\
&Y/\T(f) \ar[ur]_{\phi} &}
\end{equation*}
the map $\phi$ is a homeomorphism if and only if $f$ is discrete.
In particular, $f$ is an orbit map if and only if $f$ is discrete.
\end{Thm}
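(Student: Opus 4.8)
The plan is to study the canonical continuous surjection $\phi\colon Y/\T(f)\ra Z$ satisfying $\phi\circ\pi=f$, where $\pi\colon Y\ra Y/\T(f)$ is the quotient map. Since $Z'\subset Z$ is large, $f$ is an open surjection, and since $\pi$ is a quotient map, $\phi$ is automatically continuous, open and surjective; hence $\phi$ is a homeomorphism if and only if it is injective, i.e.\ if and only if $\T(f)$ acts transitively on each fibre $f^{-1}\{z\}$. The final ("in particular") clause then follows formally: if $f$ is an orbit map, say $f=\psi\circ\pi_G$ for $G\subset\text{Homeo}(Y)$ and a homeomorphism $\psi\colon Y/G\ra Z$, then $f\circ\gamma=f$ for every $\gamma\in G$, so $G\subset\T(f)$ and hence $f(y_1)=f(y_2)$ forces $y_1,y_2$ into a common $G$-orbit, hence a common $\T(f)$-orbit; thus $\phi$ is injective, and the converse implication is immediate. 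So it remains to prove that $\T(f)$ acts transitively on each fibre of $f$ if and only if $f$ is discrete.

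I will use three tools. First, since $f$ is a spread and $Y$ is locally connected, the sets $V_W(y)$, defined as the $y$-component of $f^{-1}(W)$ for $W\in\mathcal{N}_Z(z)$, form a neighbourhood basis at each $y\in f^{-1}\{z\}$; being components of open sets they are open and are permuted by $\T(f)$, and by Lemma \ref{brunate} one has $f(V_W(y))=W$. Second, since $f$ is complete, the assignment $y\mapsto(V_W(y))_{W}$ identifies $f^{-1}\{z\}$ with $\varprojlim_{W\in\mathcal{N}_Z(z)}\mathcal{C}(W)$, where $\mathcal{C}(W)$ denotes the set of components of $f^{-1}(W)$ (surjectivity is completeness applied to the selection function $W\mapsto V_W(y)$, injectivity is the neighbourhood basis together with Hausdorffness); by Lemma \ref{brunate} each element of $\mathcal{C}(W)$ meets $f^{-1}\{z\}$, and the bonding maps $\mathcal{C}(W')\ra\mathcal{C}(W)$ for $W'\subset W$ are surjective and $\T(f)$-equivariant. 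Third, every $\sigma\in\T(g)$ extends to a unique $\ol\sigma\in\T(f)$: indeed $g\circ\sigma=g$, so $(Y,f,i\circ\sigma)$ is again a completion of the spread $g\colon Y'\ra Z$, whence uniqueness of completions \cite{F} produces a homeomorphism $\ol\sigma\colon Y\ra Y$ with $f\circ\ol\sigma=f$ and $\ol\sigma\r Y'=\sigma$, unique since $Y'$ is dense and $Y$ is Hausdorff; combined with Lemma \ref{biotin} this shows $\tau\mapsto\tau\r Y'$ is an isomorphism $\T(f)\ra\T(g)$, so $\T(f)$ is countable.

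Assume first that $f$ is discrete, and fix $y_1,y_2\in f^{-1}\{z\}$. Using the neighbourhood basis and discreteness, choose $W\in\mathcal{N}_Z(z)$ with $V_i:=V_W(y_i)$ satisfying $V_i\cap f^{-1}\{z\}=\{y_i\}$. Since $Y'=f^{-1}(Z')$ is dense, $f(V_i)=W$, and $W\cap Z'$ is connected (as $Z'$ is large), there is a component $C_i$ of $g^{-1}(W\cap Z')$ with $C_i\subset V_i$; as $g$ is a \emph{normal} covering, $\T(g)$ acts transitively on the components of $g^{-1}(W\cap Z')$, so some $\sigma\in\T(g)$ has $\sigma(C_1)=C_2$, and its extension $\ol\sigma$ carries $V_1$ onto the component of $f^{-1}(W)$ containing $C_2$, namely $V_2$; hence $\ol\sigma(y_1)\in V_2\cap f^{-1}\{z\}=\{y_2\}$. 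Thus $\T(f)$ is transitive on $f^{-1}\{z\}$ and $\phi$ is a homeomorphism. Conversely, assume $\phi$ is a homeomorphism but $f$ is not discrete, so some $y$ is a non-isolated point of $f^{-1}\{z\}$; since $Y$ is Hausdorff and the $V_W(y)$ form a neighbourhood basis, $V_W(y)\cap f^{-1}\{z\}$ is in fact infinite for every $W$. Transitivity of $\T(f)$ on $f^{-1}\{z\}$ together with $\T(f)$-equivariance forces $\T(f)$ to act transitively on each $\mathcal{C}(W)$ (a deck transformation matching two fibre points over $z$ lying in components $V,V'$ must carry $V$ to $V'$), so all fibres of each bonding map have equal cardinality and all elements of $\mathcal{C}(W)$ carry the same number of points of $f^{-1}\{z\}$ --- which, by the previous sentence, is infinite. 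Along a nested cofinal sequence $W_1\supset W_2\supset\cdots$ at $z$, infinitely many bonding maps $\mathcal{C}(W_{k+1})\ra\mathcal{C}(W_k)$ therefore have all fibres of size $\geq 2$ (otherwise the system is eventually bijective and $V_{W_k}(y)\cap f^{-1}\{z\}$ eventually a singleton); passing to those levels and building a binary subtree of components exhibits $2^{\aleph_0}$ distinct elements of $\varprojlim_W\mathcal{C}(W)\cong f^{-1}\{z\}$, which is impossible since the countable group $\T(f)$ acts transitively on $f^{-1}\{z\}$. Hence $f$ is discrete.

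The delicate point is the last direction --- converting non-discreteness into an uncountable fibre. The essential input, and the only place where more than soft point-set topology is needed, is that transitivity of $\T(f)$ makes the branching of the tree $\varprojlim_W\mathcal{C}(W)$ uniform across each level, so a single infinitely-branching strand propagates to a full binary tree; without transitivity the fibre could be a countable "comb" and the counting argument would break down.
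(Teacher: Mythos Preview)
Your proof is correct and, for the forward direction (discrete $\Rightarrow$ orbit map), essentially identical to the paper's. The difference is in the converse. The paper assumes $f^{-1}\{z\}$ is countable, enumerates it as $\{y_0,y_1,\ldots\}$, and runs a diagonal argument: using transitivity to know every $y_k$ is also a non-isolated fibre point, it builds a nested chain of components $U_0\supset U_1\supset\cdots$ over a neighbourhood basis $W_0\supset W_1\supset\cdots$ of $z$ so that $y_k\notin U_{k+1}$; completeness then produces a point $y_\infty\in\bigcap U_n\subset f^{-1}\{z\}$ missed by the enumeration. You instead exploit transitivity to show the bonding maps $\mathcal{C}(W_{k+1})\ra\mathcal{C}(W_k)$ have constant fibre size at each level, deduce that infinitely many levels genuinely branch, and embed a full binary tree into the inverse system to get $2^{\aleph_0}$ selection functions, hence $2^{\aleph_0}$ fibre points by completeness. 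Both arguments rest on the same three ingredients --- countability of $\T(f)\cong\T(g)$, propagation of transitivity from fibres to component sets, and completeness of the spread --- and both reach the same contradiction; the paper's diagonalisation is slightly more economical (it avoids the tree bookkeeping and only needs to manufacture a single extra point), while your König-style argument gives the stronger conclusion $\lvert f^{-1}\{z\}\rvert=2^{\aleph_0}$ and makes the role of uniform branching more transparent.
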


\begin{Not}\label{huu}Let $g \colon Y' \to Z'$ be a normal covering onto a large subset $Z'\subset Z$ of a manifold $Z$ and $f \colon Y \ra Z$ the completion of $g \colon Y' \ra Z.$ Then the deck transformations of $g$ are restrictions of deck transformations of $f$ by Lemma \ref{biotin}, that is,
$\T(g)=\{\tau \r Y' \colon Y' \ra Y' : \tau \in \T(f)\},$ and each $\tau \in \T(f)$ is the completion of $\tau \r Y' \colon Y' \ra Y$ since $Y' \subset Y$ is large. In particular, the homomorphism
$$\varphi \colon \T(f) \ra \T(g), \tau \mapsto \tau \r Y',$$ is an isomorphism by the uniqueness of completions, see \cite[Cor. 7.8]{MA}.
\end{Not}

\begin{proof}[Proof of Theorem \ref{vauva}] Suppose first that $f$ is discrete. Since $\phi$ is open and continuous, it is sufficient to show that it is bijective; we need to show that for every $z \in Z$ and $y_1, y_2 \in f^{-1}\{z\}$ there exists $\tau \in \T(f)$ so that $\tau(y_1)=y_2.$ 

Since $f$ is discrete, there exists $W \in \mathcal{N}_Z(z)$ so that the $y_i$-components $V_i$ of $f^{-1}(W)$ satisfy $f^{-1}\{z\} \cap V_i=\{y_i\}$ for $i \in \{1,2\}.$ Since $g$ is a normal covering and $Y' \subset Y$ is large, there exists $\tau' \colon Y' \ra Y'$ in $\T(g)$ for which $\tau'(Y' \cap V_1)=Y' \cap V_2.$ The completion $\tau \in \T(f)$ of $\tau' \colon Y' \ra Y$ satisfies $\tau(y_1)=y_2.$ 

Suppose now that $\phi$ is a homeomorphism. Thus $\T(f)$ acts transitively on $f^{-1}\{z\}$ for every $z \in Z.$ Arguing towards contradiction assume $f$ is not discrete. Then there is a point $y \in Y$ so that $f^{-1}\{f(y)\} \cap U$ is an infinite set for every neighbourhood $U$ of $y.$ Let $z:=f(y).$ Since $g \colon Y' \ra Z'$ is a covering between manifolds, $\T(g)$ is countable. Since $\T(g)\cong\T(f),$ for a contradiction it is sufficient to show that $f^{-1}\{z\}$ is uncountable. 

Towards a contradiction suppose that the set $f^{-1}\{z\}$ is countable and let $f^{-1}\{z\}=\{y_0,y_1, \ldots \}$ be an enumeration of $f^{-1}\{z\}.$ Then for every $k \in \N$ and neighbourhood $U$ of $y_k$ the set $f^{-1}\{z\} \cap U$ is infinite, since $f$ is an orbit map. 

Fix $W_0 \in \mathcal{N}_Z(z)$ and let $U_0$ be the $y_0$-component of $f^{-1}(W_0).$ Then fix a point $y'_1:=y_k \in \{y_1, y_2 \ldots \} \cap U_0$ for which $y_i \notin U_0, i < k,$ and let $W_1 \in \mathcal{N}_Z(z)$ be such that the $y'_1$-component $U_1$ of $f^{-1}(W_1)$ satisfies $U_0 \supset U_1$ but $y_0 \notin U_1.$ Proceed in this fashion to construct sequences $W_0 \supset W_1 \supset \cdots$ and $U_0 \supset U_1 \supset \cdots $ so that $\Cap_{n=0}^{\infty}W_n=\{z\}.$ 

Since $f$ is a complete spread, there exists a point $y_{\infty}$ so that $\Cap_{n=0}^{\infty}U_n=\{y_{\infty}\}.$ However, by the construction, $\Cap_{n=0}^{\infty}U_n \cap f^{-1}\{z\}$ is empty. This is a contradiction. Hence $f^{-1}\{z\}$ is uncountable and $\phi$ is not injective. Since $\phi$ is bijective, this is a contradiction and we conclude that $f$ is discrete.
\end{proof}

Theorems \ref{hilu} and \ref{ll} in the introduction are now easy corollaries of Theorem \ref{vauva}.
\begin{proof}[Proof of Theorem \ref{hilu}] Let $p \colon Y \ra Y$ be a completed normal covering. Then there are large subsets $Y'\subset Y$ and $Z' \subset Z$ so that $p \r Y' \colon Y' \ra Z'$ is a normal covering and $p$ is the completion of $p \r Y' \colon Y' \ra Z.$ By Theorem \ref{vauva} $p$ is an orbit map if and only if $p$ is a discrete map. 
\end{proof}

\begin{proof}[Proof of Theorem \ref{ll}] Let $f \colon X \ra Z$ be a completed covering. Suppose $f$ has a monodromy representation $(Y,p,q).$ We need to show that $f,$ $p$ and $q$ are discrete maps. Since $p$ and $q$ are completed coverings and orbit maps, $p$ and $q$ are completed normal coverings. Thus $p$ and $q$ are discrete maps by Theorem \ref{vauva}. Since $p$ is an open surjection and $q=p \circ f,$ the discreteness of $f$ follows from the discreteness of $q.$
\end{proof}

\subsection{Image of the branch set of a completed covering}\label{ando}
In this section we prove Theorem \ref{yopo} in the introduction. We recall that the branch set $B_f$ of a completed covering $f \colon X \ra Z$ is the set of points where $f$ fails to be a local homeomorphism. We say that a completed covering $f \colon X \ra Z$ is \textit{uniformly discrete} if every $z \in Z$ has a neighbourhood $U$ so that $f^{-1}\{z\} \cap D$ is a point for every component $D$ of $f^{-1}(U).$  

\begin{Prop}\label{lamppu} Let $p \colon Y \ra Z$ be a discrete completed normal covering. Then $p(B_p) \subset Z$ is closed for the branch set $B_p$ of $p.$ 
\end{Prop}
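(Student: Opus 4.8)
The plan is to show that the complement $Z \setminus p(B_p)$ is open by finding, for each $z \in Z \setminus p(B_p)$, a neighbourhood disjoint from $p(B_p)$. The key structural tool is that a discrete completed normal covering is in fact \emph{uniformly discrete}: given $z \in Z$, discreteness provides $W \in \mathcal{N}_Z(z)$ so that each component of $p^{-1}(W)$ meets $p^{-1}\{z\}$ in at most one point, and since $p$ is an orbit map (by Theorem \ref{hilu}, or directly Theorem \ref{vauva}), the deck group $\T(p)$ permutes the components of $p^{-1}(W)$ transitively on the fibre, so \emph{every} component meets $p^{-1}\{z\}$ in exactly one point. Moreover, by Lemma \ref{brunate}, $p(D) = W$ for every component $D$ of $p^{-1}(W)$. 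Thus each $p \r D \colon D \to W$ is an open, surjective, discrete map which is injective on the fibre $p^{-1}\{z\}$.

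Now fix $z \in Z \setminus p(B_p)$ and take such a $W$ together with a component $D$ of $p^{-1}(W)$; let $\{y\} = D \cap p^{-1}\{z\}$, so $y \notin B_p$, i.e.\ $p$ is a local homeomorphism near $y$. Shrinking $W$, I would arrange that $p \r D \colon D \to W$ is a homeomorphism: indeed, since $p$ is a local homeomorphism at $y$, there is a neighbourhood $D_0 \ni y$ with $p \r D_0$ an open embedding onto a connected neighbourhood $W_0 \subset W$ of $z$; replacing $W$ by $W_0$ and $D$ by the $y$-component $D_0'$ of $p^{-1}(W_0)$, uniform discreteness forces $D_0' \cap p^{-1}\{z\} = \{y\}$, and I claim $D_0' = $ (the component through $y$ of) $D_0$. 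This last claim is the crux: one must rule out that $D_0'$ is strictly larger than the embedded sheet $D_0$. Here I would use that $D_0$ is open and closed in $p^{-1}(W_0)$ near $y$ in the following sense — $p(D_0) = W_0$ already surjects, $p \r D_0$ is injective, so if $D_0 \subsetneq D_0'$ there is a point $y' \in D_0' \setminus D_0$ with $p(y') = z'$ for some $z' \in W_0$; but then $y'$ and the unique point of $D_0$ over $z'$ are two points of $D_0'$ over $z'$, contradicting uniform discreteness applied at $z'$ (within $W_0$, which is a neighbourhood of $z'$ too, provided $W_0$ was chosen from the uniform-discreteness neighbourhood of $z'$ — this requires a little care, so instead I will argue that $D_0$ is open and closed in $D_0'$: it is open since $p \r D_0$ is a local homeomorphism and $D_0$ is a full sheet, and closed because a boundary point $y'$ of $D_0$ in $D_0'$ would satisfy $p(y') \in \overline{p(D_0)}$ with $y'$ a non-branch or branch point either way producing a second preimage in a small sheet, contradicting injectivity of $p$ on $p^{-1}\{p(y')\} \cap D_0'$). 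Since $D_0'$ is connected, $D_0 = D_0'$, so $p \r D_0 \colon D_0 \to W_0$ is a homeomorphism.

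Finally, transitivity of $\T(p)$ on the fibre together with the fact (Remark-style, from Lemma \ref{vindpark}) that deck transformations permute the components of $p^{-1}(W_0)$ shows that $p \r D'$ is a homeomorphism onto $W_0$ for \emph{every} component $D'$ of $p^{-1}(W_0)$; hence $p^{-1}(W_0) = \bigsqcup D'$ with $p$ restricting to a homeomorphism on each piece, so $p$ is a local homeomorphism at every point of $p^{-1}(W_0)$. Therefore $W_0 \cap p(B_p) = \es$, so $z$ has a neighbourhood missing $p(B_p)$. This being true for every $z \notin p(B_p)$, the set $p(B_p)$ is closed.

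The main obstacle is the identification $D_0 = D_0'$ — i.e.\ promoting "local homeomorphism at the single preimage point $y$" to "$p \r D$ is a global homeomorphism onto a possibly smaller neighbourhood" — which is where uniform discreteness (a consequence of discreteness plus the orbit-map property) and Lemma \ref{brunate} must be combined carefully. Everything else is soft point-set topology.
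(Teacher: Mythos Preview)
Your approach is sound in outline but far more laborious than needed, and the step you yourself flag as ``the main obstacle'' --- showing $D_0 = D_0'$, or equivalently that $p \r D_0'$ is injective --- is not justified as written. You appeal to ``injectivity of $p$ on $p^{-1}\{p(y')\} \cap D_0'$'', but uniform discreteness only gives you a single preimage in $D_0'$ over $z$, not over an arbitrary $z' \in W_0$. The gap can be filled using the orbit-map structure: if $y', y'' \in D_0'$ satisfy $p(y') = p(y'')$, the deck transformation $\tau$ with $\tau(y') = y''$ preserves $D_0'$ (Lemma \ref{vindpark}), hence fixes the unique point $y$ over $z$; since $p$ is a local homeomorphism at $y$, $\tau = \id$ near $y$, and by Remark \ref{huu} this forces $\tau = \id$. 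So $p \r D_0'$ is injective and your argument concludes.

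The paper, however, avoids all of this with a three-line argument. Since $p$ is an orbit map (Theorem \ref{vauva}) and deck transformations are homeomorphisms commuting with $p$, the branch set is \emph{saturated}: $p^{-1}(p(B_p)) = B_p$. Consequently $p(B_p) = Z \setminus p(Y \setminus B_p)$; as $B_p \subset Y$ is closed and $p$ is open, $p(Y \setminus B_p)$ is open and $p(B_p)$ is closed. Your route does prove a bit more --- it exhibits an evenly covered neighbourhood of each unbranched value --- but for the statement at hand the saturation observation is the efficient argument, and it is worth internalising: for any orbit map the branch set is a union of orbits, so its image is automatically closed once the branch set itself is.
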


\begin{proof}Let $U:=Y \setminus p^{-1}(p(B_p)).$ Since $p$ is an orbit map by Theorem \ref{vauva}, we have $p^{-1}(p(B_p))=B_p$ and $p(B_p)=Z \setminus p(U).$ Now $U \subset Y$ is open, since $B_p \subset Y$ is closed. Thus $p(B_p)=Z \setminus p(U)$ is closed, since $p$ is an open map. 
\end{proof}

\begin{Prop}\label{gingerale} Let $f \colon X \ra Z$ be a completed covering between manifolds. Suppose there are uniformly discrete completed normal coverings $p \colon Y \ra X$ and $q \colon Y \ra Z$ satisfying $q:=f \circ p.$ Then $f(B_f) \subset Z$ is closed. 
\end{Prop}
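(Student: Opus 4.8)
The plan is to show that $Z \setminus f(B_f)$ is open by producing, around each point $z_0 \in Z \setminus f(B_f)$, a neighbourhood disjoint from $f(B_f)$. The natural device is the commutative triangle $q = f \circ p$ with $p \colon Y \to X$ and $q \colon Y \to Z$ uniformly discrete completed normal coverings. First I would record the relationship between the branch sets: since $p$, $q$ and $f$ are completed coverings with $q = f \circ p$ and $p$ is open and surjective, a point $y \in Y$ lies in $B_q$ precisely when $y \in B_p$ or $p(y) \in B_f$; equivalently $q(B_q) = q(B_p) \cup q(p^{-1}(B_f)) = q(B_p) \cup f(B_f)$ (using $p(B_p) \subset X$ and the fact that $p$ is surjective). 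Hence $f(B_f) \subset q(B_q)$, and more precisely $f(B_f) = q(B_q) \setminus (q(B_p) \setminus f(B_f))$, but the cleaner route is to argue directly at a point.

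Next I would use discreteness. By Proposition \ref{lamppu}, applied to the discrete completed normal coverings $p$ and $q$ (uniform discreteness implies discreteness), the sets $p(B_p) \subset X$ and $q(B_q) \subset Z$ are closed. Fix $z_0 \notin f(B_f)$. Then $f^{-1}\{z_0\} \cap B_f = \varnothing$. I want a neighbourhood $U$ of $z_0$ with $f^{-1}(U) \cap B_f = \varnothing$; equivalently $p^{-1}(f^{-1}(U)) \cap B_p = \varnothing$ would already give, via $q = f\circ p$ and surjectivity of $p$, that $U \cap f(B_f) = \varnothing$ provided we also control $B_p$. Here is where uniform discreteness of $q$ enters: choose a connected neighbourhood $W_0$ of $z_0$ so that $q^{-1}\{z_0\} \cap D$ is a single point for every component $D$ of $q^{-1}(W_0)$. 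Since $q(B_q)$ is closed and, by the branch-set identity, $z_0 \in q(B_q)$ would force $z_0 \in f(B_f)$ or $z_0 \in q(B_p)$; the first is excluded, so I must separately rule out the contribution of $q(B_p)$.

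To do that I pass through $X$. Since $p(B_p) \subset X$ is closed and $f$ is continuous and open, consider $f^{-1}\{z_0\}$: it is a discrete set (as $f$ is discrete by Theorem \ref{ll}, or directly since $q$ is discrete and $p$ open surjective), and each of its points has a neighbourhood in $X$ meeting $p(B_p)$ in a closed set not containing that point only if the point is outside $p(B_p)$ — which need not hold. The correct move: shrink $W_0$ further to a connected $W \subset W_0$ so that, for each component $V$ of $f^{-1}(W)$, by Lemma \ref{brunate} $f(V) = W$ and $V \cap f^{-1}\{z_0\}$ is a single point $x_V \notin B_f$; then $f$ is a local homeomorphism at $x_V$, so after one more shrinking each $V$ maps homeomorphically onto $W$ and $V \cap B_f = \varnothing$. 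Pulling back by $p$ and using that $q = f \circ p$ is uniformly discrete with the chosen $W$, each component of $q^{-1}(W)$ maps under $p$ into some such $V$ and is itself a covering piece, hence contains no point of $B_q$, so $W \cap q(B_q) = \varnothing$; since $f(B_f) \subset q(B_q)$, we get $W \cap f(B_f) = \varnothing$, proving $f(B_f)$ closed.

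The main obstacle is the bookkeeping in the second and third paragraphs: establishing the branch-set identity $q(B_q) = q(B_p) \cup f(B_f)$ rigorously for completed coverings (as opposed to honest covering maps), and then arranging the nested neighbourhoods $W \subset W_0$ so that uniform discreteness of $q$, normality, and the local-homeomorphism property of $f$ away from $B_f$ all apply simultaneously to the \emph{same} $W$. I expect to lean on Lemma \ref{brunate} (components of $f^{-1}(W)$ surject onto $W$) and on the uniform discreteness hypothesis precisely to make the fibre $q^{-1}\{z_0\}$ — and hence, via $p$, the fibre $f^{-1}\{z_0\}$ — meet each relevant component in a single point, which is what lets the "no branching in $W$" conclusion propagate from $Y$ down to $Z$.
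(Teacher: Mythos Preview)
Your proposal has a genuine gap at the step ``after one more shrinking each $V$ maps homeomorphically onto $W$ and $V \cap B_f = \varnothing$.'' The fibre $f^{-1}\{z_0\}$ may be infinite, so there are in general infinitely many components $V$ of $f^{-1}(W)$, each containing a single point $x_V \notin B_f$. While $f$ is a local homeomorphism at every $x_V$, the injectivity neighbourhoods at the various $x_V$ need not have any uniform size: the radii could shrink to zero as $x_V$ ranges over the fibre. Hence no single smaller $W$ can be chosen to make \emph{all} components map homeomorphically. This is not mere bookkeeping; it is exactly the content of the proposition---you are trying to rule out branch points of $f$ accumulating toward a non-branch fibre, and the ``one more shrinking'' step silently assumes that such accumulation does not occur.

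The paper avoids this by never attempting a global shrinking. Instead it works one component $D$ of $q^{-1}(U)$ at a time and uses the deck-transformation group: it shows that the stabilizer $H_D = \{\tau \in \T(q) : \tau(D)=D\}$ is contained in $\T(p)$, which forces $f\r p(D)$ to be injective. The key device is Lemma~\ref{kylajuhlat}, which says that under uniform discreteness the stabilizer $H_D$ equals the stabilizer $H_C$ of any \emph{smaller} component $C \subset D$ over a smaller neighbourhood of $z_0$. This lets you shrink \emph{locally at the single point} $y \in D \cap q^{-1}\{z_0\}$---where $f$ is a local homeomorphism at $p(y)$---to get $H_C \subset \T(p)$, and then transport that conclusion back up to $H_D$. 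The stabilizer equality is precisely what substitutes for the uniform shrinking you were reaching for; without an analogue of Lemma~\ref{kylajuhlat}, your outline does not close.
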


For proving Proposition \ref{gingerale} we need the following lemma.

\begin{Apulause}\label{kylajuhlat} Let $p \colon Y \ra Z$ be a uniformly discrete completed normal covering. Let $z \in Z$ and $U, V  \in \mathcal{N}_Z(z)$ be such that $U \subset V$ and $p^{-1}\{z\} \cap D$ is a point for every component $D$ of $p^{-1}(V).$ Let $D$ be a component of $p^{-1}(V)$ and $C$ a component of $p^{-1}(U)$ so that $C \subset D.$ Then the groups
$$H_D:=\{ \tau \in \T(p) : \tau(D)=D\} \text{ and }H_C:=\{ \tau \in \T(p) : \tau(C)=C\}$$
satisfy $H_D=H_C.$
\end{Apulause}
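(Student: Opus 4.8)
The plan is to establish the two inclusions $H_D \subset H_C$ and $H_C \subset H_D$ separately, after first pinning down the single point of the fiber $p^{-1}\{z\}$ that both $C$ and $D$ are forced to contain.

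First I would locate this fiber point. Since $p$ is a completed normal covering, it is the completion of its restricted covering, and $U \subset Z$ is connected and open, so Lemma \ref{brunate} applies to the component $C$ of $p^{-1}(U)$ and gives $p(C)=U$. As $z \in U$, there is some $y_0 \in p^{-1}\{z\} \cap C$. Because $C \subset D$ and, by hypothesis, $p^{-1}\{z\} \cap D$ is a single point, this forces $p^{-1}\{z\} \cap D = \{y_0\}$ with $y_0 \in C$. This is really the only place the hypotheses enter in an essential way: the surjectivity $p(C)=U$ is what guarantees that $C$, and not merely $D$, meets the fiber, and uniform discreteness at $z$ is what makes the intersection with $D$ a single point.

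For $H_D \subset H_C$, take $\tau \in \T(p)$ with $\tau(D)=D$. Then $\tau(y_0) \in p^{-1}\{z\}$ (as $\tau$ is a deck transformation) and $\tau(y_0)\in \tau(D)=D$, so $\tau(y_0) \in p^{-1}\{z\}\cap D=\{y_0\}$, i.e.\ $\tau(y_0)=y_0$. Since $\tau$ maps $p^{-1}(U)$ homeomorphically onto itself it permutes the components of $p^{-1}(U)$, so $\tau(C)$ is such a component; and both $C$ and $\tau(C)$ contain $y_0$, hence $\tau(C)=C$, that is $\tau \in H_C$. For the reverse inclusion $H_C \subset H_D$, take $\tau$ with $\tau(C)=C$; then $\tau$ permutes the components of $p^{-1}(V)$, so $\tau(D)$ is one of them, and it contains the connected set $\tau(C)=C$. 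Since $D$ is also a component of $p^{-1}(V)$ containing the connected set $C$, and a connected subset of a space lies in a unique component, we conclude $\tau(D)=D$, i.e.\ $\tau \in H_D$. Combining the two inclusions yields $H_C=H_D$.

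I do not expect a genuine obstacle: once $y_0$ is identified the argument is purely formal. The only care needed is to apply Lemma \ref{brunate} with the correct connected open set and to use that a connected set sits in exactly one connected component; I would also remark explicitly that the uniform-discreteness hypothesis is used only for the inclusion $H_D \subset H_C$, the reverse inclusion holding for any spread.
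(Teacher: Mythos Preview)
Your proof is correct, and it is somewhat more direct than the paper's. The paper first passes to the restricted normal covering $g:=p\r Y'\colon Y'\to Z'$ via the isomorphism $\T(p)\cong\T(g)$ (invoking Theorem~\ref{vauva} and Remark~\ref{huu}), then picks a point $z'\in Z'\cap U$, deduces from Lemma~\ref{brunate} that $p^{-1}(U)\cap D=C$ and hence $g^{-1}\{z'\}\cap(D\cap Y')=g^{-1}\{z'\}\cap(C\cap Y')$, and finally identifies each stabilizer with the set of deck transformations sending a fixed $y'$ to the points of this common fiber. Your argument bypasses the passage to $Y'$: you work directly with the single fiber point $y_0\in p^{-1}\{z\}\cap C$ and observe that any $\tau\in H_D$ must fix $y_0$, which forces $\tau(C)=C$. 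Both proofs hinge on Lemma~\ref{brunate} at the same essential point, but your fixed-point formulation is leaner and, as you note, does not require knowing that $p$ is an orbit map.
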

\begin{proof} Let $Y' \subset Y$ and $Z' \subset Z$ be large subsets so that $g:=p \r Y' \colon Y' \ra Z'$ is a normal covering. Then $D \cap Y'$ is a component of $V \cap Z'$ and $C \cap Y'$ is a component of $U \cap Z'.$ By Theorem \ref{vauva} $p$ is an orbit map, and in particular, the map $\T(p) \ra \T(g), \tau \mapsto \tau \r Y'$ is an isomorphism \cite[Cor. 7.8]{MA}. Thus it is sufficient to show that $H'_D=H_C'$ for the groups 
$$H'_D:=\{ \tau \in \T(g) : \tau(D \cap Y')=D \cap Y'\}$$ and $$H'_C:=\{ \tau \in \T(g) : \tau(C \cap Y')=C \cap Y'\}.$$
Let $z' \in Z' \cap U.$ Since $p^{-1}\{z\} \cap D$ is a point, we have $p^{-1}(U) \cap D=C$ as a consequence of Lemma \ref{brunate}. Thus $g^{-1}\{z'\} \cap (D \cap Y')=g^{-1}\{z'\} \cap (C \cap Y').$ Let $y' \in g^{-1}\{z'\} \cap (C \cap Y')$ and let
$\tau_{y} \colon Y' \ra Y'$ be for every $y \in p^{-1}\{z'\}$
the unique deck-transformation of $g$ that satisfies $\tau_y(y')=y.$
Then
\begin{align*}H'_{D}&=\{ \tau_y : y \in g^{-1}\{z'\} \cap (D \cap Y')\}
\\&=\{ \tau_y : y \in g^{-1}\{z'\} \cap (C \cap Y')\}=H'_C.
\end{align*}
Thus $H_D=H_C.$
\end{proof}

\begin{proof}[Proof of Proposition \ref{gingerale}] Let $z \in Z \setminus f(B_f)$ and $U \in \mathcal{N}_Z(z)$ be a neighbourhood of $z$ so that the set $q^{-1}\{z\} \cap D$ is a point for every component $D$ of $q^{-1}(U)$.  We note that, since $p \colon Y \ra X$ is onto $X$, $U\subset Z \setminus f(B_f)$ if $f \r p(D)$ is an injection for every component $D$ of $q^{-1}(U).$ 

Let $D$ be a component of $q^{-1}(U)$ and
$H_D:=\{\tau \in \T(q) : \tau(D)=D\}.$ By Theorem \ref{vauva}, $q$ is an orbit map. Thus $f \r p(D)$ is an injection, if $H_D \subset \T(p).$ 

Let $y \in q^{-1}\{z\} \cap D$ and $x:=p(y).$ Since $x \in f^{-1}(Z \setminus B_f)$ there exists $U_0 \in \mathcal{N}_Z(z)$ so that $f \r V$ is an injection for the $x$-component $V$ of $f^{-1}(U_0)$ and so that $U_0 \subset U.$ Let $C$ be the $y$-component of $q^{-1}(U_0).$ Then $f \r p(C)=f \r V$ by Lemma \ref{brunate}. Thus $H_C:=\{\tau \in \T(q) : \tau(C)=C\} \subset \T(p),$ since $f \r p(C)$ is an injection.

By Lemma \ref{kylajuhlat}, $H_D=H_C,$ since $C \subset D.$ Thus $H_D=H_C \subset \T(p).$ Thus $f \r p(D)$ is an injection. Thus $U \subset Z \setminus f(B_f)$ and $f(B_f)\subset Z$ is closed. 
\end{proof}

Towards proving Theorem \ref{yopo} we make the following observation.

\begin{Apulause}\label{spread_5}Let $p \colon Y \to Z$ be a completed normal covering. Then $p$ is a discrete map if and only if $p$ is uniformly discrete.
\end{Apulause}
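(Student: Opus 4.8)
The plan is to prove the two implications separately. The implication ``uniformly discrete $\Rightarrow$ discrete'' is the routine one, so I would dispatch it first. Given $z \in Z$ and $y \in p^{-1}\{z\}$, take a neighbourhood $U \in \mathcal{N}_Z(z)$ witnessing uniform discreteness of $p$ at $z$, and let $D$ be the $y$-component of $p^{-1}(U)$. Since $Y$ is locally connected, $D$ is open, and by the defining property of $U$ we have $D \cap p^{-1}\{z\} = \{y\}$. Hence $y$ is isolated in the fibre $p^{-1}\{z\}$; as $z$ and $y$ were arbitrary, $p$ is discrete.

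For the converse I would invoke that a completed normal covering is an orbit map by Theorem \ref{vauva}, so the deck-transformation group $\T(p)$ acts transitively on each fibre $p^{-1}\{z\}$. Fix $z \in Z$ and a point $y_0 \in p^{-1}\{z\}$. Since $p$ is discrete there is an open neighbourhood of $y_0$ meeting $p^{-1}\{z\}$ only in $y_0$; because $p$ is a spread, the components of preimages of open sets of $Z$ form a neighbourhood basis of $Y$, so we may choose $W \in \mathcal{N}_Z(z)$ such that the $y_0$-component $V_0$ of $p^{-1}(W)$ satisfies $V_0 \cap p^{-1}\{z\} = \{y_0\}$. The claim is that this single $W$ witnesses uniform discreteness of $p$ at $z$.

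To see this, let $D$ be an arbitrary component of $p^{-1}(W)$. By Lemma \ref{brunate} (applied to the completed covering $p$ and the connected open set $W$) we have $p(V_0) = W = p(D)$, so Lemma \ref{vindpark} provides a deck-transformation $\tau \in \T(p)$ with $\tau(V_0) = D$. Since $p \circ \tau = p$, the map $\tau$ permutes the fibre $p^{-1}\{z\}$, whence
\[
D \cap p^{-1}\{z\} = \tau(V_0) \cap \tau\bigl(p^{-1}\{z\}\bigr) = \tau\bigl(V_0 \cap p^{-1}\{z\}\bigr) = \{\tau(y_0)\},
\]
a single point. Therefore $p$ is uniformly discrete.

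The only real subtlety — and the step I would flag as the main obstacle — is producing one neighbourhood $W$ that simultaneously separates the fibre inside \emph{every} component of $p^{-1}(W)$, rather than shrinking separately around each of the (possibly infinitely many) fibre points. This is exactly where the orbit-map structure is used: Lemmas \ref{brunate} and \ref{vindpark} together give that $\T(p)$ acts transitively on the components of $p^{-1}(W)$, so the single good component $V_0$ obtained from discreteness at $y_0$ is carried onto all the others by deck-transformations, and these preserve the fibre.
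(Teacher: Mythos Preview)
Your proof is correct and follows essentially the same route as the paper's: both directions use that $p$ is a spread, and for discrete $\Rightarrow$ uniformly discrete both invoke Theorem \ref{vauva} to get the orbit-map structure and then use Lemma \ref{vindpark} to transport the one good component onto all the others. Your argument is in fact slightly more careful than the paper's in that you explicitly cite Lemma \ref{brunate} to verify the hypothesis $p(V_0)=W=p(D)$ of Lemma \ref{vindpark}, which the paper leaves implicit.
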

\begin{proof} Suppose $p$ is discrete. Then, by Theorem \ref{vauva}, the map $p$ is an orbit map. Let $y \in Y$ and $z:=p(y).$ Then since $p$ is both a discrete map and a spread, there exists a neighbourhood $V$ of $y$ so that $V$ is the $y$-component of $p^{-1}(p(V))$ and $V \cap p^{-1}\{p(y)\}=\{y\}.$ Let $U$ be a component of $p^{-1}(p(V)).$ By Lemma \ref{vindpark} there is a deck-transformation $\tau \colon Y \ra Y$ of $p$ so that 
$\tau(V)=U.$ Thus $$p^{-1}\{p(y)\} \cap U=\{\tau(y)\}$$
and $p$ is uniformly discrete.

Suppose now that every $z \in Z$ has a neighbourhood $W$ such that $p^{-1}\{z\} \cap V$ is a point for each component $V$ of $p^{-1}(W).$ Then $p$ is discrete, since $p$ is a spread. 
\end{proof}

\begin{proof}[Proof of Theorem \ref{yopo}] Let $f \colon X \ra Z$ be a completed covering between ma\-nifolds and $(Y,p,q)$ a monodromy representation of $f.$ 
As a consequence of Theorem \ref{vauva} and Lemma \ref{spread_5} $p \colon Y \ra X$ and $q \colon Y \ra Z$ are uniformly discrete completed normal coverings. By Proposition \ref{lamppu}, $p(B_p) \subset X$ and $q(B_q) \subset Z$ are closed. By Proposition \ref{gingerale}, $f(B_f) \subset Z$ is closed, since $q=f \circ p.$ 
\end{proof}

\subsection{Discrete completed normal coverings}\label{dcc} 

In this section we prove Theorems \ref{mm} and \ref{relas} in the introduction. Let $g \colon Y' \to Z'$ be a normal covering onto a large subset $Z'\subset Z$ of a PL manifold $Z,$ $p \colon Y \ra Z$ the completion of $g \colon Y' \ra Z,$ $z_0 \in Z'$ and $x_0 \in g^{-1}\{z_0\}.$ In general $p$ is not discrete, see \cite[Ex.\;10.6.]{MA}, but if $p$ has locally finite multiplicity, then $p$ is discrete, see \cite[Thm.\;9.14.]{MA}. The discreteness of the map $p$ depends on the properties of the group $\pi(Z',z_0)/g_*(\pi(X',x_0))$ and the manifold $Z.$ To capture this relation we define stability of $g \colon Y' \ra Z$ and show that $p \colon Y \ra Z$ is discrete if and only if $g \colon Y' \ra Z$ is stable. 

Let $\mathcal N_Z(z;z_0)$ be the set of open and connected neighbourhoods $U \subset Z$ of $z$ satisfying $z_0 \in U.$ We denote by $\iota_{U,Z'}$ the inclusion $U \hra Z'.$ 

\begin{Def} Let $Z$ be a PL manifold, $Z'\subset Z$ a large subset, $z_0 \in Z'$ and $N \unlhd \pi(Z',z_0)$ a normal subgroup. A subset $U \in \mathcal{N}_Z(z;z_0)$ is a $(Z',N)$\textit{-stable} neighbourhood of $z$ if $$\mathrm{Im}(\pi \circ (\iota_{V \cap Z',Z'})_*)=\mathrm{Im}(\pi \circ (\iota_{U \cap Z',Z'})_*)$$
for every $V \in \mathcal{N}_Z(z;z_0)$ contained in $U;$ 

\begin{displaymath}
\xymatrix{\pi(V \cap Z',z_0) \ar[dr]_{(\iota_{V \cap Z',Z'})_*} && \pi(U \cap Z',z_0)  \ar[dl]^{(\iota_{U \cap Z',Z'})_*}\\
&\pi(Z',z_0)\ar[d]_{\pi}&\\
&\pi(Z',z_0)/N,&
}\end{displaymath}
where $\pi \colon \pi(Z',z_0) \ra \pi(Z',z_0)/N$ is the canonical quotient map.
\end{Def}

\begin{Def}\label{stability} Let $Z$ be a PL manifold and $Z' \subset Z$ a large subset, $z_0 \in Z'$ and $N \unlhd \pi(Z',z_0)$ a normal subgroup. The manifold $Z$ is $(Z',N)$-\textit{stable} if every point $z \in Z$ has a $(Z',N)$-stable neighbourhood. 
\end{Def}

\begin{Not}\label{stabhuomio}Let $Z$ be a PL manifold, $Z' \subset Z$ a large subset, $z_0 \in Z'$ and $N'$ and $N$ normal subgroups of $\pi(Z',z_0)$ so that $N' \subset N.$ Then $Z$ is $(Z',N)$-stable if $Z$ is $(Z',N')$-stable. Indeed, if $U \in \mathcal{N}_Z(z,z_0)$ is a $(Z',N')$-stable neighbourhood of $z,$ then $U$ is a $(Z',N)$-stable neighbourhood of $z.$ 
\end{Not}

\begin{Def} A mapping $g \colon Y' \ra Z$ is \textit{stable} if $g \colon Y' \ra g(Y')$ is a covering, $g(Y') \subset Z$ is large and $Z$ is $(g(Y'),\mathrm{Ker}(\sigma_g))$-stable for the monodromy $\sigma_g$ of the covering $g.$
\end{Def}

We say that a completed covering $f \colon Y \ra Z$ is a \textit{stabily completed covering} if $f \r Y' \colon Y' \ra Z$ is stable for a large subset $Y' \subset Y.$ We say that a completed covering $f \colon Y \ra Z$ is a \textit{stabily completed normal covering}, if $g:=f \r Y' \colon Y' \ra Z$ is stable and $g \colon Y' \ra g(Y')$ a normal covering for a large subset $Y' \subset Y.$ 

\begin{Thm}\label{TFAE} Let $p \colon Y \ra Z$ be a completed normal covering, $Y' \subset Y$ and $Z' \subset Z$ large subsets so that $g:=p \r Y' \colon Y' \ra Z'$ is a normal covering, $z_0 \in Z'$ and $y_0 \in g^{-1}\{z_0\}.$ Then the following conditions are equivalent:
\begin{itemize}
\item[(a)] $p\colon Y \ra Z$ is discrete,
\item[(b)] $p\colon Y \ra Z$ is uniformly discrete and 
\item[(c)] $g \colon Y' \ra Z$ is stable.
\end{itemize}
\end{Thm}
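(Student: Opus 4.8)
The plan is as follows. Since (a)$\Leftrightarrow$(b) is precisely Lemma~\ref{spread_5}, the task is to link these with (c). I would work with the identification $\T:=\T(p)\cong\T(g)\cong\pi(Z',z_0)/N$, $N:=\mathrm{Ker}(\sigma_g)$, from Remark~\ref{huu}, and the following bookkeeping for a connected open $V\subset Z$: every component of $p^{-1}(V)$ surjects onto $V$ (Lemma~\ref{brunate}); such a component $D$ meets $Y'$ in one component $D\cap Y'$ of $g^{-1}(V\cap Z')$ (Lemma~\ref{biotin} and largeness of $Y'$); and, $g$ being normal, the components of $g^{-1}(V\cap Z')$ are the orbits of $\pi(V\cap Z',z_1)$ on a fibre, hence correspond to $H_V^{\be}\backslash\T$, where $H_V^{\be}\subset\T$ is the image of $\pi(V\cap Z',z_1)$ carried to $z_0$ along a path $\be\colon z_0\cu z_1$ in $Z'$. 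Such $z_1,\be$ exist because $V\cap Z'$ is path-connected (as $Z\setminus Z'$ does not locally separate $Z$), and if $z_0\in V$ one may take $z_1=z_0$ and $\be$ constant, in which case $H_V^{\be}=\mathcal G_V:=\mathrm{Im}(\pi\circ(\iota_{V\cap Z',Z'})_*)$ is the stabiliser $\{\tau\in\T:\tau D=D\}$ of the component $D\ni y_0$. Under these correspondences, for nested $V'\subset V$ and nested paths $\be'=\be\gamma$ ($\gamma\subset V\cap Z'$), the inclusion-induced map on components $p^{-1}(V')\to p^{-1}(V)$ is the natural surjection $H_{V'}^{\be'}\backslash\T\to H_V^{\be}\backslash\T$, a bijection exactly when $H_{V'}^{\be'}=H_V^{\be}$. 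Finally the \emph{tube construction}: for $z\neq z_0$, a small ball $W\ni z$ with $z_0\notin W$ and an embedded $\be\colon z_0\cu z_1\in W\cap Z'$ in $Z'$, the set $U(W,\be):=W\cup T$ ($T\subset Z'$ a thin regular neighbourhood of $\be([0,1])$) lies in $\mathcal N_Z(z;z_0)$ and, by van Kampen (using that $T$ and $T\cap W$ are contractible), $\mathcal G_{U(W,\be)}=H_W^{\be}$.

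For (a)$\Rightarrow$(c): assume $p$ uniformly discrete, hence an orbit map (Theorem~\ref{vauva}), so each fibre is a single $\T$-orbit. Fix $z$ (the case $z=z_0$ is the simpler constant-path case) and a small connected uniform-discreteness witness $U_1\ni z$. For connected $W''\subset W\subset U_1$ containing $z$, each component of $p^{-1}(W)$ and of $p^{-1}(W'')$ meets $p^{-1}\{z\}$ in exactly one point (at least one by Lemma~\ref{brunate}, at most one by the witness property), and using that point as a label shows the map on components $p^{-1}(W'')\to p^{-1}(W)$ is bijective; by the first paragraph this yields $H_{W''}^{\be\gamma''}=H_W^{\be}$ for all nested path-choices. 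Now pick $W\subset U_1$ a small ball about $z$, a path $\be$ to $z_0$, and $U:=U(W,\be)$. Given $V\in\mathcal N_Z(z;z_0)$ with $V\subset U$, choose a still smaller ball $W''\subset V\cap W$ about $z$ and an embedded path $\be''$ from $z_0$ into $W''$ inside $V\cap Z'$: then $\mathcal G_V\supset H_{W''}^{\be''}$, and since $\be''$ and the prolongation $\be\gamma''$ of $\be$ through $W$ to the endpoint of $\be''$ differ by a loop lying in $U\cap Z'$, the group $H_{W''}^{\be''}$ is a conjugate of $H_{W''}^{\be\gamma''}=H_W^{\be}=\mathcal G_U$ by an element of $\mathcal G_U$, hence equals $\mathcal G_U$. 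Thus $\mathcal G_U\subset\mathcal G_V\subset\mathcal G_U$, so $\mathcal G_V=\mathcal G_U$; $U$ is a $(Z',N)$-stable neighbourhood of $z$, and as $z$ was arbitrary $g$ is stable.

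For (c)$\Rightarrow$(b): assume $g$ stable, fix $z$, and take a $(Z',N)$-stable $U_0\in\mathcal N_Z(z;z_0)$. For every small ball $W\ni z$ with $\overline W\subset U_0$, $z_0\notin W$, and path $\be$ to $z_0$, the tube neighbourhood $U(W,\be)$ can be chosen inside $U_0$, so stability of $U_0$ gives $H_W^{\be}=\mathcal G_{U(W,\be)}=\mathcal G_{U_0}$; in particular $H_{W'}^{\be'}=H_W^{\be}$ for nested small balls $W'\subset W$, so by the first paragraph the maps on components $p^{-1}(W')\to p^{-1}(W)$ are bijective. Fix one such $W$ and $w\in p^{-1}\{z\}$. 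Bijectivity forces the component $D_w(W)$ of $p^{-1}(W)$ through $w$ to contain a unique component of $p^{-1}(W')$ for each $W'\subset W$, whence $D_w(W)\cap p^{-1}\{z\}=D_w(W')\cap p^{-1}\{z\}$ for all $W'\subset W$; but the $D_w(W')$ form a neighbourhood basis at $w$ (as $p$ is a spread), so this common set is $\Cap_{W'}D_w(W')\cap p^{-1}\{z\}=\{w\}$. Hence every component of $p^{-1}(W)$ meets $p^{-1}\{z\}$ in one point (Lemma~\ref{brunate} makes every component a $D_w(W)$), so $p$ is uniformly discrete at $z$; as $z$ was arbitrary, $p$ is uniformly discrete and, by Lemma~\ref{spread_5}, discrete.

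The step I expect to be the main obstacle is this basepoint matching: the definition of stability probes only neighbourhoods of $z$ that must contain the global basepoint $z_0$, whereas discreteness at $z$ is controlled by arbitrarily small neighbourhoods of $z$; bridging them requires the tube/van Kampen transport of $\pi_1$-data and the verification that the ensuing conjugacy indeterminacy is always by an element of the subgroup at hand. The remaining points are routine: $V\cap Z'$ is path-connected for connected open $V$ because $Z\setminus Z'$ does not locally separate $Z$; paths in $Z'$ can be taken embedded when $\dim Z\geq 2$, and $\dim Z\leq 1$ is trivial since then $Z'=Z$ and $p$ is already a covering; and shrinking a uniform-discreteness neighbourhood preserves the property.
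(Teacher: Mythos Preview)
Your approach is correct and follows essentially the same strategy as the paper: both arguments hinge on a ``tube'' or ``bridge'' construction joining a small neighbourhood of $z$ to the basepoint $z_0$ through $Z'$, together with a van~Kampen computation identifying the image of the local fundamental group. The paper packages this as Lemma~\ref{D_2} (the explicit PL construction of the simply-connected connector $U$) and Lemma~\ref{D_1} (the van~Kampen transport of loops), then proves (c)$\Rightarrow$(b) by contradiction in Proposition~\ref{world} and (b)$\Rightarrow$(c) directly in Proposition~\ref{somac}, arguing in both cases with explicit lifts of paths.

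Your packaging is more systematically group-theoretic: you organise the argument around the bijection between components of $p^{-1}(W)$ and cosets $H_W^{\be}\backslash\T$, and read off both implications from the equality or inequality of the subgroups $H_W^{\be}$. This makes the logical structure more transparent and gives a direct proof of (c)$\Rightarrow$(b), whereas the paper argues by contradiction there. The cost is that you handle the tube construction more informally: the paper's Lemma~\ref{D_2} devotes real effort to building, inside $Z'$, a simply-connected set containing $z_0$ with path-connected intersection with $\mathrm{St}_{1/n}(z)$, using the PL structure and barycentric subdivisions to control the geometry near $\partial\mathrm{St}_{1/m}(z)$. Your sketch (``thin regular neighbourhood of an embedded $\be$, with $T\cap W$ contractible'') captures the idea but would need comparable care to make rigorous, particularly ensuring $T\subset Z'$, the embeddedness of $\be$ after prolongation, and the path-connectedness of $T\cap W$. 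You flag exactly this at the end, so you are aware of it; just note that this is where the PL hypothesis is genuinely used and where most of the technical work lies.
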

The proof consists of three parts. Lemma \ref{spread_5} proves $(a) \Leftrightarrow (b),$ Proposition \ref{world} proves $(c)\Rightarrow(b)$ and Proposition \ref{somac} proves $(b)\Rightarrow(c).$

Next we are going to state two lemmas used in proving Proposition \ref{world} and Proposition \ref{somac}. 

\begin{Apulause}\label{D_1}Let $Z$ be a manifold, $Z' \subset Z$ large and $U$ and $V$ simply-connected open subsets of $Z$ so that $U \subset Z'$ and the set $U \cap V$ is a non-empty path-connected set. Let $z_0 \in U$ and $z_1\in U \cap V \cap Z'.$ Then for every path $\gamma \colon z_0 \cu z_1$ in $(U \cup V) \cap Z'$ and every loop $\al \colon (S_1,e_0) \ra ((U \cup V) \cap Z', z_0)$ there exists a loop $\be \colon (S^1,e_0) \ra (V \cap Z',z_1)$ so that $[\al]=[\gamma\be\gamma^{\law}]$ in the group $\pi((U \cup V) \cap Z',z_0).$
\end{Apulause}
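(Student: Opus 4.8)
\textbf{Proof proposal for Lemma \ref{D_1}.}

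The plan is to use the van Kampen theorem for the cover $\{U\cap Z',\,V\cap Z'\}$ of $(U\cup V)\cap Z'$, together with the simple-connectivity hypotheses, to pin down the fundamental group of $(U\cup V)\cap Z'$ explicitly. First I would observe that $U\cap Z'=U$ is simply connected (since $U\subset Z'$), and that $V\cap Z'$ is path-connected because $Z'\subset Z$ is large and hence $V\setminus(V\cap Z')$ does not locally separate $V$; the intersection $U\cap V\cap Z'=U\cap V$ is path-connected (and in fact simply connected) by the same reasoning applied inside the simply connected set $U\cap V$. Thus van Kampen applies with all the data in place: since $\pi_1(U)$ is trivial, we get that the inclusion-induced map $\pi_1(V\cap Z',z_1)\to\pi_1((U\cup V)\cap Z',z_1)$ is surjective, and more precisely every element of $\pi_1((U\cup V)\cap Z',z_1)$ is represented by a loop lying entirely in $V\cap Z'$.

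The remaining step is a basepoint change. Given the loop $\al$ at $z_0$ and the path $\gamma\colon z_0\cu z_1$ in $(U\cup V)\cap Z'$, the loop $\gamma^{\law}\al\gamma$ is based at $z_1$, so by the previous paragraph there is a loop $\be\colon(S^1,e_0)\ra(V\cap Z',z_1)$ with $[\gamma^{\law}\al\gamma]=[\be]$ in $\pi((U\cup V)\cap Z',z_1)$. Conjugating back by $\gamma$ gives $[\al]=[\gamma\be\gamma^{\law}]$ in $\pi((U\cup V)\cap Z',z_0)$, which is exactly what is claimed. I would write the basepoint-change identities carefully (using that $\gamma\gamma^{\law}$ and $\gamma^{\law}\gamma$ are null-homotopic rel endpoints) so that the conjugation is literally an equality of homotopy classes, not merely up to further conjugation.

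The main obstacle is making sure the hypotheses of van Kampen are genuinely satisfied for the sets restricted to $Z'$ — in particular that $U\cap V\cap Z'$ is non-empty and path-connected and that the point $z_1$ can be taken in it, which is guaranteed by the assumption $z_1\in U\cap V\cap Z'$ and by largeness of $Z'$ (so that removing the non-locally-separating set $Z\setminus Z'$ from the connected open sets $V$ and $U\cap V$ leaves them path-connected). A secondary point to handle with care is that $U$ is open in $Z$ but we need $U$ open in $(U\cup V)\cap Z'$; this is automatic since $U\subset Z'$. Once these topological preliminaries are nailed down, the argument is the standard van Kampen surjectivity statement plus bookkeeping with the conjugating path, and no further subtlety arises.
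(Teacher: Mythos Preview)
Your proposal is correct and follows essentially the same route as the paper's proof: Seifert--van Kampen for the cover $\{U,\,V\cap Z'\}$ of $(U\cup V)\cap Z'$ (using $U\subset Z'$ so that $U\cap Z'=U$) gives surjectivity of $\pi_1(V\cap Z',z_1)\to\pi_1((U\cup V)\cap Z',z_1)$, and a basepoint change along $\gamma$ finishes. One minor cleanup: the path-connectedness of $U\cap V\cap Z'=U\cap V$ is a hypothesis, so no largeness argument is needed there, and your parenthetical claim that $U\cap V$ is simply connected is neither justified nor used.
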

\begin{proof}We denote $W:=U \cup V.$ Let $\gamma \colon z_0 \cu z_1$ be a path in $W \cap Z'$ and $\al\colon (S^1,e_0) \ra (W \cap Z',z_0)$ a loop. Since $U \subset Z',$ we have $W \cap Z'=U \cup (V \cap Z').$ Moreover, the sets $U$, $V \cap Z'$ and $U \cap (V \cap Z')$ are path-connected open subsets of $W \cap Z',$ since $Z' \subset Z$ is large. 
Since $U$ is simply-connected, the homomorphism $\iota_* \colon \pi(V \cap Z',z_1) \ra \pi(W \cap Z',z_1)$ induced by the inclusion $\iota \colon V \cap Z' \ra W \cap Z'$ is by the Seifert-van Kampen Theorem an epimorphism, see \cite[Thm 4.1.]{Mas}. Hence there exist a loop $\be \colon (S^1,e_0) \ra (V \cap Z',z_1)$ so that 
$[\be]=[\gamma^{\law}\al\gamma]$ in $\pi(W \cap Z',z_1).$ Now
$[\al]=[\gamma\be\gamma^{\law}]$ in $\pi(W \cap Z',z_0).$
\end{proof}

For the terminology of simplicial structures we refer to Hatcher \cite[Ch.\;2]{H}. We denote by $\sigma(v_0,\ldots, v_n)$ the simplex in $\R^k$ spanned by vertices $v_0, \ldots, v_n \in \R^k.$ The open simplex $\mathrm{Int}(\sigma):=\sigma \setminus \partial(\sigma)$ we call the simplicial interior of $\sigma.$ For a simplicial complex $K$ consisting of simplices in $\R^k$ the polyhedron is a subset $|K| \subset \R^k.$ For $z \in |K|$ we call $\mathrm{St}(z):=\Cup_{\sigma \in K; z \in \sigma} \mathrm{Int}(\sigma)$ the star neighbourhood $z$ (with respect to $K$). 

A \textit{triangulation} of a manifold $Z$ is a simplicial complex $T$ satisfying $Z \approx |T|.$ We say that a manifold is a \textit{PL manifold} if it has a triangulation. If $Z$ is a PL manifold and $T$ a triangulation of $Z$ with simplexes in $\R^k,$ then we assume from now on that
$$Z=|T| \subset \R^k.$$

Let $K$ be a simplical complex. We will introduce for the polyhedron $|K|$ a topological basis related to the star neighbourhoods $\mathrm{St}(x), x \in |K|,$ and show a technical property for this particular basis. Let $x_0 \in |K|$ be a point and $t \in (0,1].$ For a simplex $\sigma=\sigma(v_0, \ldots, v_m) \in K$ for which $x_0 \in \sigma$ we denote by $\tau(\sigma,x_0,t)$ the simplex spanned by the vertices $v'_j=x_0+t(v_j-x_0), j \in \{0, \ldots, m\}.$ The $t$-\textit{star} of $x_0$ is  the set
$$\mathrm{St}_t(x_0)= \Cup_{\{\sigma \in K : x_0 \in \sigma\}} \mathrm{Int}(\tau(\sigma,x_0,t)),$$
where $\mathrm{Int}(\tau(\sigma,x_0,t))$ is the simplicial interior of $\tau(\sigma,x_0,t)$ for all $\sigma \in K$ satisfying $x_0 \in \sigma.$

We note that the collection
$(\mathrm{St}_{1/n}(x_0))_{n \in \N}$ forms a neighbourhood basis of the point $x_0 \in |K|,$ and that there exists a contraction 
$$H \colon \mathrm{St}(x_0) \times [0,1] \ra \mathrm{St}(x_0)$$ of the open star $\mathrm{St}(x_0)$ to $x_0$ for which $H(\mathrm{St}(x_0)  \times \{1/n\})=\mathrm{St}_{1/n}(x_0)$ for every $n \in \N$ and 
$H(\{x\} \times [0,1])\subset [x,x_0]$ for all $x \in \mathrm{St}(x_0).$ 

\begin{Apulause}\label{D_2}Let $Z$ be a PL manifold, $Z' \subset Z$ large and $T$ a triangulation of $Z$ and $z_0 \in Z'$ a vertex in $T.$ Let $z \in Z$ be a point and $\mathrm{St}_{1/m}(z)$ the $1/m$-star of $z$ for $m>1.$ Suppose $X \subset Z$ is an open connected satisfying $z_0\in X$ and $\mathrm{St}_{1/m}(z) \subset X.$ Then there exists a simply-connected open set $U \subset X \cap Z'$ so that $z_0 \in U$ and that the set $U \cap \mathrm{St}_{1/m}(z)$ is non-empty and path-connected. 
\end{Apulause}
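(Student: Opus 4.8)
The plan is the following; throughout write $S:=\mathrm{St}_{1/m}(z)$. First I would record the elementary facts used below. The set $S$ is open and non-empty (it contains $z$) and star-shaped with respect to $z$, hence connected; and, using the PL structure, $\overline{S}$ is a PL ball, $S=\mathrm{Int}_Z(\overline{S})$, and its frontier $\overline{S}\setminus S$ is bicollared in $Z$, so that every point of $\overline{S}$ has arbitrarily small open neighbourhoods $N$ with $N\cap S$ non-empty and path-connected. Moreover, since $Z'$ is large, $W\cap Z'$ is open, dense in $W$ and path-connected for every open connected $W\subset Z$ (as in the proof of Lemma \ref{brunate}); applying this to $W=X$ and $W=S$ and using $S\subset X$, the sets $X\cap Z'$ and $S\cap Z'=S\cap X\cap Z'$ are open, non-empty and path-connected. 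If $z_0\in S$, the lemma is immediate: any convex coordinate ball $U$ with $z_0\in U\subset S\cap Z'$ is simply connected and contained in $X\cap Z'$, and $U\cap S=U$ is non-empty and path-connected. So I assume $z_0\notin S$.

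Next I would build $U$ as a thin tube that meets $S$ only inside a single small ball. Fix $z_1\in S\cap Z'$ and a path $\gamma\colon[0,1]\to X\cap Z'$ from $z_0$ to $z_1$. If $z_0\in\overline{S}$ set $w:=z_0$; otherwise let $c:=\inf\{t:\gamma(t)\in\overline{S}\}$, which is positive since $\gamma(0)=z_0\notin\overline{S}$, set $w:=\gamma(c)\in\partial S$, and note that by minimality of $c$ the compact arc $\Gamma:=\gamma([0,c])$ meets $\overline{S}$ only at $w$. In either case $w\in\overline{S}\cap Z'$, so I may fix a convex coordinate ball $B_0$ with $w\in B_0\subset X\cap Z'$ small enough that $B_0\cap S$ is non-empty and path-connected. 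If $w=z_0$, take $U:=B_0$. Otherwise $\Gamma\setminus\{w\}$ lies in the open set $(X\cap Z')\setminus\overline{S}$, and a thin chain of convex coordinate balls $D_1,\dots,D_{l-1}\subset(X\cap Z')\setminus\overline{S}$ running along $\gamma$, with $z_0\in D_1$, with consecutive intersections non-empty and convex, with non-consecutive ones empty, and with $D_{l-1}\cap B_0$ non-empty and convex, yields $U:=D_1\cup\dots\cup D_{l-1}\cup B_0$.

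I would then verify the four required properties. Openness and $z_0\in U$ are clear, and $U\subset X\cap Z'$ since each $D_j\subset(X\cap Z')\setminus\overline{S}$ and $B_0\subset X\cap Z'$. The set $U$ is simply connected by induction on the length of the chain $D_1,\dots,D_{l-1},B_0$ and the Seifert–van Kampen theorem: each member of the chain and each consecutive intersection is convex, hence path-connected and simply connected, while non-consecutive members are disjoint, so the intersection of each partial union with the next ball is a single path-connected set. Finally $D_j\cap S\subset D_j\cap\overline{S}=\varnothing$ for $j\leq l-1$, whence $U\cap S=B_0\cap S$, which is non-empty and path-connected by the choice of $B_0$; in the case $w=z_0$ the same identity holds, and the case $z_0\in S$ was already disposed of.

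The step I expect to be the crux is forcing $U\cap S$ to be path-connected, not merely non-empty — a generic thin tube from $z_0$ to a point of $S\cap Z'$ meets $S$ in several components. The device that handles this is to truncate $\gamma$ at the first time it hits the \emph{closure} $\overline{S}$ (rather than $S$, or a small ball inside $S$): then the whole tube except its terminal ball $B_0$ stays inside the open set $Z\setminus\overline{S}$, so $U$ touches $S$ only through $B_0$, and one only needs the local fact that $B_0\cap S$ is path-connected for $B_0$ small. This last fact is where the PL hypothesis is used, via $S=\mathrm{Int}_Z(\overline{S})$ with $\overline{S}$ a PL ball whose frontier is bicollared (hence locally flat) in $Z$. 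The remaining ingredients — path-connectedness of $W\cap Z'$ for $W$ open connected, and simple connectivity of a thin chain of convex balls — are standard.
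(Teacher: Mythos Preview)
Your strategy matches the paper's: build a simply-connected tube in $X\cap Z'$ from $z_0$ that avoids $\overline S$ except at a single terminal piece, where the intersection with $S=\mathrm{St}_{1/m}(z)$ is arranged to be path-connected; you correctly identify this last point as the crux. The implementations differ in detail. To control the terminal intersection, the paper picks the approach point $z_1\in\partial S\cap X\cap Z'$ in the interior of a top-dimensional simplex $\sigma$ and takes the connecting piece $W_{t_0}$ to be a small Euclidean neighbourhood of a straight segment inside $\mathrm{Int}(\sigma)$, so that $W_{t_0}\cap S$ is an intersection of two convex subsets of $\sigma$ and hence convex; you instead invoke a bicollar of $\partial S$, which is true (via the radial structure of the $t$-stars) but less explicit. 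For the long tube, the paper takes a regular neighbourhood of a simplicial arc $|L|\subset X\cap Z'\setminus\overline S$ in a fine barycentric subdivision, which is simply connected because $|L|$ is a strong deformation retract; you use a chain of convex balls. Your chain argument is standard but tacitly assumes the arc $\Gamma$ can be taken simple and that consecutive balls sit in a common affine chart so their intersections are genuinely convex---both easy to arrange in the PL setting, but the regular-neighbourhood route sidesteps this bookkeeping. Finally, your subcase $z_0\in\overline S\setminus S$ is vacuous: since $z_0$ is a vertex of $T$ and $m>1$, the condition $z_0\in\overline{\mathrm{St}_{1/m}(z)}$ forces $z=z_0\in S$, which is the paper's opening observation.
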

\begin{proof} Suppose $z_0 \in \overline{\mathrm{St}_{1/m}(z)}.$ Since $z_0$ is a vertex of $T$ and $m>1,$ we have $z=z_0.$ Thus $z=z_0 \in \mathrm{St}_{1/m}(z) \cap Z'.$ Since $\mathrm{St}_{1/m}(z)\cap Z'  \subset Z$ is open, there exists a simply-connected neighbourhood $U\subset \mathrm{St}_{1/m}(z) \cap Z'$ of $z.$ This proves the claim in the case $z_0 \in \overline{\mathrm{St}_{1/m}(z)}.$ 

Suppose then that $z_0 \notin \overline{\mathrm{St}_{1/m}(z)}.$ Since $Z'\subset Z$ is large and $\mathrm{St}_{1/m}(z) \subsetneq X,
$ there exists a point $z_1 \in \partial (\mathrm{St}_{1/m}(z)) \cap X \cap Z',$ that is a simplicial interior point of an $n$-simplex $\sigma \in T$ and a face $f$ of $\tau(\sigma,z,1/m).$ Since $|T|\subset \R^k$ for some $k \in \N,$ the Euclidean metric of $\R^k$ induces a metric on $|T|$. Let $$r_1:=\text{min}\{\text{dist}(z_1, f) \mid f \text{ is a face of } \tau(\sigma,z,1/m) \text{ and } z_1 \notin f\}$$ and
$$r_2=\text{dist}(z_1, Z \setminus (Z'\cap X )).$$ 
Then $r:=\text{min}\{r_1,r_2\}>0.$

We fix $k \in \N$ and a $1$-subcomplex $L$ in the $k^{th}$ barycentric subdivision $\mathrm{Bd}^k(T)$ of $T$ so that $|L|\subset |T|$ is a simple closed curve in $X \cap Z'\cap (Z \setminus \overline{\mathrm{St}_{1/m}(z)})$ from $z_0$ to $\mathrm{Int}(\sigma)$ satisfying $\text{dist}(z_1,|L| \cap \sigma) <r/4.$ 

We fix $l\geq k+2$ large enough so that for 
$$W:=\Cup_{z' \in |L|} \mathrm{St}^{l}(z'),$$
where $\mathrm{St}^{l}(z')$ is for every $z' \in |L|$ the open star of $z'$ in $\mathrm{Bd}^l(T),$ we have 
$$\overline{W} \subset X \cap Z' \cap (Z \setminus \overline{\mathrm{St}_{1/m}(z)}).$$
Then $|L|$ is a strong deformation retract of $W$ since $l\geq k+2;$ see regular neighbourhoods in \cite[Sec.\;2]{RS}. In particular, $W$ is simply-connected since $|L|$ is simply connected. 

Let $K < \mathrm{Bd}^l(T)$ be the $(n-1)$-subcomplex satisfying $\partial W=|K|$ and let $S$ be the collection of $(n-1)$-simplexes $\tau \in K$ for which 
$$\text{dist}(z_1, \overline{W} \cap \sigma)=\text{dist}(z_1, \tau).$$
Since $z_1$ is an interior point of $f$ and $\text{dist}(z_1,\overline{W} \cap \sigma)<r_1,$ there exists $\tau \in S$ and $z_2 \in \mathrm{Int}(\tau)$ for which the line segment $[z_1,z_2] \subset \mathrm{Int}(\sigma)$ has length at most $r/2$
and
$$[z_1,z_2] \cap (\mathrm{St}_{1/m}(z) \cup W)=\{z_1,z_2\}.$$ 
 
Now, for all $t<r/2$ the $t$-neighbourhood $W_t \subset \mathrm{Int}(\sigma)$ 
of $[z_1,z_2]$ satisfies $W_t \subset X \cap Z'$ and the intersections $W \cap 
W_t$ and $\mathrm{St}_{1/m}(z) \cap W_t$ are non-empty. Since $z_2 \in \mathrm{Int}(\tau),$ the intersections $W \cap W_{t_0}$ and $\mathrm{St}_{1/m}(z) \cap W_{t_0}$ are non-empty intersections of two convex sets for some $t_0<r/2$. Thus $W \cap W_{t_0}$ and $\mathrm{St}_{1/m}(z) \cap W_{t_0}$ are path-connected. Since the sets $W_{t_0}$ and $W$ are simply-connected and the set $W \cap W_{t_0}$ path-connected, the set $U=W \cup W_{t_0} \subset Z'$ is simply-connected by the Seifert-van Kampen Theorem. Further, $z_0 \in W \subset U.$ Since $\mathrm{St}_{1/m}(z) \cap W_{t_0}$ is path-connected and $U \cap \mathrm{St}_{1/m}(z)=\mathrm{St}_{1/m}(z) \cap W_{t_0},$ this proves the claim. 
\end{proof}

\begin{Prop}\label{world}Let $p \colon Y \ra Z$ be a completed normal covering, $Y' \subset Y$ and $Z' \subset Z$ large subsets so that $g:=p \r Y' \colon Y' \ra Z'$ is a normal covering. If $g \colon Y' \ra Z$ is stable, then $p \colon Y \to Z$ is uniformly discrete.
\end{Prop}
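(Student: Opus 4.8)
The plan is to prove Proposition \ref{world} by showing directly that every point $z \in Z$ has a neighbourhood $W$ such that $p^{-1}\{z\} \cap D$ is a single point for each component $D$ of $p^{-1}(W)$; this is exactly uniform discreteness. Fix $z \in Z$ and a point $y_0 \in Y'$ with $p(y_0) =: z_0 \in Z'$; after composing with a deck transformation and replacing the triangulation by a barycentric subdivision we may assume $z_0$ is a vertex of the triangulation $T$ of $Z$ and that $z$ lies in the star of $z_0$-free region as needed. Since $g \colon Y' \ra Z$ is stable, the manifold $Z$ is $(Z', \mathrm{Ker}(\sigma_g))$-stable, so $z$ has a $(Z', \mathrm{Ker}(\sigma_g))$-stable neighbourhood. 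The main idea is that we can take for $W$ a $t$-star $\mathrm{St}_{1/m}(z)$ contained in this stable neighbourhood, and then count the sheets of $p$ over $z$ inside a component $D$ of $p^{-1}(W)$ using the monodromy characterization of fibers of the covering $g$.

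The key steps, in order, are as follows. First, I would use Lemma \ref{D_2} (together with a path $\gamma$ from $z_0$ into a small neighbourhood of $z$) to produce, for each sufficiently small $m$, a simply-connected open set $U \subset Z' $ with $z_0 \in U$ such that $U \cap \mathrm{St}_{1/m}(z)$ is non-empty and path-connected. Then by Lemma \ref{D_1}, every loop at $z_0$ in $(U \cup \mathrm{St}_{1/m}(z)) \cap Z'$ is, up to the conjugating path $\gamma$, homotopic into $\mathrm{St}_{1/m}(z) \cap Z'$; this lets me compare the image in $\pi(Z',z_0)/\mathrm{Ker}(\sigma_g) = \mathrm{Im}(\sigma_g)$ of $\pi(\mathrm{St}_{1/m}(z) \cap Z', \cdot)$ for nested values of $m$. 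Second, I would invoke the $(Z', \mathrm{Ker}(\sigma_g))$-stability to see that this image stabilizes as $m \to \infty$, i.e.\ for $m$ large it is the same group regardless of $m$. Third, I would translate this stabilization into a statement about fibers: the number of points of $g^{-1}\{z'\}$ inside a component $C$ of $g^{-1}(\mathrm{St}_{1/m}(z) \cap Z')$, for $z' \in \mathrm{St}_{1/m}(z) \cap Z'$ near $z$, is the index of that stabilized subgroup in $\mathrm{Im}(\sigma_g)$ and hence does not grow with $m$. Finally, since $p$ is the completion of $g$ and $Y' \subset Y$ is large, each component $D$ of $p^{-1}(\mathrm{St}_{1/m}(z))$ is the completion of a component $C = D \cap Y'$ of $g^{-1}(\mathrm{St}_{1/m}(z) \cap Z')$ (using Lemma \ref{biotin} and Lemma \ref{brunate}), and I would use the complete-spread / selection-function machinery as in the proof of Theorem \ref{vauva} to show that if $p^{-1}\{z\} \cap D$ contained two distinct points, one could build a strictly decreasing chain of such components forcing the fiber count over nearby $z'$ to grow without bound, contradicting the uniform bound just obtained. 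Hence $p^{-1}\{z\} \cap D$ is a single point for every component, and $p$ is uniformly discrete.

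I expect the main obstacle to be the bookkeeping in the third and fourth steps: relating the algebraic stabilization of $\mathrm{Im}(\pi \circ (\iota_{\mathrm{St}_{1/m}(z)\cap Z', Z'})_*)$ to an honest finite bound on $\#(p^{-1}\{z\} \cap D)$, and making rigorous the ``build a decreasing chain of components'' argument inside the completion. The subtlety is that $\mathrm{Im}(\sigma_g)$ can itself be infinite, so one cannot argue by finiteness of the whole monodromy group; the point is that only the index $[\mathrm{Im}(\sigma_g) : \mathrm{Im}(\pi \circ (\iota_{\cdot})_*)]$ matters for counting local sheets, and stability is precisely the hypothesis that this index is eventually constant. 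Handling the case $z = z_0$ separately (where the stable neighbourhood can be taken simply-connected in $Z'$ itself, as in the first paragraph of the proof of Lemma \ref{D_2}) keeps the general argument clean, since then $\mathrm{St}_{1/m}(z) \subset Z'$ and the monodromy image is trivial, giving $\#(p^{-1}\{z\} \cap D) = 1$ immediately.
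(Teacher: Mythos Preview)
Your overall strategy is close to the paper's, but step three contains a real error. For a normal covering $g$ with deck group $G \cong \pi(Z',z_0)/\mathrm{Ker}(\sigma_g)$, the number of points of $g^{-1}\{z'\}$ lying in a fixed component $C$ of $g^{-1}(\mathrm{St}_{1/m}(z)\cap Z')$ is the \emph{cardinality} of the image subgroup $H_m := \mathrm{Im}(\pi \circ (\iota_{\mathrm{St}_{1/m}(z)\cap Z',Z'})_*)$, not its index in $G$; the index $[G:H_m]$ counts components, not sheets per component. Since $H_m$ may be infinite, bounding $|H_m|$ is useless, and your remark that ``only the index matters for counting local sheets'' has this exactly backwards. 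The quantity you actually need is the number of components of $g^{-1}(\mathrm{St}_{1/m}(z)\cap Z')$ contained in a fixed component $C_{m_0} = D \cap Y'$, which is $[H_{m_0}:H_m]$; stability says precisely that this equals $1$ for all $m \geq m_0$. Then if $p^{-1}\{z\}\cap D$ contained two points, the spread property would give disjoint sub-components $C_1,C_2$ of $p^{-1}(\mathrm{St}_{1/m}(z))$ inside $D$, hence $[H_{m_0}:H_m] \geq 2$ --- contradiction. No decreasing chain or selection-function argument in the style of Theorem~\ref{vauva} is needed.

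The paper carries out exactly this last step, but phrased as a direct violation of stability rather than an index count. Given distinct $d_1,d_2 \in p^{-1}\{z\}\cap D$ for a stable neighbourhood $V$, it separates them into disjoint components $C_1,C_2$ of $p^{-1}(\mathrm{St}_{1/n}(z))$, picks $c_i \in C_i \cap Y'$ over a common point $z_1$, and takes a path $\be \colon c_1 \cu c_2$ inside $D \cap Y'$. The loop $\gamma(g\circ\be)\gamma^{\leftarrow}$ then represents a class in $\mathrm{Im}(\pi \circ (\iota_{V\cap Z',Z'})_*)$ that cannot lie in $\mathrm{Im}(\pi \circ (\iota_{V_1\cap Z',Z'})_*)$ for the smaller neighbourhood $V_1 = U \cup \mathrm{St}_{1/n}(z)$ built from Lemma~\ref{D_2}: by Lemma~\ref{D_1} any loop in $V_1 \cap Z'$ is homotopic to one in $\mathrm{St}_{1/n}(z) \cap Z'$, whose lift starting at $c_1$ stays in $C_1$ and so cannot end at $c_2$. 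This is the same contradiction, written out as an explicit witnessing loop.
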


\begin{proof}Let $z_0 \in Z'$ and $y_0 \in g^{-1}\{z_0\}.$ Let $z \in Z.$ Since $g$ is stable the manifold $Z$ is $(Z',g_*(\pi(Y',y_0))$-stable. Let $T$ be a triangulation of $Z$ having $z_0$ a vertex. Let $V$ be a $(Z',g_*(\pi(Y',y_0)))$-stable neighbourhood of $z$ and $D$ a component of $p^{-1}(V).$ It is now sufficient to show that $p^{-1}\{z\} \cap D$ is a point. By Lemma \ref{brunate}, we have $p(D)=V,$ and hence $p^{-1}\{z\} \cap D\neq \es.$ 

We suppose towards a contradiction that there are distinct points $d_1$ and $d_2$ in $p^{-1}\{z\} \cap D.$ Using points $d_1$ and $d_2,$ we construct a loop  
$\al_1 \colon (S^1,e_0) \ra (V \cap Z',z_0)$ and $V_1 \in \mathcal{N}_Z(z,z_0)$ so that $V_1 \subset V$ and 
$$[\al_1]g_*(\pi(Y',y_0)) \notin \mathrm{Im}(\pi \circ \iota_{Z' \cap V_1,Z'})).$$
This contradicts the $(Z',g_*(\pi(Y',y_0)))$-stability of $Z$ and proves the claim.

We begin by choosing the neighbourhood $V_1 \subset V$ of $z.$ For all $n \in \N$ let $\mathrm{St}_{1/n}(z)$ be the $1/n$-star of $z$ in the triangulation $T.$ Then $(\mathrm{St}_{1/n}(z))_{n \in \N}$ is a neighbourhood basis for $z.$ Since $p \colon Y \to Z$ is a spread and $Y$ a Hausdorff space, there exists $n \geq 2$ so that $\mathrm{St}_{1/n}(z) \subset V$ and the $d_i$-components $C_i,$ $i \in \{1,2\},$ of $p^{-1}(\mathrm{St}_{1/n}(z))$ are disjoint. By Lemma \ref{D_2}, there is such an open simply-connected set $W \subset V \cap Z'$ containing $z_0$ that the sets $W \cup \mathrm{St}_{1/n}(z)$ and $W \cap \mathrm{St}_{1/n}(z)$ are path-connected. Hence  
$$V_1=W \cup \mathrm{St}_{1/n}(z) \subset V$$ 
is an open connected neighbourhood of $z$ containing $z_0$. Configuration illustrated in Figure \ref{wer}.  
\begin{figure}[ht]
\includegraphics{Martina.1}
\caption{}\label{wer}
\end{figure}

We continue by choosing a loop $\al_1 \colon (S^1,e_0) \ra (V \cap Z',z_0)$. Fix $z_1 \in W \cap \mathrm{St}_{1/n}(z) \cap Z'.$ Since $W$ is a connected set containing points $z_0$ and $z_1,$ there is a path $\gamma \colon z_0 \cu z_1$ in $W.$ Since $\mathrm{St}_{1/n}(z)$ is connected, Lemma \ref{brunate} implies that there are points $c_i \in p^{-1}\{z_1\} \cap C_i$ for $i \in \{1,2\}.$ By Lemma \ref{biotin}, $p^{-1}(Z')=Y'.$ Thus $c_1, c_2 \in Y' \cap D.$ Since $Y' \subset Y$ is large, the set $Y \setminus Y'$ does not locally separate $Y$ and the set $D \cap Y'$ is path-connected. Let $\be \colon [0,1] \ra D \cap Y'$ be a path $\be \colon c_1 \cu c_2.$ Then $g \circ \be \colon [0,1] \ra V \cap Z'$ is a loop at $z_1.$ 
We set $\al_1=\gamma(g \circ \be) \gamma^{\leftarrow}.$

Suppose there is a loop $\al_2 \colon (S^1,e_0) \ra (V_1 \cap Z',z_0)$ satisfying
$$[\al_2] \in [\gamma(g \circ \be) \gamma^{\leftarrow}]g_*(\pi(Y',y_0)).$$
Since $W \subset Z'$ and $\gamma$ is a path in $W \subset V_1 \cap Z',$ there is by Lemma \ref{D_1} a loop $\al_3 \colon (S^1,e_0) \ra (\mathrm{St}_{1/n}(z) \cap Z',z_1)$ satisfying 
$$[\al_2]=[\gamma \al_3 \gamma^{\leftarrow}].$$

Next we change the base point of $Y$ as follows. Since $g \colon Y' \ra Z'$ is a normal covering and $W \subset Z',$ the path $\gamma^{\leftarrow} \colon [0,1] \ra W$ has a lift $\ol{\gamma^{\leftarrow}}_{c_1} \colon [0,1]\ra Y'.$ We set $y_1:=\ol{\gamma^{\leftarrow}}_{c_1}(1).$ 
Since $g \colon Y' \ra Z'$ is a normal covering, we have 
$$g_*(\pi(Y',y_1))=g_*(\pi(Y',y_0)).$$ Hence $$[\gamma \al_3 \gamma^{\leftarrow}] \in [\gamma(g \circ \be) \gamma^{\leftarrow}]g_*(\pi(Y',y_0))=[\gamma(g \circ \be) \gamma^{\leftarrow}]g_*(\pi(Y',y_1)).$$ 
Thus the lifts $\ol{\gamma \al_3 \gamma^{\leftarrow}}_{y_1}$ and $\ol{\gamma(g \circ \be) \gamma^{\leftarrow}}_{y_1}$ have a common endpoint. By the uniqueness of lifts,
$$\ol{\al_3}_{c_1}(1)=\ol{(g \circ \be)}_{c_1}(1)=\be(1)=c_2.$$ 

On the other hand, since $\al_3$ is a path in $\mathrm{St}_{1/n}(z),$ the lift $\ol{\al_3}_{c_1}$ is a path in $p^{-1}(\mathrm{St}_{1/n}(z)).$ Since $C_1$ is a component of $p^{-1}(\mathrm{St}_{1/n}(z)),$ this implies that $\ol{\al_3}_{c_1}[0,1] \subset C_1.$ Thus $c_2=\ol{\al_3}_{c_1}(1) \in C_1,$ which is a contradiction. Hence $p^{-1}\{p(y)\} \cap D$ is a point. 
\end{proof}

\begin{Prop}\label{somac}
Let $p \colon Y \ra Z$ be a completed normal covering, $Y' \subset Y$ and $Z' \subset Z$ large subsets so that $g:=p \r Y' \colon Y' \ra Z'$ is a normal covering. If $p$ is uniformly discrete, then $g \colon Y' \ra Z$ is stable.  
\end{Prop}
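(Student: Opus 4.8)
The plan is to verify the one non-trivial part of stability: since $g\colon Y'\to g(Y')=Z'$ is a covering onto the large subset $Z'\subset Z$ by hypothesis, it remains to show that $Z$ is $(Z',N)$-stable, where $N:=\mathrm{Ker}(\sigma_g)$. Recall that $N=g_*(\pi(Y',y_0))$ since $g$ is normal (Lemmas \ref{nalkat} and \ref{nalka}), and that $p$ is an orbit map by Lemma \ref{spread_5} and Theorem \ref{vauva}, so $\T(p)$ acts transitively on $p^{-1}\{z\}$ for every $z\in Z$. Fix $z\in Z$ and a triangulation of $Z$ having $z_0$ as a vertex, and for $U\in\mathcal{N}_Z(z;z_0)$ write $D(U)$ for the component of $p^{-1}(U)$ containing $y_0$ and $M_U:=\mathrm{Im}(\pi\circ(\iota_{U\cap Z',Z'})_*)$. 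Using that $g$ is normal and that $D(U)\cap Y'$ is path-connected (Lemma \ref{biotin} and largeness of $Y'$), lifting loops based at $z_0$ from $y_0$ yields a bijection $M_U\to D(U)\cap g^{-1}\{z_0\}$, $[\al]N\mapsto\ol{\al}_{y_0}(1)$. Hence $M_V\subset M_U$ whenever $V\subset U$, and it suffices to produce one $U\in\mathcal{N}_Z(z;z_0)$ with $D(V)\cap g^{-1}\{z_0\}=D(U)\cap g^{-1}\{z_0\}$ for every $V\in\mathcal{N}_Z(z;z_0)$ contained in $U$.

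For the construction: by uniform discreteness there is $m_0$ so that $N^*:=\mathrm{St}_{1/m_0}(z)$ has $p^{-1}\{z\}\cap C$ a single point for each component $C$ of $p^{-1}(N^*)$. I will first note (using Lemma \ref{brunate}) that this property passes to $\mathrm{St}_{1/m}(z)$ for all $m\ge m_0$, and moreover that $C\mapsto(\text{the point of }p^{-1}\{z\}\text{ in }C)$ is then a bijection from the components of $p^{-1}(\mathrm{St}_{1/m}(z))$ onto $p^{-1}\{z\}$; in particular every component of $p^{-1}(N^*)$ contains exactly one component of $p^{-1}(\mathrm{St}_{1/m}(z))$. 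Applying Lemma \ref{D_2} with $X$ a connected open set containing $z_0$ and $N^*$, I obtain a simply-connected open $W\subset Z'$ with $z_0\in W$ and $W\cap N^*$ non-empty and path-connected, and set $U:=W\cup N^*\in\mathcal{N}_Z(z;z_0)$.

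The heart of the proof is a structural description of $D(U)$. Since $W\subset Z'$ is simply connected, $p^{-1}(W)=g^{-1}(W)$ (Lemma \ref{biotin}) is a disjoint union of sheets, one through each point of $g^{-1}\{z_0\}$, and since $W\cap N^*$ is connected each sheet meets $p^{-1}(N^*)$ in a single copy of $W\cap N^*$, lying in a single component of $p^{-1}(N^*)$. I will deduce that $D(U)$ has a ``star'' shape: it equals the union of one component $C$ of $p^{-1}(N^*)$ with all the sheets of $p^{-1}(W)$ whose copy of $W\cap N^*$ lies in $C$. Two consequences follow. First, as the sheets miss $p^{-1}\{z\}$ (being contained in $p^{-1}(Z')$), $D(U)$ meets $p^{-1}\{z\}$ in exactly the one point $d\in C$. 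Second, writing $\psi\colon g^{-1}\{z_0\}\to p^{-1}\{z\}$ for the map carrying $y$ to the point of $p^{-1}\{z\}$ in the $p^{-1}(N^*)$-component that meets the sheet of $p^{-1}(W)$ through $y$ — a map which is $\T(p)$-equivariant because deck transformations permute the sheets and the components of $p^{-1}(N^*)$ and preserve $p^{-1}\{z\}$ — we get $D(U)\cap g^{-1}\{z_0\}=\psi^{-1}(d)$, and then from $\psi(y_0)=d$ and equivariance, $D(U)\cap g^{-1}\{z_0\}=\mathrm{Stab}_{\T(p)}(d)\cdot y_0$.

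Finally, given $V\in\mathcal{N}_Z(z;z_0)$ with $V\subset U$, I pick $n\ge m_0$ with $\mathrm{St}_{1/n}(z)\subset V$, apply Lemma \ref{D_2} again inside $V$ to get a simply-connected tube $W_V\subset V\cap Z'$ through $z_0$ with $W_V\cap\mathrm{St}_{1/n}(z)$ path-connected, and set $U_V:=W_V\cup\mathrm{St}_{1/n}(z)\subset V$. The same structural analysis applied to $U_V$ (with $\mathrm{St}_{1/n}(z)$ in place of $N^*$) shows $D(U_V)$ meets $p^{-1}\{z\}$ in a single point $d'$ and $D(U_V)\cap g^{-1}\{z_0\}=\mathrm{Stab}_{\T(p)}(d')\cdot y_0$. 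Since $D(U_V)\subset D(V)\subset D(U)$ while $p(D(V))=V\ni z$ by Lemma \ref{brunate}, we get $\varnothing\ne D(V)\cap p^{-1}\{z\}\subset D(U)\cap p^{-1}\{z\}=\{d\}$ and $\{d'\}=D(U_V)\cap p^{-1}\{z\}\subset D(V)\cap p^{-1}\{z\}$, forcing $d'=d$; hence $D(U_V)\cap g^{-1}\{z_0\}=D(U)\cap g^{-1}\{z_0\}$, and sandwiching this between $D(U_V)\cap g^{-1}\{z_0\}$ and $D(U)\cap g^{-1}\{z_0\}$ gives $D(V)\cap g^{-1}\{z_0\}=D(U)\cap g^{-1}\{z_0\}$, i.e.\ $M_V=M_U$. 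Thus $U$ is a $(Z',N)$-stable neighbourhood of $z$, and $g$ is stable. The step I expect to be the real obstacle is the ``star'' structural description of $D(U)$ together with the identification $D(U)\cap g^{-1}\{z_0\}=\mathrm{Stab}_{\T(p)}(d)\cdot y_0$ for $d$ the unique point of $D(U)$ over $z$: it is precisely here that uniform discreteness is used (through the choice of $N^*$, with Lemma \ref{kylajuhlat} and the bijection of the second paragraph as the tools), and it is this single-point property that makes $d$, and hence $M_U$, independent of the tube and available to the smaller neighbourhood $V$.
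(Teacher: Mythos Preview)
Your argument is correct and rests on the same scaffolding as the paper's proof (pick $\mathrm{St}_{1/n}(z)$ with the single-point property from uniform discreteness, then enlarge via Lemma~\ref{D_2} to a neighbourhood $U\in\mathcal{N}_Z(z;z_0)$), but the endgame is organised differently. The paper works loop by loop: given $[\alpha]N$ represented in $V\cap Z'$ and a smaller $V_1$, it uses Lemma~\ref{D_1} to push $\alpha$ into $\mathrm{St}_{1/n}(z)\cap Z'$, then observes that the single-point property forces $p^{-1}(W)\cap D_1=C_1$ for $W$ the $z$-component of $V_1\cap\mathrm{St}_{1/n}(z)$, so the lift's endpoint can be reached by a path inside $C_1\cap Y'$, yielding a representative of $[\alpha]N$ in $V_1\cap Z'$. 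You instead set up the bijection $M_U\cong D(U)\cap g^{-1}\{z_0\}$, identify the right-hand side with the stabiliser orbit $\mathrm{Stab}_{\T(p)}(d)\cdot y_0$ via your equivariant map $\psi$, and then show by sandwiching that $d$ is the same for $U$ and for any $U_V\subset V\subset U$, whence $M_V=M_U$. Your route bypasses Lemma~\ref{D_1} and makes explicit the invariant (the point $d$) that uniform discreteness supplies; the paper's route is more direct but encodes the same mechanism (their step $p^{-1}(W)\cap D_1=C_1$ is exactly your star description, read at a smaller scale).

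One small gap to patch: the assertion ``the sheets miss $p^{-1}\{z\}$ (being contained in $p^{-1}(Z')$)'' presupposes $z\notin Z'$, which need not hold (and fails for $z=z_0$). The conclusion $D(U)\cap p^{-1}\{z\}=\{d\}$ is nonetheless correct: if $z\in W$ then $z\in W\cap N^*$, so any sheet-point over $z$ lies in that sheet's copy of $W\cap N^*$, hence in $C$, and therefore equals $d$.
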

\begin{proof}Let $z_0 \in Z',$ $y_0 \in g^{-1}\{z_0\}$ and $T$ a triangulation of $Z$ so that $z_0$ is a vertex. We prove the statement by constructing a $(Z',g_*(\pi(Y',y_0))$-stable neighbourhood for every $z \in Z.$ 

Suppose that $z=z_0.$ Since $p$ is uniformly discrete and $z_0 \in Z',$ there exists a simply-connected neighbourhood $V \subset Z'$ of $z$ so that $p^{-1}\{z\} \cap D$ is a point for each component $D$ of $p^{-1}(V).$ Then $V$ is a $(Z',g_*(\pi(Y',y_0))$-stable neighbourhood of $z,$ since $V \subset Z'$ and $V$ is simply-connected. 

Let then $z \in Z \setminus \{z_0\}.$ For all $n \in \N$ let $\mathrm{St}_{1/n}(z)$ be the $1/n$-star of $z$ in the triangulation $T.$ Since $(\mathrm{St}_{1/n}(z))_{n \in \N}$ is a neighbourhood basis for $z$ and $p$ is uniformly discrete, there exists $n \geq 2$ so that 
$p^{-1}\{z\} \cap D$ is a point for each component $D$ of $p^{-1}(\mathrm{St}_{1/n}(z)).$ By Lemma \ref{D_2}, there is an open simply-connected set $U \subset Z'$ containing $z_0$ so that $U \cup \mathrm{St}_{1/n}(z)$ and $U \cap \mathrm{St}_{1/n}(z)$ are path-connected. Let $V:=U \cup \mathrm{St}_{1/n}(z).$ 

Towards showing that $V$ is a $(Z',g_*(\pi(Y',y_0))$-stable neighbourhood of $z,$ let 
$\al \colon (S^1,e_0) \ra (V \cap Z',z_0)$ be a loop and let $V_1 \in \mathcal{N}_Z(z;z_0)$ be an open set contained in $V.$ It is sufficient to show that there exists a loop $\be \colon (S^1,e_0) \ra (V_1 \cap Z',z_0)$ satisfying
$[\be]g_*(\pi(Y',y_0))=[\al]g_*(\pi(Y',y_0)).$

We choose first a suitable representative from the class $[\al]$ as follows. Let $W$ be the $z$-component of $\mathrm{St}_{1/n}(z) \cap V_1.$ Since $Z' \subset Z$ is large, there exists a point $z_1 \in W \cap Z'$ and a path $\gamma \colon z_0 \cu z_1$ in $V_1 \cap Z'.$  By Lemma \ref{D_1}, there exists a loop $\al_1 \colon (S^1,e_0) \ra (\mathrm{St}_{1/n}(z) \cap Z' ,z_1)$ for which $[\al]=[\gamma \al_1 \gamma^{\leftarrow}]$ in $\pi(V \cap Z',z_0).$  

Denote the endpoint of the lift $\ol{\gamma_{y_0}}$ in $g$ by $y_1$ and the endpoint of the lift $\ol{\al_1}_{y_1}$ in $g$ by $y_2.$ Let $D_1$ be the $y_1$-component of $p^{-1}(\mathrm{St}_{1/n}(z)).$ We proceed by showing that there exists a loop $\al_2 \colon (S^1,e_0) \ra (W \cap Z',z_1)$ so that the lift $\ol{\al_2}_{y_1}$ in $g$ that begins at $y_1$ ends at $y_2.$

Let $C_1\subset D_1$ be the $y_1$-component of $p^{-1}(W).$ Since $W$ is a connected neighbourhood of $z,$ every component of $p^{-1}(W)$ intersects the set $p^{-1}\{z\}$ by Lemma \ref{brunate}. Since $W \subset \mathrm{St}_{1/n}(z)$ and the set $p^{-1}\{z\} \cap D_1$ is a point, this implies
$$p^{-1}(W) \cap D_1=C_1.$$
Hence the points $y_1$ and $y_2$ belong to $C_1 \cap Y'.$ Since $Y \setminus Y'$ does not locally separate $Y,$ 
there is a path $\be \colon y_1 \cu y_2$ in $C_1 \cap Y'.$ Let $\al_2:=g \circ \be.$ 

Since the lifts $\ol{\gamma \al_1 \gamma^{\leftarrow}}_{y_0}$ and $\ol{\gamma \al_2 \gamma^{\leftarrow}}_{y_0}$ end at the common point, 
$$[\gamma \al_2 \gamma^{\leftarrow}]\in [\gamma\al_1\gamma^{\leftarrow}]g_*(\pi(Y',y_0))= [\al]g_*(\pi(Y',y_0)).$$
Since $\gamma \al_2 \gamma^{\leftarrow}$ is a loop in $V_1 \cap Z'$ at $z_0,$ $V$ is a $(Z',g_*(\pi(Y',y_0)))$-stable neighbourhood of the point $z.$ Since this holds for every $z \in Z$ the manifold $Z$ is $(Z',g_*(\pi(Y',y_0)))$-stable.
\end{proof}

This concludes the proof of Theorem \ref{TFAE}. After the following two consequences of Theorem \ref{TFAE} we are ready to prove Theorems \ref{ilta}, \ref{mm} and \ref{relas} in the introduction. 

\begin{Cor}\label{viimeinen} Let $f \colon X \ra Z$ be a completed covering between PL manifolds, $X' \subset X$ and $Z' \subset Z$ large subsets and 
\begin{equation*}
\xymatrix{
& Y' \ar[ld]_{p'} \ar[rd]^{q'} &\\
X' \ar[rr]^{f'} & & Z' }
\end{equation*}
a commutative diagram of coverings so that $p'$ and $q'$ are normal coverings. Let $z_0 \in Z'$ and $y_0 \in q'^{-1}\{z_0\}$, $N=q'_*(\pi(Y',y_0))$ and
\begin{equation*}
\xymatrix{
& Y \ar[ld]_{p} \ar[rd]^{q} &\\
X \ar[rr]^{f} & & Z }
\end{equation*}
be a commutative diagram of open continuous maps so that $p$ is the completion of $p' \colon Y' \ra X$ and $q$ is the completion of $p' \colon Y' \ra Z.$ Then, the maps $p$ and $q$ are orbit maps if and only if $Z$ is $(Z',N)$-stable. 
\end{Cor}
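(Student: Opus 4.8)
The plan is to reduce the statement to Theorem \ref{TFAE} applied separately to the two completed normal coverings $p \colon Y \ra X$ and $q \colon Y \ra Z$. First I would observe that, since $p'$ and $q'$ are normal coverings and $p$, $q$ are their completions, $p$ is a completed normal covering of $X$ and $q$ is a completed normal covering of $Z$; here $X' \subset X$ and $Z' \subset Z$ are the relevant large subsets, and $Y' \subset Y$ is large by Definition \ref{completion} (being the image of the embedding into a completion). By Theorem \ref{hilu} (equivalently Theorem \ref{vauva}), $p$ is an orbit map iff $p$ is discrete, and $q$ is an orbit map iff $q$ is discrete; so the corollary is equivalent to the assertion that \emph{$p$ and $q$ are both discrete if and only if $Z$ is $(Z',N)$-stable}.

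Next I would bring in Theorem \ref{TFAE} for each map. Applied to $q \colon Y \ra Z$ with the normal covering $q' = q \r Y' \colon Y' \ra Z'$, it gives: $q$ is discrete $\iff$ $q' \colon Y' \ra Z$ is stable $\iff$ $Z$ is $(Z', \mathrm{Ker}(\sigma_{q'}))$-stable. Now $q'$ is a \emph{normal} covering, so by Lemma \ref{nalkat} and Lemma \ref{nalka} we have $\mathrm{Ker}(\sigma_{q'}) = q'_*(\pi(Y',y_0)) = N$. Hence $q$ is discrete $\iff$ $Z$ is $(Z',N)$-stable. This already proves one of the two maps is controlled by exactly the stated condition.

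It then remains to show that discreteness of $q$ forces discreteness of $p$, and conversely. For the forward direction: if $q$ is discrete it is an orbit map (Theorem \ref{hilu}); since $f$ is a spread (it is an open map onto a manifold that is a covering to its image on a large set, hence light, by \cite[Cor.4.7]{MA}) and $q = f \circ p$, one sees $p$ is discrete because the fibers of $p$ inject into fibers of $q$ via... more precisely, $q^{-1}\{z\} = p^{-1}(f^{-1}\{z\})$ and $f^{-1}\{z\}$ is totally disconnected, so local finiteness of $q$-fibers around a point transfers to local finiteness of $p$-fibers; this is the same reasoning as in the proof of Theorem \ref{ll}, where discreteness of $q$ is used to deduce discreteness of $f$, and here symmetrically of $p$. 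For the converse: if $p$ is discrete it is an orbit map, and since $f \r X' \colon X' \ra Z'$ is a covering, $f$ is a spread; then $q = f \circ p$ is a composition of a discrete orbit map $p$ with the spread $f$, and since $p$ has discrete fibers while $f \r X'$ has discrete fibers, $q$ has discrete fibers locally — concretely, apply Theorem \ref{TFAE} to $p$ to get $X$ is $(X', \mathrm{Ker}(\sigma_{p'}))$-stable, and combine with the (known) stability of the PL manifold structure along the covering $f' \colon X' \ra Z'$ to conclude $Z$ is $(Z',N)$-stable, which by the paragraph above gives discreteness of $q$. Putting the two implications together with the equivalence ``$q$ discrete $\iff Z$ is $(Z',N)$-stable'' finishes the proof.

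The main obstacle I anticipate is the converse direction, namely extracting discreteness of $q$ (equivalently $(Z',N)$-stability of $Z$) from discreteness of $p$ (equivalently $(X', \mathrm{Ker}(\sigma_{p'}))$-stability of $X$) together with the covering $f \colon X' \ra Z'$. One has to track how stable neighbourhoods in $X$ push forward along the covering $f'$ to stable neighbourhoods in $Z$ with respect to the subgroup $N = q'_*(\pi(Y',y_0))$; this uses that $N \subset f'_*(\pi(X',x_0))$ and that $f' \colon X' \ra Z'$ restricts to homeomorphisms on evenly covered neighbourhoods, so the relevant images of fundamental groups of small neighbourhoods of $Z$ correspond, after the identification $\pi(Z',z_0)/N \cong$ (monodromy data), to those in $X$. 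Once the bookkeeping of base points and the normality of $p'$, $q'$ is set up correctly, the argument is a diagram chase; the genuinely nontrivial input is Theorem \ref{TFAE} itself, which has already been proved.
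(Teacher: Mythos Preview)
Your overall strategy---reduce to Theorem \ref{TFAE} via Theorem \ref{vauva}---is exactly right, and it is also what the paper has in mind (the paper states this corollary without proof, as an immediate consequence of Theorem \ref{TFAE}). The identification $\mathrm{Ker}(\sigma_{q'}) = q'_*(\pi(Y',y_0)) = N$ for the normal covering $q'$ is correct, and so is the equivalence ``$q$ is an orbit map $\iff$ $q$ is discrete $\iff$ $Z$ is $(Z',N)$-stable.''

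However, you have manufactured an obstacle that is not there. The statement to prove is
\[
(\text{$p$ \emph{and} $q$ are orbit maps}) \iff \text{$Z$ is $(Z',N)$-stable},
\]
and the two directions go as follows. For ($\Rightarrow$): you are \emph{assuming} that both $p$ and $q$ are orbit maps, so in particular $q$ is an orbit map; by Theorem \ref{vauva} $q$ is discrete, and by Theorem \ref{TFAE} $Z$ is $(Z',N)$-stable. You never need to deduce anything about $q$ from $p$ alone. For ($\Leftarrow$): $(Z',N)$-stability gives $q$ discrete by Theorem \ref{TFAE}, hence $q$ is an orbit map. Then, since $p^{-1}\{x\} \subset q^{-1}\{f(x)\}$ for every $x \in X$, discreteness of $q$ immediately gives discreteness of $p$, and Theorem \ref{vauva} makes $p$ an orbit map as well. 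This is the easy implication you already noted.

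So the passage you flagged as the ``main obstacle''---pushing $(X',\mathrm{Ker}(\sigma_{p'}))$-stability of $X$ forward along $f$ to obtain $(Z',N)$-stability of $Z$---is simply not required. Drop that paragraph and your proof is complete and matches the paper's intended argument.
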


Let $f \colon X \ra Z$ be a completed covering between manifolds and $q \colon Y \ra Z$ a completed covering that is an orbit map. We say that $q$ is \textit{natural of} $f$ if there are large subsets $X' \subset X,$ $Y' \subset Y$ and $Z' \subset Z$ so that $g:=f \r X'\colon X' \ra Z'$ and $h:=q \r Y' \colon Y' \ra Z'$ are coverings satisfying $\mathrm{Ker}(\sigma_g)=\mathrm{Ker}(\sigma_h).$ We recall that the monodromy group of $f$ is the monodromy group of $g.$

\begin{Thm}\label{hh} Let $f \colon X \ra Z$ be a stabily completed covering between PL manifolds. Then $f$ has a monodromy representation $(Y,p,q),$ where $q$ is natural of $f.$
\end{Thm}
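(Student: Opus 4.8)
The plan is to realize $Y$ as a Fox completion: I would build a normal covering of the manifold $X$ whose monodromy realizes that of $f$, take its completion to define $Y$ and $p$, set $q:=f\circ p$, and then read off the orbit-map structure from Corollary~\ref{viimeinen} (equivalently from Theorems~\ref{TFAE} and~\ref{vauva}) and the group action from the deck-transformation groups.

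Since $f$ is stabily completed, fix large subsets $X'\unlhd X$ and $Z'\unlhd Z$ so that $g:=f\r X'\colon X'\ra Z'$ is a covering and $Z$ is $(Z',\mathrm{Ker}(\sigma_g))$-stable; in particular $X'$ and $Z'$ are open. Choose $z_0\in Z'$ and $x_0\in g^{-1}\{z_0\}$, and set $N:=\mathrm{Ker}(\sigma_g)$, a normal subgroup of $\pi(Z',z_0)$ by Lemma~\ref{nalkat}. Applying Proposition~\ref{spread_3} to the covering $g\colon(X',x_0)\ra(Z',z_0)$ and the normal subgroup $N\subset\mathrm{Ker}(\sigma_g)$, I obtain a normal covering $p'\colon(Y',y_0)\ra(X',x_0)$ such that, writing $q':=g\circ p'=f\circ p'\colon Y'\ra Z'$, one has $q'_*(\pi(Y',y_0))=N$. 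Since $N$ is normal in $\pi(Z',z_0)$, the covering $q'$ is normal, and for a normal covering the monodromy kernel equals the image of $\pi_1$ (combine Lemmas~\ref{nalkat} and~\ref{nalka}); hence $\mathrm{Ker}(\sigma_{q'})=N=\mathrm{Ker}(\sigma_g)$.

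Now $p'\colon Y'\ra X$ is a spread, being open and a covering onto the open subset $X'$ of the manifold $X$; let $p\colon Y\ra X$ be its completion. Then $Y$ is a locally connected Hausdorff space, $Y'\unlhd Y$ is large, and $p$ is a completed covering by Definition~\ref{cc}, indeed a completed normal covering since $p'$ is normal. Put $q:=f\circ p\colon Y\ra Z$; by Lemma~\ref{elama} it is a completed covering, and since it restricts on the large set $Y'$ to the normal covering $q'=f\circ p'$, uniqueness of completions identifies $q$ with the completion of $q'\colon Y'\ra Z$, so $q$ is a completed normal covering. The covering-level diagram ($p'\colon Y'\ra X'$ and $q'\colon Y'\ra Z'$, both normal) together with its completion ($p\colon Y\ra X$ and $q\colon Y\ra Z$, with $q=f\circ p$) then satisfies the hypotheses of Corollary~\ref{viimeinen} for the subgroup $N=q'_*(\pi(Y',y_0))$. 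Because $\mathrm{Ker}(\sigma_{q'})=\mathrm{Ker}(\sigma_g)=N$ and $Z$ is $(Z',\mathrm{Ker}(\sigma_g))$-stable, $Z$ is $(Z',N)$-stable, so Corollary~\ref{viimeinen} yields that $p$ and $q$ are orbit maps. Consequently $Y/\T(p)\approx X$ and $Y/\T(q)\approx Z$, and $\T(p)\subset\T(q)$ since $p\circ\tau=p$ forces $q\circ\tau=f\circ p\circ\tau=q$. By Remark~\ref{huu} and normality of $q'$ we get $\T(q)\cong\T(q')\cong\pi(Z',z_0)/N$, which is by definition the monodromy group $G$ of $f$. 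Identifying $G$ with $\T(q)$ acting on $Y$ and putting $H:=\T(p)\subset G$, the triple $(Y,p,q)$ is a monodromy representation of $f$: $p$ and $q$ are the associated orbit maps, they are completed coverings with $q=f\circ p$, and $Y/G\approx Z$, $Y/H\approx X$. Moreover $q$ is natural of $f$, since the large sets $X',Y',Z'$ witness it: $f\r X'=g$, $q\r Y'=q'$, and $\mathrm{Ker}(\sigma_g)=N=\mathrm{Ker}(\sigma_{q'})$.

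The one genuinely delicate step is the passage from ``$f$ is stabily completed'' to ``$p$ and $q$ are orbit maps''; everything else is bookkeeping with Fox completions and the covering theory of Sections~\ref{nc}--\ref{ccc}. That passage rests on the identity $\mathrm{Ker}(\sigma_{q'})=\mathrm{Ker}(\sigma_g)$ --- exactly what transfers the $(Z',N)$-stability of $Z$ to $q$ --- and then on Corollary~\ref{viimeinen}, hence ultimately on Theorem~\ref{TFAE}, where the PL hypothesis and the stability machinery of Section~\ref{dcc} are used. No separate stability assumption on $X$ is required: the discreteness of $p$ may alternatively be deduced from that of $q$ via $p^{-1}\{x\}\subset q^{-1}\{f(x)\}$.
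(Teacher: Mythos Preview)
Your proof is correct and follows essentially the same route as the paper's: build $p'$ via Proposition~\ref{spread_3} with $q'_*(\pi(Y',y_0))=\mathrm{Ker}(\sigma_g)$, complete to get $p$ and $q=f\circ p$, and then use the $(Z',N)$-stability to conclude that $p$ and $q$ are orbit maps with $\T(q)\cong\pi(Z',z_0)/N$. The only cosmetic difference is that you invoke Corollary~\ref{viimeinen} directly, whereas the paper unpacks it by first applying Theorem~\ref{TFAE} to get discreteness of $q$, then Theorem~\ref{vauva} for the orbit-map conclusion, and finally deduces discreteness of $p$ from $q=f\circ p$ (exactly the alternative you note in your last sentence).
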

\begin{proof} Let $X'\subset X$ and $Z' \subset Z$ be large subsets so that $g:=f \r X' \colon X' \ra Z'$ is a covering and $Z$ is $(Z',\mathrm{Ker}(\sigma_g))$-stable. By Theorem \ref{spread_3} there is a normal covering $p' \colon Y' \ra X'$ so that $q':=g \circ p'$ is a normal covering satisfying $q_*'(\pi(Y',y_0))=\mathrm{Ker}(\sigma_g).$ Thus 
$\mathrm{Ker}(\sigma_{q'})=q_*'(\pi(Y',y_0))=\mathrm{Ker}(\sigma_g)$ and the deck-transformation group $\T(q')$ is isomorphic to the monodromy group of $g.$ 

Let $p \colon Y \ra X$ be the completion of $p' \colon Y' \ra X$ and $q:=f\circ p \colon Y \ra Z.$ Then $q$ is the completion of $q' \colon Y' \ra Z$ by Proposition \ref{elama} and $q$ is natural of $f.$ Since $\T(q')$ is isomorphic to the monodromy group of $g,$ it is sufficient to show that $p$ and $q$ are orbit maps and $\T(q)\cong \T(q').$ Since $Z$ is $(Z',\mathrm{Ker}(\sigma_{q'}))$-stable, $q$ is discrete by Theorem \ref{TFAE}. Thus $q$ is an orbit map by Theorem \ref{vauva}, and $\T(q)\cong \T(q')$ by Remark \ref{huu}. Since $q=f \circ p$ is discrete, $p$ is discrete. Thus $p$ is an orbit map by Theorem \ref{vauva}.
\end{proof}

\begin{proof}[Proof of Theorem \ref{ilta}] Let $p \colon Y \ra Z$ be a completed normal covering onto a PL manifold $Z.$ Let $Y' \subset Y$ and $Z' \subset Z$ be large subsets so that $g:= p \r Y' \colon Y' \ra Z'$ is a normal covering. Since $p$ is discrete, $g \colon Y' \ra Z$ is stable by Theorem \ref{TFAE}. Thus $p$ is a stabily completed normal covering.
 
Suppose then that $p$ is a stabily completed normal covering. Then there are such large subsets $Y'' \subset Y$ and $Z'' \subset Z$ that $h:=p \r Y'' \colon Y'' \ra Z''$ is a normal covering and the map $h \colon Y'' \ra Z$ is stable. Thus the map $p$ is discrete by Theorem \ref{TFAE}.  
\end{proof}

\begin{proof}[Proof of Theorem \ref{mm}] Suppose $f \colon X \ra Z$ is a completed covering between PL manifolds and $(Y,p,q)$ a monodromy representation of $f.$ By Corollary \ref{viimeinen}, the maps $p$ and $q$ are stabily completed normal coverings. Since $q$ is a stabily completed covering, as a consequence of Theorem \ref{spread_3} and Remark \ref{stabhuomio}, also $f$ is a stabily completed covering.
\end{proof}

\begin{proof}[Proof of Theorem \ref{relas}] By Theorem \ref{mm} and Theorem \ref{hh} a completed co\-ve\-ring $f \colon X \ra Z$ between PL manifolds is a stabily completed covering if and only if $f$ has a monodromy representation $(Y,p,q).$ 
\end{proof}

We end this section with a corollary of Theorem \ref{viimeinen}. We recall that for a simplical $n$-complex $K,$ $K^{n-2}$ is the codimension $2$ skeleton of $K.$   

\begin{Cor}\label{simplisiaalinen} Let $f \colon X \ra Z$ be a surjective, open and discrete simplicial map from a PL $n$-manifold $X$ with triangulation $K$ onto a PL $n$-manifold $Z$ with triangulation $L.$ Let also $z_0 \in |L| \setminus |L^{n-2}|$ and $x_0 \in f^{-1}\{z_0\}.$ Then $g:=f \r |K| \setminus |K^{n-2}| \colon |K| \setminus |K^{n-2}| \ra |L| \setminus |L^{n-2}|$ is a covering and for all normal subgroups $N \unlhd \pi(|L| \setminus |L^{n-2}|,z_0)$ is contained in $\mathrm{Ker}(\sigma_g)$ for the monodromy $\sigma_g$ of $g,$ there is a commutative diagram 
\begin{equation*}
\xymatrix{
& Y \ar[ld]_{p} \ar[rd]^{q} &\\
X \ar[rr]^{f} & & Z }
\end{equation*}
of completed coverings so that $p$ and $q$ are orbit maps and the deck-trans\-for\-ma\-ti\-on groups $\T(p)$ and $\T(q)$ satisfy 
$$\T(p)\cong \pi(|K| \setminus |K^{n-2}|,x_0)/N \text{ and } \T(q) \cong \pi(|L| \setminus |L^{n-2}|,z_0)/N .$$  
\end{Cor}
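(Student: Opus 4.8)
The plan is to derive this as a direct application of Theorem \ref{viimeinen} (together with Proposition \ref{spread_3} and the existence theory for completed coverings). First I would establish the covering claim: by the example following Definition \ref{cc} (the results in \cite{I}), since $f$ is an open, discrete, surjective simplicial map between PL $n$-manifolds with triangulations $K$ and $L$, the restriction $g := f\r (|K|\setminus|K^{n-2}|) \colon |K|\setminus|K^{n-2}| \ra |L|\setminus|L^{n-2}|$ is a covering, the sets $X' := |K|\setminus|K^{n-2}|$ and $Z' := |L|\setminus|L^{n-2}|$ are large in $X$ and $Z$ respectively (codimension-two skeleta do not locally separate a PL manifold), and $f$ is the completion of $g \colon X' \ra Z$. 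In particular $f$ is a completed covering between PL manifolds.

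Next, given a normal subgroup $N \unlhd \pi(Z',z_0)$ with $N \subset \mathrm{Ker}(\sigma_g)$, I would invoke Proposition \ref{spread_3}: since $N \subset \mathrm{Ker}(\sigma_g)$ and $X$ is a manifold, there is a normal covering $p' \colon (Y',y_0) \ra (X',x_0)$ such that $q' := g\circ p' \colon Y' \ra Z'$ is a normal covering with $q'_*(\pi(Y',y_0)) = N$. Then $Y'$ is a manifold, $p' \colon Y' \ra X'$ and $q' \colon Y' \ra Z'$ form a commutative triangle of normal coverings, and we may form the completions $p \colon Y \ra X$ of $p' \colon Y' \ra X$ and $q \colon Y \ra Z$ of $p' \colon Y' \ra Z$; by Lemma \ref{elama}, $q = f \circ p$ is the completion of $q' \colon Y' \ra Z$, so the diagram commutes and all three maps are completed coverings by Lemma \ref{elama} and Lemma \ref{sina}.

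Now I would verify the stability hypothesis needed to apply Theorem \ref{viimeinen}. Since $f$ is simplicial, every point of $Z$ has a star neighbourhood that is a cone, and I would check directly that such a star neighbourhood $U$ is $(Z',N)$-stable: for $V \in \mathcal N_Z(z;z_0)$ with $V \subset U$, the inclusion-induced maps $\pi(V\cap Z',z_0) \ra \pi(Z',z_0)$ and $\pi(U\cap Z',z_0) \ra \pi(Z',z_0)$ have the same image modulo $N$, because the combinatorial structure of a simplicial star controls the monodromy of $g$ around the missing skeleton — loops in $U \cap Z'$ not already killed in $V \cap Z'$ wind around codimension-two strata in a way that lies in $\mathrm{Ker}(\sigma_g) \supset N$. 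Hence $Z$ is $(Z',N)$-stable. Applying Theorem \ref{viimeinen} to the two diagrams (the lower one of coverings, the upper one of completions), the maps $p$ and $q$ are orbit maps.

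Finally I would read off the deck-transformation groups. Since $q' \colon Y' \ra Z'$ is a normal covering with $q'_*(\pi(Y',y_0)) = N$, we have $\T(q') \cong \pi(Z',z_0)/N = \pi(|L|\setminus|L^{n-2}|,z_0)/N$, and by Remark \ref{huu} the restriction isomorphism $\T(q) \ra \T(q')$ gives $\T(q) \cong \pi(|L|\setminus|L^{n-2}|,z_0)/N$. Likewise $p' \colon Y' \ra X'$ is normal with $p'_*(\pi(Y',y_0))$ equal to the preimage of $N$ under the monomorphism $g_* \colon \pi(X',x_0) \ra \pi(Z',z_0)$, which is isomorphic to $N$ and of the same index, so $\T(p') \cong \pi(X',x_0)/p'_*(\pi(Y',y_0)) \cong \pi(|K|\setminus|K^{n-2}|,x_0)/N$ (identifying $N$ with its preimage); Remark \ref{huu} again upgrades this to $\T(p)$. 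I expect the main obstacle to be the stability verification in the third paragraph: one must argue carefully, using the simplicial/PL structure, that star neighbourhoods are $(Z',N)$-stable, which is where the hypothesis $N \subset \mathrm{Ker}(\sigma_g)$ is genuinely used and where the combinatorics of links of codimension-two simplices enters.
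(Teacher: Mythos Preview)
Your overall architecture matches the paper's: reduce to Proposition \ref{spread_3} and Corollary \ref{viimeinen}, with the key input being $(Z',N)$-stability of $Z$. Paragraphs 1, 2, and 4 are fine and in fact spell out details (completions via Lemma \ref{elama}, deck-group identifications via Remark \ref{huu}) that the paper leaves implicit.

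The gap is in paragraph 3, your stability verification. You try to prove $(Z',N)$-stability directly by arguing that any loop in $U\cap Z'$ not already represented in $V\cap Z'$ ``winds around codimension-two strata in a way that lies in $\mathrm{Ker}(\sigma_g)\supset N$.'' This is both unnecessary and, as stated, not justified: there is no mechanism linking such residual loops to the monodromy kernel of $g$. The paper's route is much simpler and does not use the hypothesis $N\subset\mathrm{Ker}(\sigma_g)$ at all for this step. One observes that $Z$ is $(Z',\{e\})$-stable --- for a point $z$, the scaled stars $\mathrm{St}_{1/m}(z)$ form a neighbourhood basis and the inclusions $\mathrm{St}_{1/m}(z)\cap Z'\hookrightarrow \mathrm{St}_{1/n}(z)\cap Z'$ are homotopy equivalences (radial scaling), so the images in $\pi(Z',z_0)$ literally coincide before any quotienting. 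Then Remark \ref{stabhuomio} upgrades $(Z',\{e\})$-stability to $(Z',N)$-stability for every normal $N$. The hypothesis $N\subset\mathrm{Ker}(\sigma_g)$ is used only where you already use it, in invoking Proposition \ref{spread_3} to produce $p'$. Replace your third paragraph with this two-line argument and the proof is complete.
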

\begin{proof} Since $f \colon X \ra Z$ is an open discrete simplicial map the branch set of $f$ is contained in $|K^{n-2}|.$ In particular $g:=f \r|K| \setminus |K^{n-2}| \colon |K| \setminus |K^{n-2}| \ra  |L| \setminus |L^{n-2}|$ is a covering and $f$ is the completion of $f \colon  |K| \setminus |K^{n-2}| \ra Z.$ Clearly, $Z$ is $(|L| \setminus |L^{n-2}|,\{e\})$-stable for the trivial group $\{e\} \subset \pi( |L| \setminus |L^{n-2}|,z_0).$ Further, $Z$ is $(|L| \setminus |L^{n-2}|, N )$-stable by Remark \ref{stabhuomio}. We conclude the statement from Theorem \ref{spread_3} and Theorem \ref{viimeinen}.
\end{proof}

\begin{Not}If the map $p \colon Y \ra X$ in Corollary \ref{simplisiaalinen} has locally finite multiplicity, then the space $Y$ is a locally finite simplicial complex and $p$ and $q$ are simplicial maps, see \cite[Sec.\;6]{F}. However, in the next section we show that $p$ need not have locally finite multiplicity. 
\end{Not}

\section{Existence of locally compact monodromy representations}

\subsection{Uniformly bounded local multiplicities} In this section we show the existence of a completed covering that has a monodromy representation that is not locally compact. Let $f \colon X \ra Z$ be a completed covering and $z \in Z.$ We say that $f$ has \textit{uniformly bounded local multiplicities} in $f^{-1}\{z\} \subset X$ if there exist a neighbourhood $U \in \mathcal{N}_Z(z)$ so that every point 
$z' \in U$ satisfies  
$$\sup\{\#(f^{-1} \{z'\} \cap D) : D \text{ is a component of }f^{-1}(U)\}< \infty.$$

\begin{figure}[htb]
\includegraphics{kaavio.1}
\caption{}\label{qer}
\end{figure}

\begin{Apulause}\label{kotiin}
Suppose $f \colon X \ra Z$ is a completed covering and $(Y,p,q)$ is a locally compact monodromy representation of $f.$ Then $f$ has uniformly bounded local multiplicities in $f^{-1}\{z\} \subset X$ for every $z \in Z.$  
\end{Apulause}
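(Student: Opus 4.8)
The plan is to produce, for a fixed $z \in Z$, a single neighbourhood $U \in \mathcal{N}_Z(z)$ such that \emph{every} component of $f^{-1}(U)$ has compact closure in $X$, and then to bound the local multiplicities of $f$ over $U$ by the local multiplicity of one fixed relatively compact component of $q^{-1}(U)$ lying above it. By the definition of a monodromy representation, $q\colon Y \ra Z$ is the orbit map of an action of the monodromy group $G$ of $f$ with $Y/G \approx Z$ and $q = f\circ p$; in particular $G$ acts transitively on each fibre $q^{-1}\{z\}$ and $p$ is surjective. By Theorem \ref{ll} the maps $f$, $p$, $q$ are discrete, and by hypothesis $Y$ is locally compact and locally connected.

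First I would fix $y_0 \in q^{-1}\{z\}$ and, using that $q^{-1}\{z\}$ is closed and (since $q$ is discrete) a discrete subspace of $Y$, pick an open $O \ni y_0$ with $O \cap q^{-1}\{z\} = \{y_0\}$; by local compactness choose an open $W \ni y_0$ with $\overline W \subset O$ and $\overline W$ compact, so that $\overline W \cap q^{-1}\{z\} = \{y_0\}$ and $\partial W \cap q^{-1}\{z\} = \es$. The now-standard spread argument (cf. the proof of Theorem \ref{pakkanen}) applies: $q(\partial W)$ is compact, hence closed, and misses $z$, so the $z$-component $U$ of the open set $q(W)\setminus q(\partial W)$ is a connected open neighbourhood of $z$, and the $y_0$-component $D_0$ of $q^{-1}(U)$ satisfies $D_0 \cap \partial W = \es$, hence $D_0 \subset \overline W$; thus $\overline{D_0}$ is compact.

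The crucial --- and least formal --- step is to show that \emph{every} component $D$ of $f^{-1}(U)$ has the form $D = p(gD_0)$ for some $g \in G$, with $p$ restricting to a surjection $gD_0 \ra D$. Since $f$ is a completed covering, Lemma \ref{brunate} gives $f(D) = U$, so there is $x \in f^{-1}\{z\}\cap D$; choosing $y \in p^{-1}\{x\}$ (so $y \in q^{-1}\{z\}$) and $g\in G$ with $gy_0 = y$, the set $gD_0$ is a component of $q^{-1}(U)$ through $y$. Writing $q^{-1}(U) = p^{-1}(f^{-1}(U))$ as the disjoint union of the clopen pieces $p^{-1}(D')$ over the components $D'$ of $f^{-1}(U)$ --- here I use that $X$ is a manifold, so components of open sets are open and closed in them --- the component $gD_0$ lies in $p^{-1}(D)$ and is therefore a component of $p^{-1}(D)$; Lemma \ref{brunate} applied to the completed covering $p$ then yields $p(gD_0) = D$. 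With this in hand, for any $z'\in U$ and any component $D = p(gD_0)$ of $f^{-1}(U)$ one checks $f^{-1}\{z'\}\cap D = p\big(q^{-1}\{z'\}\cap gD_0\big)$, and hence
\[
\#\big(f^{-1}\{z'\}\cap D\big)\ \leq\ \#\big(q^{-1}\{z'\}\cap gD_0\big)\ =\ \#\big(q^{-1}\{z'\}\cap D_0\big),
\]
the last equality because $g$ is a homeomorphism with $q\circ g = q$. Finally, $q^{-1}\{z'\}\cap D_0$ is finite, being a closed discrete (as $q$ is discrete) subset of the compact set $\overline{D_0}$, so $\sup_D \#(f^{-1}\{z'\}\cap D) \leq \#(q^{-1}\{z'\}\cap D_0) < \infty$ for every $z'\in U$, which is the claim; since $z$ was arbitrary this holds for every $z \in Z$.

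I expect the main obstacle to be exactly this uniformity over the --- possibly infinitely many --- components $D$ of $f^{-1}(U)$: finiteness of each individual fibre intersection $f^{-1}\{z'\}\cap D$ is immediate from discreteness and local compactness, but obtaining a bound independent of $D$ forces one to transport every component of $f^{-1}(U)$ back, via the $G$-action upstairs, to the single fixed relatively compact piece $D_0$, and to verify carefully (via Lemma \ref{brunate} and the clopen-components argument) that $gD_0$ really is a whole component of $p^{-1}(D)$ so that $p(gD_0) = D$.
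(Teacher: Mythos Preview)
Your proof is correct and follows essentially the same route as the paper's: pick $y_0\in q^{-1}\{z\}$, use local compactness and discreteness of $q$ to get a neighbourhood $U$ with a relatively compact component $D_0$ of $q^{-1}(U)$, use the $G$-action to transport any component of $q^{-1}(U)$ onto $D_0$, and then push down via $q=f\circ p$. The paper is simply terser --- it asserts in one line that the open discrete $q$ has locally finite multiplicity on the locally compact $Y$, invokes the orbit-map property (Lemma \ref{vindpark}) to equate the fibre counts over all components of $q^{-1}(U)$, and then hides your Lemma \ref{brunate}/clopen-component argument in the phrase ``since $q=f\circ p$'' --- but the content is the same.
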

\begin{proof} Let $z \in Z$ and $y \in q^{-1}\{z\}.$ Since $q \colon Y \ra Z$ is an open and discrete map, the map $q$ has locally finite multiplicity, since $Y$ is locally compact. Thus there exists such $U \in \mathcal{N}_Z(z)$ that $q \r D_y$ has finite multiplicity for the $y$-component $D_y$ of $q^{-1}(U).$ Let $z' \in U$ and $k_{z'}:=\#(q^{-1}\{z'\} \cap D_y).$ Since $q$ is an orbit map, 
$$\sup\{\#(q^{-1} \{z'\} \cap D) : D \text{ is a component of }q^{-1}(U)\}= k_{z'}.$$ Hence  
$$\sup\{\#(f^{-1} \{z'\} \cap E) : E \text{ is a component of }f^{-1}(U)\}\leq k_{z'} < \infty,$$
since $q=f \circ p.$ Thus $f$ has uniformly bounded local multiplicities in $f^{-1}\{z\}.$
\end{proof}

\begin{Not} Suppose $f \colon X \ra Z$ is a completed covering between manifolds so that there exists $z \in Z$ so that $f$ does not have uniformly bounded local multiplicities in $f^{-1}\{z\} \subset X.$ Then, as a consequence of Lemma \ref{kotiin}, $f$ is not an orbit map. 
\end{Not}

\begin{Prop}There exists a BLD-mapping from $\R^2$ to the $2$-sphere $S^2$ having a monodromy representation that is not locally compact.
\end{Prop}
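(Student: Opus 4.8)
The plan is to construct an explicit infinite branched covering $f\colon \R^2 \to S^2$ whose branch set maps to a single point, but for which the local multiplicities at that point grow without bound. Concretely, I would take a proper, open, discrete map $f$ obtained by gluing together infinitely many "pizza slices": decompose $S^2$ as the union of two closed hemispheres $H_+, H_-$ meeting along the equator, and think of $S^2$ as a CW-complex with one vertex $z_0$ (the south pole, say), one edge, and two faces, or more conveniently as the double of a bigon. The source $\R^2$ is then built as a corresponding infinite branched cover, ramified over the north pole $z_{\infty}$ with local degrees $2,3,4,\dots$ arranged so that the covering $g := f\r \big(\R^2 \setminus f^{-1}\{z_\infty\}\big)\colon \R^2 \setminus f^{-1}\{z_\infty\} \to S^2 \setminus \{z_\infty\}$ is a connected covering of the punctured sphere (which is homotopy equivalent to a circle, hence has $\Z$ as fundamental group; a connected covering of it is classified by a subgroup of $\Z$, and by choosing the covering corresponding to the trivial subgroup — the universal cover, i.e.\ a real line — one realizes all local degrees at the single preimage point simultaneously).

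Next I would verify the structural hypotheses. First, $B_f = f^{-1}\{z_\infty\}$ has codimension $2$ and $f(B_f) = \{z_\infty\}$ is closed, with $S^2 \setminus \{z_\infty\}$ open, so the setup of Question \ref{kysymys} applies; $f$ restricted to $\R^2 \setminus f^{-1}\{z_\infty\}$ is a covering onto $S^2 \setminus \{z_\infty\}$, and since $f$ is proper with discrete fibers over the complement and the branch point has a single preimage, $f$ is a completed covering — indeed one can check directly that conditions (1) and (2) of Lemma \ref{aurinko} hold, or invoke Luisto \cite{L} once the metric/BLD structure is in place. Then I would equip $S^2$ with a polyhedral path-metric $d_s$ (from a triangulation) and pull it back via $f$ to a length metric $e_s$ on $\R^2$ for which $f$ is $L$-BLD for some $L$: this is automatic because $f$ is a simplicial-type branched cover, with the constant $L$ controlled by the maximal local degree appearing near $z_\infty$ — and here is where the construction must be arranged carefully, because a priori the local degrees are unbounded. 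The fix is that BLD only requires a \emph{uniform} multiplicative distortion of lengths of paths, not bounded local degree; a branched cover of the plane ramified at isolated points with arbitrary local degrees is still $1$-BLD for the pulled-back length metric, since winding $k$ times around a cone point of cone angle $2\pi k$ neither stretches nor shrinks path length. So $f\colon (\R^2, e_s) \to (S^2, d_s)$ is $1$-BLD (or $L$-BLD after adjusting the metric on the source).

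Having produced such an $f$, I would then produce its monodromy representation using the machinery of the paper. Since $f(B_f) = \{z_\infty\}$ is a discrete (one-point) subset of $S^2$, by Luisto \cite{L} $f$ is a completed covering, and moreover $S^2 \setminus \{z_\infty\}$ being homotopy equivalent to a point — wait, it is contractible, so every covering of it is trivial; to get a genuinely infinite monodromy one instead ramifies over \emph{two} points $z_0, z_\infty$ so that $S^2 \setminus \{z_0, z_\infty\}$ is an annulus with $\pi_1 = \Z$, and arranges the branch data over $z_\infty$ to have unbounded local degrees while keeping everything unramified and of controlled (even trivial) behavior over $z_0$. Then $Z$ is $(Z', \mathrm{Ker}(\sigma_g))$-stable — stability is automatic for surfaces with such discrete branch image, as in the discussion after Corollary \ref{ilo} — so by Theorem \ref{relas} $f$ has a monodromy representation $(Y,p,q)$. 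Finally, by Lemma \ref{kotiin}, if this monodromy representation were locally compact then $f$ would have uniformly bounded local multiplicities in $f^{-1}\{z_\infty\}$; but by construction the local degrees of $f$ over points near $z_\infty$ are unbounded, contradiction. Hence $(Y,p,q)$ is not locally compact.

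The main obstacle, and the part I expect to require the most care, is the simultaneous realization of \emph{unbounded} local multiplicities at a point and \emph{bounded} (BLD) global distortion: these pull in opposite directions, and the construction must exploit that BLD is insensitive to cone-angle multiplicities at isolated branch points of surfaces. A secondary subtlety is ensuring the cover is connected with the right monodromy so that Theorem \ref{relas} applies and gives an honest monodromy representation, which forces the second branch point $z_0$ (or a nontrivial $\pi_1$ of the unramified part) into the picture; one must check that adding $z_0$ does not spoil stability — but since $f(B_f)$ remains discrete, stability holds as in the proof of Corollary \ref{ilo}. Everything else (properness, discreteness, openness, verifying the completed-covering property via Lemma \ref{aurinko}, pulling back the path metric via Theorem \ref{kk} once local compactness is \emph{not} assumed — here one uses the explicit length structure instead) is routine.
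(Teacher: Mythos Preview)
Your overall strategy is correct and matches the paper's proof exactly: exhibit a BLD map $f\colon \R^2 \to S^2$ with $f(B_f)$ finite but with unbounded local multiplicities over some branch value; invoke Luisto \cite{L} to see that $f$ is a completed covering; observe that a finite (hence discrete) branch image yields stability and thus a monodromy representation via Theorem~\ref{hh} (equivalently Theorem~\ref{relas}); and then apply Lemma~\ref{kotiin} to conclude that the representation is not locally compact. The paper's proof is exactly this, deferring the construction of $f$ itself to a figure.

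The gap is in your construction of $f$. With exactly two branch values $z_0, z_\infty \in S^2$, the complement is an annulus with $\pi_1 \cong \Z$, generated by a small loop around $z_\infty$ (and the loop around $z_0$ is its inverse). A connected cover corresponds to a subgroup $n\Z \subset \Z$, for which the generator acts transitively on the fiber $\Z/n\Z$: there is then a \emph{single} preimage of $z_\infty$ with local degree $n$ (and likewise over $z_0$). Taking the trivial subgroup gives a single infinite orbit, so the completion is not a manifold at all. In neither case do you obtain infinitely many preimages of $z_\infty$ with unbounded finite local degrees; in particular, ``unramified over $z_0$ and unbounded over $z_\infty$'' is impossible here. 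You need at least three branch values so that the fundamental group of the complement is nonabelian (free of rank $\ge 2$); then permutation representations with the required orbit structure for the local monodromies exist, and one can arrange the completed cover to be $\R^2$ (this is the classical Speiser--graph / line--complex picture from Nevanlinna theory). The paper does not spell out this construction either, but your two--branch--value proposal cannot work as stated.
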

\begin{proof}Let $f \colon \R^2 \ra S^2$ be a BLD-mapping for which $f(B_f) \subset S^2$ is a finite set, but for a point $z \in S^2$ the map $f$ does not have uniformly bounded local multiplicities in $f^{-1}\{z\} \subset \R^2$; see Figure \ref{qer}. 

Since $f$ is BLD-mapping and $f(B_f)\subset S^2$ is closed, $f$ is a completed covering by \cite{L}. Since $f(B_f) \subset S^2$ is a finite set, $f$ is a stabily completed covering. Thus it has a monodromy representation $(Y,p,q)$ by Theorem \ref{hh}. By Proposition \ref{kotiin}, $(Y,p,q)$ is not locally compact.  
\end{proof}

\subsection{Completed normal coverings with locally finite multiplicity}\label{brr}

Let $Z$ be a PL manifold and $f \colon X \ra Z$ a stabily completed covering. Then there are large subsets $Y' \subset Y$ and $Z' \subset Z$ so that $g:=p \r Y' \colon Y' \ra Z'$ is a covering and $Z$ is $(Z', \mathrm{Ker}(\sigma_g))$-stable. 

Fix $z_0 \in Z'.$ Let $z \in Z$ and $U \in \mathcal{N}_Z(z,z_0)$ be a $(Z', \mathrm{Ker}(\sigma_g))$-stable neighbourhood of $z.$ Let $\pi \colon \pi(Z',z_0) \ra \pi(Z',z_0)/\mathrm{Ker}(\sigma_g)$ be the canonical map. 
\begin{Def}\label{br} A group $H \subset  \pi(Z',z_0)/\mathrm{Ker}(\sigma_g)$ is a \textit{local monodromy group} of the completed covering $f \colon X \ra Z$ at $z$ if there exists a $(Z', \mathrm{Ker}(\sigma_g))$-stable neighbourhood $U \in \mathcal{N}_Z(z,z_0)$ of $z$ so that $H=\mathrm{Im}(\pi \circ (\iota_{Z' \cap U, Z'})_*)$ for the inclusion $\iota_{Z' \cap U,Z'}$.
\end{Def}
For a $(Z', \mathrm{Ker}(\sigma_g))$-stable neighbourhood $U \in \mathcal{N}_Z(z,z_0)$ we denote
\begin{equation*}
\text{Mono}_f(U;Z'):=\mathrm{Im}(\pi \circ (\iota_{Z' \cap U, Z'})_*).
\end{equation*}

\begin{Not}If $U, V \in \mathcal{N}_Z(z,z_0)$ are $(Z', \mathrm{Ker}(\sigma_g))$-stable neighbourhoods of $z \in Z,$ then 
$$\text{Mono}_f(V;Z')=\text{Mono}_f(U;Z')$$ if $V \subset U.$  
\end{Not}   

\begin{Apulause}\label{whynot}Let $Z$ be a PL manifold and $p \colon Y \ra Z$ a stabily completed normal covering. Let $Y' \subset Y$ and $Z' \subset Z$ be such large subsets that $g:=p \r Y' \colon Y' \ra Z'$ is a normal covering and $Z$ is $(Z', \mathrm{Ker}(\sigma_g))$-stable. Let $y \in Y,$ $z:=p(y)$ and $V$ be a $(Z', \mathrm{Ker}(\sigma_g)))$-stable neighbourhood of the point $z.$  Let $Z$ have a triangulation $T$ having a vertex $z_0 \in Z'$ and let $n \geq 2$ be such that $\mathrm{St}_{1/n}(z)\subset V.$ Let $D_y$ be the $y$-component of $p^{-1}(\mathrm{St}_{1/n}(z)).$ Then
\begin{itemize} 
\item[(a)] 
$\#(p^{-1}\{z'\}\cap D_y) = \#\mathrm{Mono}_p(V;Z')$ for $z' \in \mathrm{St}_{1/n}(z) \cap Z'$ and
\item[(b)]$\#(p^{-1}\{z'\}\cap D_y) \leq \#\mathrm{Mono}_p(V;Z')$ for $z' \in \mathrm{St}_{1/n}(z).$
\end{itemize}
\end{Apulause}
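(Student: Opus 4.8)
The plan is to reduce everything to the covering $g \colon Y' \ra Z'$ and the monodromy action on the fiber $g^{-1}\{z_0\}$, and to count fiber points in $D_y$ by counting the orbit of a single point under a suitable subgroup. First I would fix a triangulation $T$ as in the statement, a vertex $z_0 \in Z'$, and $n \geq 2$ with $\mathrm{St}_{1/n}(z) \subset V$. Since $p$ is a completed normal covering, by Lemma \ref{biotin} we have $p^{-1}(Z') = Y'$, so $D_y \cap Y'$ is a component of $g^{-1}(\mathrm{St}_{1/n}(z) \cap Z')$; moreover by Lemma \ref{brunate} we have $p(D_y) = \mathrm{St}_{1/n}(z)$, so $D_y$ really does meet each fiber over $\mathrm{St}_{1/n}(z)$. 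The key point for (a) is that for $z' \in \mathrm{St}_{1/n}(z) \cap Z'$, the set $p^{-1}\{z'\} \cap D_y = g^{-1}\{z'\} \cap (D_y \cap Y')$ is in bijection with the orbit of a base fiber point under the image of $\pi_1$ of $\mathrm{St}_{1/n}(z) \cap Z'$ in the monodromy group — this is the standard description of fibers of a component of a restricted covering.

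The technical bridge I would use is exactly the argument appearing in the proof of Proposition \ref{somac} (and Lemma \ref{D_2}): choose via Lemma \ref{D_2} a simply-connected open $U \subset Z'$ with $z_0 \in U$ so that $U \cup \mathrm{St}_{1/n}(z)$ and $U \cap \mathrm{St}_{1/n}(z)$ are path-connected, set $W := U \cup \mathrm{St}_{1/n}(z)$ — a $(Z',\mathrm{Ker}(\sigma_g))$-stable neighbourhood of $z$ contained in $V$ once $n$ is large enough. Then pick $z_1 \in \mathrm{St}_{1/n}(z) \cap U \cap Z'$ and a path $\gamma \colon z_0 \cu z_1$ in $U$; by Lemma \ref{D_1}, every loop in $W \cap Z'$ at $z_0$ is, up to $\gamma$-conjugation, a loop in $\mathrm{St}_{1/n}(z) \cap Z'$ at $z_1$. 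Transporting the base point of $Y'$ along $\gamma$ to a point $y_1 \in D_y \cap Y'$ over $z_1$, one gets that the component $D_y \cap Y'$ of $g^{-1}(\mathrm{St}_{1/n}(z) \cap Z')$ corresponds to the subgroup of $\pi_1(\mathrm{St}_{1/n}(z) \cap Z', z_1)$ whose image in the monodromy group is, after conjugation, precisely $\mathrm{Im}(\pi \circ (\iota_{Z' \cap W, Z'})_*) = \mathrm{Mono}_p(V;Z')$ (the last equality being the stability of $V$ together with Remark: $\mathrm{Mono}$ is independent of the stable neighbourhood once one is inside another). Hence $\#(g^{-1}\{z'\} \cap (D_y \cap Y'))$ equals the index-type count $\#\mathrm{Mono}_p(V;Z')$, i.e. the cardinality of that orbit, which is $\#\mathrm{Mono}_p(V;Z')$ itself since $g$ is \emph{normal} (so the monodromy action on the fiber is free and the stabilizer description collapses to the subgroup being trivial in the monodromy group); this gives (a).

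For (b), let $z' \in \mathrm{St}_{1/n}(z)$ be arbitrary, not necessarily in $Z'$. Here I would use that $p$ restricted to $D_y$ is the completion of $g$ restricted to $D_y \cap Y'$: by Lemma \ref{biotin}/\ref{brunate} applied to the open set $\mathrm{St}_{1/n}(z)$, each point of $p^{-1}\{z'\} \cap D_y$ is obtained as $\bigcap_W \Theta(W)$ for a selection function landing in $D_y$. Pick a small $W' \in \mathcal{N}_Z(z';z')$ with $W' \cap Z'$ path-connected and $W' \subset \mathrm{St}_{1/n}(z)$; then the components of $p^{-1}(W') \cap D_y$ are in bijection with the components of $g^{-1}(W' \cap Z') \cap (D_y \cap Y')$, whose number is at most $\#(g^{-1}\{z''\} \cap (D_y \cap Y'))$ for $z'' \in W' \cap Z'$, which by part (a) equals $\#\mathrm{Mono}_p(V;Z')$. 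Since distinct points of $p^{-1}\{z'\} \cap D_y$ lie in distinct components of $p^{-1}(W') \cap D_y$ (as $p$ is a spread and $Y$ Hausdorff, for $W'$ small enough), the count $\#(p^{-1}\{z'\} \cap D_y)$ is bounded by that same number, giving (b).

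The main obstacle I anticipate is bookkeeping the base-point transport and the conjugation by $\gamma$ so that the subgroup one reads off from the component $D_y \cap Y'$ is literally $\mathrm{Mono}_p(V;Z')$ and not merely a conjugate — but normality of $g$ makes $\mathrm{Ker}(\sigma_g) = g_*(\pi(Y',y_0))$ normal and its image well-defined up to nothing, so conjugation is harmless; the only real care is checking that $W = U \cup \mathrm{St}_{1/n}(z)$ can be arranged inside $V$ (shrink $n$) and that it is stable (Remark after Definition \ref{br}, since $V$ is stable and $W \subset V$, using that $\mathrm{St}_{1/n}(z)$ is cofinal at $z$).
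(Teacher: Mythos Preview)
Your approach is essentially the paper's: Lemma~\ref{D_2} to build $W=U\cup\mathrm{St}_{1/n}(z)\subset V$, Lemma~\ref{D_1} to reduce loops in $W\cap Z'$ to loops in $\mathrm{St}_{1/n}(z)\cap Z'$, normality of $g$ so that $\mathrm{Ker}(\sigma_g)=g_*(\pi(Y',y_0))$ is conjugation-invariant, and then stability of $V$ to identify the image with $\mathrm{Mono}_p(V;Z')$. Part~(b) is also the paper's argument; the paper phrases it by contradiction (assume $k+1$ preimages, separate them, push down to $Z'$), which is what your component-counting becomes once you note that a single $W'$ can separate only finitely many points.

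One point deserves care. You write ``transporting the base point of $Y'$ along $\gamma$ to a point $y_1\in D_y\cap Y'$''. If this means lifting $\gamma$ from the fixed base point $y_0\in g^{-1}\{z_0\}$, the endpoint need not lie in $D_y$; nothing ties $y_0$ to the given $y$. The paper handles this by lifting to some $y_1$, letting $D_{y_1}$ be its component, and then invoking that $p$ is an orbit map (via Theorem~\ref{TFAE} and Theorem~\ref{vauva}) together with Lemma~\ref{vindpark} to get a deck transformation carrying $D_y$ onto $D_{y_1}$, so the fibre counts agree. Your outline can sidestep this: simply \emph{choose} $y_1\in D_y\cap Y'$ over $z_1$ (it exists by Lemma~\ref{brunate} and Lemma~\ref{biotin}) and use it as the new base point; normality of $g$ gives $g_*(\pi(Y',y_1))=g_*(\pi(Y',y_0))$, so the orbit computation goes through unchanged. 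Either way, make the step explicit. Also, your parenthetical ``once $n$ is large enough'' is unnecessary: $n$ is fixed by hypothesis with $\mathrm{St}_{1/n}(z)\subset V$, and Lemma~\ref{D_2} applied with $X=V$ already yields $U\subset V\cap Z'$, hence $W\subset V$.
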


\begin{proof} We first prove claim (a). Let $z'$ and $z''$ be points in $\mathrm{St}_{1/n}(z) \cap Z'.$ Then $\#(p^{-1}\{z'\}\cap D_y)=\#(p^{-1}\{z''\}\cap D_y),$ since $\mathrm{St}_{1/n}(z) \cap Z'$ is connected and $D_y \cap Y'$ is a component of $g^{-1}(\mathrm{St}_{1/n}(z) \cap Z').$ Thus we only need to show that there is a point $z_1 \in \mathrm{St}_{1/n}(z) \cap Z'$ satisfying $\#(p^{-1}\{z_1\}\cap D_y)=\#\text{Mono}_p(V).$

By Lemma \ref{D_2}, there is an open set $U \subset V \cap Z'$ so that $U \cap \mathrm{St}_{1/n}(z)$ is path-connected and $V_n:=U \cup \mathrm{St}_{1/n}(z) \in \mathcal{N}_Z(z,z_0).$ Since $V_n\subset V,$ the set $V_n$ is a $(Z', \mathrm{Ker}(\sigma_g))$-stable neighbourhood of $z.$ Hence 
$$\#\mathrm{Im}(\pi \circ (\iota_{V_n \cap Z',Z'})_*)=\#\mathrm{Im}(\pi \circ (\iota_{V \cap Z',Z'})_*).$$

Let $z_1 \in V_n \cap Z'$ and let $\be \colon [0,1]\ra V_n \cap Z'$ be a path $\be \colon z_0 \cu z_1.$ Let $y_0 \in p^{-1}\{z_0\}$ and $y_1:=\ol{\be}_{y_0}(1).$ Let $D_{y_1}$ be the $y_1$-component of $f^{-1}(\mathrm{St}_{1/n}(z)).$ Since $p$ is an orbit map, there is by Lemma \ref{vindpark} a deck-transformation $\tau \in \T(p)$ satisfying $\tau(D_y)=D_{y_1}.$ Hence
$$\#(p^{-1}\{z_1\}\cap D_y)=\#(p^{-1}\{z_1\}\cap D_{y_1}).$$

Since $g$ is a normal covering, we have $\mathrm{Ker}(\sigma_g)=\mathrm{Im}(g_*).$ By Lemma \ref{D_1} there is for every loop $\gamma \colon (S^1,e_0)\ra (V_n \cap Z',z_0)$ a loop $\al \colon (S^1,e_0)\ra (\mathrm{St}_{1/n}(z) \cap Z',z_1)$ satisfying $[\gamma]=[\be\al\be^{\law}]$ in $V_n \cap Z'.$ Hence 
$$\#(p^{-1}\{z_1\}\cap D_{y_1})=\#\mathrm{Im}(\pi \circ (\iota_{V_n \cap Z',Z'})_*),$$
since $g^{-1}(Z')=Y'$ by Lemma \ref{biotin}. 
We conclude that
\begin{align*}\#(p^{-1}\{z_1\}\cap D_y)=&\#(p^{-1}\{z_1\}\cap D_{y_1})=\#\mathrm{Im}(\pi \circ (\iota_{V_n \cap Z',Z'})_*)\\=&\#\mathrm{Im}(\pi \circ (\iota_{V \cap Z',Z'})_*)=\#\text{Mono}_p(V).
\end{align*}

We then prove claim (b). Since $p^{-1}\{z'\}$ is a countable set for every $z' \in Z,$ the statements holds trivially if $\#\text{Mono}_p(V)=\infty.$ Suppose $\#\text{Mono}_p(V)=k$ for $k \in \N.$  

Towards a contradiction suppose that $z'\in \mathrm{St}_{1/n}(z) \setminus Z'$ and there exists $k+1$ points $y_0, \ldots, y_k$ in $p^{-1}\{z'\}\cap D_y.$ 
Let $U\subset \mathrm{St}_{1/n}(z)$ be an open connected set so that the $y_i$-components $D'_{y_i}\subset D$ of $p^{-1}(U)$ are pairwise disjoint. By Lemma \ref{brunate}, $p(D'_{y_i})=U$ for every $i \in \{0, \ldots, k\}.$ Since $Z'\subset Z$ is large, there exists a point $z'_1 \in U \cap Z'$ for which $\#(p^{-1}\{z'\}\cap D_y)\geq k.$ This is a contradiction with (a). Hence $\#(p^{-1}\{z'\}\cap D_y)\leq k$ for all $z' \in \mathrm{St}_{1/n}(z).$
\end{proof}

\begin{Thm}\label{torstai} Let $Z$ be a PL manifold and $p \colon Y \ra Z$ a discrete completed normal covering. Then $p$ has locally finite multiplicity if and only if $p$ has a finite local monodromy group at each $z \in Z$. 
\end{Thm}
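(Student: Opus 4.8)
The statement is an equivalence between two local properties of the discrete completed normal covering $p \colon Y \ra Z$, so the natural approach is to localize: both conditions are about the behaviour of $p$ over small neighbourhoods of a single point $z \in Z$, and the bridge between them is Lemma \ref{whynot}, which relates the cardinality of a fibre $p^{-1}\{z'\}\cap D_y$ inside a component to $\#\mathrm{Mono}_p(V;Z')$. First I would fix $z \in Z$, a triangulation $T$ of $Z$ with a vertex $z_0 \in Z'$ (where $g := p\r Y' \colon Y' \ra Z'$ is a normal covering and $Z$ is $(Z',\mathrm{Ker}(\sigma_g))$-stable, which holds since $p$ is discrete hence stabily completed by Theorem \ref{TFAE}), and a $(Z',\mathrm{Ker}(\sigma_g))$-stable neighbourhood $V$ of $z$ with $\mathrm{St}_{1/n}(z) \subset V$ for some $n \geq 2$.

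For the direction ``finite local monodromy group $\Rightarrow$ locally finite multiplicity'', suppose the local monodromy group $\mathrm{Mono}_p(V;Z')$ at $z$ is finite, say of order $k$. I would cover a neighbourhood of $z$ by the open sets $\mathrm{St}_{1/n}(z)$ and apply Lemma \ref{whynot}(b): for every $z' \in \mathrm{St}_{1/n}(z)$ and every component $D$ of $p^{-1}(\mathrm{St}_{1/n}(z))$ — using that $p$ is an orbit map (Theorem \ref{vauva}), so the deck group acts transitively on components and all components have the same fibre cardinality by Lemma \ref{vindpark} — we get $\#(p^{-1}\{z'\}\cap D) \leq k$. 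Summing over the components meeting a suitably small compact piece, or rather directly bounding $\#(p^{-1}\{z'\}\cap D) \leq k$ for each component, gives that $p$ restricted to $\mathrm{St}_{1/n}(z)$ has multiplicity bounded by $k$ on each component, which is exactly local finiteness of multiplicity at $z$. Since $z$ was arbitrary, $p$ has locally finite multiplicity.

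For the converse, suppose $p$ has locally finite multiplicity. Then at each $z$ there is $U \in \mathcal{N}_Z(z)$ so that $p^{-1}\{z'\}\cap D$ is finite for every component $D$ of $p^{-1}(U)$ and every $z' \in U$; shrinking inside a $(Z',\mathrm{Ker}(\sigma_g))$-stable neighbourhood $V$ with $\mathrm{St}_{1/n}(z) \subset V \cap U$, pick any $z' \in \mathrm{St}_{1/n}(z) \cap Z'$ and a component $D_y$. Lemma \ref{whynot}(a) gives $\#(p^{-1}\{z'\}\cap D_y) = \#\mathrm{Mono}_p(V;Z')$, and the left-hand side is finite by the local finiteness assumption. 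Hence $\mathrm{Mono}_p(V;Z')$ is a finite local monodromy group of $p$ at $z$, and since $z$ was arbitrary $p$ has a finite local monodromy group at each point of $Z$.

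\textbf{Main obstacle.} The technical heart is already packaged in Lemma \ref{whynot}, so the remaining work is mostly bookkeeping; the one point requiring care is ensuring that ``locally finite multiplicity at $z$'' in the sense of a uniform bound over \emph{all} components of $p^{-1}(\mathrm{St}_{1/n}(z))$ (not just one) follows correctly — this is where I would lean on $p$ being an orbit map and on Lemma \ref{vindpark} to transport the fibre count from one component $D_y$ to every other component via a deck transformation, so that the single bound $k = \#\mathrm{Mono}_p(V;Z')$ controls the multiplicity globally over that neighbourhood. I also need to double-check compatibility of the ``locally finite multiplicity'' definition used here with the neighbourhood-and-components formulation, but this is routine given the spread property of $p$.
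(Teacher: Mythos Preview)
Your proposal is correct and follows essentially the same route as the paper: both directions hinge on Lemma \ref{whynot}, applied over a star $\mathrm{St}_{1/n}(z)$ inside a $(Z',\mathrm{Ker}(\sigma_g))$-stable neighbourhood $V$, after invoking Theorem \ref{TFAE} and Theorem \ref{vauva} for the setup. Your ``main obstacle'' is a slight over-worry: locally finite multiplicity is a condition at each point $y\in Y$, so it suffices to bound the fibre in the single component $D_y$ containing $y$ (as the paper does), and the transfer to other components via Lemma \ref{vindpark} is not needed.
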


\begin{proof}The map $p$ is an orbit map by Theorem \ref{vauva}. Thus there is by Theorem \ref{TFAE} large subsets $Y' \subset X$ and $Z' \subset Z$ so that $g:=p \r Y' \colon Y' \ra Z'$ is a normal covering and $Z$ is $(Z', \mathrm{Ker}(\sigma_g))$-stable. Fix a triangulation $T$ of $Z$ having a vertex $z_0 \in Z'$. 

Suppose first that $p$ has locally finite multiplicity. Let $z \in Z$ and $y \in p^{-1}\{z\}.$ Let $D_n$ be the $y$-component of $p^{-1}(\mathrm{St}_{1/n}(z))$ for every $n \geq 1.$ Then $(D_n)_{n \in \N}$ is a neighbourhood basis of $y,$ since $p$ is a spread.

Let $V \in \mathcal{N}_Z(z,z_0)$ be a $(Z', \mathrm{Ker}(\sigma_g))$-stable neighbourhood of $z.$ Since $p$ has locally finite multiplicity, there exists $n \geq 2$ so that $\mathrm{St}_{1/n}(z)\subset V$ and $\#(p^{-1}\{z'\}\cap D_{n})<\infty$ for every $z' \in \mathrm{St}_{1/n}(z).$ Let $z_1 \in \mathrm{St}_{1/n}(z) \cap Z'.$ By Lemma \ref{whynot}, $$\#\text{Mono}_p(V)=\#(p^{-1}\{z_1\}\cap D_n)<\infty.$$ 

Suppose then that $p$ has a finite local monodromy group at each $z \in Z$. Let $y \in Y$ and $z:=p(y).$ Let $V\in \mathcal{N}_Z(z,z_0)$ be a $(Z', \mathrm{Ker}(\sigma_g))$-stable neighbourhood of the point $z$ so that $\#\text{Mono}_p(V)< \infty.$ Let $n \geq 2$ be such that $\mathrm{St}_{1/n}(z)\subset V$ and $D_y$ be the $y$-component of $p^{-1}(\mathrm{St}_{1/n}(z)).$ By Lemma \ref{whynot}, $p \r D_y$ has finite multiplicity. Thus $p$ has locally finite multiplicity. 
\end{proof}

Next we show regularity results for local monodromy.

\begin{Thm}\label{viimeista} Let $f \colon X \ra Z$ be a stabily completed covering between PL manifolds. 
Let 
\begin{equation*}
\xymatrix{
& Y \ar[ld]_{p} \ar[rd]^{q} &\\
X \ar[rr]^{f} & & Z }
\end{equation*}
be a commutative diagram of completed coverings so that $p$ and $q$ are orbit maps and $q$ is natural of $f.$ Then $q$ has locally finite multiplicity if and only if every $z \in Z$ has a finite local monodromy group of $f$ at $z.$ 
\end{Thm}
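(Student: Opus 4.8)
The plan is to transfer the problem to the completed normal covering $q$ and apply Theorem \ref{torstai}, after matching the local monodromy groups of $f$ and $q$ at each point. First I would set up notation: since $f$ is stabily completed and $q$ is natural of $f$, fix large subsets $X' \subset X$, $Y' \subset Y$, $Z' \subset Z$ so that $g := f \r X' \colon X' \ra Z'$ and $h := q \r Y' \colon Y' \ra Z'$ are coverings with $\mathrm{Ker}(\sigma_g) = \mathrm{Ker}(\sigma_h)$, and fix a base point $z_0 \in Z'$. Note that $q$ is a completed covering that is an orbit map, hence a completed normal covering; since $p$ and $q$ are orbit maps, by Corollary \ref{viimeinen} (or Theorem \ref{mm}) the covering $h$ is stable, i.e.\ $Z$ is $(Z', \mathrm{Ker}(\sigma_h))$-stable, which is the same stability condition that appears in the definition of the local monodromy group of $f$ since $\mathrm{Ker}(\sigma_g) = \mathrm{Ker}(\sigma_h)$. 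Also $q$ is discrete by Theorem \ref{ll} (or Theorem \ref{vauva}), so Theorem \ref{torstai} applies to $q$.

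Next I would observe that the two notions of ``finite local monodromy group at $z$'' literally coincide. By Definition \ref{br}, a local monodromy group of $f$ at $z$ is a subgroup of $\pi(Z',z_0)/\mathrm{Ker}(\sigma_g)$ of the form $\mathrm{Im}(\pi \circ (\iota_{Z' \cap U, Z'})_*)$ for a $(Z', \mathrm{Ker}(\sigma_g))$-stable neighbourhood $U \in \mathcal{N}_Z(z,z_0)$; by the Remark following Definition \ref{br}, this is independent of the choice of such $U$, hence it is the single group $\mathrm{Mono}_f(U;Z')$. Since $\mathrm{Ker}(\sigma_g) = \mathrm{Ker}(\sigma_h)$ and the stable neighbourhoods are the same, $\mathrm{Mono}_f(U;Z') = \mathrm{Mono}_q(U;Z')$ for every stable $U \in \mathcal{N}_Z(z,z_0)$. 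So ``$f$ has a finite local monodromy group at $z$'' is equivalent to ``$q$ has a finite local monodromy group at $z$''. Applying Theorem \ref{torstai} to the discrete completed normal covering $q \colon Y \ra Z$ then gives: $q$ has locally finite multiplicity $\iff$ $q$ has a finite local monodromy group at each $z \in Z$ $\iff$ $f$ has a finite local monodromy group at each $z \in Z$. This chain of equivalences is exactly the statement.

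The only genuine point to check carefully — and the step I expect to be the main obstacle — is the bookkeeping around base points and the identification $\mathrm{Mono}_f(U;Z') = \mathrm{Mono}_q(U;Z')$. One must make sure that the large subset $Z'$ used to define naturality can be taken to be the same as the one witnessing stability of both $g$ and $h$ (shrinking to a common large subset and using Lemma \ref{something} to see that the monodromy kernels are unaffected up to the relevant isomorphism), and that the base point $z_0$ lies in it. Once $g$ and $h$ are coverings over the \emph{same} $Z'$ with the \emph{same} monodromy kernel, the star-neighbourhood machinery underlying Definition \ref{br} and Lemma \ref{whynot} is driven entirely by the pair $(Z', \mathrm{Ker}(\sigma))$ and the manifold $Z$, so $f$ and $q$ genuinely produce the same local monodromy groups, and no further computation is needed.
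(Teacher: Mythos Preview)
Your proposal is correct and follows essentially the same route as the paper: fix large subsets with $\mathrm{Ker}(\sigma_g)=\mathrm{Ker}(\sigma_h)$ via naturality, observe that the stable neighbourhoods and hence the local monodromy groups of $f$ and $q$ coincide, and then invoke Theorem \ref{torstai} for the discrete completed normal covering $q$. The paper presents the two implications separately rather than as a single chain of equivalences, but the content is identical; your extra care about the common large subset $Z'$ and the base point is exactly the bookkeeping the paper handles implicitly by choosing everything at once from the definition of naturality.
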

\begin{proof} By naturality of $q$ we may fix large subsets $X' \subset X,$ $Y' \subset Y$ and $Z' \subset Z$ for which $g:=f \r X' \colon X' \ra Z'$ and $h:=q \r Y' \colon Y' \ra Z'$ are coverings satisfying
$\mathrm{Ker}(\sigma_{g})=\mathrm{Ker}(\sigma_{h}).$

Suppose first that $f$ has a finite local monodromy group of $f$ at each point of $Z$. Let $z \in Z.$ Then there is a $(Z', \mathrm{Ker}(\sigma_g))$-stable neighbourhood $V \in \mathcal{N}_{Z}(z,z_0)$ of $z$ for which $\#\text{Mono}(V) <\infty.$ Since $\mathrm{Ker}(\sigma_{g})=\mathrm{Ker}(\sigma_{h}),$ $V$ is a $(Z', \mathrm{Ker}(\sigma_h))$-stable neighbourhood of $z.$ Since $\#\text{Mono}(V) <\infty,$ $q$ has a finite local monodromy group   at $z.$ Thus $q$ has locally finite multiplicity by Theorem \ref{torstai}. 

Suppose then that $q$ has locally finite multiplicity. Let $z \in Z.$ By Theorem \ref{torstai} $q$ has a finite local monodromy group at $z$. Since $\mathrm{Ker}(\sigma_{g})=\mathrm{Ker}(\sigma_{h}),$ the finite local monodromy group of $q$ at $z$ is a finite local monodromy group of $f$ at $z.$ 
\end{proof}

As a direct consequence of Theorem \ref{viimeista} we obtain an analogy of Theorem \ref{relas}.

\begin{Cor}\label{matka} Let $f \colon X \ra Z$ be a stabily completed covering between PL manifolds, so that $f$ has a finite local monodromy group at each $z \in Z.$ Then there is a monodromy representation $(Y,p,q)$ of $f,$ where $q$ has locally finite multiplicity. 
\end{Cor}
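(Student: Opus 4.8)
The plan is to combine Theorem \ref{hh} with Theorem \ref{viimeista}. Since $f$ is a stabily completed covering between PL manifolds, Theorem \ref{hh} provides a monodromy representation $(Y,p,q)$ of $f$ in which $q$ is natural of $f$. So the only thing left to establish is that this particular $q$ has locally finite multiplicity.

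Here is where I would invoke Theorem \ref{viimeista}. By construction the maps $p$ and $q$ in the monodromy representation $(Y,p,q)$ obtained from Theorem \ref{hh} are orbit maps and $q$ is natural of $f$; thus the diagram fits the hypotheses of Theorem \ref{viimeista}. That theorem says that $q$ has locally finite multiplicity if and only if every $z \in Z$ has a finite local monodromy group of $f$ at $z$. By hypothesis $f$ has a finite local monodromy group at each $z \in Z$, so the right-hand condition holds, and hence $q$ has locally finite multiplicity. This gives the desired monodromy representation.

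\begin{proof} Since $f \colon X \ra Z$ is a stabily completed covering between PL manifolds, by Theorem \ref{hh} there is a monodromy representation $(Y,p,q)$ of $f$ in which $q$ is natural of $f.$ In particular, $p$ and $q$ are completed coverings and orbit maps fitting into the commutative diagram
\begin{equation*}
\xymatrix{
& Y \ar[ld]_{p} \ar[rd]^{q} &\\
X \ar[rr]^{f} & & Z. }
\end{equation*}
Thus the hypotheses of Theorem \ref{viimeista} are satisfied. Since $f$ has a finite local monodromy group at each $z \in Z,$ Theorem \ref{viimeista} implies that $q$ has locally finite multiplicity. Hence $(Y,p,q)$ is a monodromy representation of $f$ in which $q$ has locally finite multiplicity.
\end{proof}

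The statement is an essentially immediate corollary, so there is no substantial obstacle; the only mild subtlety is checking that the monodromy representation furnished by Theorem \ref{hh} indeed meets the naturality hypothesis of Theorem \ref{viimeista}, which it does by the very construction in the proof of Theorem \ref{hh}.
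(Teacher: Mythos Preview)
Your proof is correct and matches the paper's approach exactly: the paper states Corollary \ref{matka} as ``a direct consequence of Theorem \ref{viimeista}'' without giving an explicit proof, and your argument spells out precisely this consequence---first obtaining the monodromy representation with $q$ natural of $f$ via Theorem \ref{hh}, then applying Theorem \ref{viimeista}.
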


We conclude this section by defining for a PL manifold $Z,$ completed normal covering $p \colon Y \ra Z$ and $y \in Y,$ the homotopical index $\H(y,p).$ By Lemmas \ref{whynot} and \ref{torstai} we may define as follows.

\begin{Def}\label{prev}Let $Z$ be a PL manifold, $p \colon Y \ra Z$ a completed normal covering having locally finite multiplicity and $y \in Y.$ Let $Y' \subset Y$ and $Z' \subset Z$ be large subsets so that $g:=p \r Y' \colon Y' \ra Z'$ is a normal covering. Let $T$ be a triangulation of $Z$ having a vertex $z_0 \in Z',$ $n\geq 2$ be such that $\mathrm{St}_{1/n}(p(y))$ is contained in a $(Z', \mathrm{Ker}(\sigma_g))$-stable neighbourhood of $p(y),$ $D$ be the $y$-component of $\mathrm{St}_{1/n}(p(y))$ and $z' \in \mathrm{St}_{1/n}(p(y)) \cap Z'.$ 
Then $$\H(p,y):=\#(p^{-1}\{z'\} \cap D)$$ is the \textit{homotopical index} of $y$ in $p.$
\end{Def}

When $p \colon Y \ra Z,$ $y \in Y$ and $D \subset Y$ are as in definition \ref{prev} we get the following upper bound for the homotopical indices of points in $D.$ 

\begin{Apulause}\label{maxindeksi} Let $y' \in D$ and $\H(y',p)$ be the  homotopical index of $y'$ in $p.$ Then $\H(y',p) \leq \H(y,p).$ 
\end{Apulause}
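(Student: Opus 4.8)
The plan is to obtain the estimate from the monotonicity of the local homotopical count along components, with Lemma~\ref{whynot} doing the main work. Write $z:=p(y)$ and $z_*:=p(y')$, and fix, as in Definition~\ref{prev}, large subsets $Y' \subset Y$ and $Z' \subset Z$ with $g:=p\r Y' \colon Y' \ra Z'$ a normal covering, a triangulation $T$ of $Z$ with a vertex $z_0 \in Z'$, and $n \geq 2$ so that $\mathrm{St}_{1/n}(z)$ lies in a $(Z',\mathrm{Ker}(\sigma_g))$-stable neighbourhood of $z$ and $D$ is the $y$-component of $p^{-1}(\mathrm{St}_{1/n}(z))$; here $p$ is discrete, so $Z$ is $(Z',\mathrm{Ker}(\sigma_g))$-stable by Theorem~\ref{TFAE}. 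First I would note that $z_* \in \mathrm{St}_{1/n}(z)$: since $y' \in D$ and $p(D)=\mathrm{St}_{1/n}(z)$ by Lemma~\ref{brunate}, the point $z_*=p(y')$ lies in the open set $\mathrm{St}_{1/n}(z)$.

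Next I would pick the data computing $\H(y',p)$ using the same $T$, $z_0$, $Y'$, $Z'$. Let $V_*$ be a $(Z',\mathrm{Ker}(\sigma_g))$-stable neighbourhood of $z_*$. Since the stars $(\mathrm{St}_{1/m}(z_*))_{m\in\N}$ form a neighbourhood basis of $z_*$ and both $V_*$ and $\mathrm{St}_{1/n}(z)$ are open neighbourhoods of $z_*$, I can choose $m \geq 2$ with $\mathrm{St}_{1/m}(z_*) \subset V_* \cap \mathrm{St}_{1/n}(z)$. Let $D'$ be the $y'$-component of $p^{-1}(\mathrm{St}_{1/m}(z_*))$; then $\H(y',p)=\#(p^{-1}\{z''\}\cap D')$ for any $z'' \in \mathrm{St}_{1/m}(z_*) \cap Z'$, and such a $z''$ exists because $Z' \subset Z$ is dense.

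The key step is the inclusion $D' \subset D$: from $\mathrm{St}_{1/m}(z_*) \subset \mathrm{St}_{1/n}(z)$ we get $p^{-1}(\mathrm{St}_{1/m}(z_*)) \subset p^{-1}(\mathrm{St}_{1/n}(z))$, so $D'$ is a connected subset of $p^{-1}(\mathrm{St}_{1/n}(z))$ containing $y'$, whereas $D$ is the component of $p^{-1}(\mathrm{St}_{1/n}(z))$ containing $y'$. Fixing $z'' \in \mathrm{St}_{1/m}(z_*) \cap Z' \subset \mathrm{St}_{1/n}(z) \cap Z'$, this yields
$$\H(y',p)=\#(p^{-1}\{z''\}\cap D') \leq \#(p^{-1}\{z''\}\cap D)=\H(y,p),$$
the last equality being Lemma~\ref{whynot}(a) applied to $z$, the stable neighbourhood containing $\mathrm{St}_{1/n}(z)$, and the point $z'' \in \mathrm{St}_{1/n}(z)\cap Z'$. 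I do not expect a genuine obstacle here; the only thing to be careful about is that the triangulation, base vertex and large subsets may legitimately be taken the same in the definition of both $\H(y,p)$ and $\H(y',p)$, which is justified by the independence of $\H$ from these choices guaranteed by Lemmas~\ref{whynot} and~\ref{torstai}.
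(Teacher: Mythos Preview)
Your proof is correct and follows essentially the same route as the paper: choose a small star $\mathrm{St}_{1/m}(p(y'))$ inside $\mathrm{St}_{1/n}(p(y))$ and inside a stable neighbourhood of $p(y')$, note that the $y'$-component $D'$ of its preimage lies in $D$, pick a point of $Z'$ in the small star, and compare the two fibre counts using Lemma~\ref{whynot}(a). (Incidentally, the displayed inequality in the paper's own proof is written with the terms reversed; your version has the correct direction.)
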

\begin{proof} Let $y' \in D.$ Let $m \geq 2$ be such that $\mathrm{St}_{1/m}(p(y')) \subset \mathrm{St}_{1/n}(p(y))$ and such that $\mathrm{St}_{1/m}(p(y'))$ is contained in a $(Z', \mathrm{Ker}(\sigma_g))$-stable neighbourhood of $p(y').$ Let $C \subset D$ be the $y'$-component of $\mathrm{St}_{1/m}(p(y))$ and $z' \in \mathrm{St}_{1/m}(p(y')) \cap \mathrm{St}_{1/n}(p(y)) \cap Z'.$ Then by Lemma \ref{whynot} 
$$\H(p,y)=\#(p^{-1}\{z'\} \cap D) \leq \#(p^{-1}\{z'\} \cap C)=\H(p,y'),$$
since $C \subset D.$ 
\end{proof}

\subsection{Metrization of monodromy representations}\label{ccamm}

In this section we consider PL manifolds as length manifolds and prove Theorem \ref{kk} and Theorem \ref{hopo} in the introduction. 

Let $|K|\subset \R^n$ be the polyhedron of a simplical complex $K$ and
\begin{equation*}\label{tad}\ell(\gamma):=\sup\left\{\sum_{i=1}^{k-1}|\gamma(t_i)-\gamma(t_{i+1})| : t_1, \ldots, t_k \in [0,1], t_1 < \cdots < t_k, k \in \N \right\}
\end{equation*}
the \textit{length} of $\gamma$ for every path $\gamma$ in $|K|.$ 
A path $\gamma$ in $|K|$ is called \textit{rectifiable} if the length $\ell(\gamma)$ is finite. Since $|K| \subset \R^n$ is a polyhedron for $z_1, z_2 \in |K|$ there exists a rectifiable path $\gamma \colon z_1 \cu z_2.$ In particular, the formula $$d_s(z_1,z_2)=\text{inf}_{\gamma} \{\ell(\gamma) \mid \gamma \colon z_1 \cu z_2\}
$$
defines a path-metric on $|K|.$ For a detailed study of path length structures see \cite[Sec.\;1]{G}. 

We note that the path-metric $d_s$ coincides with the metric of $\R^n$ when restricted to any simplex $\sigma \in K.$ By local finiteness of $K$ the metric topology induced on $|K|$ by $d_s$ coincides with the relative topology of $|K|$ as a subset of $\R^n.$ 

For a PL manifold $Z,$ there exists an embedding $\iota \colon Z \hra \R^n$ satisfying $\iota(Z)=|K|$ for a simplical complex $K$ and for the next theorem we assume $Z=(|K|,d_s).$ For the next theorem we also assume the following, if $p \colon Y \ra Z$ is a completed normal covering and there are fixed large subsets $Y'\subset Y$ and $Z'\subset Z$ so that $p \r Y' \colon Y' \ra Z'$ is a normal covering, then $K$ has a vertex $z_0 \in Z'.$    
 
\begin{Thm}\label{kumpula}
Let $Z$ be a PL manifold, $p \colon Y \ra Z$ a completed normal covering that has locally finite multiplicity. Then there exists a path metric $d_s^*$ on $Y$ so that
\begin{itemize}
\item[(a)]the topology induced by $d_s^*$ on $Y$ coincide with the original topology of $Y,$ 
\item[(b)]$(Y,d_s^*)$ is a locally proper metric space,
\item[(c)]$p \colon (Y,d_s^*) \ra (Z,d_s)$ is a $1$-Lipschitz map and 
\item[(d)]every deck-transformation $\tau \in \T(p)$ is an isometry with respect to $d_s^*.$
\end{itemize}
\end{Thm}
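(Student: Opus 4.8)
The plan is to put on $Y$ the length metric pulled back from $(Z,d_s)$ along $p$. Fix large $Y'\subset Y$ and $Z'\subset Z$ with $g:=p\r Y'\colon Y'\to Z'$ a normal covering. For a path $\gamma$ in $Y$ set $\ell^{*}(\gamma):=\ell(p\circ\gamma)$ and let $d_{s}^{*}$ be the length metric of the length structure $\ell^{*}$, i.e.
$$d_{s}^{*}(y_{1},y_{2})=\inf\{\ell^{*}(\gamma):\gamma\text{ a path }y_{1}\cu y_{2}\text{ in }Y\};$$
being a length metric it is automatically a path metric in the sense of \cite{G}. First I would assemble the local picture. Since $p$ has locally finite multiplicity it is discrete by \cite[Thm.\;9.14]{MA}, hence uniformly discrete (Lemma~\ref{spread_5}), an orbit map with deck group $\T(p)$ (Theorem~\ref{vauva}), $g$ is stable so $Z$ is $(Z',\mathrm{Ker}(\sigma_{g}))$-stable (Theorem~\ref{TFAE}), and $Y$ is locally compact (Theorem~\ref{kuuskytkolme}). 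Fix a triangulation $T$ of $Z$ with a vertex in $Z'$; for $y\in Y$, $z:=p(y)$ and $m\ge2$ let $D_{m}$ be the $y$-component of $p^{-1}(\mathrm{St}_{1/m}(z))$. Since $p$ is a spread the $D_{m}$ form a neighbourhood basis at $y$; for all large $m$ the star $\mathrm{St}_{1/m}(z)$ lies in a $(Z',\mathrm{Ker}(\sigma_{g}))$-stable neighbourhood of $z$, whence $p^{-1}\{z\}\cap D_{m}=\{y\}$, $\overline{D_{m}}$ is compact (local compactness of $Y$), $p\r D_{m}\colon D_{m}\to\mathrm{St}_{1/m}(z)$ is a surjective open discrete map of multiplicity at most $\H(p,y)<\infty$ (Lemma~\ref{whynot}), and $g\r(D_{m}\cap Y')$ is a covering of $\mathrm{St}_{1/m}(z)\cap Z'$. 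Finally I would use that $\mathrm{St}_{1/m}(z)$ is star-shaped about $z$ and that, as $d_{s}$ restricts to the Euclidean metric on each simplex, a straight segment inside $\mathrm{St}_{1/m}(z)$ has $d_{s}$-length equal to its Euclidean length; in particular $\mathrm{diam}_{d_{s}}\mathrm{St}_{1/m}(z)\to0$.

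Given this, properties (c) and (d) are immediate: $\ell^{*}(\gamma)\ge d_{s}(p(y_{1}),p(y_{2}))$ makes $p$ $1$-Lipschitz, and for $\tau\in\T(p)$ one has $p\circ(\tau\circ\gamma)=p\circ\gamma$ while $\gamma\mapsto\tau\circ\gamma$ permutes the relevant path sets, so $\tau$ is a $d_{s}^{*}$-isometry. The crux of the remaining two properties is a \emph{barrier principle}: a path $\gamma$ in $Y$ with $\gamma(0)\in D_{m}$ and $\ell^{*}(\gamma)<d_{s}(p(\gamma(0)),Z\setminus\mathrm{St}_{1/m}(z))$ stays inside $D_{m}$, since its projection cannot exit $\mathrm{St}_{1/m}(z)$ and $D_{m}$ is a connected component of the open set $p^{-1}(\mathrm{St}_{1/m}(z))$. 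This principle gives non-degeneracy of $d_{s}^{*}$ (if $y_{1}\ne y_{2}$ with $p(y_{1})=p(y_{2})=z$, uniform discreteness puts them in distinct components of $p^{-1}(\mathrm{St}_{1/n}(z))$, forcing $d_{s}^{*}(y_{1},y_{2})\ge d_{s}(z,Z\setminus\mathrm{St}_{1/n}(z))>0$; the case $p(y_{1})\ne p(y_{2})$ is clear), and it gives $B_{d_{s}^{*}}(y,\delta)\subset D_{m}$ once $\delta<d_{s}(z,Z\setminus\mathrm{St}_{1/m}(z))$, whence every set open in $Y$ is $d_{s}^{*}$-open. Once (a) is known, (b) follows since for small $\delta$ the ball $\overline{B_{d_{s}^{*}}(y,\delta)}$ is a closed subset of the compact $\overline{D_{m}}$.

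The main task is the converse inclusion in (a), that $d_{s}^{*}$-balls are open in $Y$; equivalently $d_{s}^{*}(y,y')\to0$ as $y'\to y$ in $Y$, and in fact $D_{m}\subset B_{d_{s}^{*}}(y,\delta)$ as soon as $\mathrm{diam}_{d_{s}}\mathrm{St}_{1/m}(z)<\delta$. For this I would prove that $d_{s}^{*}(y,y')\le|z-p(y')|$ for every $y'\in D_{m}$ by lifting the radial segment $r\colon t\mapsto z+t(p(y')-z)$ to a path in $D_{m}$ from $y$ to $y'$; this is where the finiteness of the local monodromy is used, through properness of $p\r D_{m}$. When $y'\in D_{m}\cap Y'$ and the open ray $(z,p(y'))$ misses $Z\setminus Z'$, one lifts $r|_{(0,1]}$ through the covering $g\r(D_{m}\cap Y')$; the lift lies in $D_{m}$ (being connected and meeting $D_{m}$) and in the compact $\overline{D_{m}}$, so as $t\to0^{+}$ it accumulates only at points of $p^{-1}\{z\}\cap\overline{D_{m}}=\{y\}$ (using $z\in\mathrm{St}_{1/m}(z)$ and that $D_{m}$ is closed in $p^{-1}(\mathrm{St}_{1/m}(z))$), hence extends continuously by $y$ and has $p$-image of $d_{s}$-length $|z-p(y')|$. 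The general $y'\in D_{m}$ is then reached by approximating it within $D_{m}\cap Y'$ along rays avoiding the negligible set $Z\setminus Z'$, using largeness of $Z'$. I expect this last step---assembling enough radial lifts through $Y'$ and controlling the limit at the branch points of $Y\setminus Y'$---to be the real obstacle; the tools are star-shapedness of $\mathrm{St}_{1/m}(z)$, largeness of $Z'$, properness of $p\r D_{m}$, and completeness of the spread $p$. Combining the two inclusions gives (a), and with (b), (c), (d) in hand the proof is complete.
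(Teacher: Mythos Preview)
Your overall setup---defining $d_s^*$ as the pullback length metric and reading off (c), (d), and the barrier principle---matches the paper. There is, however, a genuine circularity: you invoke Theorem~\ref{kuuskytkolme} to conclude that $Y$ is locally compact and hence that $\overline{D_m}$ is compact, and you use this compactness both to deduce (b) from (a) and, crucially, to force the radial lift $\widetilde r|_{(0,1]}$ to accumulate at $y$ as $t\to 0^+$. But in the paper Theorem~\ref{kuuskytkolme} is proved \emph{from} Theorem~\ref{kumpula}, not before it. So as the paper is organized, your argument assumes what it is meant to establish. You would need an independent proof that $p\r D_m$ is proper (equivalently, that $\overline{D_m}$ is compact) before any of your limit arguments go through.

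The paper avoids this by proving a direct path-lifting lemma (Lemma~\ref{indeksit}) \emph{without} using local compactness: one lifts an arbitrary path $\gamma\colon[0,1]\to \mathrm{St}_{1/m}(z)$ with $\gamma(0)=z$ by induction on the homotopical index $\H(y,p)$, using Lemma~\ref{kiire} to glue across the set where the index is maximal. This replaces your compactness-based accumulation argument entirely. With lifts in hand, the paper shows $\mathcal T=\mathcal T_{d_s^*}$ (Proposition~\ref{kiirastorstai}) and then proves local properness (Proposition~\ref{lankalauantai}) by a direct finite-subcover argument using only finiteness of $p^{-1}\{z'\}\cap D$ and compactness of balls in $Z$---again without assuming $Y$ locally compact.

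There is a second, independent gap in your sketch of (a). Your radial lift, when it exists, goes from $y$ to \emph{some} preimage of $p(y')$ in $D_m$; for $y'\in Y'$ you fix this by lifting from the $y'$-end, but your passage to general $y'\in D_m$ by ``approximating within $D_m\cap Y'$'' needs $d_s^*(y_k',y')\to 0$ for approximants $y_k'\to y'$---which is exactly the statement ($d_s^*$ induces the original topology) you are trying to prove. The paper's inductive path-lifting lemma sidesteps this as well, since it produces a lift starting at $y$ and ending at the prescribed $y'$ directly.
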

The proof consists of Proposition \ref{pitkäperjantai} and Proposition \ref{lankalauantai}. We obtain metric $d_s^*$ in Theorem \ref{kumpula} by lifting paths. We show an analogy of the result \cite[II.3.4]{R} concerning local path-lifting for open discrete maps between manifolds. 

In Lemmas \ref{kiire}, \ref{kiireet} and \ref{indeksit} we assume that $Z$ is a PL manifold and $p \colon Y \ra Z$ is a completed normal covering that has locally finite multiplicity. Since $p$ has locally finite multiplicity, it is discrete by \cite[Thm.\;9.14]{MA}. We recall also that $p$ is an orbit map by Theorem \ref{vauva}, and that there are by Theorem \ref{TFAE} large subsets $Y' \subset Y$ and $Z' \subset Z$ so that $g:=p \r Y' \colon Y' \ra Z'$ is a normal covering and $Z$ is $(Z', \mathrm{Ker}(\sigma_g))$-stable. 

\begin{Apulause}\label{kiire} Let $z \in Z$ and $y \in p^{-1}\{z\}.$ Let $m \geq 2$ be such that $\mathrm{St}_{1/m}(z)$ is contained in a $(Z', \mathrm{Ker}(\sigma_g))$-stable neighbourhood of $z$ and let $D$ be the $y$-component of $p^{-1}(\mathrm{St}_{1/m}(z)).$ Let $\gamma \colon [0,1] \ra \mathrm{St}_{1/m}(z)$ be a path and let $\gamma' \colon [0,1] \ra D$ a function satisfying $p \circ \gamma'=\gamma.$ Then $\gamma'$ is continuous at every $t \in [0,1]$ satisfying $\H(\gamma'(t);p)=\H(y;p).$
\end{Apulause}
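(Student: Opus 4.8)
The plan is to fix $t_0\in[0,1]$ with $\mathcal{H}(\gamma'(t_0);p)=\mathcal{H}(y;p)$, put $w:=\gamma'(t_0)$, and reduce continuity of $\gamma'$ at $t_0$ to the claim that for every sufficiently small connected open neighbourhood $U$ of $p(w)=\gamma(t_0)$ the set $p^{-1}(U)\cap D$ is connected, hence equals the $w$-component $C$ of $p^{-1}(U)$. Granting this, continuity is immediate: since $p$ is a spread, the $w$-components of $p^{-1}(U)$, for $U$ running over small neighbourhoods of $p(w)$, form a neighbourhood basis of $w$; and since $\gamma$ is continuous, for each such $U$ there is $\delta>0$ with $\gamma\big((t_0-\delta,t_0+\delta)\cap[0,1]\big)\subset U$, whence $\gamma'\big((t_0-\delta,t_0+\delta)\cap[0,1]\big)\subset p^{-1}(U)\cap D=C$ because $\gamma'$ takes values in $D$ and $p\circ\gamma'=\gamma$.

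For the reduction I would choose $U$ with some care. Since $p(w)=\gamma(t_0)\in\mathrm{St}_{1/m}(z)$, which is open, and since $Z$ is $(Z',\mathrm{Ker}(\sigma_g))$-stable, pick $m'\geq 2$ so large that $U:=\mathrm{St}_{1/m'}(p(w))$ satisfies $U\subset\mathrm{St}_{1/m}(z)$ and $U$ is contained in a $(Z',\mathrm{Ker}(\sigma_g))$-stable neighbourhood of $p(w)$; let $C$ be the $w$-component of $p^{-1}(U)$. As $U\subset\mathrm{St}_{1/m}(z)$ and $D$ is a component of $p^{-1}(\mathrm{St}_{1/m}(z))$, every component of $p^{-1}(U)$ is either contained in $D$ or disjoint from $D$, so $p^{-1}(U)\cap D$ is a disjoint union $C=C_0,C_1,C_2,\dots$ of components of $p^{-1}(U)$ — countably many, since each fibre of $p$ is countable.

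Now fix $z'\in U\cap Z'$, nonempty because $Z'$ is dense. Since $z'\in\mathrm{St}_{1/m}(z)\cap Z'$ and $D$ is the $y$-component of $p^{-1}(\mathrm{St}_{1/m}(z))$, Definition \ref{prev} together with Lemma \ref{whynot}(a) yields $\#\big(p^{-1}\{z'\}\cap D\big)=\mathcal{H}(y;p)$; and since $z'\in U\cap Z'=\mathrm{St}_{1/m'}(p(w))\cap Z'$ and $C$ is the $w$-component of $p^{-1}(U)$, the same definition yields $\#\big(p^{-1}\{z'\}\cap C\big)=\mathcal{H}(w;p)$. By the choice of $t_0$ we have $\mathcal{H}(w;p)=\mathcal{H}(y;p)$ (and this number is finite by local finiteness of the multiplicity), so
\[
\sum_{i\geq 1}\#\big(p^{-1}\{z'\}\cap C_i\big)=\#\big(p^{-1}\{z'\}\cap D\big)-\#\big(p^{-1}\{z'\}\cap C\big)=0.
\]
On the other hand, each $C_i$ is a component of $p^{-1}(U)$ with $U$ connected, so $p(C_i)=U\ni z'$ by Lemma \ref{brunate}, forcing $\#\big(p^{-1}\{z'\}\cap C_i\big)\geq 1$. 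Hence there is no component $C_i$ with $i\geq 1$, i.e. $p^{-1}(U)\cap D=C$, which completes the reduction.

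The one point I expect to require care is the bookkeeping around Definition \ref{prev}: one must check that $\mathcal{H}(y;p)$ and $\mathcal{H}(w;p)$ are legitimately computed with the specific stars $\mathrm{St}_{1/m}(z)$, $\mathrm{St}_{1/m'}(p(w))$ and the specific components $D$, $C$ at hand — which is precisely why $m'$ is taken with $\mathrm{St}_{1/m'}(p(w))$ nested inside both $\mathrm{St}_{1/m}(z)$ and a stable neighbourhood of $p(w)$ — using Lemma \ref{whynot}(a) and the invariance of $\mathrm{Mono}_p(\,\cdot\,;Z')$ under shrinking the stable neighbourhood to see that $\#\big(p^{-1}\{z'\}\cap D\big)$ does not depend on the auxiliary choices. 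The hypothesis $\mathcal{H}(\gamma'(t_0);p)=\mathcal{H}(y;p)$ is used exactly once, in the displayed identity, and it is essential: were the index at $\gamma'(t_0)$ smaller than $\mathcal{H}(y;p)$, further components $C_i$ could occur and $\gamma'$ could genuinely jump between them.
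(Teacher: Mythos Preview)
Your argument is correct and follows essentially the same route as the paper's proof: both pick small star neighbourhoods $U=\mathrm{St}_{1/m'}(\gamma(t_0))$ inside $\mathrm{St}_{1/m}(z)$ and a stable neighbourhood of $\gamma(t_0)$, show that $p^{-1}(U)\cap D$ coincides with the $\gamma'(t_0)$-component $C$ of $p^{-1}(U)$, and then use that these $C$'s form a neighbourhood basis of $\gamma'(t_0)$ to conclude continuity. The only difference is expository: the paper asserts $p^{-1}(\mathrm{St}_{1/n}(\gamma(t)))\cap D=C_n$ in one line ``by Lemma~\ref{whynot}'', whereas you spell out the underlying fibre count (using Lemma~\ref{whynot}(a) twice to get $\#(p^{-1}\{z'\}\cap D)=\mathcal{H}(y;p)$ and $\#(p^{-1}\{z'\}\cap C)=\mathcal{H}(w;p)$, and Lemma~\ref{brunate} to rule out extra components).
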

\begin{proof} Let $\mathrm{St}_{1/n}(\gamma(t))$ be the $1/n$-star of $\gamma(t)$ and $C_n$ the $\gamma'(t)$ component of $p^{-1}(\mathrm{St}_{1/n}(\gamma(t)))$ for every $n \geq 2.$ Let $n_0 \geq 2$ be such that $\mathrm{St}_{1/{n_0}}(\gamma(t))$ is contained in $\mathrm{St}_{1/m}(z)$ and in a $(Z', \mathrm{Ker}(\sigma_p))$-stable neighbourhood of $\gamma(t).$ Since $\H(\gamma'(t);p)=\H(y;p),$ $p^{-1}(\mathrm{St}_{1/n}(\gamma(t)))\cap D=C_n$ for every $n\geq n_0$ by Lemma \ref{whynot}.  Since $\gamma$ is con\-tinuous, there exists for every $n\geq n_0$ such $\epsilon_n > 0$ that $\gamma(t-\epsilon_n, t +\epsilon_n)\subset \mathrm{St}_{1/n}(\gamma(t)).$ Now for every $n\geq n_0$ we have $\gamma'(t- \epsilon_n, t +\epsilon_n)\subset C_n.$ Since $(C_n)_{n\geq n_0}$ is a neighbourhood basis at $\gamma'(t)$ the function $\gamma'$ is continuous at $t.$ 
\end{proof}

\begin{Apulause}\label{kiireet} Let $h \colon (0,1) \ra Z$ be a continuous map. Suppose there exists for every $t \in (0,1)$ such $\epsilon>0$ that for every $y \in p^{-1}\{h(t)\}$ there exists a continuous map $h' \colon (t-\epsilon, t+ \epsilon) \ra Y$ satisfying $h'(t)=y$ and $p \circ h' =h \r (t-\epsilon, t+ \epsilon).$ Then there exists for every $y \in p^{-1}\{h(1/2)\}$ a continuous map $\ol{h} \colon (0,1) \ra Y$ satisfying $\ol{h}(1/2)=y$ and $p \circ \ol{h}=h.$
\end{Apulause}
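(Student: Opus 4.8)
The plan is to build the lift separately on $[1/2,1)$ and on $(0,1/2]$ and then glue the two pieces at $1/2.$ By symmetry it is enough to produce a continuous $\ol h^{+}\colon[1/2,1)\ra Y$ with $\ol h^{+}(1/2)=y$ and $p\circ\ol h^{+}=h\r[1/2,1)$; the same argument applied to the left of $1/2$ yields $\ol h^{-}\colon(0,1/2]\ra Y$ with $\ol h^{-}(1/2)=y,$ and since the two agree at $1/2$ their concatenation is the required $\ol h.$ For the one-sided statement I would set
$$T:=\{t\in[1/2,1):h\r[1/2,t]\text{ admits a continuous lift }g\text{ with }g(1/2)=y\}.$$
Restricting lifts shows $1/2\in T$ and that $T$ is downward closed, hence a subinterval with left endpoint $1/2.$ The hypothesis makes $T$ relatively open: if $t\in T$ with lift $g\colon[1/2,t]\ra Y,$ apply the local-lifting hypothesis at $t$ to the point $g(t)$ to obtain $\epsilon>0$ and a lift $h'\colon(t-\epsilon,t+\epsilon)\ra Y$ with $h'(t)=g(t)$; since $g$ and $h'$ agree at $t,$ the pasting lemma joins them to a continuous lift on $[1/2,t+\epsilon/2],$ so a relative neighbourhood of $t$ lies in $T.$ Everything then reduces to showing that $T$ is closed in $[1/2,1),$ which is the heart of the matter.

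So suppose $s_n\uparrow s$ with $s<1$ and $s_n\in T,$ with lifts $g_n\colon[1/2,s_n]\ra Y,$ $g_n(1/2)=y$; I must lift $h\r[1/2,s].$ I would first fix $m\geq2$ so that $\mathrm{St}_{1/m}(h(s))$ lies in a $(Z',\mathrm{Ker}(\sigma_g))$-stable neighbourhood of $h(s),$ then $\epsilon'>0$ with $h((s-\epsilon',s])\subset\mathrm{St}_{1/m}(h(s)),$ put $s'':=\max(1/2,s-\epsilon')$ and fix $N$ with $s_N\in(s'',s).$ The connected set $g_N([s'',s_N])$ lies in a single component $D$ of $p^{-1}(\mathrm{St}_{1/m}(h(s))),$ and since $p$ is uniformly discrete (it is discrete, so Lemma \ref{spread_5} applies) the fibre $p^{-1}\{h(s)\}\cap D$ is a single point $y_D,$ which by Lemma \ref{maxindeksi} has the maximal homotopical index $k:=\H(y_D,p)$ among the points of $D.$ The key step is then to produce a continuous lift $\psi\colon[s'',s]\ra D$ of $h\r[s'',s]$ with $\psi(s'')=g_N(s'')$; any such $\psi$ automatically ends at the unique point of $p^{-1}\{h(s)\}\cap D,$ namely $y_D,$ so $g_N\r[1/2,s'']$ and $\psi$ agree at $s''$ and paste to a continuous lift of $h\r[1/2,s]$ fixing $y.$ Hence $s\in T,$ so $T=[1/2,1).$

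The hard part is exactly the construction of $\psi,$ a local path-lifting statement: a path into a sufficiently fine star $\mathrm{St}_{1/m}(h(s))$ lifts continuously into $D$ from any prescribed initial point. This is delicate because path lifts of discrete open maps need not be unique and may lose homotopical index at branch values. I would assemble $\psi$ from three ingredients: the local lift through $y_D$ furnished by the hypothesis at $s$ (whose trace over $(s-\epsilon',s]$ is forced into $D$ by connectedness and Lemma \ref{brunate}); the fact that over $Z'$ the map $p$ restricts to the honest normal covering $g\colon Y'\ra Z',$ so that path lifts over $Z'$ exist and are unique; and Lemma \ref{kiire}, which guarantees that any fibrewise-chosen lift into $D$ is automatically continuous at every parameter whose image has index $k.$ The freedom to make the outcome start at the prescribed point $g_N(s'')$ comes from $p$ being an orbit map (Theorem \ref{vauva}), so that the deck group of $p$ acts transitively on fibres lying in $D.$ Once $\psi$ is in hand, $T$ is closed, and combining the $[1/2,1)$ and $(0,1/2]$ constructions produces $\ol h.$
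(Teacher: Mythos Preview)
Your open–closed framework on $T$ is sound and essentially equivalent to the paper's maximality argument, but your closedness step has a real gap in the construction of $\psi$. Your ``three ingredients'' do not add up: the hypothesis only hands you a lift on $(s-\epsilon,s+\epsilon)$ for the $\epsilon$ attached to the point $s$, which need not cover your interval $[s'',s]$ (you chose $\epsilon'$ from the star geometry, not from the hypothesis). To bridge the remainder you invoke covering lifts over $Z'$ together with Lemma~\ref{kiire}, but the path $h|[s'',s-\epsilon]$ may hit $Z\setminus Z'$ repeatedly, and patching local lifts across those points is exactly the general local path-lifting statement (Lemma~\ref{indeksit}), which in the paper is proved \emph{using} the present lemma --- so that route is circular. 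Your last sentence about deck transformations is the right idea, but it only rescues the argument once you also choose $s_N>s-\epsilon$; then the hypothesis at $s$ already gives a \emph{continuous} lift through $y_D$ on $[s_N,s]$ landing in $D$, and a single $\tau\in\T(p)$ with $\tau(D)=D$ matches it to $g_N(s_N)$. With that adjustment, ingredients (2) and (3) are unnecessary.

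The paper takes a shorter path that sidesteps all of this. It applies Zorn's Lemma to the poset of partial lifts through $y$ to obtain a single maximal lift $h'$ defined on an open interval $(a,b)$; the key gain over your set $T$ is that one now possesses the lift on the \emph{entire} half-open interval up to $b$, so there is no compatibility issue between different $g_n$. If $b<1$, one takes $\delta$ with $h((b-2\delta,b))\subset\mathrm{St}_{1/m}(h(b))$, lets $D$ be the component of $p^{-1}(\mathrm{St}_{1/m}(h(b)))$ containing the tail of $h'$, and simply \emph{defines} $h''(b)$ to be the unique point of $D\cap p^{-1}\{h(b)\}$. Continuity at the single new point $b$ is then immediate from Lemma~\ref{kiire}, contradicting maximality. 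No deck-transformation bookkeeping and no construction of a lift over a whole subinterval are needed.
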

\begin{proof} Let $y \in p^{-1}\{h(1/2)\}.$ By Zorn's Lemma there exists a maximal connected set $I\subset (0,1), 1/2 \in I,$ for which there exists a continuous map $h' \colon I \ra Y$ satisfying $p \circ h'(1/2)=y$ and $p \circ h'=h \r I.$ By the existence of local lifts $I$ is an open interval $(a,b) \subset [0,1].$ We need to show that $a=0$ and $b=1.$ 

Fix a map $h' \colon I \ra Y$ satisfying $h'(1/2)=y$ and $p \circ h'=h \r I.$ Suppose $b\neq 1.$ Let $m \geq 2$ be such that $\mathrm{St}_{1/m}(h(b))$ is contained in a $(Z', \mathrm{Ker}(\sigma_g))$-stable neighbourhood of $h(b),$ $\delta \in (0,(b-a)/2)$ be such that $h(b -2\delta ,b) \subset \mathrm{St}_{1/m}(h(b)),$ $D$ be the component of $p^{-1}(\mathrm{St}_{1/m}(h(b)))$ that contains $h'(b -2\delta ,b),$ $y'$ be the point in $D \cap \{h(b)\}$ and $h''\colon (a,b] \ra Y$ be the extension of $h'$ defined by $h''(b)=y.$ 

By Lemma \ref{kiire}, $h'' \r [b-\delta,b] \colon [b-\delta,b] \ra D$ is continuous. Thus $h''$ is continuous. This is a contradiction with the maximality of $I=(a,b) \subset [0,1].$ Thus $b=1.$ By a similar argument $a=1.$    
\end{proof}

\begin{Apulause}\label{indeksit} Let $z \in Z$ and $m \geq 2$ such that $\mathrm{St}_{1/m}(z)$ is contained in a $(Z', \mathrm{Ker}(\sigma_g))$-stable neighbourhood of $z.$ Let $\gamma \colon [0,1] \ra \mathrm{St}_{1/m}(z)$ be a path satisfying $\gamma(0)=z$ and $y \in p^{-1}\{z\}.$ Then $\gamma$ has a lift $\ol{\gamma} \colon [0,1] \ra Y$ satisfying $\gamma(0)=y.$
\end{Apulause}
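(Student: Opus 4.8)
The plan is to construct $\ol{\gamma}$ by a maximality argument. Let $D$ be the $y$-component of $p^{-1}(\mathrm{St}_{1/m}(z))$. By Lemma~\ref{brunate}, $p(D)=\mathrm{St}_{1/m}(z)$, and since $Y'\subset Y$ is large, $D\cap Y'$ is a component of $g^{-1}(\mathrm{St}_{1/m}(z)\cap Z')$ on which $g$ restricts to a normal covering; moreover any continuous lift of $\gamma$ beginning at $y$ has connected image containing $y$, so it lies in $D$. I would therefore order the continuous partial lifts $\xi\colon I\to D$ — with $0\in I$ an interval, $\xi(0)=y$ and $p\circ\xi=\gamma|_I$ — by extension. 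This poset is nonempty and every chain has an upper bound, so Zorn's Lemma yields a maximal element $\xi\colon I^\ast\to D$; since $I^\ast$ is an interval containing $0$, it is of the form $[0,b)$ or $[0,b]$.

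First I would check that $I^\ast$ is closed. If $I^\ast=[0,b)$ (necessarily $b>0$), put $z_b=\gamma(b)$ and choose $\delta_n\downarrow 0$ with $0<\delta_n<b$ and $\gamma([b-\delta_n,b))\subset\mathrm{St}_{1/n}(z_b)$; then $\xi([b-\delta_n,b))$ lies in one component $V_n$ of $p^{-1}(\mathrm{St}_{1/n}(z_b))$ and $V_{n+1}\subset V_n$. Extending $(V_n)_n$ to all connected neighbourhoods of $z_b$ produces a selection function of the complete spread $p$, whose intersection is a single point $y_b$; one verifies $p(y_b)=z_b$, that the $V_n$ form a neighbourhood basis at $y_b$ (using that $p$ is a spread), and hence that $\xi(t)\to y_b$ as $t\to b$. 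Setting $\xi(b)=y_b$ extends $\xi$ continuously, contradicting maximality; so $I^\ast=[0,b]$.

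The main work — and where I expect the genuine difficulty — is to show $b=1$, i.e. that a continuous lift on $[0,b]$ with $b<1$ can be pushed a little further. By Theorem~\ref{vauva} and \cite[Thm.~9.14]{MA} the map $p$ is discrete, hence uniformly discrete by Lemma~\ref{spread_5}, so I would pick $k$ with $\mathrm{St}_{1/k}(\gamma(b))\subset\mathrm{St}_{1/m}(z)$ contained in a $(Z',\mathrm{Ker}(\sigma_g))$-stable neighbourhood and such that $p^{-1}\{z'\}\cap E$ is a point for every component $E$ of $p^{-1}(\mathrm{St}_{1/k}(\gamma(b)))$. Letting $D_b$ be the $\xi(b)$-component of $p^{-1}(\mathrm{St}_{1/k}(\gamma(b)))$, one has $D_b\subset D$, $p(D_b)=\mathrm{St}_{1/k}(\gamma(b))$ by Lemma~\ref{brunate}, $p^{-1}\{\gamma(b)\}\cap D_b=\{\xi(b)\}$, and $g$ restricts to a normal covering $D_b\cap Y'\to\mathrm{St}_{1/k}(\gamma(b))\cap Z'$. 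Choosing $\epsilon>0$ with $\gamma([b,b+\epsilon])\subset\mathrm{St}_{1/k}(\gamma(b))$ and $b+\epsilon\le 1$, it suffices to produce a continuous lift of $\gamma|_{[b,b+\epsilon]}$ into $D_b$ with value $\xi(b)$ at $b$ and then concatenate with $\xi$. To do this I would fix a function-lift $\gamma'$ of $\gamma|_{[b,b+\epsilon]}$ into $D_b$ with $\gamma'(b)=\xi(b)$, arranging that on each component of the open set $\{t:\gamma(t)\in Z'\}$ it is a covering lift via $g$ and that these lifts are chosen coherently across the closed set $\{t:\gamma(t)\in Z\setminus Z'\}$ using values supplied by the completeness of $p$; continuity then follows from Lemma~\ref{kiire} at the parameters where $\gamma'$ attains the maximal homotopical index $\H(\xi(b),p)=\max_{D_b}\H(\cdot,p)$ (Lemma~\ref{maxindeksi}) — in particular at $b$, since $p^{-1}\{\gamma(b)\}\cap D_b$ is a single point — and from the covering structure elsewhere. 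This coherent gluing of the covering lifts over the dense part $\{t:\gamma(t)\in Z'\}$ with the completeness-forced values over its complement, into one continuous germ based at $\xi(b)$, is the crux; it is here that the PL structure, the stability of $g$, and the finiteness of the homotopical index (equivalently, locally finite multiplicity) enter. Granting it, maximality forces $b=1$, and $\ol{\gamma}:=\xi$ is the required lift.
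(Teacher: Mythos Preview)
Your maximality/Zorn set-up and the closure argument (extending a partial lift on $[0,b)$ to $[0,b]$ via a selection function for the complete spread $p$) are fine, and this part is a clean alternative to what the paper does. The difficulty is entirely in the extension step, and there your outline has a genuine gap.

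Having reduced to producing a lift of $\gamma|_{[b,b+\epsilon]}$ into $D_b$ starting at $\xi(b)$, you are facing exactly the same statement you set out to prove, with $(z,y,D)$ replaced by $(\gamma(b),\xi(b),D_b)$; nothing has become easier, since $\H(\xi(b),p)$ may well equal $\H(y,p)$. Your proposed direct construction --- covering lifts on the $Z'$-part and ``completeness-forced values'' on the complement --- does not close this loop. The decomposition $\{t:\gamma(t)\in Z'\}$ versus $\{t:\gamma(t)\notin Z'\}$ is not the right one: a parameter $t_0$ with $\gamma(t_0)\in Z\setminus Z'$ can have $\H(\gamma'(t_0),p)<\H(\xi(b),p)$, and then $p^{-1}\{\gamma(t_0)\}\cap D_b$ contains several points; completeness does not single one out, Lemma~\ref{kiire} does not apply (the index is not maximal), and the covering structure of $g$ says nothing there. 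Correspondingly, on the open $Z'$-intervals there are many covering lifts into $D_b\cap Y'$, and ``choosing them coherently'' across such $t_0$ is precisely the problem you have not solved. You flag this as the crux, but you do not supply the mechanism.

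The paper's proof provides exactly that mechanism: induction on the homotopical index $k=\H(y,p)$. One decomposes $\mathrm{St}_{1/m}(z)$ not as $Z'$ versus $Z\setminus Z'$ but as the set $F$ of points whose preimage in $D$ is a single point (index $k$) versus the open set $U$ of points of strictly smaller index. Over $\gamma^{-1}(F)$ the lift is \emph{forced}; over each component $I_n$ of $\gamma^{-1}(U)$ the induction hypothesis (together with Lemma~\ref{kiireet}) manufactures a lift into $D$; continuity at $\gamma^{-1}(F)$ comes from Lemma~\ref{kiire}, and on the $I_n$ from the construction. The induction is what replaces your undefined ``coherent choice'' and breaks the circularity; if you want to keep the Zorn framework, you will still need this inductive local step (or an equivalent finite-multiplicity argument) to push past $b$.
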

\begin{proof} 
Let $k:=\H(y,p).$ We prove the existence of the lift $\ol{\gamma}_y$ by induction on $k$. Let $D$ be the $y$-component of $p^{-1}(\mathrm{St}_{1/m}(z)).$ By Lemma \ref{brunate}, $p(D)=\mathrm{St}_{1/m}(z).$ Suppose $k=1.$ Then $p \r D \colon D \ra \mathrm{St}_{1/m}(z)$ is a homeomorphism. Thus $\gamma$ has a lift $\ol{\gamma}$ satisfying $\ol{\gamma}(0)=y.$ 

Suppose the statement holds for all $j \leq k-1.$ Let 
$$F:=\{z' \in \mathrm{St}_{1/m}(z) : \H(y',p)=\H(y,p) \text{ for every } y' \in p^{-1}\{z'\}\}$$ and
$$U:= \{z' \in \mathrm{St}_{1/m}(z) : \H(y',p)<\H(y,p) \text{ for every } y' \in p^{-1}\{z'\}\}.$$
By Lemma \ref{whynot} and Lemma \ref{maxindeksi}, $U=\mathrm{St}_{1/m}(z) \setminus F,$ $U \subset \mathrm{St}_{1/m}(z)$ is open and $p^{-1}\{z'\} \cap D$ is a point for every $z' \in F.$  

Now $\gamma^{-1}(U) \subset [0,1]$ is a countable union $\Cup_{n \in N}I_n$ of disjoint open intervals $I_n\subset [0,1].$ We use Lemma \ref{kiireet} to show that there exists for every $n \in N$ a continuous map $\ol{\gamma \r I_n} \colon I_n \ra D$ satisfying $p \circ \ol{\gamma \r I_n}= \gamma \r I_n.$ 

Let $n \in N$ and $t \in I_n.$ Then there exists such $p \geq 2$ that $\mathrm{St}_{1/p}(\gamma(t))$ is contained in a $(Z', \mathrm{Ker}(\sigma_g))$-stable neighbourhood of $\gamma(t)$ and such $\epsilon >0$ that $(t- \epsilon, t+\epsilon) \subset I_n$ and $\gamma(t- \epsilon, t+\epsilon) \subset \mathrm{St}_{1/p}(\gamma(t)).$ Let $h:=I_n \r (t- \epsilon, t + \epsilon).$ 

Let $y' \in p^{-1}(\gamma(t)).$ Since $\gamma(t) \in U,$ we have $\H(y',p) < k.$ Thus, by the induction hypotheses there are continuous maps $h_1' \colon (t- \epsilon ,t] \ra Y$ and $h_2' \colon [t,t + \epsilon) \ra Y$ satisfying $h_1'(t)=y'=h_2'(t),$ $p \circ h_1'=h \r (t- \epsilon,t]$ and $p \circ h_2'=h \r [t,t+ \epsilon).$ Hence there exists a continuous map $h' \colon (t - \epsilon, t + \epsilon) \ra Y$ satisfying $h'(t)=y'$ and $p \circ h'=h.$ 

Let $t_0 \in I_n$ be the center of $I_n$ and $y'' \in p^{-1}\{\gamma(t_0)\} \cap D.$ By Lemma \ref{kiireet}, there exists a continuous map $\ol{\gamma \r I_n} \colon I_n \ra Y$ satisfying $p \circ \ol{\gamma \r I_n}= \gamma \r I_n$ and $\ol{\gamma \r I_n}(t_0)=y'' \in D.$ Since $D$ is a component of $\mathrm{St}_{1/m}(z),$ $\ol{\gamma \r I_n} \colon I_n \ra D.$ 

Let then $J:=\gamma^{-1}(F).$  Since $p^{-1}\{z'\} \cap D$ is a point for every $z' \in F,$ there exists a unique function $\ol{\gamma \r J} \colon J \ra D$ satisfying $p \circ \ol{\gamma \r J}=\gamma \r J.$ 

Let $\ol{\gamma} \colon [0,1] \ra D$ be the unique function satisfying $\ol{\gamma} \r I_n=\ol{\gamma \r I_n}$ for every $n \in N$ and $\ol{\gamma} \r J=\ol{\gamma \r J}.$ Then $p \circ \ol{\gamma}=\gamma.$ Since $[0,1] \setminus J$ is open, the function $\ol{\gamma}$ is continuous at every $t \in [0,1] \setminus J.$ By Lemma \ref{kiire}, $\ol{\gamma}$ is continuous at every $t \in J.$ Thus $\ol{\gamma}$ is continuous. Since $\gamma(0)=z \in F,$ $\ol{\gamma}(0)=y.$
\end{proof}

\begin{Apulause}\label{LBAPp}Let $Z$ be a PL manifold and $p \colon Y \ra Z$ a completed normal covering that has locally finite multiplicity. Then for every pair of points $y_1$ and $y_2$ in $Y$ there exists a path $\gamma \colon y_1 \cu y_2$ so that $p \circ \gamma$ is rectifiable.
\end{Apulause}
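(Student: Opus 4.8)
The plan is to reduce the statement to two local facts and then concatenate paths. Fact~(i): any two points of $Y'$ can be joined by a path in $Y'$ with rectifiable $p$-image. Fact~(ii): every point of $Y$ can be joined to some point of $Y'$ by a path with rectifiable $p$-image. Granting these, if $y_1,y_2\in Y$ I use (ii) to get $y_i'\in Y'$ and paths $\gamma_i\colon y_i\cu y_i'$ with $p\circ\gamma_i$ rectifiable $(i=1,2)$, and (i) to get a path $\gamma_0\colon y_1'\cu y_2'$ in $Y'$ with $p\circ\gamma_0$ rectifiable; then $\gamma:=\gamma_1\gamma_0\gamma_2^{-1}\colon y_1\cu y_2$ satisfies $\ell(p\circ\gamma)=\ell(p\circ\gamma_1)+\ell(p\circ\gamma_0)+\ell(p\circ\gamma_2)<\infty$. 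Throughout I keep the standing hypotheses of this subsection: $Y'\subset Y$ and $Z'\subset Z$ are large, $g:=p\r Y'\colon Y'\ra Z'$ is a normal covering, $Z$ is $(Z',\mathrm{Ker}(\sigma_g))$-stable, $Z=|K|\subset\R^n$ carries the path metric $d_s$, and (as recalled before Lemma~\ref{indeksit}) $p$ is an orbit map by Theorem~\ref{vauva}. I also use that each $t$-star $\mathrm{St}_t(x_0)$ is star-shaped with respect to its centre $x_0$, so $[x_0,w]\subset\mathrm{St}_t(x_0)$ for $w\in\mathrm{St}_t(x_0)$, and that a straight segment in $|K|$ is rectifiable because $K$ is locally finite.

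For (ii): let $\tilde y\in Y$ and $z:=p(\tilde y)$. Since $(\mathrm{St}_{1/n}(z))_n$ is a neighbourhood basis of $z$ and $Z$ is $(Z',\mathrm{Ker}(\sigma_g))$-stable, there is $m\geq2$ with $\mathrm{St}_{1/m}(z)$ contained in a $(Z',\mathrm{Ker}(\sigma_g))$-stable neighbourhood of $z$. As $Z'$ is dense, choose $z'\in\mathrm{St}_{1/m}(z)\cap Z'$ and let $s\colon[0,1]\ra\mathrm{St}_{1/m}(z)$, $s(0)=z$, $s(1)=z'$, be the straight segment from the centre $z$ to $z'$; it lies in $\mathrm{St}_{1/m}(z)$ by star-shapedness and is rectifiable. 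By Lemma~\ref{indeksit}, $s$ lifts to a path $\ol s\colon[0,1]\ra Y$ with $\ol s(0)=\tilde y$; then $p\circ\ol s=s$ is rectifiable and $\ol s(1)\in p^{-1}\{z'\}\subset p^{-1}(Z')=Y'$ by Lemma~\ref{biotin}. This is (ii).

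For (i): I would show that the relation on $Y'$ defined by ``$\tilde x\approx\tilde x'$ if there is a path $\tilde x\cu\tilde x'$ in $Y'$ with rectifiable $p$-image'' is an equivalence relation (reflexivity by constant paths, symmetry by reversal, transitivity by concatenation since lengths add) whose classes are open, so that connectedness of $Y'$ forces a single class. Given $\tilde x\in Y'$, put $z:=g(\tilde x)$. Since $g$ is a covering and $Z'$ is open, $z$ has an evenly covered neighbourhood contained in $Z'$; some $t$-star $V:=\mathrm{St}_{1/n}(z)$ lies inside it, and being connected it is then itself evenly covered by $g$, so the component $U$ of $g^{-1}(V)$ containing $\tilde x$ is mapped homeomorphically onto $V$. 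For any $\tilde x'\in U$ the polygonal path $[z,g(\tilde x)]^{\law}[z,g(\tilde x')]$ lies in $V$ by star-shapedness and is rectifiable; applying $(g\r U)^{-1}$ to it yields a path $\tilde x\cu\tilde x'$ in $U\subset Y'$ whose $p$-image equals that polygonal path. Hence $U\subset[\tilde x]_{\approx}$, the classes are open, and (i) follows.

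I expect the main obstacle to be organising (i) and (ii) so that Lemma~\ref{indeksit} is only ever applied to paths emanating from a star centre: that lemma supplies path lifting only for paths starting at the centre of a $(Z',\mathrm{Ker}(\sigma_g))$-stable star, which is exactly why the construction routes through the segments $[z,z']$ instead of attempting to straighten a prescribed path $y_1\cu y_2$ with fixed endpoints. The secondary point requiring care is in (i): one must choose the small $t$-stars to be simultaneously contained in $Z'$, evenly covered by $g$, and star-shaped with respect to their centres, which is what makes the local lifts $(g\r U)^{-1}$ of rectifiable polygonal paths available.
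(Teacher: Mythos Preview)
Your proof is correct and follows essentially the same route as the paper: both reduce to (ii) reaching $Y'$ from each $y_i$ via a lift provided by Lemma~\ref{indeksit} of a rectifiable segment in a small star, and then (i) connecting the two points of $Y'$ using that $g\colon Y'\ra Z'$ is a covering between manifolds. The paper simply asserts (i) in one line (``since $g$ is a covering between open manifolds''), whereas you spell it out via the open-equivalence-class argument; note in your (i) that $z=g(\tilde x)$, so the first segment of your polygonal path is degenerate and you are really lifting $[z,g(\tilde x')]$.
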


\begin{proof} Let $Y' \subset Y$ and $Z' \subset Z$ be large subsets so that $g:=p \r Y' \colon Y' \ra Z'$ is a normal covering. 
By Lemma \ref{indeksit}, there exists points $z_1, z_2 \in Z'$ and rectifiable paths $\al_1 \colon p(y_1) \cu z_1$ and $\al_2 \colon p(y_2) \cu z_2$ having lifts $(\ol{\al_1})_{y_1}$ and   $(\ol{\al_2})_{y_2}$ by $p.$ We denote $y'_1=(\ol{\al_i})_{y_1}(1)$ and $y'_2=(\ol{\al_i})_{y_2}(1)$.
 
Since $g \colon Y' \ra Z'$ is a covering between open manifolds, there exists a path $\be \colon y_1' \cu y_2'$ so that $\ell(p \circ \be)<\infty.$ Now $\gamma:=(\ol{\al_1})_{y_1} \be (\ol{\al_2})_{y_1}^{\law} \colon y_1 \cu y_2$ satisfies
$$\ell(p \circ \gamma)\leq\ell(\al_1)+\ell(p \circ \be) + \ell(\al_2^{\law}) < \infty.$$ 
\end{proof}

Let $Z$ be a PL manifold and $p \colon Y \ra Z$ a completed normal covering that has locally finite multiplicity and $d_s$ the path-metric of $Z.$ We call $d_s^* \colon Y \times Y \ra \R_+$ defined by
$$d_s^*(y_1,y_2) = \inf \{\ell(p \circ \gamma) \mid p \circ \gamma,\, \gamma \colon y_1 \cu y_2\}$$
the \textit{pullback} of the path-metric $d_s$ by $p.$ 

\begin{Cor}\label{pitkäperjantai}Let $Z$ be a PL manifold and $p \colon Y \ra Z$ a completed normal covering that has locally finite multiplicity, $d_s$ the path-metric of $Z$ and $d_s^*$ the pullback of $d_s$ by $p.$ Then $d_s^*$ is a metric, every $\tau \in \T(p)$ is an isometry with respect to $d_s^*$ and $p \colon (Y,d_s^*) \ra (Y,d_s)$ is a $1$-Lipschitz map.
\end{Cor}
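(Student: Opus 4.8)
The plan is to verify the three metric axioms, then the $1$-Lipschitz bound and the isometry statement, relying only on additivity of the length functional $\ell$, the non-emptiness of the relevant sets of paths supplied by Lemma \ref{LBAPp}, and the uniform discreteness of $p$.

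First I would observe that $d_s^*$ is finite and nonnegative: for any $y_1, y_2 \in Y$ the set $\{\ell(p \circ \gamma) : \gamma \colon y_1 \cu y_2\}$ contains a finite element by Lemma \ref{LBAPp}. Symmetry is immediate, since $\gamma \mapsto \gamma^{-1}$ is a length-preserving bijection between the paths $y_1 \cu y_2$ and the paths $y_2 \cu y_1$. For the triangle inequality I would concatenate: if $\gamma_1 \colon y_1 \cu y_2$ and $\gamma_2 \colon y_2 \cu y_3$, then $\ell(p \circ (\gamma_1 \gamma_2)) = \ell(p \circ \gamma_1) + \ell(p \circ \gamma_2)$, so $d_s^*(y_1, y_3) \le \ell(p \circ \gamma_1) + \ell(p \circ \gamma_2)$, and taking the infimum over $\gamma_1$ and over $\gamma_2$ separately gives $d_s^*(y_1, y_3) \le d_s^*(y_1, y_2) + d_s^*(y_2, y_3)$. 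The $1$-Lipschitz estimate is equally direct: any path $\gamma \colon y_1 \cu y_2$ projects to a path $p \circ \gamma$ joining $p(y_1)$ and $p(y_2)$, whence $\ell(p \circ \gamma) \ge d_s(p(y_1), p(y_2))$, and taking the infimum gives $d_s(p(y_1), p(y_2)) \le d_s^*(y_1, y_2)$.

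The step that requires genuine input is positive definiteness, namely that $d_s^*(y_1, y_2) = 0$ forces $y_1 = y_2$; this is the main (and essentially only) obstacle. If $p(y_1) \ne p(y_2)$ it follows from the Lipschitz estimate just proved, so suppose $p(y_1) = p(y_2) = z$ and $y_1 \ne y_2$. Since $p$ has locally finite multiplicity it is discrete, hence uniformly discrete by Lemma \ref{spread_5}, so $z$ has a neighbourhood $U$ meeting each component of $p^{-1}(U)$ in at most one point of $p^{-1}\{z\}$. Then $y_1$ and $y_2$ lie in distinct components $D_1 \ne D_2$ of $p^{-1}(U)$, and $D_1$ is open because $Y$ is locally connected. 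Given a path $\gamma \colon y_1 \cu y_2$, set $t^* := \sup\{t : \gamma([0,t]) \subset D_1\}$; since $D_1$ is open, $\gamma$ is continuous, and $\gamma(1) = y_2 \notin \overline{D_1}$, one obtains $t^* < 1$ and $\gamma(t^*) \in \partial D_1$. Next I would note $p(\partial D_1) \subset Z \setminus U$: if $x \in \partial D_1$ had $p(x) \in U$, then the component of the open set $p^{-1}(U)$ containing $x$ would be an open neighbourhood of $x$ meeting $D_1$, hence equal to $D_1$, contradicting $x \notin D_1$. Consequently $p \circ \gamma|_{[0,t^*]}$ joins $z$ to a point of $Z \setminus U$, so $\ell(p \circ \gamma) \ge d_s(z, Z \setminus U) =: \rho$, and $\rho > 0$ since $U$ is open. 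Taking the infimum over all $\gamma \colon y_1 \cu y_2$ yields $d_s^*(y_1, y_2) \ge \rho > 0$, a contradiction.

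Finally, for the deck transformations I would fix $\tau \in \T(p)$ and use $p \circ \tau = p$: post-composition $\gamma \mapsto \tau \circ \gamma$ is a bijection from the paths $y_1 \cu y_2$ onto the paths $\tau(y_1) \cu \tau(y_2)$ satisfying $\ell(p \circ (\tau \circ \gamma)) = \ell(p \circ \gamma)$, so $d_s^*(\tau(y_1), \tau(y_2)) = d_s^*(y_1, y_2)$; that is, $\tau$ is an isometry of $(Y, d_s^*)$.
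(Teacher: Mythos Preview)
Your proof is correct and follows the same route as the paper's, only with considerably more detail. The paper's argument is a three-line sketch: $p$ is discrete by Montesinos \cite[Thm.\;9.14]{MA}, so $d_s^*$ separates points (with a reference to \cite[Sec.\;1]{G}), finiteness comes from Lemma~\ref{LBAPp}, and the Lipschitz and isometry claims are declared immediate from the definition. Your contribution is to spell out the positive-definiteness step explicitly via uniform discreteness (Lemma~\ref{spread_5}) and a boundary-crossing argument, which is exactly what the reference to Gromov is hiding; the remaining verifications you give (symmetry, triangle inequality, $1$-Lipschitz, deck-transformation isometry) are the routine checks the paper omits.
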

\begin{proof} The map $p$ is discrete by \cite[Thm.\;9.14]{MA}. Thus $d_s^*$ separates points of $Y$ and $d_s^*$ is a metric by Lemma \ref{LBAPp}; see \cite[Sec.\;1]{G}. By the definition of $d_s^*$ every $\tau \in \T(p)$ is an isometry with respect to $d_s^*$ and $p \colon (Y,d_s^*) \ra (Y,d_s)$ is a $1$-Lipschitz map.
\end{proof}

In the following proposition we show that the topology induced by $d_s^*$ on $Y$ coincides with the original topology of $Y.$ 

\begin{Prop}\label{kiirastorstai}Let $Z$ be a PL manifold and $p \colon Y \ra Z$ a completed normal covering that has locally finite multiplicity. Let $d_s$ be the path-metric on $Z$ and $\T$ the topology of $Y$. Let $d_s^*$ be the pullback of the path-metric $d_s$ of $Z$ by $p$ and $\T_{d_s^*}$ the topology induced on $Y$ by the metric $d_s^*.$ Then $\T=\T_{d_s^*}.$ 
\end{Prop}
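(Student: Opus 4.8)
The plan is to prove the two inclusions $\T_{d_s^*}\subseteq\T$ and $\T\subseteq\T_{d_s^*}$ separately, using the neighbourhood basis of $(Y,\T)$ that comes from $p$ being a spread. Fix $y\in Y$, put $z=p(y)$, fix a triangulation $T$ of $Z$ having a vertex in $Z'$, and for $m\ge 2$ let $D_m$ be the $y$-component of $p^{-1}(\mathrm{St}_{1/m}(z))$. Since $p$ is a spread and $(\mathrm{St}_{1/m}(z))_{m\ge2}$ is a neighbourhood basis of $z$, the sets $(D_m)_{m\ge 2}$ form a neighbourhood basis of $y$ in $(Y,\T)$, and by Lemma \ref{brunate} $p(D_m)=\mathrm{St}_{1/m}(z)$. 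Recall also that $p$ is discrete (hence an orbit map, by Theorem \ref{vauva}) and that $g:=p\r Y'\colon Y'\ra Z'$ is a normal covering with $Z$ being $(Z',\mathrm{Ker}(\sigma_g))$-stable (Theorem \ref{TFAE}); by Corollary \ref{pitkäperjantai} $d_s^*$ is a genuine metric.

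For $\T\subseteq\T_{d_s^*}$ I would argue directly, with no path lifting. Let $U\in\T$ with $y\in U$ and pick $m$ with $D_m\subseteq U$. Since $d_s$ induces the manifold topology on $Z$, there is $r>0$ with $B_{d_s}(z,r)\subseteq\mathrm{St}_{1/m}(z)$. If $d_s^*(y,y')<r$, choose a path $\gamma\colon y\cu y'$ with $\ell(p\circ\gamma)<r$; then $p\circ\gamma$ has image in $B_{d_s}(z,r)\subseteq\mathrm{St}_{1/m}(z)$, so the connected set $\gamma([0,1])$ lies in $p^{-1}(\mathrm{St}_{1/m}(z))$ and, containing $y$, lies in $D_m$. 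Hence $y'\in D_m\subseteq U$, so $B_{d_s^*}(y,r)\subseteq U$ and $U\in\T_{d_s^*}$.

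For $\T_{d_s^*}\subseteq\T$ it suffices to show that for each $\epsilon>0$ there is $m$ with $D_m\subseteq B_{d_s^*}(y,\epsilon)$. Two preliminary observations: first, $\mathrm{St}_{1/m}(z)$ is star-shaped about $z$ and each simplex of $T$ carries the Euclidean metric, so for $z'\in\mathrm{St}_{1/m}(z)$ the segment $[z,z']$ lies in $\mathrm{St}_{1/m}(z)$ with $\ell([z,z'])=d_s(z,z')\le r_m$, where $r_m:=(1/m)\max\{|v-z|:v\text{ a vertex of a simplex containing }z\}\to0$; second, by Lemma \ref{whynot} the number $N:=\H(y,p)$ equals $\#(p^{-1}\{z'\}\cap D_m)$ for every $z'\in\mathrm{St}_{1/m}(z)\cap Z'$ once $\mathrm{St}_{1/m}(z)$ lies in a $(Z',\mathrm{Ker}(\sigma_g))$-stable neighbourhood of $z$, and $N<\infty$. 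The heart of the argument is the estimate $\mathrm{diam}_{d_s^*}(D_m)\le 2Nr_m$ for all such large $m$; granting it we take $m$ with $2Nr_m<\epsilon$. To prove it, fix $y'\in D_m$ and connect $y$ to $y'$ inside $D_m$ by a concatenation $\overline{\al_1}\,\be\,\overline{\al_2}^{\law}$: here $\overline{\al_1}$ and $\overline{\al_2}$ are lifts (furnished by Lemma \ref{indeksit}, applied to the star $\mathrm{St}_{1/m}(z)$ and to a small substar of $\mathrm{St}_{1/{m'}}(p(y'))\subseteq\mathrm{St}_{1/m}(z)$ sitting in a stable neighbourhood of $p(y')$) of short segments $\al_1\colon z\cu z_1$ and $\al_2\colon p(y')\cu z_2$ with $z_1,z_2\in Z'$, starting at $y$ resp.\ $y'$; being connected and meeting $D_m$ these lifts lie in $D_m$, with endpoints $y_1,y_2\in D_m\cap Y'$. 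Since $D_m\cap Y'$ is path-connected (as $Y\setminus Y'$ does not locally separate $Y$) and $p$ restricts there to an $N$-sheeted covering onto $\mathrm{St}_{1/m}(z)\cap Z'$, there is a path $\be\colon y_1\cu y_2$ in $D_m\cap Y'$, and one wants to choose $z_1,z_2$ and $\be$ so that $\ell(p\circ\be)\le 2Nr_m-\ell(\al_1)-\ell(\al_2)$, whence $\ell(p\circ(\overline{\al_1}\be\,\overline{\al_2}^{\law}))\le 2Nr_m$.

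The main obstacle is precisely this last point: bounding the $p$-length needed to pass from the sheet of $y_1$ to the sheet of $y_2$. I expect to handle it as follows. Because $\mathrm{St}_{1/m}(z)$ is contractible, any two paths in it with the same endpoints are homotopic rel endpoints; and a path-homotopy downstairs lifts to a homotopy upstairs whose opposite edge is again a lift from $y$ of a short segment, since a lift of the constant path at $z'$ into the discrete fibre $p^{-1}\{z'\}\cap D_m$ must be constant. Pushing the short base segments off the non-locally-separating set $Z\setminus Z'$ (largeness of $Z'$) and using that only $N$ sheets occur over $\mathrm{St}_{1/m}(z)$ keeps the total bounded by $2Nr_m$. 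Alternatively, $\mathrm{diam}_{d_s^*}(D_m)\le 2Nr_m$ can be proved by induction on $N=\H(y,p)$, following the inductive construction of the lift in Lemma \ref{indeksit} and carrying a length bound along; the base case $N=1$ is immediate because then $p\r D_m\colon D_m\to\mathrm{St}_{1/m}(z)$ is a homeomorphism. In both approaches the delicate step — controlling how far one must travel inside $D_m$ to change sheets — is where the argument must be carried out carefully; the remaining bookkeeping is routine.
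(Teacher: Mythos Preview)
Your argument for $\T\subset\T_{d_s^*}$ is correct and matches the paper's.

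For $\T_{d_s^*}\subset\T$ there is a genuine gap. Your homotopy approach fails: contractibility of $\mathrm{St}_{1/m}(z)$ is irrelevant because $p$ is only an honest covering over $\mathrm{St}_{1/m}(z)\cap Z'$, and that set is \emph{not} simply connected when there is branching. Think of $w\mapsto w^N$ on the disk: two radial-type paths from $z$ to $z'$ that pass on opposite sides of the branch point are homotopic in the full star but their lifts from $y$ end at different sheets. Path-lifting (Lemma~\ref{indeksit}) does not upgrade to homotopy-lifting through branch points, so you cannot conclude that two lifts of homotopic paths share an endpoint. Your inductive alternative is not developed, and it is not clear how to propagate a uniform bound like $2Nr_m$ through the $F/U$ decomposition of Lemma~\ref{indeksit}: over $U$ you only control distances inside substars around points of smaller index, and those radii are not obviously comparable to $r_m$.

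The paper sidesteps the diameter estimate entirely with a deck-transformation trick you are missing. Choose $r_y$ small enough that the $y$-component $U_y$ of $p^{-1}(B(p(y),r_y))$ meets $p^{-1}\{p(y)\}$ only in $y$ (uniform discreteness), that Lemma~\ref{indeksit} applies, that $B(p(y),r_y)$ is radially convex, and that $B_{d_s^*}(y,r_y)\subset U$. For $y_1\in U_y$ lift a short path $\gamma\colon p(y)\cu p(y_1)$ with $\ell(\gamma)<r_y$ to $\ol{\gamma}_y$; then $d_s^*(y,\ol{\gamma}_y(1))<r_y$. Now take $\tau\in\T(p)$ with $\tau(\ol{\gamma}_y(1))=y_1$. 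Since both points lie in $U_y$, Lemma~\ref{vindpark} gives $\tau(U_y)=U_y$, hence $\tau(y)=y$ by the uniqueness condition above. As $\tau$ is a $d_s^*$-isometry (Corollary~\ref{pitkäperjantai}), $d_s^*(y,y_1)=d_s^*(\tau(y),\tau(\ol{\gamma}_y(1)))=d_s^*(y,\ol{\gamma}_y(1))<r_y$. Thus $U_y\subset B_{d_s^*}(y,r_y)\subset U$, and $U=\bigcup_{y\in U}U_y\in\T$. Note this actually gives $\mathrm{diam}_{d_s^*}(U_y)\le 2r_y$ with no factor of $N$: all sheets are equidistant from $y$ because the deck group acts by isometries fixing $y$.
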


\begin{proof} We first show that $\T \subset \T_{d_s^*}.$ Since $p$ is a spread, it is sufficient to show that for every open connected subset $V \subset Z$ and component $U$ of $p^{-1}(V)$ we have $U \in \T_{d_s^*}.$ Let $V \subset Z$ be an open and connected set, and let $U \subset Y$ be a component of $p^{-1}(V).$ Fix $y \in U.$ Since $p \colon (Y,d_s^*) \ra (Y,d_s)$ is a $1$-Lipschitz map, there exists $r_y \in (0,1)$ so that $p(B(y,r_y))\subset B(y,r_y) \subset V.$ Now for every point $y' \in B(y,r_y)$ there exists by the definition of $d_s^*$ a path $\gamma \colon y \cu y'$ in $p^{-1}(B(y,r_y))\subset p^{-1}(V).$
Thus $B(y,r_y)\subset U,$ since $U$ is the $y$-component of $p^{-1}(V).$ We conclude
$U \in \T_{d_s^*}.$

We then show that $\T_{d_s^*} \subset \T.$ Let $U \in \T_{d_s^*}.$ The map $p$ is discrete by \cite[Thm.\;9.14]{MA}, uniformly discrete Theorem \ref{TFAE} and an orbit map by Theorem \ref{vauva}. Thus, as a consequence of Lemma \ref{indeksit}, there exists for every $y \in U$ a radius $r_y \in (0,1)$ that satisfies the following conditions:
\begin{itemize}
\item[(a)]for the $y$-component $U_y$ of $p^{-1}(B(p(y),r_y)),$ $U_y \cap p^{-1}\{p(y)\}=\{y\},$  
\item[(b)] every path in $B(p(y),r_y)$ beginning at $p(y)$ has a total lift into $Y$ beginning at $y,$ 
\item[(c)] $[z,p(y)] \subset B(p(y),r_y)$ for every $z \in B(p(y),r_y)$ and 
\item[(d)] $B(y,r_y) \subset U.$ 
\end{itemize}
Since $p$ is a spread, $U_y \in \T$ for every $y \in U.$ 
It is suffices to show that $U_y \subset B(y,r_y)$ for every $y \in U,$ since then 
$$U=\Cup_{y \in U}B(y,r_y)=\Cup_{y \in U}U_y \in \T.$$

Let $y_1 \in U_y.$ Then $p(y_1) \in B(p(y),r_y)$ and there exists a path $\gamma \colon p(y) \cu p(y_1)$ in $B(p(y),r_y)$ satisfying $\ell(\gamma)<r_y.$

Let $\ol{\gamma}_y$ be a lift of $\gamma$ in $U_y$ beginning at $y.$ Then 
$$d_s^*(y, \ol{\gamma}_y(1)) \leq \ell(p \circ \ol{\gamma}_y)=\ell(\gamma)<r_y.$$
Hence $\ol{\gamma}_y(1) \in B(y,r_y).$

Since $p$ is an orbit map there is a deck-transformation $\tau \in \T(p)$ satisfying $\tau (\ol{\gamma}(1))=y_1.$ Since $y_1$ and $\ol{\gamma}_y(1)$ belong to $U_y,$ we have $\tau(U_y)=U_y$ by Lemma \ref{brunate}. Hence $\tau(y)=y,$ since $U_y \cap p^{-1}\{p(y)\}=\{y\}.$ Thus 
$$d_s^*(y_1,y)=d_s^*(\tau(\ol{\gamma}_y(1)), \tau(y))=d_s^*(\ol{\gamma}_y(1),y)<r_y,$$
since $\tau$ is an isometry with respect to $d_s^*.$  
Thus $y_1 \in B(y,r_y).$ Thus $U_y \subset B(y,r_y)$ and we conclude that $\T_{d_s^*} \subset \T.$ 
\end{proof}

In the following proposition we show that the topology induced by the pullback metric is locally proper.

\begin{Prop}\label{lankalauantai}Let $p \colon Y \ra Z$ be a completed normal covering onto a PL manifold $Z$ that has locally finite multiplicity. Let $d_s^*$ be the pullback of the path-metric $d_s$ of $Z$ by $p.$ Then $Y$ is a locally proper metric space with respect to $d_s^*$. 
\end{Prop}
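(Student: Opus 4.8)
The plan is to prove that every $y\in Y$ has an open neighbourhood with compact closure; since a compact metric space is proper and, by Proposition \ref{kiirastorstai}, the pullback metric $d_s^*$ induces the topology of $Y$, this closure is then a proper metric neighbourhood of $y$, which is exactly local properness. So I fix $y\in Y$, put $z:=p(y)$, and first record what is available. Since $p$ has locally finite multiplicity it is discrete by \cite[Thm.\;9.14]{MA}, hence uniformly discrete by Theorem \ref{TFAE}; and, shrinking a neighbourhood witnessing local finiteness to a connected one, there are a connected open $\Omega\ni z$ and $k\in\N$ with $\#(p^{-1}\{w\}\cap D)\le k$ for every $w\in\Omega$, where $D$ is the $y$-component of $p^{-1}(\Omega)$. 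As $(Z,d_s)$ is locally compact, I choose $\rho>0$ so small that the closed $d_s$-ball $\overline{B}(z,\rho)$ is compact and contained in $\Omega$, and set $U_y$ to be the $y$-component of $p^{-1}(B(z,\rho))$. Then $U_y$ is an open neighbourhood of $y$ with $U_y\subset D$ and $p(U_y)\subset B(z,\rho)$, and the whole task reduces to showing that $\overline{U_y}$ is compact.

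Since $Y$ is metrizable I would argue sequential compactness. Let $(\eta_i)\subset\overline{U_y}$; then $p(\eta_i)\in\overline{B}(z,\rho)$, so after passing to a subsequence $p(\eta_i)\to w$ for some $w\in\overline{B}(z,\rho)\subset\Omega$. Because $D$ is closed in $p^{-1}(\Omega)$ while $\overline{U_y}\subset\overline{D}$, and $\Omega$ is open, for all large $i$ one gets $\eta_i\in\overline{U_y}\cap p^{-1}(\Omega)\subset D$; consequently $p^{-1}\{w\}\cap D=\{\xi_1,\ldots,\xi_m\}$ is a finite non-empty set ($m\le k$, non-empty because $p(D)=\Omega$ by Lemma \ref{brunate}). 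By uniform discreteness I fix a connected open $W$ with $w\in W\subset\Omega$ so that $p^{-1}\{w\}\cap C$ is a singleton for every component $C$ of $p^{-1}(W)$, and I let $C_1,\ldots,C_m$ be the pairwise distinct components of $p^{-1}(W)$ containing $\xi_1,\ldots,\xi_m$. The heart of the argument is the claim that $\eta_i\in C_1\cup\cdots\cup C_m$ for all large $i$: for such $i$ we have $\eta_i\in D$ and $p(\eta_i)\in W$, so the component $C_i'$ of $p^{-1}(W)$ containing $\eta_i$ is connected, lies in $p^{-1}(\Omega)$ and meets $D$, whence $C_i'\subset D$; by Lemma \ref{brunate} $p(C_i')=W\ni w$, so $C_i'$ contains the unique point of $p^{-1}\{w\}$ lying in $C_i'$, and this point is in $D$, hence equals some $\xi_j$, forcing $C_i'=C_j$. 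Pigeonholing, a subsequence lies in one fixed $C_j$.

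On that subsequence $p(\eta_i)\to w=p(\xi_j)$. For every connected open $W'$ with $w\in W'\subset W$, Lemma \ref{brunate} shows each component of $p^{-1}(W')$ maps onto $W'$, so there is exactly one component of $p^{-1}(W')$ inside $C_j$ (it must contain the unique point $\xi_j$ of $p^{-1}\{w\}\cap C_j$), namely the $\xi_j$-component; thus $p^{-1}(W')\cap C_j$ is precisely the $\xi_j$-component of $p^{-1}(W')$. Since $p$ is a spread, these $\xi_j$-components (over all connected $W'\ni w$ in $W$) form a neighbourhood basis at $\xi_j$, and for large $i$ the relations $p(\eta_i)\in W'$ and $\eta_i\in C_j$ yield $\eta_i\in p^{-1}(W')\cap C_j$; hence $\eta_i\to\xi_j\in\overline{U_y}$. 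This proves $\overline{U_y}$ sequentially compact, hence compact, which finishes the proof. The step I expect to be the main obstacle is the claim in the previous paragraph: a priori a sequence with $p(\eta_i)\to w$ could escape through infinitely many distinct components of $p^{-1}(W)$ clustering at $w$, and excluding this is exactly where finiteness of the fibre $p^{-1}\{w\}\cap D$, uniform discreteness, and the completeness of the spread (through Lemma \ref{brunate}) are all used together.
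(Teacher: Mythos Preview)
Your argument is correct. The only imprecision is the phrase ``for all large $i$'' before $\eta_i\in\overline{U_y}\cap p^{-1}(\Omega)\subset D$: in fact this holds for every $i$, since $p(\overline{U_y})\subset\overline{p(U_y)}\subset\overline{B}(z,\rho)\subset\Omega$ by continuity of $p$ and your choice of $\rho$. Everything else goes through as written; in particular the key step identifying $p^{-1}(W')\cap C_j$ with the $\xi_j$-component of $p^{-1}(W')$ is justified exactly as you indicate, via Lemma \ref{brunate} and the uniqueness of the fibre point in $C_j$.

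Your route differs genuinely from the paper's. The paper works directly with a closed $d_s^*$-ball $\overline{B(y,r)}$ and proves compactness by an open-cover argument: it projects to the compact set $\overline{B(p(y),r)}$ in $Z$, extracts a finite subcover there, and then uses that deck transformations are $d_s^*$-isometries (Corollary \ref{pitkäperjantai}) to transport small balls around and assemble a finite subcover upstairs. Your proof is essentially topological: you never use the isometry property of the deck group, only that $d_s^*$ metrizes $Y$ (so that sequential compactness suffices), and you derive compactness purely from uniform discreteness, finiteness of fibres in $D$, and the spread structure. What your approach buys is a cleaner separation of concerns --- the metric enters only through metrizability, and the argument would adapt to any situation where $Y$ is first countable. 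What the paper's approach buys is a more concrete handle on the metric geometry (an explicit radius, explicit use of the orbit structure), which fits naturally with the surrounding Theorem \ref{kumpula} where the isometric deck action is part of the conclusion.
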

\begin{proof} Fix $y \in Y.$ Since $p$ has locally finite multiplicity, Lemma \ref{indeksit} implies that there exists $r \in (0,1)$ that satisfies $p(\overline{B(y,r)})=\overline{B(p(y),r)}$ and satisfies for the $y$-component $D$ of $p^{-1}(B(p(y),2r))$ the following conditions:
\begin{itemize} 
\item[(a)]$p^{-1}\{p(y)\} \cap D=\{y\}$ 
\item[(b)]$p\r D$ has finite multiplicity and 
\item[(c)]$\overline{B(y,r)} \subset D.$ 
\end{itemize}
We prove the claim by showing that $\overline{B(y,r)}$ is compact. 

Let $\mathcal{U}$ be an open cover of $\overline{B(y,r)}.$ Since $p$ is an open map and the set $p^{-1}\{z\} \cap \overline{B(y,r)}$ is finite, there exists for every $z \in p(\overline{B(y,r)})$ a radius $r_z \in (0,1)$ so that for every $y' \in p^{-1}\{z\} \cap \overline{B(y,r)}$ there exists $U \in \mathcal{U}$ satisfying $B(y',r_z)\subset U$ and  $p(B(y',r_z))=B(z,r_z).$ 

Since $p(\overline{B(y,r)})=\overline{B(p(y),r)} \subset Z$ is compact, we may fix $z_1, \ldots, z_k \in p(\overline{B(y,r)})$ so that $\{B(z_i,r_{z_i}) : 1\leq i \leq k\}$ is an open cover of $p(\overline{B(y,r)}).$ The set $\{B(y',r_{z_i}) \mid y' \in p^{-1}\{z_i\} \cap \overline{B(y,r)}, i \in \{1, \ldots, k\}\}$ is now finite and for every $i \in \{1, \ldots, k\}$ and $y' \in p^{-1}\{z_i\} \cap \overline{B(y,r)}$ there exists $U \in \mathcal{U}$ so that $B(y',r_{z_i})\subset U.$ Thus it suffices to show that
$$\overline{B(y,r)} \subset V:=\Cup \{B(y',r_{z_i}) : y' \in p^{-1}\{z_i\} \cap \overline{B(y,r)}, i \in \{1, \ldots, k\}\}.$$

Let $y' \in \overline{B(y,r)}.$ Fix $i \in \{1, \ldots, k\}$ satisfying $p(y') \subset B(z_i,r_{z_i})$ and $e \in p^{-1}\{z_i\} \cap \overline{B(y,r)}.$ Then there exists a point $y'' \in  p^{-1}\{p(y')\} \cap B(e,r_{z_i}),$ since $p(B(e,r_{z_i}))=B(z_i,r_{z_i}).$ Let $\tau \colon (Y,d_s^*) \ra (Y, d_{s}^*)$ be a deck-transformation isometry satisfying $\tau(y'')=y'.$ Then $y' \in B(\tau(e),r_{z_i})$ and $\tau(e) \in p^{-1}\{z_i\}.$ Since $y''$ and $y'$ belong to $D,$ $\tau(D)=D$ by Lemma \ref{vindpark}. Thus $\tau(y)=y,$ since $p^{-1}\{p(y)\} \cap D=\{y\}.$ Hence 
$$d_s^*(\tau(e),y)=d_s^*(\tau(e),\tau(y))=d_s^*(e,y) \leq r.$$ Thus $\tau(e) \in p^{-1}\{z_i\} \cap \overline{B(y,r)}$ and $y' \in B(\tau(e),r_{z_i}) \subset V.$
We conclude that $\U$ has a finite subcover, $\overline{B(y,r)}\subset Y$ is compact and $(Y,d_s^*)$ is a locally proper metric space.  
\end{proof}

This concludes the proof of Theorem \ref{kumpula} and we are ready for the proofs of Theorems \ref{kuuskytkolme}, \ref{kk} and \ref{hopo} in the introduction. 

We say that a path-metric $d'_s$ on $Z$ is a \textit{polyhedral path-metric} on $Z,$ if there exists such a simplicial complex $K,$ polyhedron $|K|\subset \R^n$ and embedding $\iota \colon Z \ra \R^n$ satisfying $\iota(Z)=|K|,$ that $d'_s$ is the pullback of $d_s$ by $\iota$ for   the path-metric $d_s$ of $|K|.$

We recall that a completed normal covering $p \colon Y \ra Z$ is an open map and the domain $Y$ is by definition a Hausdorff space. In particular, $p$ has thus locally finite multiplicity, if $p$ is discrete and $Y$ is locally compact. 

\begin{proof}[Proof of Theorem \ref{kuuskytkolme}] Let $Z$ be a PL manifold and $p \colon Y \ra Z$ be a discrete completed normal covering. If $Y$ is locally compact, then $p$ has locally finite multiplicity. Suppose that $p$ has locally finite multiplicity. By Theorem \ref{kumpula}, there is a metric $d_s^*$ on $Y$ so that the topology induced by $d_s^*$ coincides with the original topology of $Y$ and $(Y,d_s^*)$ is a locally proper metric space. Thus $Y$ is locally compact.  
\end{proof}

\begin{proof}[Proof of Theorem \ref{kk}] Let $f \colon X \ra Z$ be a completed covering between PL manifolds, $(Y,p,q)$ a locally compact monodromy representation of $f,$ $d_s$ a polyhedral path-metric of $Z.$ By Theorem \ref{vauva}, the orbit map $q$ is discrete. Thus $q$ has locally finite multiplicity, since $Y$ is locally compact. Since $q$ has locally finite multiplicity, by Theorem \ref{kumpula}, the pullback $d_s^*$ of $d_s$ is a path-metric on $Y$ satisfying conditions (a)--(d) in Theorem \ref{vauva}. 
\end{proof} 

\begin{proof}[Proof of Theorem \ref{hopo}]For the statement we need to show that a completed covering  $f \colon X \ra Z$ between PL manifolds has a locally compact monodromy representation $(Y,p,q)$ if and only if $f$ is stabily completed and $f$ has a finite local monodromy group at each $z \in Z.$ 

Suppose first that $f$ is stabily completed covering and $f$ has a finite local mo\-nodromy group at each $z \in Z.$ By Corollary \ref{matka} $f$ has a monodromy representation $(Y,p,q),$ where $q$ has locally finite multiplicity. By Theorem \ref{kumpula}, there is a metric $d_s^*$ on $Y$ so that the topology induced by $d_s^*$ coincides with the original topology of $Y$ and $(Y,d_s^*)$ is a locally proper metric space. Thus $Y$ is locally compact and $(Y,p,q)$ a locally compact monodromy representation of $f.$

Suppose then that $f$ has a locally compact monodromy representation $(Y,p,q).$ Then the map $q$ is discrete by Theorem \ref{vauva}. Thus $q$ has locally finite multiplicity. By Theorem \ref{TFAE}, $q$ is a stabily completed normal covering. Thus, by Theorem \ref{torstai}, $q$ has a finite local monodromy group at each $z \in Z,$ since $q$ has locally finite multiplicity. 

Fix $z \in Z$ and let $H$ be a finite local monodromy group of $q$ at $z.$ By Theorem \ref{spread_3} and Remark \ref{stabhuomio} there is a quotient of $H$ that is a local monodromy group of $f$ at $z.$ Since every quotient of a finite group is finite, $f$ has a finite local monodromy group at $z.$
\end{proof}

\bibliography{viite}{}

\providecommand{\bysame}{\leavevmode\hbox to3em{\hrulefill}\thinspace}
\providecommand{\MR}{\relax\ifhmode\unskip\space\fi MR }
\providecommand{\MRhref}[2]{%
  \href{http://www.ams.org/mathscinet-getitem?mr=#1}{#2}
}
\providecommand{\href}[2]{#2}
\begin{thebibliography}{10}

\bibitem{BE}
Israel Berstein and Allan~L. Edmonds, \emph{{The degree and branch set of a
  branced covering}}, Invent. Math. \textbf{45} (1978), no.~3, 213--220.
  \MR{0474261 (57 \#13908)}

\bibitem{CH}
P.~T. Church and E.~Hemmingsen, \emph{{Light open maps on {$n$}-manifolds.
  {II}}}, Duke Math. J. \textbf{28} (1961), 607--623. \MR{0133121 (24 \#A2955)}

\bibitem{C}
Antonio~F. Costa, \emph{{Some properties of branched coverings of topological
  spaces}}, {Mathematical contributions ({S}panish)}, Editorial Univ.
  Complutense Madrid, Madrid, 1986, pp.~77--81. \MR{873510 (88b:57003)}

\bibitem{DP}
D.~{Drasin} and P.~{Pankka}, \emph{{Sharpness of Rickman's Picard theorem in
  all dimensions}}, ArXiv e-prints (2013).

\bibitem{E}
Allan~L. Edmonds, \emph{{Branched coverings and orbit maps}}, Michigan Math. J.
  \textbf{23} (1976), no.~4, 289--301 (1977). \MR{0461483 (57 \#1468)}

\bibitem{F}
Ralph~H. Fox, \emph{{Covering spaces with singularities}}, {A symposium in
  honor of {S}. {L}efschetz}, Princeton University Press, Princeton, N.J.,
  1957, pp.~243--257. \MR{0123298 (23 \#A626)}

\bibitem{G}
Misha Gromov, \emph{{Metric structures for {R}iemannian and non-{R}iemannian
  spaces}}, english ed., {Modern Birkh{\"a}user Classics}, Birkh{\"a}user
  Boston, Inc., Boston, MA, 2007, Based on the 1981 French original, With
  appendices by M. Katz, P. Pansu and S. Semmes, Translated from the French by
  Sean Michael Bates. \MR{2307192 (2007k:53049)}

\bibitem{H}
Allen Hatcher, \emph{{Algebraic topology}}, Cambridge University Press,
  Cambridge, 2002. \MR{1867354 (2002k:55001)}

\bibitem{HR}
Juha Heinonen and Seppo Rickman, \emph{Geometric branched covers between
  generalized manifolds}, Duke Math. J. \textbf{113} (2002), no.~3, 465--529.
  \MR{1909607 (2003h:57003)}

\bibitem{I}
Ivan Izmestiev and Michael Joswig, \emph{{Branched coverings, triangulations,
  and 3-manifolds}}, Adv. Geom. \textbf{3} (2003), no.~2, 191--225. \MR{1967999
  (2004b:57004)}

\bibitem{L}
R.~{Luisto}, \emph{{Note on local-to-global properties of BLD-mappings}}, ArXiv
  e-prints (2014).

\bibitem{Mas}
William~S. Massey, \emph{{Algebraic topology: an introduction}},
  Springer-Verlag, New York-Heidelberg, 1977, Reprint of the 1967 edition,
  Graduate Texts in Mathematics, Vol. 56. \MR{0448331 (56 \#6638)}

\bibitem{MA}
Jos{\'e}~Mar{\'i}a Montesinos-Amilibia, \emph{{Branched coverings after
  {F}ox}}, Bol. Soc. Mat. Mexicana (3) \textbf{11} (2005), no.~1, 19--64.
  \MR{2198590 (2006i:57005)}

\bibitem{PS}
Pekka Pankka and Juan Souto, \emph{{On the nonexistence of certain branched
  covers}}, Geom. Topol. \textbf{16} (2012), no.~3, 1321--1344. \MR{2967053}

\bibitem{R}
Seppo Rickman, \emph{{Quasiregular mappings}}, {Ergebnisse der Mathematik und
  ihrer Grenzgebiete (3) [Results in Mathematics and Related Areas (3)]},
  vol.~26, Springer-Verlag, Berlin, 1993. \MR{1238941 (95g:30026)}

\bibitem{RS}
Colin~Patrick Rourke and Brian~Joseph Sanderson, \emph{Introduction to
  piecewise-linear topology}, Springer Study Edition, Springer-Verlag,
  Berlin-New York, 1982, Reprint. \MR{665919 (83g:57009)}

\bibitem{V}
Jussi V{\"a}is{\"a}l{\"a}, \emph{{Discrete open mappings on manifolds}}, Ann.
  Acad. Sci. Fenn. Ser. A I No. \textbf{392} (1966), 10. \MR{0200928 (34
  \#814)}

\bibitem{CER}
A.~V. \v{C}ernavski{\u\i}, \emph{{Finite-to-one open mappings of manifolds}},
  Mat. Sb. (N.S.) \textbf{65 (107)} (1964), 357--369. \MR{0172256 (30 \#2476)}

\end{thebibliography}
\bibliographystyle{amsplain}

%

\end{document}